\newcommand{\QQ}{\mathbf{Q}}
\newcommand{\Qp}{\QQ_p}
\newcommand{\Ql}{\QQ_\ell}
\newcommand{\ZZ}{\mathbf{Z}}
\newcommand{\Zp}{\ZZ_p}
\newcommand{\Zl}{\ZZ_\ell}
\newcommand{\DD}{\mathbf{D}}
\newcommand{\Zhat}{\widehat{\ZZ}}
\renewcommand{\AA}{\mathbf{A}}
\newcommand{\RR}{\mathbf{R}}
\newcommand{\CC}{\mathbf{C}}
\newcommand{\QQbar}{\overline{\QQ}}
\newcommand{\Qpbar}{\overline{\QQ}_p}
\DeclareMathOperator{\Frob}{Frob}
\newcommand{\cH}{\mathcal{H}}
\newcommand{\cO}{\mathcal{O}}
\newcommand{\cS}{\mathcal{S}}
\newcommand{\cZ}{\mathcal{Z}}
\newcommand{\et}{\text{\upshape \'et}} 
\newcommand{\mot}{\mathrm{mot}}
\newcommand{\dR}{\mathrm{dR}}
\newcommand{\Iw}{\mathrm{Iw}}
\newcommand{\dd}{\mathop{}\!\mathrm{d}}
\renewcommand{\ge}{\geqslant}
\renewcommand{\le}{\leqslant}
\newcommand{\into}{\hookrightarrow}
\newcommand{\onto}{\twoheadrightarrow}
\newcommand{\sA}{\mathscr{A}}
\newcommand{\sD}{\mathscr{D}}
\newcommand{\sE}{\mathscr{E}}
\newcommand{\sF}{\mathscr{F}}
\newcommand{\sH}{\mathscr{H}}
\newcommand{\sV}{\mathscr{V}}
\newcommand{\grz}{\mathfrak{z}}
\newcommand{\grZ}{\mathfrak{Z}}
\newcommand{\f}{\mathrm{f}}
\newcommand{\uchi}{\underline{\chi}}
\newcommand{\upsi}{\underline{\psi}}
\newcommand{\uphi}{\underline{\phi}}
\newcommand{\LE}{\mathcal{LE}}
\newcommand{\EI}{\mathcal{EI}}
\newcommand{\cEI}{{}_c\EI}
\DeclareMathOperator{\br}{br}
\DeclareMathOperator{\Eis}{Eis}
\DeclareMathOperator{\GSp}{GSp}
\DeclareMathOperator{\Sp}{Sp}
\DeclareMathOperator{\SO}{SO}
\DeclareMathOperator{\GL}{GL}
\DeclareMathOperator{\SL}{SL}
\DeclareMathOperator{\Anc}{Anc}
\DeclareMathOperator{\cEis}{{}_{\mathit{c}}Eis}
\DeclareMathOperator{\Hom}{Hom}
\DeclareMathOperator{\ord}{ord}
\DeclareMathOperator{\Spec}{Spec}
\DeclareMathOperator{\Gal}{Gal}
\DeclareMathOperator{\pr}{pr}
\DeclareMathOperator{\Sym}{Sym}
\DeclareMathOperator{\mom}{mom}
\DeclareMathOperator{\ch}{ch}
\DeclareMathOperator{\vol}{Vol}
\newcommand{\Gl}{G_\ell}
\newcommand{\Hl}{H_\ell}
\newcommand{\sG}{\mathscr{G}}
\DeclareMathOperator{\Ind}{Ind}
\DeclareMathOperator{\cInd}{c-Ind}
\newenvironment{smatrix}{\left(\begin{smallmatrix}}{\end{smallmatrix}\right)}
\newcommand{\stbt}[4]{\begin{smatrix}#1 & #2 \\ #3 & #4\end{smatrix}}
\newcommand{\dfour}[4]{\begin{smatrix}#1 \\ & #2 \\ & & #3 \\ & & & #4\end{smatrix}}
\newtheorem{definition}{Definition}[subsection]
\newtheorem{theorem}[definition]{Theorem}
\newtheorem{lemma}[definition]{Lemma}
\newtheorem{proposition}[definition]{Proposition}
\newtheorem{corollary}[definition]{Corollary}
\theoremstyle{remark}
\newtheorem{notation}[definition]{Notation}
\newtheorem{assumption}[definition]{Assumption}
\declaretheorem[name=Remark,sibling=definition,qed={\lower-0.3ex\hbox{$\diamond$}}]{remark}
\declaretheorem[name=Remark,parent=section,qed={\lower-0.3ex\hbox{$\diamond$}}]{remark2}
\begin{document}
 \title{Euler systems for GSp(4)}

\renewcommand{\urladdrname}{{\itshape Open Research Contributor ID (ORCID)}}

 \author{David Loeffler}
 \address[Loeffler]{Mathematics Institute\\
 Zeeman Building, University of Warwick\\
 Coventry CV4 7AL, UK.}
 \email{d.a.loeffler@warwick.ac.uk}
 \urladdr{\href{http://orcid.org/0000-0001-9069-1877}{0000-0001-9069-1877}}

 \author{Christopher Skinner}
 \address[Skinner]{
 Department of Mathematics\\
 Princeton University\\
 Fine Hall, Washington Road \\
 Princeton, NJ 08544-1000\\
 USA}
 \email{cmcls@princeton.edu}

\author{Sarah Livia Zerbes}
 \address[Zerbes]{Department of Mathematics\\
 University College London\\
 Gower Street\\
 London WC1E 6BT, UK}
 \email{s.zerbes@ucl.ac.uk}
 \urladdr{\href{http://orcid.org/0000-0001-8650-9622}{0000-0001-8650-9622}}

\thanks{Supported by: Royal Society University Research Fellowship ``$L$-functions and Iwasawa theory'' (Loeffler); Simons Investigator Grant \#376203 from the  Simons Foundation and and NSF grant DMS-1501064 (Skinner); ERC Consolidator Grant ``Euler systems and the Birch--Swinnerton-Dyer conjecture'' (Zerbes).}

\date{\today}

\begin{abstract}
 We construct an Euler system for Galois representations associated to cohomological cuspidal automorphic representations of $\GSp_4$, using the pushforwards of Eisenstein classes for $\GL_2 \times \GL_2$.
\end{abstract}

\maketitle

\setcounter{tocdepth}{1}
\tableofcontents

\section{Introduction}

 The theory of Euler systems is one of the most powerful tools available for studying the arithmetic of global Galois representations. However, constructing Euler systems is a difficult problem, and the list of known constructions is accordingly rather short. In this paper, we construct a new example of an Euler system, for the four-dimensional Galois representations associated to cohomological cuspidal automorphic representations of $\GSp_4 / \QQ$, and apply this to studying the Selmer groups of these Galois representations. Our construction relies crucially on an unexpected relation with branching problems in smooth representation theory, which is the key input in proving the norm-compatibility relations for our Euler system classes.
 
 We construct this Euler system in the \'etale cohomology of the Shimura variety of $\GSp_4$.  The strategy that we use for this construction is also applicable to many other examples of Shimura varieties, including those associated to the groups $\operatorname{GU}(2, 1)$, $\GSp_6$, and $\GSp_4 \times \GL_2$, which will be explored in forthcoming work. 
 
 The starting-point for our construction is a family of motivic cohomology classes for Siegel threefolds, which were introduced and studied by Francesco Lemma in the papers \cite{lemma10,lemma15,lemma17}. Lemma's classes are constructed by using the subgroup $H = \GL_2 \times_{\GL_1} \GL_2$ inside $\GSp_4$. Beilinson's Eisenstein symbol gives a supply of motivic cohomology classes for the Shimura varieties attached to $H$, and pushing these forward to $\GSp_4$ gives motivic cohomology classes for the Siegel threefold. By applying the \'etale realisation map and projecting to an appropriate Hecke eigenspace, Lemma's motivic classes give rise to elements of the groups $H^1\left(\QQ, W_{\Pi}^*(-q)\right)$, where $\Pi$ is a suitable automorphic representation of $\GSp_4$, $W_{\Pi}^*$ the dual of the associated 
 $p$-adic 
 Galois representation, and $q$ is an integer in a certain range depending on the weight of $\Pi$.
 
To build an Euler system for these representations $W_{\Pi}^*$, we need to modify this construction in order to obtain classes defined over cyclotomic fields $\QQ(\zeta_m)$. These classes are required to satisfy an appropriate norm-compatibility relation as $m$ changes, and to take values in a $\Zp$-lattice in $W_{\Pi}^*$ independent of $m$. We define these classes by translating the natural embedding of $H$ in $G$ via appropriately chosen elements of $G(\AA_f)$, following a strategy that has been successfully used in several earlier Euler-system constructions \cite{LLZ,leiloefflerzerbes18}.
 
 Using the theory of $\Lambda$-adic Eisenstein classes initiated by Kings, we show that these Euler system classes can be interpolated $p$-adically as the parameters (including the Tate twist $q$) vary. This leads to a definition of a ``motivic $p$-adic $L$-function'' for $\Pi$, which is a $p$-adic measure on $\Zp^\times$ interpolating the images of the Euler system classes under the Bloch--Kato logarithm and dual-exponential maps at $p$. Assuming various technical hypotheses, we prove in \S \ref{sect:selmer} that if this motivic $L$-function is non-vanishing for a value of $q$ such that $W_{\Pi}^*(-q)$ is critical (in the sense of Deligne), then the corresponding Bloch--Kato Selmer groups $H^1_\f(\QQ, W_{\Pi}^*(-q))$ and $H^1_\f(\QQ, W_{\Pi}(1+q))$ are zero. Our motivic $p$-adic $L$-function should interpolate the critical values of the spin $L$-function of $\Pi$ (that is, we expect an ``explicit reciprocity law'', analogous those that have been proved for the Beilinson--Kato and Beilinson--Flach Euler systems). If such an explicit reciprocity law holds, then our bounds for $H^1_\f$ would give new cases of the Bloch--Kato conjecture. The construction of a spin $p$-adic $L$-function and the proof of an explicit reciprocity law are the topics of forthcoming joint work with Vincent Pilloni. 
 
 One of the chief novelties of our construction is in the proofs of the norm-compatibility relations for the Euler system classes. In place of the (exceedingly laborious) double-coset computations used in \cite{LLZ} for example, we use methods of smooth representation theory to reduce the norm-compatibility statement to a far easier, purely local statement involving Bessel models of unramified representations of $\GSp_4(\Ql)$. This reduction is possible thanks to a case of the local Gan--Gross--Prasad conjecture due to Kato, Murase and Sugano, showing that the space of $\SO_4(\Ql)$-invariant linear functionals on an irreducible spherical representation of $\SO_4(\Ql) \times \SO_5(\Ql)$ has dimension $\le 1$. This technique promises to be applicable in many other settings where local multiplicity one results of this type are known; for instance, in a forthcoming paper we shall use a similar approach to prove norm-compatibility relations in an Euler system for the Shimura variety of the unitary group $\operatorname{GU}(2, 1)$, using the local Gan--Gross--Prasad conjecture for the pair $( \operatorname{U}(2), \operatorname{U}(3) )$. 

 \subsection*{Outline of the paper}
 
  For the benefit of the reader, we give a brief outline of how our Euler system classes are constructed, and how the norm-compatibility relations for these are proved. 
  
  \subsubsection*{Construction of the elements} Let $G = \GSp_4$, and for each open compact $U \subset G(\AA_f)$, let $Y_G(U)$ be the Siegel three-fold of level $U$ (a Shimura variety for $G$). We construct a map of $G(\AA_f)$-representations, the \emph{Lemma--Eisenstein map},
  \[ \LE: I \mathop{\otimes}_{\cH(H(\AA_f))} \cH(G(\AA_f))\rTo \varinjlim_U H^4_{\mot}\left(Y_G(U), \sD \right) \]
  where $\cH(-)$ denotes the Hecke algebra, $\sD$ is a relative Chow motive (a ``motivic sheaf'') over $Y_G$ arising from some algebraic representation of $G$, and $I$ is a certain explicit representation of $H(\AA_f)$. This construction depends on parameters $a,b,q,r$, specifying weights for $G$ and for $H$, but we shall suppress this for now. The construction of $\LE$, given in \S\ref{sect:LEconstruct}, is essentially formal: the representation $I$ records the data needed to define an Eisenstein class in the motivic cohomology of $Y_H$, and $\LE$ maps this Eisenstein class to a linear combination of $G(\AA_f)$-translates of its pushforward to $Y_G$, with the $\cH(G(\AA_f))$ term recording which translations to apply. 
  
  Let $K$ be a level, unramified outside $S \cup \{p\}$ (where $S \not\ni p$ is a finite set of primes) and having a certain specific form at $p$. Then for any $n$ coprime to $S$, the base-extension $Y_G(K) \times_{\Spec \QQ} \Spec \QQ(\mu_n)$ is itself a Shimura variety for some subgroup $K' \subseteq K$. In \S\ref{sect:localdata} we use these isomorphisms, and certain explicit choices of test data as input to $\LE$, to define a collection of classes
  \[ z_{M, m} \in H^4_{\mot}\left(Y_G(K) \times \Spec \QQ(\mu_{Mp^m}), \sD \right),\]
  for $m \ge 0$ and $M$ square-free and coprime to $pS$. These are our Euler system classes.
  
  \subsubsection*{Norm-compatibility} The ``ideal'' norm-compatibility result for these classes would be an identity of the form
  \[
   \operatorname{norm}\left( z_{\ell M, m} \right) = \mathcal{P}_\ell(\sigma_\ell^{-1}) \cdot z_{M, m}
  \]
  for primes $\ell \nmid M p S$. Here ``$\operatorname{norm}$'' denotes the Galois norm map from $\QQ(\mu_{M\ell p^m})$ to $\QQ(\mu_{Mp^m})$, $\sigma_\ell$ is the Frobenius at $\ell$, and $\mathcal{P}_\ell(X)$ is a degree 4 polynomial with coefficients in the spherical Hecke algebra at $\ell$, which acts on each irreducible representation as the corresponding spin $L$-factor. However, we cannot prove the full strength of this statement here (we hope to return to this issue in a later paper). Instead, we prove a version of this result after mapping to Galois cohomology. We choose $\Pi$ a suitably nice cohomological automorphic representation of $\GSp_4$ such that $\Pi_f^K \ne 0$. (We need $\Pi_\ell$ to be generic for almost all $\ell$, which excludes certain ``endoscopic'' representations such as Saito--Kurokawa lifts.) Then $\Pi_f^* \otimes W_\Pi^*$ appears with multiplicity 1 as a direct summand of $\varinjlim_U H^3_{\et}(Y_G(U)_{\QQbar}, \sD)$, and does not contribute to cohomology outside degree 3.  Choosing a vector $\varphi \in \Pi_f$ thus gives a homomorphism of Galois representations
  \[ \Pi_f^* \otimes W_\Pi^* \rTo W_{\Pi}^*, \]
  which factors through $(\Pi_f^*)^K$ if $\varphi$ is $K$-invariant. Combining this with the Hochschild--Serre spectral sequence gives a map of vector spaces
  \[ H^4_{\et}\left(Y_G(K) \times \Spec \QQ(\mu_{Mp^m}), \sD \right) \rTo H^1(\QQ(\mu_{Mp^m}), W_{\Pi}^*).\]
  We thus obtain a collection of cohomology classes $z^{\Pi}_{M, m} \in H^1(\QQ(\mu_{Mp^m}), W_{\Pi}^*)$, depending on the choice of $\varphi$, and we shall prove the norm-compatibility relations for these instead.
  
  For simplicity, assume that $M = 1$ and $m = 0$, so we are trying to compare $z^{\Pi}_{1, 0}$ with $\operatorname{norm}(z^{\Pi}_{\ell, 0})$ (the general case can be reduced to this by twisting). We have constructed a $G(\AA_f)$-equivariant bilinear pairing
  \[ 
   \left( I \mathop{\otimes}_{\cH(H(\AA_f))} \cH(G(\AA_f)) \right) \otimes \Pi_f 
   \to 
   H^1(\QQ, W_{\Pi}^*)
  \]
  or equivalently (via Frobenius reciprocity) an $H(\AA_f)$-equivariant pairing
  \[ 
   \grz: I \otimes \Pi_f \to H^1(\QQ, W_{\Pi}^*).
  \]
  By construction the classes $z^{\Pi}_{1, 0}$ and $\operatorname{norm}(z^{\Pi}_{\ell, 0})$ are values of this pairing, at different choices of test data $v, v' \in I \otimes \Pi_f$. In most cases (away from a few small weights) the representation $I$ is a direct sum of principal series representations $\tau$, each of which factors as $\displaystyle \mathop{\bigotimes}_{\text{$w$ prime}} \tau_w$; and by construction the projections of $v$ and $v'$ to $\tau_w \otimes \Pi_w$ coincide for $w \ne \ell$. 
  
  It is at this point that the decisive input from local representation theory appears: known cases of the Gan--Gross--Prasad conjecture imply that $\Hom_{H(\Ql)}(\tau_\ell \otimes \Pi_\ell, 1_H)$ is one-dimensional, and we can construct a canonical basis $\grz_\ell$ of this space using zeta-integrals. So it suffices to show that $\grz_\ell(v_\ell) = P_\ell(1) \grz_\ell(v_\ell')$, which is a simple, purely local computation which we carry out in \S\ref{sect:repth}. It then follows that $\grz'_\ell(v_\ell) = P_\ell(1) \grz'_\ell(v_\ell')$ for \emph{every} $H$-equivariant homomorphism $\grz_\ell'$ from $\tau_\ell \otimes \Pi_\ell$ to a space with trivial $H$-action, and the desired norm relation follows (see Proposition \ref{prop:ESnorm-nonint}).
  
 \subsection*{Acknowledgements} 
 
  This paper owes its existence to a question raised by Francesco Lemma, who asked us whether the techniques used to build the Euler system of Beilinson--Flach elements for $\GL_2 \times \GL_2$ could be adapted to the setting of $\GSp_4$. We are very grateful to Francesco for this inspiration, and for several interesting conversations during the writing of the paper. We would also like to thank several others for helpful advice and comments, notably Giuseppe Ancona, Martin Dickson, Dimitar Jetchev, Jacques Tilouine, and Bin Xu; and the referee for numerous helpful corrections and comments.
  
  Substantial parts of this paper were written during a visit by the first and third authors to the Institute for Advanced Study in Princeton in the spring of 2016, and we are very grateful to the Institute for its support and hospitality.

\section{General notation}
\label{sect:groups}

 \begin{itemize}
  \item Let $J$ be the skew-symmetric $4 \times 4$ matrix over $\ZZ$ given by $\begin{smatrix} &&&1\\&&1\\&-1\\-1\end{smatrix}$. We let $G = \GSp_4$ be the group scheme over $\ZZ$ defined by
  \[ G(R) \coloneqq \GSp_4(R) = \left\{ (g, \mu) \in \GL_4(R) \times \GL_1(R) : g^t \cdot  J \cdot g = \mu J\right\} \]
  for any commutative unital ring $R$. We write $\mu: G \to \GL_1$ for the symplectic multiplier map. 
  
  \item We define the \emph{standard Borel subgroup} $B \subseteq G$ to be the subgroup $\{ (g, \mu): g$ is upper-triangular$\}$.
  
  \item We define a \emph{standard parabolic subgroup} to be a subgroup of $G$ containing $B$; there are exactly four of these, namely $B$, $G$, the \emph{Siegel parabolic} $P_\mathrm{S}$ and the \emph{Klingen parabolic} $P_{\mathrm{Kl}}$, where
  \begin{align*}
   P_\mathrm{S} &= 
   \begin{smatrix}
    * & * & * & * \\
    * & * & * & * \\
    &   & * & * \\
    &   & * & * 
   \end{smatrix}, &
   P_{\mathrm{Kl}} = 
   \begin{smatrix}
    * & * & * & * \\
    & * & * & * \\
    & * & * & * \\
    &   &   & * 
   \end{smatrix}.
  \end{align*}
  
  \item We write $T$ for the diagonal torus of $G$, which is equal to the product $A \times T'$, where $A, T'$ are the tori defined by
  \begin{align*} 
   A &= \dfour{x}{y}{x}{y}, &T' &= \dfour{x}{x}{1}{1}.
  \end{align*}
  
  \item  Let $H = \GL_2 \times_{\GL_1} \GL_2$ (fibre product over the determinant map), and let $\iota$ denote the embedding $H \into G$ given by
  \[
  \left( 
  \begin{smatrix} a  & b  \\ c  & d  \end{smatrix},
  \begin{smatrix} a' & b' \\ c' & d' \end{smatrix}
  \right) \mapsto
  \begin{smatrix}
  a &   &   & b \\
  & a'& b'&   \\
  & c'& d' &   \\
  c &   &   & d
  \end{smatrix}.
  \]
  We write $B_H = \iota^{-1}(B) = \iota^{-1}(P_\mathrm{S})$ for the standard Borel subgroup of $H$.
 \end{itemize}

 \begin{remark2}
  The quotient of $G$ by its centre $Z_G$ is the split form of the orthogonal group $\SO_5$. We have $Z_G \subset \iota(H)$, and the image of $H / Z_G$ in $G/Z_G$ via $\iota$ is the split form of $\SO_4$, embedded as the stabiliser of an anisotropic vector in the defining 5-dimensional representation. This will be used in \S \ref{sect:GGP} below, in order to make use of the results of the Gan--Gross--Prasad theory of restriction of representations of $\SO_5$ to $\SO_4$. 
 \end{remark2}
  
\section{Preliminaries I: Local representation theory}
\label{sect:repth}
 
 In this section, we fix a prime $\ell$ and collect some definitions and results regarding smooth representations of the groups $\GL_2(\Ql)$, $G(\Ql)$, and $H(\Ql)$ on complex vector spaces. For brevity we shall write $\Gl = G(\Ql)$ and similarly $\Hl$.
 
 \subsection{Principal series of $\GL_2(\Ql)$}

  \begin{notation}
   We write $\dd x$ and $\dd^\times x$ for the Haar measures on $\Ql$ and $\Ql^\times$ normalised such that $\Zl$ (resp.~$\Zl^\times$) has volume 1. The norm $|\cdot|$ is normalised such that $|\ell| = 1/\ell$. If $\chi$ is a smooth character of $\Ql^\times$, we write $L(\chi, s)$ for its local $L$-factor, which is
   \[ L(\chi, s) = L(\chi |\cdot|^s, 0) = \begin{cases} (1 - \chi(\ell) \ell^{-s})^{-1} & \text{if $\chi |_{\Zl^\times} = 1$,}\\
   1 & \text{otherwise.}\end{cases}
   \]
  \end{notation}
  
  \begin{definition}
   Given two smooth characters $\chi$ and $\psi$ of $\Ql^\times$, we let $I(\chi,\psi)$ be the space of smooth functions $f:\GL_2(\Ql)\rightarrow \CC$ such that
   \[
    f(\begin{smatrix} a & b \\ 0 & d\end{smatrix} g )  = \chi(a)\psi(d)|a/d|^{1/2}f(g),
   \]
   equipped with a $\GL_2(\Ql)$-action via right translation of the argument. 
  \end{definition}
  
  As is well known, the pairing $I(\chi, \psi) \times I(\chi^{-1}, \psi^{-1}) \to \CC$ defined by
  \[ \langle f_1, f_2 \rangle = \int_{\GL_2(\Zl)} f_1(g) f_2(g) \dd g, \]
  where we normalise the measure such that $\GL_2(\Zl)$ has volume 1, identifies $I(\chi^{-1}, \psi^{-1})$ with the dual of $I(\chi, \psi)$. Moreover, if $\chi / \psi \neq |\cdot|^{\pm 1}$, then $I(\chi,\psi)$ is an irreducible representation.
  
  We will frequently need to use analytic continuation in an auxiliary parameter $s$. The following construction will be helpful:  
 
  \begin{definition}
   A \emph{polynomial section} of the family of representations $I(\chi |\cdot|^s, \psi |\cdot|^{-s})$ is a function on $\GL_2(\Ql) \times \CC$, $(g, s) \mapsto f_s(g)$, such that $g \mapsto f_s(g)$ is in $I(\chi |\cdot|^s, \psi |\cdot|^{-s})$ for each $s \in \CC$, and $s \mapsto f_s(g)$ lies in $\CC[\ell^s, \ell^{-s}]$ for every $g \in \GL_2(\Ql)$. A section is \emph{flat} if its restriction to $\GL_2(\Zl)$ is independent of $s$.
  \end{definition}
  
  From the Iwasawa decomposition, one sees that every $f \in I(\chi, \psi)$ extends to a unique flat section. The space of polynomial sections is stable under the action of $\GL_2(\Ql)$ (while the space of flat sections clearly is not).
  
  \begin{definition}
   Let $M:I(\chi,\psi) \rightarrow I(\psi,\chi)$ be the 
   normalised  
   standard intertwining operator, defined by analytic continuation to $s = 0$ of the integral
   \[ 
    M(f_s; g) \coloneqq L(\chi/\psi, 2s)^{-1} \int_{\Ql} f_s\left(w \stbt{1}{n}{0}{1} g\right) \dd n 
   \]
   where $w=\stbt{0}{1}{-1}{0}$ is the long Weyl element.
  \end{definition}
  
  More precisely, if $|\chi/\psi| = |\cdot|^r$, and $(f_s)_{s \in \CC}$ is any polynomial section, then the intertwining integral for $M(f_s; g)$ is absolutely convergent for $r + 2 \Re(s) > 0$ and defines a polynomial section of $I(\psi |\cdot|^{-s}, \chi |\cdot|^s)$ (see e.g.~\cite[Proposition 4.5.7]{bump97} for further details). The specialisation of this section at $s = 0$ depends only on $f_0 \in I(\chi, \psi)$, not on the choice of section passing through $f_0$, and this defines a non-zero intertwiner between $I(\chi, \psi)$ and $I(\psi, \chi)$ (even in the exceptional case $\chi = \psi$).
  
  \begin{proposition}
   \label{prop:Madjoint}
   Suppose $\chi$ and $\psi$ are unramified. Then for all $f_1 \in I(\chi, \psi)$ and $f_2 \in I(\psi^{-1}, \chi^{-1})$ we have
   \[ \langle M(f_1), f_2 \rangle = \langle f_1, M(f_2) \rangle. \]
  \end{proposition}
 
  \begin{proof}
   By choosing polynomial sections passing through the $f_i$, we may assume without loss of generality that $\chi/\psi \ne |\cdot|^{\pm 1}$, so that both $I(\chi, \psi)$ and $I(\psi^{-1}, \chi^{-1})$ are irreducible. Hence it suffices to check the equality when $f_1$ and $f_2$ are the respective spherical vectors (normalised such that $f_i(1) = 1$). With our conventions, $M$ sends the normalised spherical vector of $I(\chi, \psi)$ to the normalised spherical vector of $I(\psi, \chi)$, and these normalised vectors pair to 1 under the duality pairing.
  \end{proof}

 \subsection{Siegel sections}
 
  \begin{notation}
   Let $\cS(\Ql^2; \CC)$ denote the space of Schwartz functions (locally-constant, compactly-supported functions) on $\Ql^2$. We let $\GL_2(\Ql)$ act on this space from the left by $(g \cdot \phi)(x, y) = \phi( (x, y) \cdot g)$ for $g \in \GL_2(\Ql)$ and $\phi \in \cS(\Ql^2; \CC)$. For $\phi \in \cS(\Ql^2, \CC)$, we define its Fourier transform $\hat \phi$ by
   \[ 
    \hat \phi(x, y) = \iint e_\ell(xv-yu) \phi(u, v)\dd u\dd v,
   \]
   where $e_\ell(x)$ is the standard additive character of $\Ql$, mapping $1/\ell^n$ to $\exp(2\pi i / \ell^n)$.
  \end{notation}
  
  \begin{proposition}
   Let $\phi \in \cS(\Ql^2, \CC)$, and let $\chi$, $\psi$ be characters of $\Ql^\times$ with $|\chi/\psi| = |\cdot|^r$. Then the integral
   \[ 
    f_{\phi, \chi, \psi}(g, s) \coloneqq \frac{\chi(\det g) |\det g|^{s + 1/2}}{L(\chi/\psi, 2s+1)} \int_{\Ql^\times} \phi\big( (0, x) g\big) (\chi/\psi)(x) |x|^{2s+1}\dd^\times x
   \]
   converges for $r + 2 \Re(s) > -1$, and defines a polynomial section of $I(\chi|\cdot|^s, \psi|\cdot|^{-s})$, so $f_{\phi, \chi, \psi}(g) \coloneqq f_{\phi, \chi, \psi}(g, 0) \in I(\chi, \psi)$ is well-defined. These elements satisfy
   \begin{equation}
    \label{zeta-section-eq}
    \begin{aligned}
     f_{g \cdot \phi, \chi, \psi}(h) &= \chi(\det g)^{-1} |\det g|^{-1/2} f_{\phi, \chi, \psi}(hg),\\
     f_{\widehat{g \cdot \phi}, \chi, \psi}(h) &= \psi(\det g)^{-1} |\det g|^{-1/2} f_{\hat\phi, \chi, \psi}(hg)
    \end{aligned}
   \end{equation}
   for all $g, h \in \GL_2(\Ql)$.
  \end{proposition}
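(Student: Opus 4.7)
The plan is to reduce the construction to Tate's theory of local zeta integrals on $\Ql$. For any $g \in \GL_2(\Ql)$, the second row $v_g$ of $g$ is a nonzero vector in $\Ql^2$, so $\phi_g(x) \coloneqq \phi((0,x)g) = \phi(x v_g)$ defines an element of $\cS(\Ql, \CC)$. The defining integral for $f_{\phi,\chi,\psi}$ then reads
\[ f_{\phi,\chi,\psi}(g,s) = \frac{\chi(\det g)\,|\det g|^{s+1/2}}{L(\chi/\psi,\,2s+1)} \cdot Z(\phi_g,\, \chi/\psi,\, 2s+1), \]
where $Z(\Phi, \omega, u) \coloneqq \int_{\Ql^\times} \Phi(x)\omega(x)|x|^u\, d^\times x$ is Tate's local zeta integral. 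Absolute convergence in the range $r + 2\Re(s) > -1$ follows from the standard estimate near $x=0$, and Tate's local theory shows that $Z(\phi_g, \chi/\psi, 2s+1)/L(\chi/\psi, 2s+1) \in \CC[\ell^s, \ell^{-s}]$ for each fixed $g$. Together with the polynomial factor $\chi(\det g)|\det g|^{s+1/2}$ this yields the polynomial section property.

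To check that $f_{\phi,\chi,\psi}(\cdot, s)$ really lies in $I(\chi|\cdot|^s, \psi|\cdot|^{-s})$, I would replace $g$ by $\stbt a b 0 d g$ and change variables $y \mapsto y/d$ in the inner integral; the resulting Jacobian and character factors combine with $\chi(\det \stbt a b 0 d g) = \chi(ad)\chi(\det g)$ and the analogous identity for $|\det|^{s+1/2}$ to give exactly the prefactor $\chi(a)\psi(d)|a/d|^{s+1/2}$ required by the induction. The first equivariance identity in \eqref{zeta-section-eq} is equally immediate: substituting $(g\cdot\phi)(v) = \phi(vg)$ turns $\phi((0,y)h)$ into $\phi((0,y)hg)$, so the zeta integral at $h$ against $g\cdot\phi$ coincides with the one at $hg$ against $\phi$, and the only discrepancy is the ratio of normalising prefactors $\chi(\det h)|\det h|^{s+1/2}$ versus $\chi(\det hg)|\det hg|^{s+1/2}$, which yields $\chi(\det g)^{-1}|\det g|^{-(s+1/2)}$ and specialises at $s=0$ to the claim.

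The second equivariance identity is the only step requiring real computation. The key input is the Fourier-transform formula $\widehat{g\cdot\phi}(x,y) = |\det g|^{-1}\,\hat\phi\bigl((x,y)\cdot g/\det g\bigr)$, obtained from a direct change of variables; the twist by $g/\det g$ (rather than $g$ itself or $(g^T)^{-1}$) is forced by the fact that the paper's pairing $(u,v) \mapsto xv - yu$ is a symplectic form rather than the Euclidean inner product. Substituting this into the definition of $f_{\widehat{g\cdot\phi},\chi,\psi}(h,s)$ and rescaling $y \mapsto y/\det g$ in the inner integral introduces factors of $(\chi/\psi)(\det g)$ and $|\det g|^{2s+1}$; combining these with $|\det g|^{-1}$ from the Fourier formula and with $\chi(\det g)|\det g|^{s+1/2}$ from the normalising prefactor of $f_{\hat\phi,\chi,\psi}(hg,s)$ produces the overall factor $\psi(\det g)^{-1}|\det g|^{s-1/2}$, which at $s=0$ gives the claimed identity. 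The only real obstacle is keeping these Fourier-transform normalisations consistent; a sign or $\det g$ error here would swap $\chi$ with $\psi$ or shift the exponent of $|\det g|$, so this is the step where one must be most careful.
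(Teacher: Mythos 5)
Your proposal is correct and takes essentially the same approach as the paper, which simply cites Tate's local zeta-integral theory for the convergence and analytic continuation, notes membership in the induced representation as immediate, and derives the second equivariance identity from the Fourier-transform relation $\widehat{g\cdot\phi} = |\det g|^{-1}\,({}^\iota g\cdot\hat\phi)$ with ${}^\iota g = (\det g)^{-1}g$ — which is precisely the formula you arrived at by unwinding the symplectic pairing $e_\ell(xv-yu)$. You have filled in the intermediate calculations (the change of variables $y\mapsto y/d$ for the induction property, the book-keeping of $(\chi/\psi)(\det g)$ and $|\det g|^{2s+1}$ factors for the second equivariance), but the key inputs and overall structure match the paper's proof.
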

  
  \begin{proof}
   The convergence of the integral, and its analytic continuation as a function of $s$, form part of Tate's theory of local zeta integrals for $\GL_1$. The fact that $f_{\phi, \chi, \psi}(-, s)$ lies in $I(\chi|\cdot|^s, \psi|\cdot|^{-s})$ is immediate from the definition in the region of convergence of the integral, and follows for all $s$ by analytic continuation. The first of the transformation formulae \eqref{zeta-section-eq} is obvious from the definition, and the second follows from the identity  $\widehat{(g \cdot \phi)} = \frac{1}{|\det g|} ({}^\iota g \cdot \hat \phi)$, where ${}^\iota g = (\det g)^{-1} g$.
  \end{proof}
  
  \begin{proposition}
   \label{prop:SSFE}
   We have
   \[ 
    M(f_{\phi, \chi, \psi}) = \frac{\varepsilon(\psi/\chi)}{L(\chi/\psi, 1)} f_{\hat\phi, \psi, \chi},
   \]
   where $\varepsilon(\psi/\chi)$ is the local $\varepsilon$-factor (a non-zero scalar, equal to 1 if $\chi/\psi$ is unramified). 
  \end{proposition}

  \begin{proof}
   This is a straightforward consequence of the functional equation for Tate's $\GL_1$ zeta integral.
  \end{proof}
%
%
  If $\chi/\psi = |\cdot|^{-1}$ we interpret the right-hand side as 0, so the elements $f_{\phi, \chi, \psi}$ all land in the 1-dimensional subrepresentation. Let us evaluate these integrals explicitly for some specific choices of $\phi$, assuming now that $\chi$ and $\psi$ are \emph{unramified} characters.
  
  \begin{definition}
   We define functions $\phi_t \in \cS(\Ql^2, \CC)$, for integers $t \ge 0$, as follows.
   \begin{itemize}
    \item For $t = 0$, we let $\phi_0 \coloneqq \ch(\Zl \times \Zl)$.
    \item For $t > 0$, we let $\phi_t \coloneqq \ch(\ell^t\Zl \times \Zl^\times)$.
   \end{itemize}
  \end{definition}
  
  Note that $\phi_t$ is preserved by the action of the group $K_0(\ell^t) = \left\{\stbt{a}{b}{c}{d} \in \GL_2(\Zl): c = 0 \bmod \ell^t\right\}$.
  
  \begin{lemma}\label{GL2-lem}
   We have
   \[ f_{\phi_t, \chi, \psi}(1) = 
    \begin{cases}
     1 & \text{if $t = 0$,}\\
     L(\chi/\psi, 1)^{-1} & \text{if $t > 0$.}
    \end{cases}
   \]
   Moreover, the function $f_{\phi_t, \chi, \psi}$ is supported on $B(\Ql) K_0(\ell^t)$.
  \end{lemma}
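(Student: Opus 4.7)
The plan is a direct evaluation of the defining integral, split into the two cases $t = 0$ and $t > 0$; the only subtlety is the primitivity of the bottom row of an element $k \in \GL_2(\Zl)$.

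For the value at $g = 1$, I would substitute $g = 1$ so that $(0,x) \cdot g = (0,x)$, and observe that
\[
\phi_t(0, x) = \begin{cases} \ch(\Zl)(x) & \text{if $t = 0$,} \\ \ch(\Zl^\times)(x) & \text{if $t > 0$,} \end{cases}
\]
using that $0 \in \ell^t \Zl$. For $t > 0$, both $|x|$ and $(\chi/\psi)(x)$ restrict to the trivial character on $\Zl^\times$ (since $\chi/\psi$ is unramified), so the integral collapses to $\vol(\Zl^\times) = 1$; dividing by $L(\chi/\psi, 1)$ yields the asserted value. For $t = 0$, I would decompose $\Zl \setminus \{0\} = \bigsqcup_{n \ge 0} \ell^n \Zl^\times$ and recognise the resulting sum $\sum_{n \ge 0} ((\chi/\psi)(\ell)/\ell)^n$ as $L(\chi/\psi, 1)$, which cancels the normalising factor and gives $1$.

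For the support, I would use the Iwasawa decomposition $\GL_2(\Ql) = B(\Ql)\GL_2(\Zl)$ together with the transformation formula \eqref{zeta-section-eq} for $g \in B(\Ql)$ to reduce to showing $f_{\phi_t,\chi,\psi}(k) = 0$ for $k \in \GL_2(\Zl) \setminus K_0(\ell^t)$. The case $t = 0$ is vacuous since $K_0(1) = \GL_2(\Zl)$, so assume $t > 0$. Writing $k = \stbt{a}{b}{c}{d}$, we have $(0,x)k = (xc, xd)$, and $\phi_t(xc, xd) \ne 0$ forces $xd \in \Zl^\times$ and $xc \in \ell^t \Zl$; hence $v_\ell(x) = -v_\ell(d)$ and $v_\ell(c) \ge v_\ell(d) + t$. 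Since $\det(k) \in \Zl^\times$, the bottom row $(c, d)$ must be primitive (otherwise $ad - bc \in \ell\Zl$, contradicting the determinant being a unit), so $\min(v_\ell(c), v_\ell(d)) = 0$; combined with the preceding inequality, this forces $v_\ell(d) = 0$ and $v_\ell(c) \ge t$, i.e.\ $k \in K_0(\ell^t)$.
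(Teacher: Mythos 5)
Your proof is correct, and the computation of $f_{\phi_t,\chi,\psi}(1)$ is just the standard evaluation the paper dismisses as immediate. On the support statement, however, you take a genuinely different route. The paper first uses the $\GL_2(\Ql)$-equivariance of the Siegel-section construction to write $\phi_t = \stbt{\ell^{1-t}}{0}{0}{1}\phi_1$ and reduce to the case $t=1$, then invokes the decomposition $\GL_2(\Zl) = K_0(\ell) \sqcup K_0(\ell)\,w\,K_0(\ell)$ and checks vanishing at the single point $w$, where $\phi_1((0,x)w) = \phi_1(-x,0) = 0$. You instead argue uniformly in $t$ directly from the integrand: after reducing to $k = \stbt{a}{b}{c}{d} \in \GL_2(\Zl)$ via Iwasawa, the constraint $\phi_t(xc,xd)\neq 0$ forces $v_\ell(c) - v_\ell(d) \geq t$, and primitivity of the bottom row (from $\det k \in \Zl^\times$) then forces $v_\ell(d) = 0$ and $v_\ell(c) \geq t$. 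Your argument is more elementary and self-contained — it avoids both the equivariance reduction and any knowledge of the Iwahori double-coset structure — at the cost of being slightly longer for the $t=1$ case. One small point worth making explicit in either approach: the vanishing of the integrand a priori kills $f_{\phi_t,\chi,\psi}(k,s)$ only in the half-plane of absolute convergence, but since each $s \mapsto f_{\phi_t,\chi,\psi}(k,s)$ lies in $\CC[\ell^s,\ell^{-s}]$, vanishing on an open set gives identical vanishing, so the conclusion at $s=0$ is legitimate.
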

  
  \begin{proof}
   The computation of the value at the identity is immediate. The assertion regarding the support of the function is vacuous for $t = 0$, and for $t > 1$ we have
   \[ \phi_t = \stbt{\ell^{1-t}}{0}{0}{1} \phi_1, \]
   so in fact it suffices to prove the assertion for $t = 1$; in this case, we simply observe that the function $f_{\phi_1, \chi, \psi}$ vanishes on the long Weyl element $w = \stbt{0}{1}{-1}{0}$, since $\phi_t\big( (0, x) w\big) = \phi_t(-x, 0) = 0$ for all $x$.
  \end{proof}
 
 \subsection{Notation: subgroups of $\Gl$ and $\Hl$}
  \label{sect:compacta}
  We now define an assortment of open compact subgroups of $\Gl$ and $\Hl$. We represent elements of $\Gl$ in block form $\stbt{A}{B}{C}{D}$, where $A, B, C, D$ are $2\times 2$ matrices.
  \begin{itemize}
   \item $K_{\Gl} = G(\Zl)$.\smallskip
   \item $K_{\Gl, 0}(\ell^n) = \left\{g \in G(\Zl):g = \stbt{*}{*}{0}{*} \bmod \ell^n\right\}$, for $n \ge 0$.\smallskip
   \item $K_{\Gl, 1}(\ell^n) = \left\{ g \in G(\Zl):g = \stbt{*}{*}{0}{1} \bmod \ell^n\right\}$, for $n \ge 0$.\smallskip
   \item $K_{\Gl}(\ell^m, \ell^n) = \left\{ g \in K_{\Gl, 1}(\ell^n): \mu(g) = 1 \bmod \ell^m\right\}$, for $m, n \ge 0$.\smallskip
   \item $K_{\Gl}'(\ell^m, \ell^n) =  \left\{ g \in K_{\Gl, 1}(\ell^n): g = 1 \bmod \ell^m\right\}$, for $n \ge m \ge 0$.
  \end{itemize}
  (The subgroups $K_{\Gl}'(\ell^m, \ell^n)$ will not be used until \S\ref{sect:localdata} below.) We define subgroups $K_{\Hl}$, $K_{\Hl, 0}(\ell^n)$, etc of $\Hl$ as the preimages (via $\iota$) of the corresponding subgroups of $\Gl$. We write $\dd g$ and $\dd h$ for the Haar measures on $\Gl$ and $\Hl$ normalised such that $K_{\Gl}$ (resp.~$K_{\Hl}$) has volume 1.
  
 \subsection{Induced representations of $\Hl$}
  \label{sect:Hindrep}
  
  Given two pairs of characters $\uchi=(\chi_1,\chi_2)$ and $\upsi = (\psi_1,\psi_2)$, we define $I_H(\uchi, \upsi)$ as the representation of $\Hl$ given by the normalised induction from $B_H(\Ql)$ of the character
  \[
   \dfour{a}{a'}{b'}{b}
   \mapsto \chi_1(a)\psi_1(b)\chi_2(a')\psi_2(b') \quad (\text{where $ab = a'b'$}).
  \]
  Since $\Hl$ acts transitively on $\mathbf{P}^1(\Ql) \times \mathbf{P}^1(\Ql)$, restriction of functions from $\GL_2(\Ql) \times \GL_2(\Ql)$ to $\Hl$ defines an isomorphism of $\Hl$-representations $I(\chi_1, \psi_1) \otimes I(\chi_2, \psi_2) \to I_H(\uchi, \upsi)$, where $\Hl$ acts on the source via its inclusion in $\GL_2(\Ql) \times \GL_2(\Ql)$. In particular, there is an intertwining operator $M: I_H(\uchi, \upsi) \to I_H(\upsi, \uchi)$ given by the tensor product of the two $\GL_2$ interwtining operators.
  
  \begin{proposition}
   If there is no quadratic character $\eta$ such that $\chi_1 / \psi_1 = \chi_2/\psi_2 = \eta$, then every irreducible subquotient of $I(\chi_1,\psi_1) \otimes I(\chi_2, \psi_2)$ as a representation of $\GL_2(\Ql) \times \GL_2(\Ql)$ remains irreducible as a representation of $\Hl$.
  \end{proposition}
  
  \begin{proof}
   This is an instance of \cite[Lemma 2.1]{gelbartknapp82}.
  \end{proof}

  Given $\uphi = \sum \phi_{i,1}\otimes\phi_{i,2} \in \cS(\Ql^2, \CC)^{\otimes 2}$, we let $f_{\uphi,\uchi,\upsi}$ be the image in $I_H(\uchi,\upsi)$ of the element
  \[
   \sum_i f_{\phi_{i,1},\chi_1,\psi_1} \otimes f_{\phi_{i,2},\chi_2,\psi_2} \in I(\chi_1,\psi_1)\otimes I(\chi_2,\psi_2).
  \]
  For a non-negative integer $t$, let $\uphi_t = \phi_t\otimes\phi_t$.

  \begin{proposition}
   \label{Mf-H-lem} Let $t \ge 0$. Then:
   \begin{itemize}
    \item[(a)] We have 
    \[ f_{\uphi_t,\uchi,\upsi}(1) = 
     \begin{cases}
      1 & \text{if $t = 0$,}\\
      L(\chi_1/\psi_1,1)^{-1}L(\chi_2/\psi_2,1)^{-1} & \text{if $t \ge 1$.}
     \end{cases}
    \]
    \item[(b)] $f_{\uphi_t,\uchi,\upsi}$ is supported on $B_H(\Ql)K_{\Hl,0}(\ell^t)$.
   \end{itemize}
  \end{proposition}
 
  \begin{proof}
   Follows from Lemma \ref{GL2-lem}.
  \end{proof}

 \subsection{Representations of $\Gl$}
  
  \subsubsection{Principal series representations of $\Gl$}
  
   We follow the notations of \cite{robertsschmidt07} for representations of $\Gl$. See \emph{op.cit}.~for further details (in particular \S 2.2 and the tables in Appendix A). 
   
   \begin{definition}
    Let $\chi_1, \chi_2, \rho$ be smooth characters of $\Ql^\times$ such that 
    \begin{equation}
     \label{eq:irredcond}
     |\cdot|^{\pm 1} \notin \{ \chi_1, \chi_2, \chi_1 \chi_2, \chi_1 /\chi_2\}.
    \end{equation}
    We let $\chi_1 \times \chi_2 \rtimes \rho$ denote the representation of $\Gl$ afforded by the space of smooth functions $f: \Gl \to \CC$ satisfying
    \[ 
     f\left(\begin{smatrix}
      a & * &    *    & *      \\
        & b &    *    & *      \\
        &   & cb^{-1} & *      \\
        &   &         & ca^{-1}
     \end{smatrix} g \right)
     = \frac{|a^2 b|}{|c|^{3/2}} \chi_1(a) \chi_2(b) \rho(c) f(g), 
    \]
    with $\Gl$ acting by right translation. We refer to such representations as \emph{irreducible principal series}.
   \end{definition}
   
   This representation has central character $\chi_1 \chi_2 \rho^2$; the condition \eqref{eq:irredcond} implies that it is irreducible and generic. If $\eta$ is a smooth character of $\Ql^\times$, then twisting $\chi_1 \times \chi_2 \rtimes \rho$ by $\eta$ (regarded as a character of $\Gl$ via the multiplier map) gives the representation $\chi_1 \times \chi_2 \rtimes \rho\eta$.
   
   \begin{definition}
    \label{def:LfactorG}
    Let $\sigma = \chi_1 \times \chi_2 \rtimes \rho$ be an irreducible principal series representation. The local (spin) $L$-factor of $\sigma$ is the function
    \[ L(\sigma, s) = L(\sigma \otimes |\cdot|^s, 0) = L(\rho, s) L(\rho \chi_1, s) L(\rho \chi_2, s) L(\rho \chi_1 \chi_2, s).\]
   \end{definition}
     
   \begin{proposition}
    If $\sigma = \chi_1 \times \chi_2 \rtimes \rho$ is an irreducible principal series representation, then $\sigma$ is unramified if and only if all three characters $\chi_1, \chi_2, \rho$ are unramified. Moreover, every irreducible, generic, unramified representation of $\Gl$ is isomorphic to $\chi_1 \times \chi_2 \rtimes \rho$, for a unique Weyl-group orbit of unramified characters $(\chi_1, \chi_2, \rho)$ satisfying \eqref{eq:irredcond}.
   \end{proposition}
  
   \begin{proof}
    See \cite[\S 2.2]{robertsschmidt07}.
   \end{proof}

  \subsubsection{Hecke operators}
   \label{sect:heckeops}

   Firstly, we consider the action of the spherical Hecke algebra $\cH(K_{\Gl} \backslash \Gl / K_{\Gl})$ on $\sigma^{K_{\Gl}}$ when $\sigma$ is an unramified principal series representation.
   
   \begin{lemma}
    \label{lemma:Tleigenvalues}
    Consider the following elements in the Hecke algebra $\cH(K \backslash \Gl / K)$:
    \begin{align*}
     T(\ell) &= K_{\Gl} \dfour{1}{1}{\ell}{\ell} K_{\Gl}, &     T_1(\ell^2) &= K_{\Gl} \dfour{1}{\ell}{\ell}{\ell^2} K_{\Gl},
     &
     R(\ell) &= K_{\Gl} \dfour{\ell}{\ell}{\ell}{\ell} K_{\Gl}.
    \end{align*}
    If $\mathcal{P}_\ell(X)$ is the polynomial over $\cH(K_{\Gl} \backslash \Gl / K_{\Gl})$ defined by
    \[ 1 - T(\ell) X + \ell(T_1(\ell^2) + (\ell^2 + 1)R(\ell))X^2 - \ell^3 T(\ell) R(\ell) X^3 + \ell^6 R(\ell)^2 X^4, \]
    then for any unramified principal series $\sigma = \chi_1 \times \chi_2 \rtimes \rho$, $\mathcal{P}_\ell(\ell^{-s})$ acts on $\sigma^{K_{\Gl}}$ as $L(\sigma, s - 3/2)^{-1}$.
   \end{lemma}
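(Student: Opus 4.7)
The plan is to exploit the fact that $\sigma^K$ is one-dimensional, so each element of the commutative spherical Hecke algebra $\cH(K \backslash G / K)$ acts on $\sigma^K$ as a scalar. Writing $(\alpha_0, \alpha_1, \alpha_2) = (\rho(\ell), \chi_1(\ell), \chi_2(\ell))$ for the Satake parameters, the claim becomes a polynomial identity in the $\alpha_i$.

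First I would compute the scalar action of each generator. The double coset $R(\ell) = K \cdot \ell I_4 \cdot K$ reduces to the single right coset $\ell I_4 \cdot K$ since $\ell I_4$ is central, and the defining transformation law of $\sigma$ evaluated at $\diag(\ell,\ell,\ell,\ell)$ gives $R(\ell) \mapsto \alpha_0^2 \alpha_1 \alpha_2$ (the value at $\ell$ of the central character $\chi_1 \chi_2 \rho^2$). For $T(\ell)$ and $T_1(\ell^2)$, one decomposes each double coset as a disjoint union of right cosets $b_i K$ with $b_i \in B(\Ql)$ via the Iwasawa decomposition, and evaluates the attached scalar $\delta_B^{1/2}(b_i)\, \chi_1 \chi_2 \rho(b_i)$ on each. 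This is a classical Satake-transform calculation for $\GSp_4$ (see e.g.\ Andrianov, or Roberts--Schmidt \S 3); the result is
\begin{align*}
T(\ell) &\longmapsto \ell^{3/2}\, \alpha_0 (1+\alpha_1)(1+\alpha_2), \\
T_1(\ell^2) &\longmapsto \ell^2\, \alpha_0^2 (\alpha_1+\alpha_2)(1+\alpha_1\alpha_2) + (\ell^2 - 1)\, \alpha_0^2 \alpha_1 \alpha_2.
\end{align*}

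The lemma then reduces to the algebraic identity
\[
  1 - T(\ell) X + \ell\bigl(T_1(\ell^2) + (\ell^2 + 1) R(\ell)\bigr) X^2 - \ell^3 T(\ell) R(\ell) X^3 + \ell^6 R(\ell)^2 X^4 = \prod_{i=0}^3 \bigl(1 - a_i \ell^{3/2} X\bigr),
\]
where $(a_0, a_1, a_2, a_3) = (\alpha_0, \alpha_0\alpha_1, \alpha_0\alpha_2, \alpha_0\alpha_1\alpha_2)$. This is a direct comparison of the elementary symmetric polynomials $e_k(a_0,\ldots,a_3)$ with the eigenvalues computed above: the matches at $X^0, X^1, X^3, X^4$ are immediate (using $e_1 = \alpha_0(1+\alpha_1)(1+\alpha_2)$, $e_4 = \alpha_0^4\alpha_1^2\alpha_2^2$, and $e_3 = \alpha_0 \alpha_1\alpha_2 \cdot e_1$), and the $X^2$ coefficient matches via the identity $\alpha_0^{-2} e_2 = (\alpha_1+\alpha_2)(1+\alpha_1\alpha_2) + 2\alpha_1\alpha_2$. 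Setting $X = \ell^{-s}$ yields $\prod_i (1 - a_i \ell^{3/2 - s})$, which is $L(\sigma, s - 3/2)^{-1}$ by Definition \ref{def:LfactorG}.

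The one non-trivial ingredient is the coset enumeration for $T_1(\ell^2)$, which involves some bookkeeping and is the main obstacle in principle; however it is a standard Satake-transform calculation for $\GSp_4$ in the literature, and we would invoke the relevant reference rather than redo it here.
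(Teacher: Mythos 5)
Your proof is correct, and the algebraic verification of the elementary symmetric polynomial identities is exactly right: with $(a_0,a_1,a_2,a_3)=(\alpha_0,\alpha_0\alpha_1,\alpha_0\alpha_2,\alpha_0\alpha_1\alpha_2)$ the relations $e_3 = \alpha_0^2\alpha_1\alpha_2\, e_1$, $e_4 = (\alpha_0^2\alpha_1\alpha_2)^2$, and $e_2 = \alpha_0^2\bigl((\alpha_1+\alpha_2)(1+\alpha_1\alpha_2)+2\alpha_1\alpha_2\bigr)$ do exactly match the coefficients of $\mathcal{P}_\ell$ against the quoted Satake eigenvalues of $T(\ell)$, $T_1(\ell^2)$, $R(\ell)$. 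Note, though, that the paper gives no proof at all here: the lemma is stated with a terminal \qed and the only justification is the Remark pointing to \S 2.4 of Taylor's thesis, where one finds that our $\mathcal{P}_\ell(X)$ equals $X^4 Q_\ell(1/X)$ in Taylor's notation. So what you have written is not a different route from the paper so much as a spelled-out version of the computation the paper silently outsources. That is useful; the one thing I would tighten is the phrase ``classical Satake-transform calculation'': it is worth citing a specific source for the coset count and eigenvalue of $T_1(\ell^2)$ (Andrianov's book on quadratic forms and Hecke operators, or the GSp(4) Hecke algebra computations in Roberts--Schmidt), since that formula is the only input not re-derived in your argument and is less immediate than the minuscule case of $T(\ell)$.
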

   
   \begin{proof}
    See \cite[\S 2.4]{taylor-thesis}; our $\mathcal{P}_\ell(X)$ is $X^4 Q_\ell(1/X)$ in Taylor's notation.
   \end{proof}
   
   Secondly, we consider the larger space of invariants under the Siegel parahoric subgroup $K_{\Gl, 0}(\ell)$. We let $U(\ell)$ denote the Hecke operator $\tfrac{1}{\vol K_{\Gl,0}(\ell)} \ch\left(K_{\Gl,0}(\ell) \dfour{\ell}{\ell}{1}{1}  K_{\Gl,0}(\ell)\right)$, which acts on $\sigma^{K_{\Gl,0}(\ell)}$ via
   \[ x \mapsto \sum_{u,v,w \in \ZZ / \ell} \begin{smatrix} \ell & & u & v \\ & \ell & w& u \\ & & 1 \\&&&1\end{smatrix} x.\]
   
   \begin{proposition}
    \label{prop:Uleigenvalues}
    If $\sigma$ is an unramified principal series, then $\sigma^{K_{\Gl,0}(\ell)}$ is 4-dimensional, and we have
    \[ \det\left(1 - U(\ell) \ell^{-s}\ \middle|\ \sigma^{K_{\Gl,0}(\ell)}\right) = L(\sigma, s-3/2)^{-1}. \]
   \end{proposition}
  
   \begin{proof}
    See \cite[Lemma 2.4]{taylor-thesis}.
   \end{proof}
 
 \subsection{Zeta integrals for $G$}
 
  In this section we isolate the key local zeta integral calculations used in our proofs of the tame norm relations. 
  
  \subsubsection{The Bessel model}
  
   \begin{definition}
    Let $A$ be the torus $\left\{ \dfour{x}{y}{x}{y} : x,y \in \mathbf{G}_m\right\}$. The \emph{Bessel subgroup} $R$ of $G$ is the semidirect product $A \ltimes N_\mathrm{S}$, where $N_\mathrm{S}$ is the unipotent radical of the Siegel parabolic $P_\mathrm{S}$.
   \end{definition}
   
   \begin{definition}
    Let $\lambda$ be a character of $A$. A (split) \emph{$\lambda$-Bessel functional} on a representation $\sigma$ of $\Gl$ is a linear functional $\mu: \sigma \to \CC$ transforming under left-translation by $R(\Ql)$ via the formula
    \begin{equation}
     \label{eq:besselmodel}
     \mu\left( \begin{smatrix} 1 & & u & v \\ & 1 & w & u \\ & & 1 & \\ & & & 1\end{smatrix}a \cdot \varphi\right) = e_\ell(u) \lambda(a) \mu(\varphi)
    \end{equation}
    for all $\varphi \in \sigma$, $a \in A$, and $u, v, w \in \Ql$.
   \end{definition}
   
   If $\sigma$ is irreducible, then the space of $\lambda$-Bessel functionals on $\sigma$ has dimension $\le 1$, by \cite[Theorem 6.3.2]{robertsschmidt16}. It is clearly zero unless $\lambda |_{Z(\Gl)}$ coincides with the central character of $\sigma$.
   
   \begin{theorem}[Roberts--Schmidt]
    If $\sigma$ is an irreducible generic representation of $\Gl$ (such as an irreducible principal series representation), then $\sigma$ admits a non-zero $\lambda$-Bessel functional $\mu_\lambda$ for \emph{every} $\lambda$ whose restriction to $Z(\Gl)$ agrees with the central character of $\sigma$. If both $\sigma$ and $\lambda$ are unramified, then we may normalise $\mu_{\lambda}$ such that $\mu_\lambda(\varphi_0) = 1$, where $\varphi_0$ is the spherical vector of $\sigma$.
   \end{theorem}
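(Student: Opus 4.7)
The plan is to construct $\mu_\lambda$ by extracting it from the Whittaker model of $\sigma$, and then to pin down its value on $\varphi_0$ via Sugano's explicit formula. Since $\sigma$ is generic (which for an irreducible principal series follows from \eqref{eq:irredcond}), it admits a non-zero $\theta$-Whittaker functional $W: \sigma \to \CC$, where $\theta$ is a non-degenerate character of the unipotent radical $N$ of the Borel. I would choose coordinates so that $\theta$ restricted to $N_\mathrm{S}$ agrees with the character $e_\ell(u)$ appearing in \eqref{eq:besselmodel}.

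Following Novodvorsky (as presented in \cite[\S 6.1]{robertsschmidt16}), I would then define
\[
 \mu_\lambda(\varphi) \coloneqq \int^{\mathrm{reg}}_{A(\Ql)/Z(G)(\Ql)} W(a \cdot \varphi)\, \lambda(a)^{-1}\, da,
\]
where the integral is interpreted in a regularized sense, either via analytic continuation in an auxiliary twist of $\lambda$, or by explicitly subtracting the Jacquet-module contributions that obstruct convergence of the naive integral. The transformation law under $N_\mathrm{S}$ is then automatic from the inclusion $N_\mathrm{S} \subset N$ and the compatibility $\theta|_{N_\mathrm{S}} = e_\ell(u)$; the transformation law under $A$ is built into the definition; and non-vanishing for some $\varphi$ follows from the standard fact that the Whittaker model of a generic representation separates its Jacquet module along every parabolic. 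Combined with the already-cited uniqueness statement from \cite[Theorem 6.3.2]{robertsschmidt16}, this gives the first assertion, with $\mu_\lambda$ unique up to scalar.

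For the normalization assertion, I would compute $\mu_\lambda(\varphi_0)$ in the unramified case by invoking Sugano's formula for the spherical Bessel function on $\GSp_4$ (see \cite{sugano85}, or the treatment in \cite[\S 7]{robertsschmidt16}). This formula expresses the function $g \mapsto \mu_\lambda(g \cdot \varphi_0)$ on the split torus $A$ as an explicit rational combination of the Satake parameters of $\sigma$ and the values of $\lambda$; its value at $g = 1$ is a non-zero quantity (the denominators are products of unramified $L$-factors which are automatically finite under \eqref{eq:irredcond}), so one may rescale $\mu_\lambda$ to enforce $\mu_\lambda(\varphi_0) = 1$. The main obstacle, really the only subtle point, is to reconcile the regularization of the above integral with Sugano's direct construction of the spherical function so that the two scalar normalizations match up coherently; this is a matter of measure-theoretic bookkeeping (choices of Haar measure on $A/Z(G)$ and of the additive character $e_\ell$) rather than genuine mathematical difficulty.
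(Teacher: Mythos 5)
The paper's entire proof is a one-line citation to \cite[Proposition~3.4.2]{robertsschmidt16}, and your sketch is essentially a reconstruction of what that reference (together with \cite{sugano85}) does: build the split Bessel functional by a regularised integral of the Whittaker function over $A/Z$, and compute its value on $\varphi_0$ by the Casselman--Shalika-type unwinding that yields Sugano's explicit formula. So the route matches.

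One point in your write-up is weaker than you present it. The parenthetical reason you give for $\mu_\lambda(\varphi_0)\ne 0$ --- ``the denominators are products of unramified $L$-factors which are automatically finite under \eqref{eq:irredcond}'' --- only addresses finiteness of the expression, not its non-vanishing; and the informal argument ``the Whittaker model separates the Jacquet module'' shows that the Bessel functional is non-zero on \emph{some} vector, not specifically on $\varphi_0$. The actual content here is that Sugano's closed formula for $B_{\varphi_0,\lambda}$ restricted to $R(\Ql)\backslash G(\Ql)/K$ evaluates at the identity to a non-zero rational function of the Satake parameters; this non-vanishing is precisely what makes the normalisation possible, and it is the computational heart of \cite[Proposition~2-5]{sugano85} (or \cite[Corollary~1.9]{bumpfriedbergfurusawa97}), not mere measure-theoretic bookkeeping. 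Since you do invoke Sugano's formula, the conclusion is correct, but the justification you supply for the non-vanishing is not the right one.
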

   
   \begin{proof}
    For the existence of the Bessel functional see \cite[Proposition 3.4.2]{robertsschmidt16}. It is shown in \emph{op.cit.} that the Bessel functional can be explicitly constructed by integrating functions in the Whittaker model of $\sigma$; and the assertion that in the unramified case the spherical vector maps to 1 under this functional follows, for example, from the computations of \cite[\S 7.1]{robertsschmidt07}.
   \end{proof}
   
   If $\sigma$ is any irreducible representation admitting some $\lambda$-Bessel functional $\mu_\lambda$, then for any $\varphi \in \sigma$ we may define a function $B_{\varphi, \lambda}$ on $\Gl$ by $B_{\varphi, \lambda}(g) = \mu_\lambda(g \cdot \varphi)$. The space of functions $\{B_{\varphi, \lambda}: \varphi \in \sigma\}$ is the \emph{$\lambda$-Bessel model} of $\sigma$.
   
   \begin{proposition}
    \label{prop:UlBessel}
    Let $\sigma$ be an irreducible representation of $\Gl$ admitting a $\lambda$-Bessel model, and let $\varphi \in \sigma$ be invariant under $N_{\mathrm{S}}(\Zl)$. Let us define
    \[ U(\ell^k) \varphi \coloneqq \sum_{u,v,w \in \ZZ / \ell^k} \begin{smatrix} 1 & & u & v \\ & 1 & w & u \\ & & 1 & \\ & & & 1\end{smatrix}\dfour{\ell^k}{\ell^k}{1}{1} \varphi.
    \]
    Then we have
    \[ 
     B_{U(\ell^k) \varphi, \lambda}(\dfour{x}{x}{1}{1}) = 
     \begin{cases}
      0 & \text{if $|x| > 1$, }\\
      \ell^{3k} B_{\varphi, \lambda}(\dfour{\ell^k x}{\ell^k x}{1}{1}) & \text{if $|x| \le 1$.}
     \end{cases}
    \]
   \end{proposition}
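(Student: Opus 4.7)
\emph{Proof plan.} The proof is a direct computation from the definitions. Write $t = \dfour{x}{x}{1}{1}$ and $t_k = \dfour{\ell^k}{\ell^k}{1}{1}$, and set $s \coloneqq tt_k = \dfour{\ell^k x}{\ell^k x}{1}{1}$. Denoting the general element of $N_{\mathrm S}$ by $n(u,v,w)$, a direct matrix computation gives the conjugation identity $t\cdot n(u,v,w) = n(xu,xv,xw)\cdot t$.

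Substituting this into the definition of $U(\ell^k)\varphi$ and then applying the defining transformation law \eqref{eq:besselmodel} of the Bessel functional (with $a = 1 \in A$) to extract the unipotent factor, one obtains
\[
 B_{U(\ell^k)\varphi,\lambda}(t)
 \;=\; \sum_{u,v,w \bmod \ell^k} \mu_\lambda\!\bigl(n(xu,xv,xw)\,s\,\varphi\bigr)
 \;=\; \ell^{2k}\Bigl(\sum_{u\bmod\ell^k} e_\ell(xu)\Bigr)\,B_{\varphi,\lambda}(s).
\]

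From here the result follows by a three-way case split on $|x|$. If $|x|\le 1$ then $e_\ell(xu)=1$ for every $u\in\Zl$, the inner sum equals $\ell^k$, and one obtains the asserted formula $\ell^{3k}\,B_{\varphi,\lambda}(s)$. If $1<|x|\le\ell^k$ then $u\mapsto e_\ell(xu)$ descends to a non-trivial additive character of $\ZZ/\ell^k$, and its sum vanishes by orthogonality. Finally, if $|x|>\ell^k$ I claim that $B_{\varphi,\lambda}(s)$ itself vanishes: for any $u\in\Zl$, the $N_{\mathrm S}(\Zl)$-invariance of $\varphi$ gives $B_{\varphi,\lambda}\bigl(s\,n(u,0,0)\bigr)=B_{\varphi,\lambda}(s)$, whereas pulling $n(u,0,0)$ through $s$ by the same conjugation trick and re-applying \eqref{eq:besselmodel} gives $B_{\varphi,\lambda}\bigl(s\,n(u,0,0)\bigr)=e_\ell(\ell^k x u)\,B_{\varphi,\lambda}(s)$; since $|\ell^k x|>1$, one can pick $u$ with $e_\ell(\ell^k x u)\ne 1$, forcing $B_{\varphi,\lambda}(s)=0$.

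There is no essential obstacle: the only point requiring a little care is the ``large'' regime $|x|>\ell^k$, where the character sum $\sum_{u}e_\ell(xu)$ is no longer well-defined on residue classes mod $\ell^k$, so the vanishing has to come from $B_{\varphi,\lambda}(s)$ rather than from orthogonality of characters. Once this is noticed, the whole computation is mechanical.
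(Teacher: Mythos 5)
Your proof is correct and follows essentially the same route as the paper's: pull the torus element through the unipotent sum by conjugation, invoke the Bessel transformation law, and split into the three regimes $|x|\le 1$, $1<|x|\le\ell^k$, $|x|>\ell^k$. The paper states the vanishing of $B_{\varphi,\lambda}\bigl(\dfour{x'}{x'}{1}{1}\bigr)$ for $|x'|>1$ as a preliminary observation and then cites it for $x'=\ell^k x$, whereas you prove that same fact inline in the third case; the content is identical, and your remark on why the orthogonality argument alone does not cover the regime $|x|>\ell^k$ is a correct and useful clarification of a point the paper leaves implicit.
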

   
   \begin{proof}
    We first note that the assumption that the Bessel function $B_{\varphi, \lambda}$ is fixed by right-translation by $N_{\mathrm{S}}(\Zl)$, and transforms on the left via \eqref{eq:besselmodel}, implies that $B_{\varphi, \lambda}(\dfour{x}{x}{1}{1})$ is zero if $|x| > 1$. We now compute that
    \begin{align*}
     B_{U(\ell^k) \varphi, \lambda}(\dfour{x}{x}{1}{1}) 
     &= \sum_{u, v, w} B_{\varphi, \lambda}\left( \dfour{x}{x}{1}{1} 
     \begin{smatrix} 1 & & u & v \\ & 1 & w & u \\ & & 1 & \\ & & & 1\end{smatrix}\dfour{\ell^k}{\ell^k}{1}{1}\right) \\
     &= \ell^{2k} \sum_{u \bmod \ell^k} e_\ell(xu) B_{\varphi, \lambda}(\dfour{\ell^k x}{\ell^k x}{1}{1}).
    \end{align*}
    If $|x| > \ell^k$ then the Bessel function is zero; and if $\ell^k \ge |x| > 1$, then the sum of the $e_\ell$ terms vanishes. This leaves the cases $|x| \le 1$, in which case the terms $e_\ell(xv)$ are all equal to 1 and we obtain the result.
   \end{proof}

  \subsubsection{Novodvorsky's integral} In order to construct an intertwining operator between $\sigma$ and a principal-series $H$-representation, we shall use an integral involving a choice of vector in the Bessel model of $\sigma$, for some choice of character $\lambda$ as above. For $\varphi \in \sigma$, $\eta$ an unramified character of $\Ql^\times$, and $s \in \CC$, we define
  \[ 
   Z(\varphi, \eta, \lambda, s) \coloneqq L(\sigma \otimes \eta, s)^{-1}
  \int_{\Ql^\times} B_{\varphi, \lambda}(\begin{smatrix} x & & & \\ & x & & \\ & & 1 & \\ & & & 1\end{smatrix}) \eta(x) |x|^{s-3/2} d^\times x.
  \]
  Here $L(\sigma \otimes \eta, s)$ is the spin $L$-factor of $\sigma \otimes \eta$, as in Definition \ref{def:LfactorG}.
  
  \begin{remark}
   This integral apparently first appears in \cite[Equation 2.7]{novodvorsky79}. In an earlier draft of the present paper, we mistakenly ascribed this construction to Sugano; in fact Sugano's paper \cite{sugano85} considers a related but slightly different integral -- see Remark \ref{rmk:PS} below.
  \end{remark}
   
   \begin{proposition}
    \label{prop:novintegral}
    Suppose $\sigma$ is an irreducible unramified principal series representation, with central character $\chi_\sigma$, and let $\eta$ be an unramified character. Let $\lambda$ be given by $\dfour{x}{y}{x}{y} \mapsto \lambda_1(x) \lambda_2(y)$ for unramified characters $\lambda_1, \lambda_2$ of $\Ql^\times$.
    \begin{enumerate}[(a)]
     
     \item The integral defining $Z(\varphi, \eta, \lambda, s)$ is absolutely convergent for $\Re(s) \gg 0$, and it has analytic continuation to all $s \in \CC$ as an element of $\CC[\ell^s, \ell^{-s}]$.
     
     \item If $\varphi_0$ is the spherical vector (normalised so that $B_{\varphi_0, \lambda}(1) = 1$) then we have
     \[ 
      Z(\varphi_0, \eta, \lambda, s) = 
      \left[ L(\lambda_1 \eta,s + \tfrac{1}{2})L(\lambda_2 \eta, s + \tfrac12)\right]^{-1}.
     \]
     
     \item We have
     \[ Z\left( \begin{smatrix} ta & & & v \\ &tb & w \\ && a \\ &&& b \end{smatrix} \varphi, \eta, \lambda, s\right) = \tfrac{\lambda_1(a) \lambda_2(b)}{\eta(t) |t|^{s-3/2}} Z(\varphi, \eta, \lambda, s)
     \]
     for any $v, w \in \Ql$ and $a,b,t \in \Ql^\times$.
     
    \end{enumerate}
   \end{proposition}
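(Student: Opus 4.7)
The plan is to prove the three parts with (c) and (a) established first (both can be set up in a right half-plane where the defining integral converges absolutely, and extended by analytic continuation), leaving (b) as the main computational step.

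For (c), I would decompose the argument matrix in the Siegel parabolic as
\[
\begin{smatrix} ta & & & v \\ &tb & w \\ && a \\ &&& b \end{smatrix}
= \dfour{a}{b}{a}{b} \cdot \dfour{t}{t}{1}{1} \cdot n\!\left(\tfrac{v}{at},\tfrac{w}{bt}\right),
\]
where $n(V,W)$ denotes the element of $N_\mathrm{S}(\Ql)$ with $(1,4)$- and $(2,3)$-entries $V$ and $W$ respectively (and $u=0$ in the parameterisation of \eqref{eq:besselmodel}). After multiplying on the left by $\dfour x x 1 1$, all diagonal factors commute, so the first factor $\dfour abab \in A$ can be extracted from the Bessel functional contributing $\lambda_1(a)\lambda_2(b)$. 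Conjugating $\dfour{xt}{xt}{1}{1}$ past $n(V,W)$ rescales the $(1,4)$- and $(2,3)$-entries but keeps the $u$-parameter zero, so the residual unipotent factor acts trivially on $\mu_\lambda$. This leaves $B_{h\varphi,\lambda}(\dfour x x 1 1) = \lambda_1(a)\lambda_2(b)\,B_{\varphi,\lambda}(\dfour{xt}{xt}{1}{1})$, and the substitution $y=xt$ in the zeta integral yields the stated formula.

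For (a), I would argue that the restriction of $B_{\varphi,\lambda}$ to $\{\dfour x x 1 1\}$ vanishes for $|x|$ sufficiently large: any smooth $\varphi$ is invariant under some compact open subgroup of $N_\mathrm{S}(\Ql)$, and combining this right-invariance with the left-transformation law \eqref{eq:besselmodel} after conjugating the $N_\mathrm{S}$-element by $\dfour x x 1 1$ forces the Bessel function to vanish once $|x|$ exceeds a bound depending on $\varphi$ (the same mechanism is already used in Proposition \ref{prop:UlBessel}). Near $x=0$, Casselman's theory of asymptotic expansions gives a finite expansion of $B_{\varphi,\lambda}(\dfour x x 1 1)$ in terms of characters arising from the Jacquet module $J_{P_\mathrm{S}}(\sigma)$. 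Substituting this into the zeta integral yields absolute convergence for $\Re(s)\gg 0$ and meromorphic continuation as a rational function in $\ell^{\pm s}$ whose poles are exactly cancelled by $L(\sigma\otimes\eta,s)$, so the normalised integral is a polynomial in $\ell^{\pm s}$.

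The main obstacle is part (b), which requires an explicit formula for $B_{\varphi_0,\lambda}\!\left(\dfour{\ell^k}{\ell^k}11\right)$ on the spherical vector of an unramified principal series. The formula I have in mind is Sugano's from \cite{sugano85} (recast in more modern language in \cite{robertsschmidt16}): it expresses the spherical Bessel function on this torus, for $k\ge 0$, as a Weyl-symmetric combination of the Satake parameters of $\sigma$ together with $\lambda_1(\ell)$ and $\lambda_2(\ell)$, and it vanishes for $k<0$. Substituting and summing the resulting Dirichlet series (which telescopes once one recognises the generating function as a ratio of determinants) collapses to
\[ L(\sigma\otimes\eta,s)\cdot\bigl[L(\lambda_1\eta,s+\tfrac12)L(\lambda_2\eta,s+\tfrac12)\bigr]^{-1}, \]
and dividing by $L(\sigma\otimes\eta,s)$ delivers the claimed inverse of two $\GL_1$ $L$-factors. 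The principal risk here is purely bookkeeping: reconciling Sugano's normalisations of Satake parameters, the Bessel character $\lambda$, and the Haar measure with the conventions fixed earlier in this section.
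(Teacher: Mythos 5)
Your proof of (c) spells out what the paper calls ``obvious from the integral formula,'' and the decomposition of the argument as $\dfour a b a b \cdot \dfour t t 1 1 \cdot n$ with $n \in N_{\mathrm{S}}(\Ql)$ having vanishing $u$-parameter is correct; commuting the $A$-factor past $\dfour x x 1 1$, conjugating the unipotent factor through (which preserves $u=0$), and substituting $y = xt$ give the stated identity, which is exactly the intended argument. For (b) you also land on the same ingredient the paper uses, namely Sugano's explicit formula \cite[Prop.~2-5]{sugano85} (or the reformulation in \cite[Cor.~1.9]{bumpfriedbergfurusawa97}) for the spherical Bessel function on $\{\dfour{\ell^k}{\ell^k}11\}$; summing the resulting geometric series is routine bookkeeping, as you say.

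Where you diverge from the paper is in (a). The paper reduces to $\varphi = g\varphi_0$ (these span $\sigma$), notes that the integrand then depends only on the class of $\dfour x x 1 1 g$ in $R(\Ql) \backslash G(\Ql) / K$, and reads off both (a) and (b) from Sugano's explicit formula for the Bessel function on a set of double coset representatives; so one explicit computation handles both parts. Your route is to establish a support bound for $|x|$ large (correct, and the same mechanism as in Proposition \ref{prop:UlBessel}) and then control the behaviour near $x=0$ via asymptotic expansions. This can be made to work, but the appeal to ``Casselman's theory'' as stated is imprecise: Casselman's asymptotic-expansion theorem applies to matrix coefficients $\langle a\cdot\varphi, \tilde\varphi\rangle$ with $\tilde\varphi$ a \emph{smooth} vector of the contragredient, and the Bessel functional $\mu_\lambda$ is not a smooth vector. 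One needs the Bessel-model analogue of that theory, which for $\GSp_4$ is developed in \cite{robertsschmidt16}; with that citation in place, the exponents of the expansion are the ($\eta$-twisted) Satake parameters, which are exactly the poles of $L(\sigma\otimes\eta,s)$, and the normalised integral is then a Laurent polynomial in $\ell^s$. The paper's route, leaning entirely on the explicit spherical formula, sidesteps this subtlety and is slightly more economical.
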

   
   \begin{proof}
    Replacing $\sigma$ with $\sigma \otimes \eta$, and $(\lambda_1, \lambda_2)$ with $(\eta\lambda_1, \eta\lambda_2)$, we may assume $\eta$ is trivial. It suffices to prove (a) under the assumption that $\varphi = g \cdot \varphi_0$ for some $g \in \Gl$ (since these vectors span $\sigma$). The validity of (a) for this vector will only depend on the class of $g$ in the double coset space $R(\Ql) \backslash \Gl / K_{\Gl}$. A set of coset representatives for this double quotient, and a formula for the values of $B_{\varphi_0, \lambda}$ on these representatives, is given in \cite[Proposition 2-5]{sugano85}; see also \cite[Corollary 1.9]{bumpfriedbergfurusawa97} for an alternative, slightly more concrete formulation. The result now follows by an explicit calculation, which also gives (b) as a special case (compare also \cite[\S 3.2]{pitaleschmidt09}).
    
    Finally, part (c) is obvious from the integral formula if $\Re(s) \gg 0$, and follows for all $s$ by analytic continuation.
   \end{proof}
   
   From Proposition \ref{prop:UlBessel} above, we see that
   \begin{equation}
    \label{eq:ulintegral}
    Z(U(\ell)\varphi_0, \eta, \lambda, s) = \frac{\ell^{s + 3/2}}{\eta(\ell)}\left[ L(\lambda_1 \eta,s + \tfrac{1}{2})^{-1}L(\lambda_2 \eta, s + \tfrac12)^{-1} - L(\sigma \otimes \eta, s)^{-1} \right].
   \end{equation}
   This formula will be fundamental to the proof of our Euler system norm relations later in the paper.
   
   \begin{remark}
    It is not always true that the ideal of $\CC[q^s, q^{-s}]$ given by $\{Z(\varphi, \eta, \lambda, s): \varphi \in \sigma\}$ is the unit ideal. A sufficient condition is that $L(\lambda_1 \eta, s+\tfrac{1}{2}) L(\lambda_2\eta, s+\tfrac{1}{2})$ and $L(\sigma \otimes \eta, s)$  should have no poles in common, since then at least one of $Z(\varphi_0, \eta, \lambda, s)$ and $Z(U(\ell)\varphi_0, \eta, \lambda, s)$ is non-vanishing for every $s$. In fact this condition is also necessary, as shown by the computations of \cite{roesnerweissauer17}, although we do not need this here.
   \end{remark}

 \subsection{A local bilinear form}
  \label{sect:GGP}
   
  As in the preceding section, let $\sigma$ be an irreducible unramified principal series representation of $\Gl$, with central character $\chi_\sigma$. Let $\uchi = (\chi_1,\chi_2)$ and $\upsi = (\psi_1, \psi_2)$ be pairs of unramified characters of $\Ql^\times$, satisfying
  \[ 
   \chi_1 \chi_2 \cdot \psi_1 \psi_2 \cdot \chi_\sigma = 1,
  \]
  and suppose that neither $\chi_1/\psi_1$ nor $\chi_2/\psi_2$ is quadratic or equal to $|\cdot|^{-1}$ (but we do allow either or both to equal $|\cdot|$). In the notation of the above section, we define a character $\lambda$ of $A$ by $\lambda_1 = (\psi_1 \chi_2)^{-1}$, $\lambda_2 = (\chi_1 \psi_2)^{-1}$; and we take for $\eta$ the character $\psi_1 \psi_2$. For brevity, we write $\uchi_s$ for the pair $(\chi_1 |\cdot|^{-s},\chi_2 |\cdot|^{-s})$, and similarly $\upsi_s = (\psi_1 |\cdot|^s,\psi_2 |\cdot|^s)$
 
  \begin{proposition}
   \label{prop:ulintegral}
   Mapping $\varphi \in \sigma$ to the function $z_s(\varphi)$ on $\Hl$ defined by
   \[ 
    z_s(\varphi)(h) = Z(h \cdot \varphi, \eta, \lambda, 2s+\tfrac{1}{2})
   \]
   gives an $\Hl$-equivariant map from $\sigma$ to the space of polynomial sections of $I_H\left(\upsi_s^{-1}, \uchi_s^{-1}\right)$. For the spherical vector $\varphi_0$, normalised as in Proposition \ref{prop:novintegral}(b), we have
   \begin{align*}
    z_s(\varphi_0)(1) &= L(\psi_1/\chi_1, 2s+1)^{-1} L(\psi_2/\chi_2, 2s+1)^{-1},\\
    z_s(U(\ell) \varphi_0)(1) &=
    \frac{\ell^{2+2s}}{\psi_1 \psi_2(\ell)} \left[ L(\psi_1/\chi_1, 2s+1)^{-1} L(\psi_2/\chi_2, 2s+1)^{-1} - L(\sigma \otimes \psi_1 \psi_2, 2s+\tfrac12)^{-1}\right].
   \end{align*}
  \end{proposition}
 
  \begin{proof}Follows from Proposition \ref{prop:novintegral} and equation \eqref{eq:ulintegral}.
 \end{proof}
  
  We impose the following assumption:
  \begin{itemize}
   \item The functions $L(\sigma \otimes \psi_1\psi_2, s)$ and $L(\psi_1 / \chi_1, s+\tfrac12) L(\psi_2/\chi_2, s+\tfrac12)$ do not both have a pole at $s = \tfrac{1}{2}$.
  \end{itemize}
  It follows that the homomorphism $z \in \Hom_{\Hl}\left(\sigma, I_H(\upsi^{-1}, \uchi^{-1})\right)$ given by specialising $z_s$ at $s = 0$ is not zero, since at least one of $z(\varphi_0)$ and $z(U(\ell) \varphi_0)$ is non-vanishing at $1 \in \Hl$. Our assumptions on $\uchi, \upsi$ imply that, although $I_H(\upsi^{-1}, \uchi^{-1})$ may be reducible, it has a unique irreducible subrepresentation, and this subrepresentation is generic.
  
  \begin{lemma}\label{lem:properquot}
   The image of the homomorphism $z$ is contained in the unique irreducible subrepresentation of $I_H(\upsi^{-1}, \uchi^{-1})$.
  \end{lemma}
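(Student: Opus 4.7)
The plan is to reduce the claim to showing that $z$ is annihilated by a suitable intertwining operator. Under the running hypotheses, $I_H(\upsi^{-1},\uchi^{-1})$ is reducible precisely when $\chi_i/\psi_i = |\cdot|$ for some $i$ (irreducibility is automatic when $\chi_i/\psi_i$ is not $|\cdot|^{\pm 1}$ or quadratic, and we have excluded $|\cdot|^{-1}$ and quadratics, but not $|\cdot|$ itself). Suppose without loss of generality that $\chi_1/\psi_1 = |\cdot|$. Then the induced representation $I(\psi_1^{-1},\chi_1^{-1})$ has its ratio $\psi_1^{-1}/\chi_1^{-1} = |\cdot|$, so its unique irreducible subrepresentation is a twist of Steinberg, and one checks that this subrepresentation is precisely the kernel of the normalised intertwining operator $M_1: I(\psi_1^{-1},\chi_1^{-1}) \to I(\chi_1^{-1},\psi_1^{-1})$ (which is nonzero at $s=0$ since the normalising factor $L(\chi_1/\psi_1,0)$ is finite). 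Hence the unique irreducible subrepresentation $V$ equals $\ker(M_1\otimes\id)$ (intersected with $\ker(\id\otimes M_2)$ in the case $\chi_2/\psi_2 = |\cdot|$ also), and it suffices to prove $(M_1\otimes\id)\circ z = 0$.

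To establish this vanishing, I would work in the polynomial family: for $s$ in a punctured neighbourhood of $0$, the target $I_H(\upsi_s^{-1},\uchi_s^{-1})$ is irreducible, so $(M_{s,1}\otimes\id)\circ z_s$ is an $H$-equivariant map from $\sigma$ into an irreducible principal series $I_H^{\mathrm{new}}$ (with the characters $\chi_1,\psi_1$ swapped in the first factor). By the local Gan--Gross--Prasad/Kato--Murase--Sugano multiplicity-one result cited in Section \ref{sect:GGP}, the space of such $H$-equivariant maps is at most one-dimensional, with a canonical generator $\tilde z_{s,1}$ given by exactly the same Novodvorsky-integral recipe but with the characters swapped. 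So
\[
 (M_{s,1}\otimes\id)\circ z_s \;=\; c(s)\cdot \tilde z_{s,1}
\]
for a meromorphic scalar $c(s)$.

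To compute $c(s)$, I would evaluate both sides on the spherical vector $\varphi_0$ at $h = 1$. The right-hand side is given by Proposition \ref{prop:ulintegral} applied to the swapped data. The left-hand side is an intertwining integral, which after unfolding using the $\GL_2$ functional equation (Proposition \ref{prop:SSFE}, applied to the first factor) reduces again to an explicit ratio of local $L$- and $\varepsilon$-factors multiplying the analogue of $z_s(\varphi_0)(1)$. Comparing the two expressions yields a formula for $c(s)$ in which a factor of $L(\chi_1/\psi_1, 2s+1)$ appears in the numerator; since $\chi_1/\psi_1 = |\cdot|$ gives this $L$-factor a pole at $s=0$, after the normalisations the overall scalar $c(s)$ vanishes at $s=0$. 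Hence $(M_1\otimes\id)\circ z = 0$, and symmetrically for the second factor when $\chi_2/\psi_2 = |\cdot|$, so $\image(z)\subseteq V$.

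The main obstacle is carrying out the calculation of $c(s)$ cleanly: one has to unfold the intertwining integral against the Bessel model of $\sigma$, keep track of all normalising constants in the Novodvorsky integral and in Proposition \ref{prop:SSFE}, and verify that the pole of $L(\chi_1/\psi_1,\cdot)$ at the reducibility point really translates into a zero of $c(s)$. An alternative, perhaps cleaner route would be to show directly that $c(s)$ equals a product of local $\varepsilon$-factors times $\gamma$-factors whose vanishing at $s=0$ is a known consequence of the functional equation for Novodvorsky's integral, but either approach hinges on this analytic input.
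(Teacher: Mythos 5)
Your approach is genuinely different from the paper's. The paper argues structurally: it realises $\sigma$ as parabolically induced from the Siegel parabolic $P_{\mathrm{S}}$ of $G$, and applies Mackey theory to the two $H$-orbits on $G/P_{\mathrm{S}}$ to show that $\Hom_H(\sigma,\tau)=0$ for every non-generic irreducible quotient $\tau$ of $I_H(\upsi^{-1},\uchi^{-1})$, which forces the image of any $H$-map into the unique irreducible (generic) subrepresentation. Your preliminary reduction — that it suffices to prove $(M_1\otimes\id)\circ z=0$ (and the analogous statement for $M_2$), and that $\ker(M_1\otimes\id)$ is the unique irreducible subrepresentation — is correct. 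But the analytic continuation argument you then sketch has two genuine gaps.

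First, the proposed generator $\tilde z_{s,1}$ does not map into the target of $(M_{s,1}\otimes\id)\circ z_s$. That target is $I(\chi_1^{-1}|\cdot|^{s},\psi_1^{-1}|\cdot|^{-s})\otimes I(\psi_2^{-1}|\cdot|^{-s},\chi_2^{-1}|\cdot|^{s})$, in which the $|\cdot|^{\pm s}$ twists are attached to the two $\GL_2$ factors with \emph{opposite} signs; this is not of the form $I_H((\upsi')^{-1}_s,(\uchi')^{-1}_s)$ for any $s$-independent pair $(\uchi',\upsi')$, so the family obtained by ``swapping $\chi_1\leftrightarrow\psi_1$ in the Novodvorsky recipe'' lands in a different one-parameter family of $H$-representations. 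Until you produce a genuine nonzero element of $\Hom_H(\sigma,I_H^{\mathrm{new}}(s))$ to normalise against, there is no well-defined scalar $c(s)$ and no multiplicity-one argument to run.

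Second, even granting a repaired $\tilde z_{s,1}$, the claim $c(0)=0$ is neither computed nor coherently motivated: you assert that a factor $L(\chi_1/\psi_1,2s+1)$ in the \emph{numerator} of $c(s)$, having a pole at $s=0$, forces $c(0)=0$ — but a pole in the numerator would make $c$ blow up, not vanish. Running the spherical-vector comparison naively, the left side acquires a factor $L(\chi_1/\psi_1,1-2s)^{-1}$ from $M_{s,1}$ acting on the spherical vector (vanishing to order one at $s=0$), while $\tilde z_{s,1}(\varphi_0)(1)$ carries $L(\chi_1/\psi_1,2s+1)^{-1}$ (also vanishing to order one), so $c(s)$ is a $0/0$ ratio whose limit is a finite constant, and a direct evaluation of that limit does not give zero. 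What the spherical computation detects is that \emph{both} sides of the putative identity vanish at the spherical vector at $s=0$ — which says nothing about $(M_1\otimes\id)\circ z$ on the rest of $\sigma$. Establishing the vanishing of the full map at $s=0$ is exactly the content of the lemma, and the route you propose does not supply the missing analytic input; the paper's Mackey-theoretic computation is a different mechanism for producing it.
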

 
  \begin{proof}
   If $L(\psi_1 / \chi_1, s+\tfrac12) L(\psi_2/\chi_2, s+\tfrac12)$ is finite at $s = \tfrac{1}{2}$, then $I_H(\upsi^{-1}, \uchi^{-1})$ is irreducible and there is nothing to prove. So it suffices to treat the case when one or both of $\chi_i/\psi_i$ is $|\cdot|$, assuming that $L(\sigma \otimes \psi_1\psi_2, s)$ has no pole at $s = \tfrac{1}{2}$. We shall not give the details of this computation, as it is somewhat technical, and it will only be relevant in a few boundary cases. We write $\sigma$ as an induced representation from the Siegel parabolic $P_{\mathrm{S}}(\Ql)$. There are exactly two orbits of $\Hl$ on the flag variety $\Gl / P_{\mathrm{S}}(\Ql)$, and an application of Mackey theory allows us to compute $\Hom_{\Hl}(\sigma, \tau)$ for each non-generic quotient $\tau$ of $I_H(\upsi^{-1}, \uchi^{-1})$ in terms of the inducing data for $\sigma$. These $\Hom$-spaces all turn out to be zero unless $L(\sigma \otimes \psi_1\psi_2, s)$ has a pole at $s = \tfrac{1}{2}$.
  \end{proof}
 
  \begin{corollary}
   \label{prop:defofbiform}
   Let $\langle \cdot, \cdot \rangle$ denote the canonical duality pairing
   \[ I_H\left(\upsi_s, \uchi_s\right) \times I_H\left(\upsi^{-1}_s, \uchi_s^{-1}\right) \to \CC.\]
   Then the bilinear form $\grz_{\uchi, \upsi} \in \Hom_{\Hl}\big(I(\uchi, \upsi) \otimes \sigma, \CC\big)$ defined by
   \[ 
    \grz_{\uchi, \upsi}(f \otimes \varphi) \coloneqq 
      \lim_{s \to 0} L(\psi_1/\chi_1, 2s+1) L(\psi_2/\chi_2, 2s+1) \big\langle M(f_s), z_s(\varphi)\big\rangle,
   \]
   for $f_s$ any polynomial section of $I(\uchi_s, \upsi_s)$ passing through $f$, is well-defined and non-zero.
  \end{corollary}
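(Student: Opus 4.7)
The plan is to use the adjoint property (Proposition \ref{prop:Madjoint}) to move $M$ across, rewriting the defining limit as
\[ \mathfrak{z}_{\uchi,\upsi}(f \otimes \varphi) = \lim_{s \to 0}\, \bigl\langle f_s,\; \Psi_s(\varphi)\bigr\rangle, \qquad \Psi_s(\varphi) \coloneqq L(\psi_1/\chi_1,2s+1)\, L(\psi_2/\chi_2,2s+1)\, M\bigl(z_s(\varphi)\bigr) \in I_H(\uchi_s^{-1},\upsi_s^{-1}). \]
The central claim is that $\Psi_s$ extends to a holomorphic family of $H$-equivariant maps $\sigma \to I_H(\uchi_s^{-1},\upsi_s^{-1})$ near $s=0$. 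Granted this, $\langle f_s, \Psi_s(\varphi)\rangle$ is a holomorphic function of $s$ (since $f_s$ is a polynomial section) and its value at $s=0$ equals $\langle f, \Psi_0(\varphi)\rangle$, depending on the polynomial section only through $f = f_0$; this gives both existence of the limit and independence of the section. The $H$-equivariance of $\mathfrak{z}_{\uchi,\upsi}$ is then automatic from that of $M$, $z_s$, and the duality pairing.

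In the generic case where $\chi_i/\psi_i \neq |\cdot|$ for both $i$, both $L$-factors and $M$ itself are holomorphic at $s=0$, and the claim is immediate. The content of the proof is the boundary case, say $\chi_1/\psi_1 = |\cdot|$ (and possibly $\chi_2/\psi_2$ too), where $L(\psi_1/\chi_1,2s+1)$ has a simple pole at $s=0$. Here the cancellation comes from Lemma \ref{lem:properquot} together with the structure of $M$ on reducible principal series: by Lemma \ref{lem:properquot}, $z_0(\varphi)$ lies in the unique irreducible subrepresentation of $I_H(\upsi^{-1},\uchi^{-1})$, while Proposition \ref{prop:SSFE} applied factorwise shows that on each boundary $\GL_2$ factor the intertwiner $M$ sends Siegel sections into the one-dimensional subrepresentation of the target (whose ratio is $|\cdot|^{-1}$). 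So $M$ factors through the one-dimensional quotient of its source and annihilates the complementary (Steinberg-type) subrepresentation in which $z_0(\varphi)$ lives. Consequently $M(z_0(\varphi)) = 0$, and $M(z_s(\varphi))$ has a zero at $s=0$ of order at least equal to the number of boundary factors, matching the order of the pole of $L_1 L_2$.

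To pin down the leading term and deduce non-vanishing of $\mathfrak{z}_{\uchi,\upsi}$, evaluate on spherical data. By Proposition \ref{prop:ulintegral}, $z_s(\varphi_0) = [L(\psi_1/\chi_1,2s+1) L(\psi_2/\chi_2,2s+1)]^{-1}\, \tilde f_s^{\mathrm{sph}}$, where $\tilde f_s^{\mathrm{sph}}$ is the spherical section of $I_H(\upsi_s^{-1},\uchi_s^{-1})$ normalised to take the value $1$ at the identity; hence
\[ \Psi_s(\varphi_0) = M\bigl(\tilde f_s^{\mathrm{sph}}\bigr) = \prod_{i=1}^{2} L\bigl((\chi_i/\psi_i)|\cdot|^{-2s},\,1\bigr)^{-1}\, \tilde g_s^{\mathrm{sph}}, \]
by the factorwise Gindikin--Karpelevich formula (a direct consequence of Proposition \ref{prop:SSFE} in the unramified case). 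The prefactor is holomorphic and non-zero at $s=0$ even in the boundary case, since $L(|\cdot|,1) = (1-\ell^{-2})^{-1}$ is finite, so $\Psi_s(\varphi_0)$ has a non-zero holomorphic limit and $\mathfrak{z}_{\uchi,\upsi}\bigl(f_{\uphi_0,\uchi,\upsi} \otimes \varphi_0\bigr)$ is an explicit non-zero constant.

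The main obstacle is extending holomorphy of $\Psi_s$ from $\varphi_0$ to every $\varphi \in \sigma$ in the boundary case. For this the plan is to invoke multiplicity one: the Kato--Murase--Sugano result recalled in \S\ref{sect:GGP} implies that for $s$ in a punctured neighbourhood of $0$ the space $\Hom_H\bigl(\sigma, I_H(\uchi_s^{-1},\upsi_s^{-1})\bigr)$ is at most one-dimensional, so $\Psi_s = c(s)\, \Psi_s^{\mathrm{can}}$ for some rational scalar $c(s)$ and a canonical generator $\Psi_s^{\mathrm{can}}$ normalised so that $\Psi_s^{\mathrm{can}}(\varphi_0)$ is holomorphic and non-vanishing at $s=0$; holomorphy of $\Psi_s(\varphi_0)$ then forces $c(s)$, and hence $\Psi_s$, to be holomorphic at $s=0$. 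Combining this multiplicity-one input with the structural annihilation provided by Lemma \ref{lem:properquot} is where the main delicacy of the argument lies.
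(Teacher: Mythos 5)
Your starting point --- moving $M$ across by Proposition \ref{prop:Madjoint} and then invoking Lemma \ref{lem:properquot} to produce the cancelling zero of $M(z_s(\varphi))$ at $s=0$ --- is exactly the route the paper takes, and you have correctly identified that the entire content is in the boundary cases $\chi_i/\psi_i = |\cdot|$. Your explicit Gindikin--Karpelevich computation on the spherical vector is also a welcome addition: the paper's own proof of this corollary says nothing about the non-vanishing claim, which in the paper only becomes visible from the later formulas in Theorem \ref{thm:tamenormrel}, so making it explicit here is a genuine improvement in exposition.

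However, your last step is where you depart from the paper and where there is a gap. The paper does \emph{not} use Theorem \ref{thm:KMS} (Kato--Murase--Sugano) in the proof of this corollary --- that theorem appears later and is used only for the uniqueness of $\mathfrak{z}_{\uchi,\upsi}$. The paper's proof simply asserts, from Lemma \ref{lem:properquot}, that $M(z_s(\varphi))$ vanishes at $s=0$ to order equal to the order of the pole of $L_1 L_2$, and stops there. Your attempt to supplement this via multiplicity one is circular as written: you posit a ``canonical generator $\Psi_s^{\mathrm{can}}$ normalised so that $\Psi_s^{\mathrm{can}}(\varphi_0)$ is holomorphic and non-vanishing at $s=0$,'' deduce that $c(s) = \Psi_s(\varphi_0)/\Psi_s^{\mathrm{can}}(\varphi_0)$ is holomorphic, and conclude $\Psi_s = c(s)\Psi_s^{\mathrm{can}}$ is holomorphic. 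But this last inference needs the family $\Psi_s^{\mathrm{can}}$ \emph{itself} (not just its value on $\varphi_0$) to be holomorphic at $s=0$, and producing a holomorphic family of non-zero elements of the one-dimensional Hom-spaces is exactly the statement you are trying to prove; multiplicity one gives uniqueness up to scalar for each fixed $s$, but says nothing about how those scalars fit together analytically. If you wish to go beyond the bare assertion in the paper, the argument should instead use the tensor-product structure $M = M_1 \otimes M_2$ and the fact that each $M_i(0)$ annihilates the Steinberg subrepresentation of its source, so that $z_0(\varphi)$ lying in $\operatorname{St}\boxtimes\operatorname{St}$ kills both the constant and (via Leibniz applied to $M'(0) = M_1'(0)\otimes M_2(0) + M_1(0)\otimes M_2'(0)$) part of the linear term of $M(s)(z_s(\varphi))$ --- rather than appealing to Theorem \ref{thm:KMS}.
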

 
  \begin{proof}
   We have $\big\langle M(f_s), z_s(\varphi)\big\rangle = \big\langle f_s, M(z_s(\varphi))\big\rangle$ by Proposition \ref{prop:Madjoint}. From the previous lemma, $M(z_s(\varphi))$ vanishes at $s=0$ to order equal to the order of the pole of the Euler factor, so the limit is well-defined and depends only on $f$.
  \end{proof}
  
  \begin{remark}
   \label{rmk:PS}
   In the paper \cite{piatetskishapiro97} (published in 1997, but circulated as a preprint many years before), Piatetski-Shapiro defines a zeta-integral $Z(\varphi, \uphi, \lambda, \eta, s)$, for $\varphi \in \sigma$ and $\uphi \in \cS(\Ql^2 \times \Ql^2)$, by
   \[ Z(\varphi, \uphi, \lambda, \eta, s) \coloneqq \int_{N_{\Hl} \backslash \Hl} B_{\varphi, \lambda}(h) \uphi\big( (0, 1) \cdot h_1, (0, 1) \cdot h_2\big) \eta(\det h) |\det h|^{s+\tfrac{1}{2}}\dd h, \]
   where $N_{\Hl}$ is the unipotent radical of $B_H(\Ql)$. This integral also appears in Sugano's work \cite{sugano85}. If $\eta$ and $\lambda$ are chosen as above, then one checks that $\grz_{\uchi, \upsi}\left(f_{\hat\uphi, \uchi, \upsi} \otimes \varphi\right)$ is equal to the leading term of $Z(\varphi, \uphi, \Lambda, \eta, s)$ at $s = \tfrac{1}{2}$, up to a non-zero scalar factor. However, we cannot simply take this as the definition of $\grz_{\uchi, \upsi}$, since it is not \emph{a priori} clear that this leading term depends only on the vector $f_{\hat\uphi, \uchi, \upsi} \in I_H(\uchi, \upsi)$ rather than on $\uphi$ itself.
  \end{remark}

  Vitally, the linear functional $\grz_{\uchi, \upsi}$ of Proposition \ref{prop:defofbiform} is \emph{unique}:
  
  \begin{theorem}[Kato--Murase--Sugano]
   \label{thm:KMS}
   For $\uchi$ and $\upsi$ satisfying the above assumptions, we have 
   \[ \dim \Hom_{\Hl}\big(I_H(\uchi, \upsi) \otimes \sigma, \CC\big) \le 1.\]
  \end{theorem}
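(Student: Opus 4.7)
The plan is to reduce the statement to the multiplicity-one theorem for the pair $(\SO_4, \SO_5)$ in the Gan--Gross--Prasad theory, which in the unramified spherical setting is the theorem of Kato, Murase, and Sugano.

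First, by the contragredient duality $I_H(\uchi, \upsi)^\vee \cong I_H(\uchi^{-1}, \upsi^{-1})$ and Frobenius reciprocity, I have a canonical identification
\[ \Hom_H\big(I_H(\uchi, \upsi) \otimes \sigma, \CC\big) \cong \Hom_H\big(\sigma, I_H(\uchi^{-1}, \upsi^{-1})\big). \]
If neither $\chi_i/\psi_i$ equals $|\cdot|$, then under our standing hypotheses the target is already irreducible. In the remaining cases, I would repeat the Mackey-theoretic argument used in the proof of Lemma \ref{lem:properquot}, namely decomposing $I_H(\uchi^{-1}, \upsi^{-1})$ via the two orbits of $H$ on the Siegel flag variety, to show that any non-zero $H$-map out of $\sigma$ must land in the unique generic irreducible constituent $\tau$. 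Either way, the problem reduces to bounding $\dim \Hom_H(\sigma, \tau)$ for a single generic irreducible $\tau$.

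The crucial step is then to pass to orthogonal groups. By the Remark in \S\ref{sect:groups}, the embedding $H \hookrightarrow G$ descends modulo $Z_G$ to the inclusion $\SO_4 \hookrightarrow \SO_5$ of split orthogonal groups, with $\SO_4$ realised as the stabiliser of an anisotropic vector in the standard five-dimensional representation. The central-character condition $\chi_1 \chi_2 \psi_1 \psi_2 \chi_\sigma = 1$ ensures that $\tau \otimes \sigma$ descends to an irreducible representation of $\SO_4(\Ql) \times \SO_5(\Ql)$, so that the Hom space is preserved under the descent. The resulting inequality
\[ \dim \Hom_{\SO_4(\Ql)}\big( \tau \otimes \sigma, \CC \big) \le 1 \]
is then an instance of the local Gan--Gross--Prasad multiplicity-one conjecture, proved for the relevant unramified principal series in the work of Kato, Murase and Sugano.

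The expected main obstacle is the Mackey-theoretic computation isolating the generic constituent in the reducible case, which must be carried out consistently with the compatibility conditions on $\uchi$, $\upsi$, and $\sigma$; however, this is parallel to the argument in Lemma \ref{lem:properquot}, so the necessary apparatus is already in place. The deep input -- GGP multiplicity one -- then enters purely as a citation.
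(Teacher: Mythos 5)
Your high-level strategy — descend from $H\subset G$ to $\SO_4\subset\SO_5$ via the central twisting, then invoke a local Gan--Gross--Prasad multiplicity-one statement — is the same as the paper's. The problem is the detour you take through the irreducible constituent, which then invalidates your citation.

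The theorem of Kato--Murase--Sugano, as the paper uses it, applies to representations of $\SO_5$ and $\SO_4$ that are \emph{generated by a spherical vector}; irreducibility is not required. The paper's actual proof exploits exactly this: one twists $\sigma$ and $\tau = I_H(\uchi,\upsi)$ by a square root of $\chi_\sigma$ to make both trivial on $Z_G$, notes that the (possibly reducible) $\tau$ is still generated by its spherical vector under the standing hypotheses on $\uchi,\upsi$, and applies Kato--Murase--Sugano directly. No reduction to irreducible pieces is needed, and no Mackey-theoretic analysis enters this proof at all. When you instead pass to the unique generic irreducible constituent of the induced representation, you leave the spherical world: in the boundary case $\chi_i/\psi_i = |\cdot|$, that generic constituent is a twist of Steinberg on one $\GL_2$-factor and has no spherical vector, so it is \emph{not} covered by Kato--Murase--Sugano. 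The multiplicity-one statement you would then need is the general one for irreducible representations of $\SO_n\times\SO_{n+1}$, which is Waldspurger's theorem, not Kato--Murase--Sugano's. This is precisely the alternative route recorded in the remark following the theorem in the paper — so your argument can be repaired, but only by replacing the citation.

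Two smaller issues. First, you route through $I_H(\uchi^{-1},\upsi^{-1})$ via contragredient duality, but the generic constituent there is a \emph{quotient}, not a subrepresentation, so "any non-zero $H$-map out of $\sigma$ must land in the unique generic irreducible constituent" is not literally correct; to make the reduction precise you should either work with $I_H(\upsi^{-1},\uchi^{-1})$ (where genericity sits as a subrepresentation, as in Lemma \ref{lem:properquot}) or argue via the short exact sequence and vanishing of $\Hom_H(\sigma,\tau_{\mathrm{ng}})$. Second, the non-vanishing of those non-generic Hom-spaces is not automatic; it is controlled by the pole condition on $L(\sigma\otimes\psi_1\psi_2,s)$ at $s=1/2$, which is one of the standing assumptions preceding Lemma \ref{lem:properquot} and is therefore essential to your Mackey reduction even if it is not needed for the paper's more direct proof.
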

  
  \begin{proof}
   Let $\tau$ be the representation $I_H(\uchi, \upsi)$. Since the group of unramified characters of $\Ql^\times$ is 2-divisible, we may replace $\sigma$ and $\tau$ with $\sigma \otimes \omega^{-1}$ and $\tau \otimes \omega$, where $\omega$ is any square root of $\chi_\sigma$, and therefore assume that both $\sigma$ and $\tau$ are trivial on $Z(\Gl)$. Thus $\sigma$ factors through $\overline{G} = \operatorname{PGSp}_4(\Ql) = \SO_5(\Ql)$; and $\tau$ factors through the image $\overline{H}$ of $H$ in $\SO_5(\Ql)$, which is a copy of $\SO_4(\Ql)$, embedded as the stabiliser of an anisotropic vector in the defining 5-dimensional representation.
   
   We now apply the main theorem of \cite{katomurasesugano03}, which shows that for any representations $\sigma$ of $\SO_5$ and $\tau$ of $\SO_4$ which are generated by a spherical vector, the Hom-space $\Hom(\tau \otimes \sigma, \CC)$ has dimension $\le 1$.
  \end{proof}
  
  \begin{remark}
   Alternatively, it follows from the proof of Lemma \ref{lem:properquot} that this $\Hom$-space injects into $\Hom_{\Hl}(\sigma \otimes \tau_0, \CC)$ where $\tau_0$ is the unique irreducible subrepresentation of $I_H(\uchi, \upsi)$. We can now invoke a very general result, which forms part of the Gan--Gross--Prasad conjecture for special orthogonal groups: for any $n \ge 0$ and any irreducible smooth representations $\sigma$ of $\SO_{n+1}(\Ql)$ and $\rho$ of $\SO_n(\Ql)$, one has $\dim \Hom_{\SO_n}(\sigma \otimes \rho, \CC) \le 1$, by \cite[Th\'eor\`eme 1]{waldspurger12}.
  \end{remark}
     
  \begin{corollary}
   In the situation of Theorem \ref{thm:KMS}, the bilinear form $\grz_{\uchi, \upsi}$ is a basis of $\Hom_{\Hl}\big(I_H(\uchi, \upsi) \otimes \sigma, \CC\big)$.
  \end{corollary}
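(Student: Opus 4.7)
The plan is to combine the two inputs that have just been assembled: the upper bound on the dimension of the Hom-space coming from the Kato--Murase--Sugano theorem, and the nonvanishing of the explicit form $\mathfrak{z}_{\uchi,\upsi}$.

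First I would invoke Corollary \ref{prop:defofbiform}, which already establishes that $\mathfrak{z}_{\uchi,\upsi}$ is a well-defined element of $\Hom_H\bigl(I_H(\uchi,\upsi)\otimes\sigma,\CC\bigr)$ and that it is non-zero (the non-vanishing is built into the proof of that corollary, using Proposition \ref{prop:ulintegral} together with the standing assumption that $L(\sigma\otimes\psi_1\psi_2,s)$ and $L(\psi_1/\chi_1,s+\tfrac12)L(\psi_2/\chi_2,s+\tfrac12)$ do not simultaneously have poles at $s=\tfrac{1}{2}$). Then I would apply Theorem \ref{thm:KMS} to conclude that the ambient $\Hom$-space has dimension at most one. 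A non-zero vector in a vector space of dimension $\le 1$ must span it, so $\mathfrak{z}_{\uchi,\upsi}$ is a basis.

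Since every ingredient is already in place, there is no genuine obstacle — the corollary is essentially a formal consequence. The only subtlety worth flagging is that one should verify that the $\Hom$-space is indeed non-zero (so that the upper bound is attained and equal to $1$); this is exactly what the non-vanishing of $\mathfrak{z}_{\uchi,\upsi}$ supplies.
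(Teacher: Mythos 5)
Your argument is exactly the one the paper has in mind: the corollary is stated with a terminal \qedsymbol precisely because it follows immediately from Corollary \ref{prop:defofbiform} (which furnishes a non-zero element of the Hom-space) together with the dimension bound $\le 1$ from Theorem \ref{thm:KMS}. There is nothing more to add.
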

 
  \begin{proof} Clear. \end{proof}

 \subsection{Explicit formulae for the unramified local pairing}

  We record the following formulae for the values of $\grz_{\uchi, \upsi}$. We assume, as before, that $\sigma$ is an irreducible unramified principal series representation of $\Gl$. We choose our characters $\uchi$ and $\upsi$ as follows:
  \begin{itemize}
   \item $\psi_1 = \psi_2 = |\cdot|^{-1/2}$,
   \item $\chi_i = |\cdot|^{(1/2 + k_i)} \tau_i$, where $\underline{\tau} = (\tau_1, \tau_2)$ is a pair of finite-order unramified characters, and $k_i \ge 0$ are integers.
  \end{itemize}
  If one or both of the $k_i$ is zero, we also assume that $\sigma$ is essentially tempered (a twist of a tempered representation); since $\chi_\sigma$ is $|\cdot|^{-(k_1+k_2)}$ up to a finite-order character, all poles of $L(\sigma, s)$ therefore have real part $\tfrac{k_1+k_2}{2} \ge 0$, so that $L(\sigma \otimes \psi_1\psi_2, \tfrac{1}{2}) = L(\sigma, -\tfrac{1}{2})$ is finite and the assumptions of the previous section are satisfied.
  
  For $\uphi \in \cS(\Ql^2)^{\otimes 2}$, we write $F_{\uphi}$ for the Siegel section $f_{\hat\uphi, \uchi, \upsi} \in I_H(\uchi, \upsi)$; from \eqref{zeta-section-eq}, this depends $\Hl$-equivariantly on $\uphi$. We shall apply this to the particular Schwartz functions $\uphi_t$ introduced in \S\ref{sect:Hindrep} above.
 
  \begin{theorem}
   \label{thm:tamenormrel}
   Let $\grz \in \Hom_{\Hl}(I(\uchi, \upsi) \otimes \sigma, \CC)$. Then, for any $t \ge 1$, we have
   \[ 
    \grz\left(F_{\uphi_t}, \varphi_0\right) = \frac{1}{\ell^{2t-2}(\ell + 1)^2} \left( 1 - \tfrac{\ell^{k_1}}{\tau_1(\ell)}\right) \left( 1 - \tfrac{\ell^{k_2}}{\tau_2(\ell)}\right) \cdot \grz\left(F_{\uphi_0}, \varphi_0\right)
   \]
   and
   \[
    \grz\left(F_{\uphi_1}, U(\ell) \varphi_0\right) =  \frac{\ell}{(\ell + 1)^2} \left[\left( 1 - \tfrac{\ell^{k_1}}{\tau_1(\ell)}\right) \left( 1 - \tfrac{\ell^{k_2}}{\tau_2(\ell)}\right) - L(\sigma, -\tfrac12)^{-1} \right]\grz\left(F_{\uphi_0}, \varphi_0\right).
   \]
  \end{theorem}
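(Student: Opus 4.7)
The strategy is to invoke the uniqueness statement of Theorem \ref{thm:KMS}: since $\Hom_H(I_H(\uchi,\upsi)\otimes\sigma,\CC)$ is at most one-dimensional and already contains the non-zero element $\mathfrak{z}_{\uchi,\upsi}$ of Corollary \ref{prop:defofbiform}, every $\mathfrak{z}$ is a scalar multiple of $\mathfrak{z}_{\uchi,\upsi}$. It therefore suffices to establish both identities for $\mathfrak{z}_{\uchi,\upsi}$ itself, which reduces the theorem to an explicit manipulation of the defining integral
\[ \mathfrak{z}_{\uchi,\upsi}(f\otimes\varphi)=\lim_{s\to 0}L(\psi_1/\chi_1,2s+1)L(\psi_2/\chi_2,2s+1)\big\langle M(f_s),z_s(\varphi)\big\rangle. \]

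The first step is to apply the functional equation of Proposition \ref{prop:SSFE} to each of the two $\GL_2$-factors of $F_{\uphi_t,s}=f_{\hat{\uphi}_t,\uchi_s,\upsi_s}$. Using that $\varepsilon$-factors of unramified characters are trivial and that $\uphi_t$ is invariant under $(u,v)\mapsto(-u,-v)$ (so that $\widehat{\hat\uphi_t}=\uphi_t$), this identifies $M(F_{\uphi_t,s})$ with $\bigl[L(\chi_1/\psi_1,1-2s)L(\chi_2/\psi_2,1-2s)\bigr]^{-1}\cdot f_{\uphi_t,\upsi_s,\uchi_s}$. Next, the support property in Proposition \ref{Mf-H-lem}(b) shows that $f_{\uphi_t,\upsi_s,\uchi_s}$ vanishes on $H(\Zl)$ outside $B_H(\Zl)\cdot K_{H,0}(\ell^t)=K_{H,0}(\ell^t)$; and since $\iota(K_{H,0}(\ell))\subset K_{G,0}(\ell)$, the functions $z_s(\varphi_0)$ (and more generally $z_s(U(\ell)\varphi_0)$ for $t\ge 1$) are right-invariant under $K_{H,0}(\ell^t)$. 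Hence the pairing collapses to
\[ \big\langle M(F_{\uphi_t,s}),z_s(\varphi)\big\rangle=\vol\bigl(K_{H,0}(\ell^t)\bigr)\cdot M(F_{\uphi_t,s})(1)\cdot z_s(\varphi)(1),\]
and $\vol(K_{H,0}(\ell^t))=\ell^{-(2t-2)}(\ell+1)^{-2}$ for $t\ge 1$ produces the prefactor $\tfrac{1}{\ell^{2t-2}(\ell+1)^2}$ appearing in the statement.

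One then substitutes the explicit values $f_{\uphi_t,\upsi_s,\uchi_s}(1)=L(\psi_1/\chi_1,1+2s)^{-1}L(\psi_2/\chi_2,1+2s)^{-1}$ from Proposition \ref{Mf-H-lem}(a), together with the two formulas for $z_s(\varphi_0)(1)$ and $z_s(U(\ell)\varphi_0)(1)$ recorded in Proposition \ref{prop:ulintegral}. After multiplying by $L(\psi_1/\chi_1,2s+1)L(\psi_2/\chi_2,2s+1)$ the factors involving $\psi_i/\chi_i$ telescope, and passing to the limit $s\to 0$ yields a closed expression for $\mathfrak{z}_{\uchi,\upsi}(F_{\uphi_t}\otimes\varphi_0)$ and for $\mathfrak{z}_{\uchi,\upsi}(F_{\uphi_1}\otimes U(\ell)\varphi_0)$. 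Repeating the argument at $t=0$ gives $\mathfrak{z}_{\uchi,\upsi}(F_{\uphi_0}\otimes\varphi_0)=\bigl[L(\chi_1/\psi_1,1)L(\chi_2/\psi_2,1)\bigr]^{-1}$. Dividing, the $L$-factors in $(\chi_i/\psi_i)$ cancel and the ratios reduce to $L(\psi_i/\chi_i,1)^{-1}=1-\ell^{k_i}/\tau_i(\ell)$ (using $\psi_i/\chi_i=|\cdot|^{-1-k_i}\tau_i^{-1}$) together with $\psi_1\psi_2(\ell)=\ell$, which is exactly the form of the two identities claimed.

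The main subtlety is analytic rather than combinatorial: when one of $\chi_i/\psi_i$ equals $|\cdot|$ (i.e.\ $k_i=0$ and $\tau_i=1$) several of the intermediate $L$-factors acquire poles or zeroes at $s=0$, so individual quantities such as $f_{\uphi_t,\upsi,\uchi}(1)$ may vanish. One has to check that the combinations appearing in $\mathfrak{z}_{\uchi,\upsi}$ remain well-defined as $s\to 0$ --- this is guaranteed by the assumption on $L(\sigma\otimes\psi_1\psi_2,\tfrac12)$ (together with Lemma \ref{lem:properquot} which ensures $M(z_s(\varphi))$ has a compensating zero) --- and that the final identities continue to hold by analytic continuation. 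Apart from this book-keeping, the calculation is entirely mechanical.
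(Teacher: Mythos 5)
Your proposal is correct and follows the same line of argument as the paper's proof: reduce to $\mathfrak{z}_{\uchi,\upsi}$ via the multiplicity-one result of Theorem \ref{thm:KMS}, apply the Siegel-section functional equation (Proposition \ref{prop:SSFE}) to pass to $f_{\uphi_t,\upsi_s,\uchi_s}$, use the support property (Proposition \ref{Mf-H-lem}(b)) to collapse the integral over $H(\Zl)$ to a single value at the identity times $\vol K_{H,0}(\ell^t)$, and then substitute the explicit values from Proposition \ref{Mf-H-lem}(a) and Proposition \ref{prop:ulintegral}. The only additions beyond what the paper writes down are your explicit volume computation $\vol K_{H,0}(\ell^t)=\ell^{-(2t-2)}(\ell+1)^{-2}$ (which the paper leaves implicit) and your final paragraph on the boundary case $\chi_i/\psi_i=|\cdot|$, which is a correct observation about where care is needed but does not alter the structure of the argument.
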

  
  \begin{proof}
   We know that $\Hom_{\Hl}\left(I(\uchi, \upsi) \otimes \sigma, \CC\right)$ is 1-dimensional and spanned by the specific bilinear form $\grz_{\uchi, \upsi}$ constructed above, so it suffices to assume that $\grz = \grz_{\uchi, \upsi}$. By construction $F_{\uphi_t}$ is the value at $s=0$ of the Siegel section $f_{\hat\uphi_t, \uchi_s, \upsi_s}$, and we have
   \[ M\left(f_{\hat\uphi_t, \uchi_s, \upsi_s}\right) = L(\chi_1/\psi_1, 1-2s)^{-1} L(\chi_2/\psi_2, 1-2s)^{-1} f_{\uphi_t, \upsi_s, \uchi_s},\]
   by the functional equation for Siegel sections (Proposition \ref{prop:SSFE}). As we have seen above, the restriction of $f_{\uphi_t, \upsi_s, \uchi_s}$ to $H(\hat\ZZ)$ is a scalar multiple of the characteristic function of $K_{\Hl,0}(\ell^t)$, so we have
   \[\grz_{\uchi, \upsi}\left(F_{\uphi_t}, \varphi\right) = \frac{\vol{K_{\Hl,0}(\ell^t)} f_{\uphi_t, \upsi, \uchi}(1)}{L(\chi_1/\psi_1, 1) L(\chi_2/\psi_2, 1)}
    \times \lim_{s \to 0} \Big[ L(\psi_1/\chi_1, 1+2s) L(\psi_2/\chi_2, 1 + 2s) z_s(\varphi)(1)\Big],
   \]
   for any $\varphi \in \sigma$ invariant under $K_{\Hl, 0}(\ell^t)$. In particular, if $\varphi = \varphi_0$ then the bracketed term is identically 1, and from the formula for $f_{\uphi_t, \uchi, \upsi}(1)$ given in Lemma \ref{GL2-lem} we see that for $t \ge 1$ we have
   \[ \grz_{\uchi, \upsi}(F_{\uphi_t}, \varphi_0) = \vol{K_{\Hl,0}(\ell^t)} \cdot  L(\psi_1/\chi_1, 1)^{-1} L(\psi_2/\chi_2, 1)^{-1} \grz_{\uchi, \upsi, s}(F_{\uphi_0}, \varphi_0),\]
   which is the first formula claimed. The second formula is similar, using the formula for $z_s(U(\ell)\varphi_0)(1)$ given in Proposition \ref{prop:ulintegral}.   
  \end{proof}

 \subsection{An application of Frobenius reciprocity} 
  
  \begin{proposition}
   \label{prop:frobrecip}
   Let $\tau$ (resp.~$\sigma$) be smooth representations of $\Hl$ and $\Gl$ respectively. Then there are canonical bijections of $\CC$-vector spaces
   \[ 
    \Hom_{\Gl}\left(\cInd_{\Hl}^{\Gl}(\tau), \sigma^\vee\middle) \cong
    \Hom_{\Gl}\middle(\cInd_{\Hl}^{\Gl}(\tau) \otimes \sigma, \CC\middle)
    \cong \Hom_{\Hl}\middle(\tau \otimes (\sigma|_{\Hl}), \CC\right).
   \]
  \end{proposition}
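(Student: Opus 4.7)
The proposition asserts two standard categorical isomorphisms, which I would prove by writing down explicit maps in each direction. The first bijection is an instance of the tensor--hom adjunction: a $G$-equivariant $F \colon \cInd_H^G(\tau) \to \sigma^\vee$ corresponds to the bilinear form $(x, w) \mapsto F(x)(w)$, whose $G$-invariance is equivalent to $G$-equivariance of $F$ (since the evaluation pairing $\sigma^\vee \times \sigma \to \CC$ is itself $G$-invariant). The only point worth noting is that any $G$-equivariant map from a smooth representation into the algebraic dual $\sigma^*$ automatically factors through the smooth dual $\sigma^\vee$, because the image of a $K$-fixed vector is $K$-fixed; so the smooth/algebraic distinction causes no trouble.

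The second bijection is Frobenius reciprocity for compact induction. My plan is to define the forward map by pre-composition with $\iota \otimes \id_\sigma$, where $\iota \colon \tau \to (\cInd_H^G \tau)|_H$ sends $v$ to the function $\tilde v$ supported on $H$ with $\tilde v(h) = h \cdot v$; a short check using the convention $(g_0 \cdot f)(g) = f(g g_0)$ shows that $\iota$ is $H$-equivariant, so the resulting form on $\tau \otimes \sigma|_H$ is $H$-invariant. For the inverse, given an $H$-invariant $\mathfrak{z} \colon \tau \otimes \sigma|_H \to \CC$, I would set
\[
 \mathfrak{Z}(f \otimes w) \coloneqq \sum_{Hg \in H \backslash G} \mathfrak{z}\bigl(f(g) \otimes g \cdot w\bigr),
\]
which is well-defined on cosets (by $H$-invariance of $\mathfrak{z}$ and the equivariance $f(hg) = h f(g)$) and $G$-invariant in $(f,w)$ by the substitution $Hg \mapsto Hg g_0^{-1}$.

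The main technical point, and the only step I expect to require any care, is the finiteness of this sum. The plan is to appeal to the local profiniteness of $G$: if $K \subset G$ is an open compact subgroup fixing $f$, the support of $f$ lies in finitely many double cosets $Hg_iK$, and each such double coset breaks up into only $[K : K \cap g_i^{-1} H g_i]$ cosets of $H$, a finite number because $K$ is profinite and $K \cap g_i^{-1} H g_i$ is open in it. Verifying that the two constructions are mutually inverse is then routine: it suffices to check on single-coset functions $f = g_0^{-1} \cdot \tilde v$, where both round trips reduce to the identity on $\tau \otimes \sigma|_H$ by a direct calculation.
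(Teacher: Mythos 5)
The first bijection is fine, but your proof of the second has a genuine gap, and it is precisely the one the authors flag in the remark immediately after this proposition. The subgroup $H = \GL_2 \times_{\GL_1} \GL_2$ is closed but \emph{not open} in $G = \GSp_4$ (it has strictly smaller dimension), and both of your explicit maps quietly assume openness.

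For the forward map: the function $\tilde v$ supported on $H$ with $\tilde v(h) = h \cdot v$ is not an element of $\cInd_H^G(\tau)$. Elements of the (smooth, compact) induction must be locally constant under right translation by some open compact $K \subset G$, but a nonzero function supported on the closed, non-open subset $H$ can never satisfy this (if $\tilde v(g) \neq 0$ for some $g \in H$ and $k \in K$ takes $g$ out of $H$, smoothness would force $\tilde v(g) = \tilde v(gk) = 0$). So $\iota \colon \tau \to (\cInd_H^G \tau)|_H$ is the zero map, and the forward direction collapses. For the candidate inverse: your finiteness argument relies on $[K : K \cap g^{-1}Hg]$ being finite, but this index is finite if and only if $K \cap g^{-1}Hg$ is \emph{open} in $K$, which again fails because $H$ is not open in $G$. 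The support of $f$ in $H \backslash G$ is a compact but non-discrete set, so the sum $\sum_{Hg}$ has uncountably many terms; the correct formula would be an integral over $H\backslash G$ with respect to a $G$-invariant measure (which exists here since both groups are unimodular), not a sum, and you would then still need a separate argument that this map is bijective.

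The paper's proof sidesteps exactly these two problems. It converts the second bijection into
\[
\Hom_G\left(\cInd_H^G(\tau) \otimes \sigma, \CC\right) \cong \Hom_G\left(\sigma, (\cInd_H^G\tau)^\vee\right),
\]
invokes the canonical isomorphism $(\cInd_H^G\tau)^\vee \cong \Ind_H^G(\tau^\vee)$ (Renard, \S III.2.7), and then applies Frobenius reciprocity for the \emph{non-compact} induction $\Ind$, which is valid for arbitrary closed subgroups. That route never requires constructing a single-coset function in $\cInd_H^G\tau$ nor summing over $H\backslash G$, which is why it works where the open-subgroup adjunction (quoted in the paper's following remark) does not.
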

  
  \begin{proof}
   The first isomorphism is standard, and interchanging the roles of $\sigma$ and the compactly-induced representation also shows that
   \[ 
    \Hom_{\Gl}\left(\cInd_{\Hl}^{\Gl}(\tau) \otimes \sigma, \CC\middle) \cong \Hom_{\Gl}\middle(\sigma, (\cInd_{\Hl}^{\Gl} \tau)^\vee\right).
   \]
   One has a canonical isomorphism $(\cInd_{\Hl}^{\Gl} \tau)^\vee = \Ind_{\Hl}^{\Gl} (\tau^\vee)$ \cite[\S III.2.7]{renard10}. (Care must be taken here since the contragredient on the left-hand side denotes $\Gl$-smooth vectors in the abstract vector-space dual, while on the right-hand side it denotes $\Hl$-smooth vectors.) We then apply Frobenius reciprocity for the non-compact induction [\emph{op.cit}, \S III.2.5] to obtain
   \begin{align*}
    \Hom_{\Gl}\left(\sigma, \Ind_{\Hl}^{\Gl}(\tau^\vee)\right) & \cong \Hom_{\Hl}\Big((\sigma |_{\Hl}), \tau^\vee\Big) \\
    & \cong \Hom_{\Hl}\Big( \tau \otimes (\sigma|_{\Hl}), \CC\Big),
   \end{align*}
   as required.
  \end{proof}
 
  \begin{remark}
   The $\Hom$-spaces in Proposition \ref{prop:frobrecip} will \emph{not} in general be isomorphic to $\Hom_{\Hl}\big(\tau, (\sigma^\vee)|_{\Hl}\big)$. The problem is that $(\sigma^\vee) |_{\Hl}$ is in general much smaller than $(\sigma |_{\Hl})^\vee$, since the two notions of contragredient do not match -- an $\Hl$-smooth linear functional on $\sigma$ may not be $\Gl$-smooth.
  \end{remark}
  
  For later use it will be important to have an explicit form for this bijection. Let $\cH(\Gl)$ denote the Hecke algebra of locally-constant, compactly-supported $\CC$-valued functions on $\Gl$, with the algebra structure defined by convolution (normalising Haar measure as in \S\ref{sect:compacta}). We regard $\sigma$ as a left $\cH(\Gl)$-module via the usual formula
  \[ \xi \cdot \varphi = \int_{\Gl} \xi(g) (g \cdot \varphi)\dd g,\]
  so that $g_1 \cdot \Big(\xi \cdot (g_2 \cdot \varphi)\Big) = \xi(g_1^{-1} (-)g_2^{-1}) \cdot \varphi$.

  \begin{definition}
   \label{def:indrep}
   For smooth representations $\tau$, $\sigma$ as above, let $\mathfrak{X}(\tau, \sigma^\vee)$ denote the space of linear maps
   \[ \grZ: \tau \otimes_{\CC} \cH(\Gl) \to \sigma^\vee \]
   which are $\Hl \times \Gl$-equivariant, where the actions are defined as follows:
   \begin{itemize}
    \item The $\Hl$ factor acts trivially on $\sigma^\vee$, and on $\tau \otimes_{\CC} \cH(\Gl)$ it acts via the formula
    \[ h \cdot (v \otimes \xi) = (h \cdot v) \otimes \xi(h^{-1}(-)). \]
    \item The $\Gl$ factor acts trivially on $\tau$, and on $\cH(\Gl)$ it acts via $g \cdot \xi = \xi( (-) g)$.
   \end{itemize}
  \end{definition}
  
  Unwinding the definitions, we reach the following formula:
  
  \begin{proposition}
   There is a canonical bijection between $\mathfrak{X}(\tau, \sigma^\vee)$ and $\Hom_{\Hl}\left(\tau \otimes (\sigma |_{\Hl}), \CC\right)$, characterised as follows: if $\grZ \in \mathfrak{X}(\tau, \sigma^\vee)$ corresponds to $\grz \in \Hom_{\Hl}\left(\tau \otimes (\sigma |_{\Hl}), \CC\right)$, then we have
   \[ \grZ(f \otimes \xi)(\varphi) = \grz(f \otimes (\xi \cdot \varphi) ) \]
   for all $f \in \tau$, $\xi \in \cH(\Gl)$, and $\varphi \in \sigma$.
  \end{proposition}
 
  \begin{proof} Immediate.
  \end{proof}
 
  \begin{corollary}
   Suppose $\grz \leftrightarrow \grZ$ as in the above proposition; and let $U_0 \ge U_1$ be two open compact subgroups of $\Gl$, $f_0, f_1 \in \tau$, and $g_0, g_1 \in \Gl$. Suppose that
   \[ \grz(f_1, g_1 \cdot \varphi) = \grz(f_0, g_0 \cdot R \cdot \varphi) \]
   for some $R \in \cH(U_0 \backslash \Gl / U_0)$ and all $\varphi \in \sigma^{U_0}$. Then the elements $\grZ_i = \grZ(f_i \otimes \ch(g_i U_i)) \in (\sigma^\vee)^{U_i}$, $i = 0, 1$, are related by
   \[ 
    \sum_{u \in U_0 / U_1} u \cdot \grZ_1  = R' \cdot \grZ_0,
   \]
   as elements of $(\sigma^\vee)^{U_0}$, where $R'(g) = R(g^{-1})$.
  \end{corollary}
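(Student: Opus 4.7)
The plan is to verify the identity as an equality in $(\sigma^\vee)^{U_0}$, reducing this to a test against vectors in $\sigma^{U_0}$. Since $U_0$ is compact open and $\sigma$ is smooth, the averaging idempotent $e_{U_0}$ yields a decomposition $\sigma = \sigma^{U_0} \oplus (1-e_{U_0})\sigma$, and any $U_0$-invariant linear functional vanishes on the second summand; hence an element of $(\sigma^\vee)^{U_0}$ is determined by its restriction to $\sigma^{U_0}$.

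I would first check that both sides genuinely lie in $(\sigma^\vee)^{U_0}$. For the right-hand side this follows from $\grZ_0 \in (\sigma^\vee)^{U_0}$ together with the bi-$U_0$-invariance of $R'$ (a routine change of variables in the defining integral). For the left-hand side, each term $u \cdot \grZ_1$ depends only on the coset $uU_1$ by $U_1$-invariance of $\grZ_1$, and left multiplication by any element of $U_0$ permutes the cosets in $U_0/U_1$, so the sum is $U_0$-invariant.

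Finally, I evaluate both sides on a vector $\varphi \in \sigma^{U_0} \subseteq \sigma^{U_1}$. A direct integration gives $\ch(g_i U_i) \cdot \varphi = \vol(U_i) \cdot g_i \varphi$, and hence $\grZ_i(\varphi) = \vol(U_i)\, \mathfrak{z}(f_i \otimes g_i \varphi)$. On the left-hand side, since $u^{-1}\varphi = \varphi$, each of the $[U_0 : U_1]$ summands equals $\vol(U_1)\,\mathfrak{z}(f_1 \otimes g_1\varphi)$, for a total of $\vol(U_0)\,\mathfrak{z}(f_1 \otimes g_1\varphi)$. On the right-hand side, the adjoint identity $(R' \cdot \ell)(\varphi) = \ell(R \cdot \varphi)$ (a change of variables $g \leftrightarrow g^{-1}$ using unimodularity of $G$) combined with $R \cdot \varphi \in \sigma^{U_0}$ gives $\vol(U_0)\,\mathfrak{z}(f_0 \otimes g_0 R \varphi)$. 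The hypothesis equates these, and the corollary follows. No serious obstacle is expected: the result is essentially bookkeeping once the bijection $\grZ \leftrightarrow \mathfrak{z}$ from the preceding proposition is in hand, and the choice of $R'$ in place of $R$ in the statement is precisely what is needed to absorb the dualisation.
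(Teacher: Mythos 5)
Your argument is correct and follows exactly the same route as the paper: reduce to testing against $\varphi \in \sigma^{U_0}$ using $(\sigma^\vee)^{U_0} = (\sigma^{U_0})^\vee$, then apply the formula $\grZ(f\otimes\xi)(\varphi) = \mathfrak{z}(f\otimes(\xi\cdot\varphi))$ from the preceding proposition. You simply spell out the volume bookkeeping and the adjoint identity $(R'\cdot\ell)(\varphi)=\ell(R\cdot\varphi)$ that the paper leaves implicit in its one-line proof.
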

  
  \begin{proof}
   Since both sides of the desired equality are in $(\sigma^\vee)^{U_0} = (\sigma^{U_0})^\vee$, it suffices to check that they pair to the same value with $\varphi$ for every $\varphi \in \sigma^{U_0}$. This now follows from the above description of $\grZ(-)(\varphi)$.
  \end{proof}

 \subsection{Results for deeper levels}
 
  In order to prove norm-compatibility relations in the ``$p$-direction'' for our Euler system, we shall also need a few supplementary results which are proved directly (rather than using the uniqueness result of Theorem \ref{thm:KMS}). In this section, $W$ denotes an arbitrary smooth complex representation of $\Gl$ (not necessarily irreducible or even admissible), and we let $\mathfrak{X}(W)$ denote the space of homomorphisms
  \[ \grZ: \cS(\Ql^2, \CC)^{\otimes 2} \otimes_{\CC} \cH(\Gl) \to W \]
  satisfying the same equivariance property under $\Hl \times \Gl$ as in Definition \ref{def:indrep}.
  
  \begin{notation}
   For $t \ge 1$, let $\phi_{1, t} \in \cS(\Ql^2, \CC)$ denote the characteristic function of the set $\ell^t \Zl \times( 1 + \ell^t \Zl)$, and $\uphi_{1, t} = \phi_{1, t} \otimes \phi_{1, t} \in \cS(\Ql^2, \CC)^{\otimes 2}$. This is stable under the group $K_{\Hl, 1}(\ell^t)$ (see \S \ref{sect:compacta}).
  \end{notation}
  
  \begin{lemma}
   \label{lem:indept}
   Let $\xi \in \cH(\Gl)$ be invariant under left-translation by the principal congruence subgroup of level $\ell^T$ in $H(\Zl)$, for some $T \ge 1$. Then, for any $\grZ \in \mathfrak{X}(W)$, the expression
   \[ \frac{1}{\vol K_{\Hl, 1}(\ell^t)}\, \grZ\left( \uphi_{1, t} \otimes \xi \right) \]
   is independent of $t \ge T$, where $\vol(-)$ denotes volume with respect to our fixed Haar measure on $\Hl$.
  \end{lemma}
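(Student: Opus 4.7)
The plan is to reduce the statement to the combination of two facts: the identity
\[ \grZ(\uphi_{1, t} \otimes \xi) = \ell^4 \cdot \grZ(\uphi_{1, t+1} \otimes \xi) \]
for each $t \ge T$, and the index equality $[K_{H, 1}(\ell^t) : K_{H, 1}(\ell^{t+1})] = \ell^4$. Dividing both sides by $\vol K_{H, 1}(\ell^t)$ then yields the lemma.

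Let $U_H(\ell^t) \subseteq H(\Zl)$ denote the principal congruence subgroup of level $\ell^t$. For the identity, I will exhibit an explicit $\ell^4$-element subset $\mathcal{S} \subset U_H(\ell^t)$ satisfying $\sum_{h \in \mathcal{S}} h \cdot \uphi_{1, t+1} = \uphi_{1, t}$. For each $(a, \mu) \in (\FF_\ell)^2$, define
\[ g_{a, \mu} \coloneqq \begin{pmatrix} 1 & 0 \\ -\ell^t a & 1 \end{pmatrix}\begin{pmatrix} 1 + \ell^t \mu & 0 \\ 0 & (1 + \ell^t \mu)^{-1} \end{pmatrix} \in \SL_2(\Zl). \]
Each $g_{a, \mu}$ is $\equiv 1 \bmod \ell^t$, and a short direct calculation using the right action $(g \cdot \phi)(v) = \phi(vg)$ shows that $g_{a, \mu} \cdot \phi_{1, t+1}$ equals the characteristic function of the set $(\ell^t a + \ell^{t+1}\Zl) \times (1 + \ell^t \mu + \ell^{t+1}\Zl)$. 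These $\ell^2$ translates have pairwise disjoint supports whose union is the support of $\phi_{1, t}$, so they sum to $\phi_{1, t}$. Since $\det g_{a, \mu} = 1$, every pair $(g_{a_1, \mu_1}, g_{a_2, \mu_2})$ automatically satisfies the $H$-condition and lies in $U_H(\ell^t) \subseteq U_H(\ell^T)$ (using $t \ge T$); taking $\mathcal{S}$ to be the set of these $\ell^4$ pairs therefore yields the desired decomposition of $\uphi_{1, t} = \phi_{1, t} \otimes \phi_{1, t}$.

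Applying the $H$-equivariance of $\grZ$ (with trivial action on the target $W$), and using that $h \cdot \xi = \xi$ for every $h \in U_H(\ell^T)$, we obtain $\grZ((h \cdot \uphi_{1, t+1}) \otimes \xi) = \grZ((h \cdot \uphi_{1, t+1}) \otimes (h \cdot \xi)) = \grZ(\uphi_{1, t+1} \otimes \xi)$ for each $h \in \mathcal{S}$; summing over $\mathcal{S}$ establishes the identity. The index equality follows from the short exact sequence $1 \to K_{H, 1}(\ell^t) \to K^2 \xrightarrow{\det_1 / \det_2} \Zl^\times \to 1$, where $K = \{g \in \GL_2(\Zl) : g \equiv \stbt * * 0 1 \bmod \ell^t\}$, combined with the standard computation $[K : K'] = \ell^2$ for the analogous subgroup $K'$ at level $\ell^{t+1}$. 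The main subtlety lies in the construction of $\mathcal{S}$: a naive system of coset representatives for $K_{H, 1}(\ell^{t+1})$ inside $K_{H, 1}(\ell^t)$ need not lie in $U_H(\ell^T)$ and hence need not stabilise $\xi$; the trick is to work inside $\SL_2(\Zl) \times \SL_2(\Zl) \subset H(\Zl)$, where the $H$-condition $\det g_1 = \det g_2$ is automatic, enabling translators that simultaneously lie in the principal congruence subgroup.
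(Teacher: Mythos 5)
Your proof is correct and follows the same underlying strategy as the paper's: decompose $\uphi_{1,t}$ (or $\uphi_{1,T}$) as a sum of $H$-translates of the deeper-level Schwartz function by elements lying in a principal congruence subgroup that stabilises $\xi$, and then invoke the $H$-equivariance of $\grZ$. The paper's proof works directly with a full set $J$ of coset representatives for $K_{H,1}(\ell^T)/K_{H,1}(\ell^t)$ and simply asserts ``we can (and do) assume that $J$ is a subset of the principal congruence subgroup of level $\ell^T$ in $H$,'' whereas you proceed one level at a time and exhibit such representatives explicitly.

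Where your write-up genuinely earns its keep is in addressing a subtlety that the paper glosses over. One can of course write each $\GL_2$-component of a coset representative in the form $u_i k_i$ with $u_i \equiv 1 \bmod \ell^T$ and $k_i \in K_1(\ell^t)$; but nothing forces $\det u_1 = \det u_2$, so the resulting pair need not lie in $H$, and one must further adjust by elements of $U(\ell^T) \cap K_1(\ell^t)$ to equalise the determinants. Your trick of building the translators $g_{a,\mu}$ directly inside $\SL_2(\Zl)$, so that $\det g_{a,\mu} = 1$ and the $H$-condition is automatic for arbitrary pairs, cleanly sidesteps this issue. Your support computation for $g_{a,\mu}\cdot\phi_{1,t+1}$, the disjointness and union claims, and the index computation $[K_{H,1}(\ell^t) : K_{H,1}(\ell^{t+1})] = \ell^4$ are all correct. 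In short: same approach as the paper, with a more explicit and somewhat more careful execution.
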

  
  \begin{proof}
   For any integers $t \ge T \ge 1$, let $J$ be a set of coset representatives for the quotient $K_{\Hl, 1}(\ell^T) / K_{\Hl, 1}(\ell^t)$. Then $\uphi_{1, T} = \sum_{\gamma \in J} \gamma \cdot \uphi_{1, t}$, so we have
   \[ 
    \grZ\left( \uphi_{1, T}\otimes\xi \right) = \sum_{\gamma \in J} \grZ\left( (\gamma \cdot {\uphi_{1, t}}) \otimes \xi \right) = \sum_{\gamma \in J} \grZ\left(\uphi_{1, t} \otimes \xi(\gamma(-)) \right).
   \]
   We can (and do) assume that $J$ is a subset of the principal congruence subgroup of level $\ell^T$ in $H$. By assumption, all such elements will act trivially on $\xi$ from the left, so the above equality becomes
   \[ 
    \grZ\left( \uphi_{1, T} \otimes \xi \right) = (\# J) \cdot \grZ\left(\uphi_{1, t} \otimes \xi \right) = \frac {\vol K_{\Hl, 1}(\ell^T)} {\vol K_{\Hl, 1}(\ell^t)}\grZ\left(\uphi_{1, t} \otimes \xi \right)
   \]
   as required. 
  \end{proof}
  
  \begin{notation}
   We write $\grZ\left( \uphi_{1, \infty} \otimes \xi \right)$ for this limiting value.
  \end{notation}
    
  The case that interests us is the following. Let $m, n$ be integers with $m \ge 0$ and $n \ge \max(m, 1)$, and let $\eta_m = \begin{smatrix} 1 &&{\ell^{-m}}\\&1&&{\ell^{-m}}\\&&1\\&&&1\end{smatrix}$. Recall the subgroup $K_{\Gl}(\ell^m, \ell^n)$ of \S\ref{sect:compacta}; we consider the Hecke operator
  \[ U'(\ell) = \frac{1}{\vol K_{\Gl}(\ell^m, \ell^n)} \ch\left(K_{\Gl}(\ell^m, \ell^n) \dfour{\ell^{-1}}{\ell^{-1}}{1}{1} K_{\Gl}(\ell^m, \ell^n)\right).\]
  
  \begin{proposition}
   \label{prop:wildnormrel}
   For any $\grZ \in \mathfrak{X}(W)$ we have
   \[ 
    \grZ\left( \uphi_{1, \infty} \otimes \ch(\eta_{m+1} K_{\Gl}(\ell^m, \ell^n))\right) = 
    \left.\begin{cases}
     \tfrac{1}{\ell} U'(\ell)
      & \text{if $m \ge 1$}\\
     \tfrac{1}{\ell-1} \left[ U'(\ell) - 1\right] & \text{if $m = 0$}
    \end{cases}\right\} \cdot  
    \grZ\left( \uphi_{1, \infty} \otimes \ch(\eta_m K_{\Gl}(\ell^m, \ell^n))\right)
   \]
  \end{proposition}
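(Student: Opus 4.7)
The plan is a direct computation, based on two key observations: (i) the element $\delta = \dfour{\ell^{-1}}{\ell^{-1}}{1}{1}$ lies in $\iota(H(\Ql))$, being the image of $\bigl(\stbt{\ell^{-1}}{0}{0}{1}, \stbt{\ell^{-1}}{0}{0}{1}\bigr)$, so we may invoke the $H$-equivariance of $\grZ$ for translates by $\delta$; and (ii) a direct matrix calculation gives the conjugation identity $\delta\eta_m\delta^{-1} = \eta_{m+1}$, linking $\eta_m$ and $\eta_{m+1}$.

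The heart of the argument is an explicit single-coset decomposition of $K_{m,n}\delta K_{m,n}$. A computation of the stabiliser shows that $K_{m,n} \cap \delta K_{m,n}\delta^{-1}$ has index $\ell^3$ in $K_{m,n}$ (cut out by the extra condition that the upper-right $2\times 2$ block $B$ lies in $\ell M_2(\Zl)$), with representatives of the form $\{\bar{n}(\ell^n a, \ell^n b, \ell^n c)\cdot\delta : (a,b,c) \in \FF_\ell^3\}$, where $\bar{n}$ parametrises the opposite Siegel unipotent $\bar{N}_S$. Expanding $U'(\ell)\cdot\grZ(\uphi_{1,\infty} \otimes \ch(\eta_m K_{m,n}))$ as a sum of $\ell^3$ summands via $G$-equivariance, and using the commutation relation $\delta^{-1}\bar{n}(x,y,z) = \bar{n}(x/\ell, y/\ell, z/\ell)\delta^{-1}$, each summand can be rewritten as $\grZ(\uphi_{1,\infty} \otimes \ch(\eta_m K_{m,n}\bar{n}(-\ell^{n-1}a, -\ell^{n-1}b, -\ell^{n-1}c)\delta^{-1}))$. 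Invoking $H$-equivariance for $\delta$ together with the conjugation identity then transforms each term into a $\grZ$-expression involving $\ch(\eta_{m+1}K_{m,n})$; the effect of the $\delta$-action on $\uphi_{1,t}$, absorbed via Lemma \ref{lem:indept}, supplies the required volume factors.

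For $m \ge 1$, the summation collapses cleanly to yield $\ell\cdot\grZ(\uphi_{1,\infty} \otimes \ch(\eta_{m+1}K_{m,n}))$, giving the factor $\tfrac{1}{\ell}$. For $m = 0$, however, $\eta_0 \in K_{0,n}$ (since $\eta_0 \in N_S(\Zl)$), so one contribution folds back to $\grZ(\uphi_{1,\infty} \otimes \ch(\eta_0 K_{0,n})) = \grZ(\uphi_{1,\infty} \otimes \ch(K_{0,n}))$, while the remainder combines to give an $(\ell - 1)$-multiple of $\grZ(\uphi_{1,\infty} \otimes \ch(\eta_1 K_{0,n}))$, producing the formula $\tfrac{1}{\ell-1}[U'(\ell)-1]$. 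The principal technical obstacle is the combinatorial bookkeeping of the volume factors arising from the $\uphi_{1,\infty}$-averaging, especially in isolating the identity contribution in the $m=0$ case.
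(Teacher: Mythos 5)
There is a genuine gap, centred on the handedness of the coset decomposition and the resulting mismatch with the $H$-equivariance.

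Unwinding the $G(\Ql)$-equivariance of $\grZ$ (which acts on $\cH(G)$ by $g\cdot\xi = \xi((-)g)$, right translation of the argument), the Hecke operator produces
\[
U'(\ell)\cdot \grZ\bigl(\uphi\otimes\ch(\eta_m K_{m,n})\bigr) \;=\; \grZ\bigl(\uphi\otimes\ch(\eta_m K_{m,n}\delta^{-1}K_{m,n})\bigr),
\]
and one must then choose how to split $K_{m,n}\delta^{-1}K_{m,n}$ into $\ell^3$ translates of $K_{m,n}$. Your choice — right cosets $K_{m,n}\gamma_j$ with $\gamma_j = \delta^{-1}\bar n(-\ell^n a,\dots) = \bar n(-\ell^{n-1}a,\dots)\delta^{-1}$ — is a valid decomposition (and your commutation identity $\delta^{-1}\bar n(C) = \bar n(C/\ell)\delta^{-1}$ and the conjugation $\delta\eta_m\delta^{-1}=\eta_{m+1}$ are both correct), but it places the coset representative on the \emph{right} of $K_{m,n}$: each summand is $\grZ\bigl(\uphi\otimes\ch(\eta_m K_{m,n}\bar n(-\ell^{n-1}a,\dots)\delta^{-1})\bigr)$. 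The $H$-equivariance of $\grZ$ acts by \emph{left} translation on the $\cH(G)$ argument, so applying it with $h=\delta$ yields $\grZ\bigl((\delta\uphi)\otimes\ch(\eta_{m+1}\,(\delta K_{m,n}\delta^{-1})\,\bar n(-\ell^n a,\dots))\bigr)$, and the conjugated group $\delta K_{m,n}\delta^{-1}$ is \emph{not} equal to $K_{m,n}$ (its $B$-block lives in $\ell^{-1}M_2(\Zl)$ and its $C$-block in $\ell^{n+1}M_2(\Zl)$). So the step ``each term transforms into a $\grZ$-expression involving $\ch(\eta_{m+1}K_{m,n})$'' does not go through.

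The paper instead uses \emph{left} cosets $K_{m,n}\delta^{-1}K_{m,n}=\bigsqcup_{u,v,w} n(u,v,w)\delta^{-1}K_{m,n}$, parametrised by the \emph{Siegel} unipotent (not the opposite one), so that each term has the form $\grZ\bigl(\uphi\otimes\ch(\eta_m n(u,v,w)\delta^{-1}K_{m,n})\bigr)$ with the level group untouched on the far right. The engine of the proof is then the exact identity
\[
\eta_m\, n(u,v,w)\,\delta^{-1} \;=\; \iota\!\left(\stbt{\ell}{v}{}{1},\stbt{\ell}{w}{}{1}\right)\cdot \eta_{m+1}^{(1+\ell^m u)},
\]
which splits the unipotent parameters: $(v,w)$ are absorbed into an $H$-translate of the Schwartz function via $H$-equivariance (and then handled by Lemma \ref{lem:indept}), while $u$ is absorbed into the superscript $a$ of $\eta_{m+1}^{(a)}$. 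This finer factorisation is precisely what the single conjugation $\delta\eta_m\delta^{-1}=\eta_{m+1}$ cannot deliver. It is also the source of the $m=0$ anomaly in the paper's proof — the term $u=-1$ gives $1+\ell^0 u=0$, i.e.\ $\eta_{1}^{(0)}=1$ — which is not the same phenomenon as $\eta_0\in N_S(\Zl)\subset K_{0,n}$ that you invoke. Finally, a minor point: you describe $K_{m,n}\cap\delta K_{m,n}\delta^{-1}$ as cut out by $B\in\ell M_2(\Zl)$, but that condition actually cuts out $K_{m,n}\cap\delta^{-1}K_{m,n}\delta$; the correct extra condition for $K_{m,n}\cap\delta K_{m,n}\delta^{-1}$ is $C\equiv 0\bmod\ell^{n+1}$ on the lower-left block (the index is $\ell^3$ either way, but the mixup is symptomatic of the handedness issue above).
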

  
  \begin{proof} Writing $K = K_{\Gl}(\ell^m, \ell^n)$ for brevity, we have
   \begin{align*}
    U'(\ell) \cdot \grZ\left( \uphi_{1, \infty} \otimes \ch(\eta_m K )\right) &= \sum_{u, v, w \in \ZZ/\ell} \grZ\left( \uphi_{1, \infty} \otimes \ch\left(\eta_m \begin{smatrix} \ell & & u & v \\ &\ell&w&u\\&&1\\&&&1\end{smatrix} K_{m, n}\right)\right)\\
    &= \frac{1}{\vol K_{\Hl, 1}(\ell^n)} \sum_{u, v, w} \grZ\left( \left( \stbt{\ell}{v}{}{1}, \stbt{\ell}{w}{}{1} \right)^{-1} \uphi_{1, n} \otimes \ch(\eta_{m+1}^{(1 + \ell^m u)} K)\right)\\
    &= \frac{\ell^2}{\vol K_{\Hl, 1}(\ell^n)} \sum_u \grZ\left(  \ch( \ell^{n+1} \Zl \times (1 + \ell^n \Zl))^{\otimes 2} \otimes \ch(\eta_{m+1}^{(1 + \ell^m u)} K)\right)\\
    &= \sum_u \grZ\left( \uphi_{1, \infty} \otimes \ch(\eta_{m+1}^{(1 + \ell^m u)} K\right).
   \end{align*}
   There are now two cases to consider. If $m \ge 1$ then all terms in this sum are actually equal, since the powers of $\eta_{m+1}$ are conjugate via elements of the form $\dfour{a}{a}{1}{1}$ (with $a \in 1 + \ell^m \Zl$), which are in $K$ and act trivially on the Schwartz function $\uphi_{1, n}$; so the sum is simply $\ell \grZ\left( \uphi_{1, \infty} \otimes \ch(\eta_m K)\right)$ as required. If $m = 0$, then $\ell-1$ of the terms are conjugate, but the term for $u = -1$ requires special consideration since $1 + \ell^m u = 0$; thus we obtain
   \[ \left[ U'(\ell) -1\right] \cdot \grZ\left( \uphi_{1, \infty} \otimes \ch(K )\right) = (\ell - 1) \grZ\left( \uphi_{1, \infty} \otimes \ch(\eta_1 K )\right),\]
   which proves the formula in this case also.
  \end{proof}
  
  We also have an analogous result for $n = 0$, under rather stricter hypotheses. We take for $W$ the smooth dual $\sigma^\vee$ of an essentially tempered, unramified principal series representation of $\Gl$. We shall suppose that $\grZ \in \mathfrak{X}(\sigma^\vee)$ factors through a certain induced representation of $H$: more precisely, we shall take pairs of characters $\uchi = (\chi_1, \chi_2)$ and $\upsi = (\psi_1, \psi_2)$ of $\Ql^\times$ with $\psi_1 = \psi_2 = |\cdot|^{-1/2}$ and $\chi_i = |\cdot|^{k_i + 1/2} \tau_i$ for finite-order characters $\tau_i$ and positive integers $k_i$, so our setup is similar to Theorem \ref{thm:tamenormrel} except that we do \emph{not} assume the $\tau_i$ to be unramified. We then have a natural map 
  \[ 
   \cS(\Ql^2, \CC)^{\otimes 2} \rTo_{\uphi \mapsto F_{\uphi}} I_H(\uchi, \upsi),
  \]
  and we suppose that $\grZ$ factors through this map.
  
  \begin{corollary}
   \label{cor:tamenormfinal}
   In this situation, we have
   \[ 
    \grZ\left(\uphi_{1, \infty} \otimes \left(\ch(K_{\Gl}) - \ch(\eta_1 K_{\Gl})\right)\right) = \tfrac{\ell}{\ell-1} L(\sigma, -\tfrac{1}{2})^{-1} \cdot \grZ\left(\uphi_0, \ch(K_{\Gl})\right).
   \]
   In particular, if the $\tau_i$ are \emph{not} both unramified, then this holds vacuously (both sides of the formula are zero).
  \end{corollary}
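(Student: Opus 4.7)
The plan is to reduce the identity to an elementary Bessel-model computation. Both $\ch(K)$ and $\ch(\eta_1 K)$ are right-$K$-invariant in $\cH(G)$, so both sides of the identity lie in the one-dimensional space $(\sigma^\vee)^K$; it suffices therefore to check equality after evaluation on the spherical vector $\varphi_0 \in \sigma^K$. Using the correspondence $\grZ(\uphi \otimes \xi)(\varphi) = \mathfrak{z}_{\uchi,\upsi}(F_\uphi, \xi \cdot \varphi)$ from the preceding subsection, together with $\ch(K) \cdot \varphi_0 = \varphi_0$ and $\ch(\eta_1 K) \cdot \varphi_0 = \eta_1\varphi_0$, and Lemma \ref{lem:indept} for any $t \geq 1$, the LHS evaluated at $\varphi_0$ becomes
\[ \tfrac{1}{\vol K_{H,1}(\ell^t)}\bigl[\mathfrak{z}_{\uchi,\upsi}(F_{\uphi_{1,t}}, \varphi_0) - \mathfrak{z}_{\uchi,\upsi}(F_{\uphi_{1,t}}, \eta_1\varphi_0)\bigr]. \]

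The heart of the argument is a Bessel-model computation for $\eta_1\varphi_0$. Conjugation in $G(\Ql)$ yields $\dfour x x 1 1 \cdot \eta_1 = \eta_1^{(x)} \cdot \dfour x x 1 1$, so by the Bessel equivariance \eqref{eq:besselmodel},
\[ B_{\eta_1\varphi_0, \lambda}\bigl(\dfour x x 1 1\bigr) = e_\ell(x/\ell)\,B_{\varphi_0, \lambda}\bigl(\dfour x x 1 1\bigr). \]
Since $B_{\varphi_0, \lambda}(\dfour x x 1 1)$ vanishes for $|x| > 1$, equals $\mu_\lambda(\varphi_0) = 1$ on $\Zl^\times$ (as $\varphi_0$ is $K$-fixed), and $e_\ell(x/\ell) = 1$ for $x \in \ell\Zl$, only $x \in \Zl^\times$ contributes to the difference of Novodvorsky integrals. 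With $\eta = \psi_1\psi_2 = |\cdot|^{-1}$ (which is unramified), the elementary Gauss sum $\int_{\Zl^\times}[1 - e_\ell(x/\ell)]\,d^\times x = \tfrac{\ell}{\ell-1}$ then gives
\[ z_s(\varphi_0)(1) - z_s(\eta_1\varphi_0)(1) = \tfrac{\ell}{\ell-1}\,L(\sigma \otimes \eta, 2s + \tfrac{1}{2})^{-1}, \]
which at $s = 0$ specialises to $\tfrac{\ell}{\ell-1}L(\sigma, -\tfrac{1}{2})^{-1}$.

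To assemble, one observes that $\phi_{1,t}(-x, 0) = 0$ for $t \geq 1$, so $f_{\uphi_{1,t}, \upsi, \uchi}$ is supported on $B_H(\Ql) K_{H,1}(\ell^t)$, and the Iwasawa-decomposition argument from the proof of Theorem \ref{thm:tamenormrel} applies verbatim to give
\[ \mathfrak{z}_{\uchi,\upsi}(F_{\uphi_{1,t}}, \varphi) = \frac{\vol\bigl(B_H(\Zl) K_{H,1}(\ell^t)\bigr) \cdot f_{\uphi_{1,t}, \upsi, \uchi}(1)}{L(\chi_1/\psi_1, 1) L(\chi_2/\psi_2, 1)}\lim_{s \to 0}\bigl[L(\psi_1/\chi_1, 1+2s) L(\psi_2/\chi_2, 1+2s) z_s(\varphi)(1)\bigr]. \]
Direct volume calculations show $\vol(B_H(\Zl) K_{H,1}(\ell^t)) \cdot f_{\uphi_{1,t}, \upsi, \uchi}(1) = \vol K_{H,1}(\ell^t) \cdot \prod_i L(\psi_i/\chi_i, 1)^{-1}$; substituting the Bessel identity above, all $L(\psi_i/\chi_i, \cdot)$ factors cancel, and dividing by $\vol K_{H,1}(\ell^t)$ and using $\mathfrak{z}_{\uchi,\upsi}(F_{\uphi_0}, \varphi_0) = \prod_i L(\chi_i/\psi_i, 1)^{-1}$ yields the claimed identity. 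In the ramified case, orthogonality of characters forces $f_{\hat\phi_0, \chi_i, \psi_i}(1) = 0$ and likewise $f_{\hat\phi_{1,t}, \chi_i, \psi_i}(1) = 0$; combined with the equivariance and support of these Siegel sections, both $F_{\uphi_0}$ and $F_{\uphi_{1,t}}$ vanish identically, so both sides are zero. The main obstacle is the bookkeeping of the various volumes and $L$-factors so that the $t$-dependence cancels correctly; the Bessel/Gauss-sum computation itself is routine.
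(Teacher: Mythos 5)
Your unramified-case computation is correct and takes a genuinely different route from the paper. The paper deduces the identity by combining Proposition \ref{prop:wildnormrel} (with $m=0$, $n=1$, summed over $K/K_{G,0}(\ell)$) with \emph{both} formulae of Theorem \ref{thm:tamenormrel}, and watching the factors $\left(1 - \ell^{k_i}/\tau_i(\ell)\right)$ cancel. You instead reduce to pairing against $\varphi_0$ (valid, since both sides lie in the one-dimensional space $(\sigma^\vee)^K$), invoke multiplicity one to replace $\mathfrak{z}$ by $\mathfrak{z}_{\uchi,\upsi}$, and compute $B_{\eta_1\varphi_0,\lambda}(\diag(x,x,1,1)) = e_\ell(x/\ell)B_{\varphi_0,\lambda}(\diag(x,x,1,1))$ directly, so that the difference of Novodvorsky integrals collapses to the Gauss sum $\int_{\Zl^\times}(1-e_\ell(x/\ell))\,d^\times x = \tfrac{\ell}{\ell-1}$ times $L(\sigma,-\tfrac12)^{-1}$. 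I have checked your volume identity ($\vol(B_H(\Zl)K_{H,1}(\ell^t)) = \vol K_{H,0}(\ell^t)$ and $f_{\uphi_{1,t},\upsi,\uchi}(1) = ((\ell-1)\ell^{t-1})^{-2}\prod_i L(\psi_i/\chi_i,1)^{-1}$, whose product is $\vol K_{H,1}(\ell^t)\prod_i L(\psi_i/\chi_i,1)^{-1}$) and the $t$-dependence does cancel. This is arguably more illuminating than the paper's argument, as it exhibits the Euler factor $L(\sigma,-\tfrac12)^{-1}$ as a single Bessel-model computation rather than as the difference of two applications of Theorem \ref{thm:tamenormrel}. Note also that since the corollary assumes $k_i \ge 1$, the limit defining $\mathfrak{z}_{\uchi,\upsi}$ is a genuine evaluation at $s=0$, so no pole bookkeeping is needed.

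The ramified case, however, has a genuine gap. Your claim that $F_{\uphi_{1,t}}$ vanishes identically when some $\tau_i$ is ramified is false: unlike $\uphi_0$ and $\uphi_{0,1} = \ch(\ell\Zl\times\Zl^\times)^{\otimes 2}$, the function $\uphi_{1,t} = \ch(\ell^t\Zl\times(1+\ell^t\Zl))^{\otimes 2}$ is \emph{not} invariant under the centre $\Zl^\times\times\Zl^\times$ of $\GL_2\times\GL_2$, and the values $f_{\hat\phi_{1,t},\chi_i,\psi_i}(g)$ for general $g$ are Gauss sums that do not vanish for ramified $\tau_i$ (only the value at $g=1$ does). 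More fundamentally, your identification of the arbitrary $\grZ$ with the Bessel-model functional $\mathfrak{z}_{\uchi,\upsi}$ rests on Theorem \ref{thm:KMS}, which is only available for \emph{unramified} $\uchi,\upsi$; for ramified $\tau_i$ the Hom-space is not known to be one-dimensional, and the character $\lambda$ built from the $\chi_i$ is ramified, so $B_{\varphi_0,\lambda}$ and the normalisation $\mu_\lambda(\varphi_0)=1$ are no longer usable as stated. This is exactly why the paper first passes through Proposition \ref{prop:wildnormrel} and the averaging over the diagonal torus of $H(\Zl)$ (which fixes $\ch(K)$ and $\ch(K_{G,0}(\ell))$ on the left): this converts the $\uphi_{1,\infty}$-expressions into expressions in the centre-invariant function $\uphi_{0,1}$, whose image $F_{\uphi_{0,1}}$ genuinely is zero for ramified central character, killing both sides without any appeal to multiplicity one. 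You need to incorporate this averaging step (or an equivalent argument showing that $F_{\uphi_{1,t}}\otimes(\varphi_0 - \eta_1\varphi_0)$ lies in the kernel of \emph{every} $H$-invariant functional) to handle the ramified case, which is the case actually needed in Proposition \ref{prop:ESnorm-nonint} for the twists by Dirichlet characters modulo $M$.
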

  
  \begin{proof}
   Since $K_{\Gl, 0}(\ell)$ fixes $\ch(K_{\Gl})$ on the left, we have
   \[ \grZ(\uphi_{1, \infty} \otimes \ch(K_{\Gl})) = \frac{1}{\vol K_{\Hl, 0}(\ell)} \grZ\left(\uphi_{1} \otimes \ch(K_{\Gl}) \right) = (\ell + 1)^2 \grZ\left(\uphi_1 \otimes \ch(K_{\Gl}) \right). \]
   where (as in \S\ref{sect:Hindrep}) $\uphi_1 = \ch(\ell \Zl \times \Zl^\times)^{\otimes 2}$. On the other hand, taking $m=0$ and $n=1$ in Proposition \ref{prop:wildnormrel}, we have
   \[ 
    \grZ\left(\uphi_{1, \infty} \otimes \ch(\eta_1 K_{\Gl, 1}(\ell))\right) = \frac{U'(\ell) - 1}{\ell-1} \grZ\left(\uphi_{1, \infty} \otimes \ch(K_{\Gl, 1}(\ell))\right).
   \]
   Since the action of the quotient $K_{\Gl, 0}(\ell) / K_{\Gl, 1}(\ell) \cong \GL_2(\ZZ / \ell)$ commutes with the Hecke operator $U'(\ell)$, we can sum over representatives for the quotient to deduce that
   \[ 
    \grZ\left(\uphi_{1, \infty} \otimes \ch(\eta_1 K_{\Gl})\right) = \frac{(\ell + 1)^2}{\ell-1} \sum_{\gamma \in K_{\Gl} / K_{\Gl,0}(\ell)} \gamma \cdot (U'(\ell) - 1) \grZ\left(\uphi_{1} \otimes \ch(K_{\Gl, 0}(\ell))\right).
   \]
   Combining these two formulae we have
   \begin{multline*} 
    \grZ\left(\uphi_{1, \infty} \otimes \left(\ch(K_{\Gl}) - \ch(\eta_1 K_{\Gl})\right)\right) = 
    (\ell + 1)^2 (1 + \tfrac{1}{\ell-1}) \grZ\left(\uphi_{1} \otimes \ch(K_{\Gl}) \right)
    \\ - \tfrac{(\ell+1)^2}{\ell-1} \sum_{\gamma \in K / K_{\Gl, 0}(\ell)} \gamma U'(\ell) \cdot \grZ\left(\uphi_{1} \otimes \ch(K_{\Gl, 0}(\ell))\right).
   \end{multline*}
   We can now quickly dispose of the ramified cases. The map $\uphi \mapsto F_{\uphi}$ is a morphism of $(\GL_2 \times \GL_2)$-representations (not only of $H$-representations). Moreover, the elements $\uphi_0$ and $\uphi_1$ are the characteristic functions of subsets of $\Ql^2 \times \Ql^2$ invariant under $\Zl^\times \times \Zl^\times$; hence their images in any representation of $\GL_2 \times \GL_2$ with ramified central character must be zero. Hence $F_{\uphi_0}$ and $F_{\uphi_{1}}$ are both zero, and the desired formula becomes $0 = 0$, if either of the characters $\tau_i$ is ramified.
   
   Let us now assume that the $\tau_i$ are unramified, which means we may apply Theorem \ref{thm:tamenormrel}. Translating to the homomorphism $\grZ$ from its corresponding bilinear form $\grz$, the first statement in the theorem (for $t = 1$) becomes
   \[ 
    (\ell + 1)^2 (1 + \tfrac{1}{\ell-1}) \grZ\left(\uphi_{1} \otimes \ch(K_{\Gl}) \right) = \tfrac{\ell}{\ell-1} \left( 1 - \tfrac{\ell^{k_1}}{\tau_1(\ell)}\right) \left( 1 - \tfrac{\ell^{k_2}}{\tau_2(\ell)}\right) \cdot \grZ\left(\uphi_0 \otimes \ch(K_{\Gl})\right).
   \]
   On the other hand, the second statement of Theorem \ref{thm:tamenormrel} gives us
   \begin{multline*} 
    \tfrac{(\ell+1)^2}{\ell-1} \sum_{\gamma \in K_{\Gl} / K_{\Gl, 0}(\ell)} \gamma U'(\ell) \cdot \grZ\left(\uphi_{1} \otimes \ch(K_{\Gl, 0}(\ell))\right) \\
    = \tfrac{\ell}{\ell-1} 
    \left[
     \left( 1 - \tfrac{\ell^{k_1}}{\tau_1(\ell)}\right) 
     \left( 1 - \tfrac{\ell^{k_2}}{\tau_2(\ell)}\right) 
     - L(\sigma, -\tfrac12)^{-1} 
    \right]
    \grZ\left(\uphi_0 \otimes \ch(K_{\Gl})\right).
   \end{multline*}
   Combining these two formulae, the ``extra'' Euler factors coming from the $\tau_i$ cancel out, and we are left with the desired formula.
  \end{proof}

\section{Preliminaries II: Algebraic representations and Lie theory}

 \subsection{Representations of $G$}

  We recall the parametrization of algebraic representations of the group $\GSp_4$.

  \begin{notation}
   We write $T$ for the diagonal torus of $G$ (as in \S \ref{sect:groups} above), and we write $\chi_1, \dots, \chi_4$ for characters of $T$ given by projection onto the four entries. Thus $\chi_1 + \chi_4 = \chi_2 + \chi_3$ is the restriction to $T$ of the symplectic multiplier $\mu$, and $\{\chi_1, \chi_2, \mu\}$ is a basis of the character group $X^\bullet(T)$.
  \end{notation}
   
  \begin{definition}
   Let $a \ge 0, b \ge 0$ be integers. We denote by $V^{a,b}$ the unique (up to isomorphism) irreducible algebraic representation of $G$ whose highest weight, with respect to $B$, is the character $(a+b) \chi_1 + a \chi_2$.
  \end{definition}

  This representation has dimension $\frac{1}{6}(a + 1)(b + 1)(a + b + 2)(2a + b + 3)$. Its central character is $x \mapsto x^{2a + b}$, and it satisfies
  \[ (V^{a,b})^*  \cong V^{a,b} \otimes \mu^{-(2a+b)}.\]
  Note that $V^{0, 1}$ is the four-dimensional defining representation of $\GSp_4$, and $V^{1, 0}$ is the 5-dimensional direct summand of $\bigwedge^2 V^{0, 1}$. The representation $V^{1, 0} \otimes \mu^{-1}$ has trivial central character, and is the defining representation of $G / Z_G \cong \SO_5$.

 \subsection{Integral models}

  Let $\lambda = (a+b)\chi_1 + a\chi_2 + c\mu$, with $a, b \ge 0$, be a dominant integral weight, $V_\lambda$ the corresponding representation, and $v_\lambda$ a highest weight vector in $V_\lambda$. The pair $(V_\lambda, v_\lambda)$ is then unique up to unique isomorphism.

  An \emph{admissible lattice} in $V_\lambda$ is a $\ZZ$-lattice $L$ with the following properties:
  \begin{itemize}
   \item the homomorphism $\GSp_4 \to \GL(V_\lambda)$ extends to a homomorphism $\GSp_4 \to \GL(L)$ of group schemes over $\ZZ$;
   \item the intersection of $L$ with the highest weight space of $V_\lambda$ is $\ZZ \cdot v_\lambda$.
  \end{itemize}

  It is known that there are finitely many such lattices, each of which is the direct sum of its intersections with the weight spaces; and we set $V_{\lambda, \ZZ}$ to be the \emph{maximal} such lattice.

  \begin{proposition}
   \label{prop:Vabmult}
   Let $\lambda, \lambda'$ be dominant integral weights. Then there is a unique $G$-equivariant homomorphism, the \emph{Cartan product},
   \[ V_{\lambda, \ZZ} \otimes V_{\lambda', \ZZ} \to V_{\lambda + \lambda', \ZZ},\quad v \otimes w \mapsto v \cdot w \]
   such that $v_\lambda \cdot v_{\lambda'} = v_{\lambda + \lambda'}$. Moreover, for any non-zero $v \in V_\lambda$ and $v' \in V_{\lambda'}$, we have $v \cdot v' \ne 0$.
  \end{proposition}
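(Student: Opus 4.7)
The plan is to split Proposition \ref{prop:Vabmult} into three parts: existence and uniqueness of the Cartan product over $\QQ$, descent to the integral lattices, and non-vanishing on pairs of non-zero elements.

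First I would construct the Cartan product over $\QQ$. Because $\lambda$ is the highest weight of $V_\lambda$, every weight $\mu$ of $V_\lambda$ satisfies $\lambda - \mu \in \ZZ_{\ge 0}\langle\text{simple roots}\rangle$, so the only way to write $\lambda + \lambda' = \mu + \mu'$ with $\mu$ a weight of $V_\lambda$ and $\mu'$ a weight of $V_{\lambda'}$ is $\mu = \lambda$, $\mu' = \lambda'$. Hence the $(\lambda+\lambda')$-weight space of $V_\lambda \otimes V_{\lambda'}$ is one-dimensional, spanned by $v_\lambda \otimes v_{\lambda'}$. By complete reducibility in characteristic zero, $V_{\lambda+\lambda'}$ appears as a direct summand with multiplicity exactly one, and I take the Cartan product over $\QQ$ to be the projection to this summand, normalised so that $v_\lambda \otimes v_{\lambda'} \mapsto v_{\lambda+\lambda'}$; uniqueness follows immediately.

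Next I would descend to the integral lattices. Let $M$ denote the image of $V_{\lambda, \ZZ} \otimes_\ZZ V_{\lambda', \ZZ}$ under this $\QQ$-linear map. Since both source lattices are $G$-stable as group schemes over $\ZZ$ and the Cartan product is $G$-equivariant, $M$ is $G$-stable over $\ZZ$; it is also finitely generated, $\QQ$-spanning (as $v_{\lambda+\lambda'} \in M$), and $T$-graded. Its $(\lambda + \lambda')$-weight part equals the image of $V_{\lambda, \ZZ}[\lambda] \otimes_\ZZ V_{\lambda', \ZZ}[\lambda'] = \ZZ \cdot (v_\lambda \otimes v_{\lambda'})$, namely $\ZZ \cdot v_{\lambda+\lambda'}$. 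Thus $M$ is an admissible lattice in $V_{\lambda+\lambda'}$, and by maximality $M \subseteq V_{\lambda+\lambda', \ZZ}$. Uniqueness of the $\ZZ$-map is inherited from the $\QQ$-statement.

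The main obstacle is the non-vanishing. I would establish it via Borel--Weil. Letting $U^-$ denote the unipotent radical of an opposite Borel $B^-$ to $B$, the basic affine space $G/U^-$ is an irreducible quasi-affine variety whose coordinate ring $\cO(G/U^-)$ decomposes $G$-equivariantly as $\bigoplus_{\mu} V_\mu$ (summed over dominant weights of $G$), the $\mu$-component being the $\mu$-eigenspace for the right $T$-action. With the highest weight vectors normalised appropriately, multiplication of functions corresponds exactly to the Cartan products assembled above. Since $G/U^-$ is irreducible, $\cO(G/U^-)$ is an integral domain, so any product of non-zero elements is non-zero, which yields $v \cdot v' \ne 0$ for all non-zero $v \in V_\lambda$, $v' \in V_{\lambda'}$. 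An alternative, more elementary route would first show that $v' \mapsto v_\lambda \cdot v'$ is injective---its kernel is $B$-stable and cannot contain the $B$-fixed line $\QQ v_{\lambda'}$ since $v_\lambda \cdot v_{\lambda'} = v_{\lambda+\lambda'} \ne 0$, so by Lie--Kolchin it must vanish---but bootstrapping from $v_\lambda$ to a general non-zero $v$ is not entirely straightforward, which is why the Borel--Weil argument is cleaner.
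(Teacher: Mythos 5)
Your proposal matches the paper's proof essentially step for step: existence and uniqueness over $\QQ$ by highest-weight theory and multiplicity one, integrality by observing that the image of the tensor product of lattices is admissible and hence contained in the maximal lattice, and non-vanishing via the Borel--Weil identification of $\bigoplus_\mu V_\mu$ with a coordinate ring (the paper passes to $\cO(G)$, you to $\cO(G/U^-)$, but these are the same domain argument). The only difference is the amount of detail supplied; the mathematics is the same.
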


  \begin{proof}
   After tensoring with $\QQ$ the existence and uniqueness of this homomorphism is obvious from highest-weight theory. Hence the image of $V_{\lambda, \ZZ} \otimes V_{\lambda', \ZZ}$ is a $\ZZ$-lattice in $V_{\lambda + \lambda'}$, which is clearly admissible; so it must be contained in the maximal one, which is $V_{\lambda + \lambda', \ZZ}$.

   This product gives the ring $\bigoplus_{\lambda} V_{\lambda, \ZZ}$ the structure of a graded ring. The Borel--Weil theorem shows that this ring injects into $\cO(G)$, which is an integral domain; so the Cartan product of non-zero vectors is non-zero.
  \end{proof}

 \subsection{Branching laws}

  We are interested in the restriction of $V^{a,b}$ to $H$ via the embedding $\iota: H \into G$, which we shall denote by $\iota^* (V^{a,b})$. Computing the weights of these representations (and their multiplicities), one deduces the following branching law describing $\iota^*(V^{a,b})$:

  \begin{proposition}
   The restriction of $V^{a,b}$ to $H = \GL_2 \times_{\GL_1} \GL_2$ via $\iota$ is given by
   \[ \iota^*(V^{a,b}) = \bigoplus_{0 \le q \le a} \bigoplus_{0 \le r \le b}  W^{a+b-q-r, a-q+r} \otimes \sideset{}{^q}\det,\]
   where $W^{c,d}$ denotes the representation $\Sym^c \boxtimes \Sym^d$ of $H$.
  \end{proposition}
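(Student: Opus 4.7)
The strategy is a formal character comparison on the torus $T_H$: since $H$ is reductive, both sides of the desired isomorphism are semisimple $H$-representations, hence isomorphic if and only if their characters agree on $T_H$. Under $\iota$, identify $T_H$ with the subtorus of $T_G$ consisting of elements $\diag(\alpha,\beta,\gamma,\delta)$ satisfying $\alpha\delta = \beta\gamma$ (the common value being the $\det$ character of $H$, equivalently the restriction of $\mu$).

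The character of each summand $W^{c,d} \otimes \det^q$ on the right at such a point equals
\[
(\alpha\delta)^q \cdot \frac{\alpha^{c+1} - \delta^{c+1}}{\alpha - \delta} \cdot \frac{\beta^{d+1} - \gamma^{d+1}}{\beta - \gamma}, \qquad c = a+b-q-r,\ d = a-q+r,
\]
so summing over $0 \le q \le a$ and $0 \le r \le b$ gives an explicit polynomial expression for the right-hand side. For the left-hand side, apply the Weyl character formula for $\GSp_4$ at the highest weight $(a+b)\chi_1 + a\chi_2$. The Weyl group $W_G$ of $\GSp_4$ has order $8$ and contains $W_H \cong (\ZZ/2)^2$ (acting by independent swaps $\alpha \leftrightarrow \delta$ and $\beta \leftrightarrow \gamma$) as a subgroup of index $2$; the non-trivial coset is represented by the Weyl element exchanging the pair $\{\alpha,\delta\}$ with $\{\beta,\gamma\}$. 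Splitting the Weyl numerator along these two cosets, factoring out the $H$-Weyl denominator $(\alpha-\delta)(\beta-\gamma)$, and using the relation $\alpha\delta = \beta\gamma$ to rewrite unbalanced monomials between the two pairs of variables produces the double sum above: the parameter $q$ records the power of $\alpha\delta = \beta\gamma$ factored out, while $r$ tracks the residual degree imbalance.

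The main obstacle is the combinatorial bookkeeping at the end of this calculation ($\rho$-shifts and Weyl signs). A more economical alternative is to verify the formula first for the two fundamental representations $(a,b) \in \{(0,1),(1,0)\}$ directly from the explicit models — $V^{0,1}|_H = W^{1,0} \oplus W^{0,1}$ from the standard $4$-dimensional representation, and $V^{1,0}|_H = W^{1,1} \oplus \det$ from the primitive summand of $\wedge^2 V^{0,1}$ (the presence of a trivial $\SO_4$-line being consistent with the remark in \S\ref{sect:groups}) — and then to conclude the general case from two observations: the proposed right-hand side is multiplicity-free, since $(q,r)$ is recovered from the pair $(a+b-q-r, a-q+r)$, and its total dimension $\sum_{q,r}(a+b-q-r+1)(a-q+r+1)$ equals $\dim V^{a,b} = \tfrac{1}{6}(a+1)(b+1)(a+b+2)(2a+b+3)$ by a direct summation. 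It then suffices to produce, for each $(q,r)$ in the rectangle, a single non-zero $H$-highest-weight vector in $V^{a,b}$ of the appropriate $T_G$-weight $(a+b-r)\chi_1 + (a-q+r)\chi_2 + q\chi_4$; this is a routine check from the weight diagram of $V^{a,b}$.
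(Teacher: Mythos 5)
Your first approach --- comparing characters on $T_H$ (which under $\iota$ is in fact \emph{all} of $T_G$, since every diagonal $g\in\GSp_4$ already satisfies $\alpha\delta=\beta\gamma=\mu$) by splitting the $\GSp_4$ Weyl numerator along the two cosets of $W_H\cong(\ZZ/2)^2$ in $W_G$ --- is essentially the argument the paper has in mind: its entire proof is the sentence ``Computing the weights of these representations (and their multiplicities), one deduces the following branching law,'' and the Weyl-character-formula organization you describe is just one way to carry out that character comparison. You are right that the $H$-Weyl denominator divides the $G$-Weyl denominator, since the roots of $H$ are precisely the long roots of $\GSp_4$, so the splitting you propose is coherent; the $\rho$-shift bookkeeping you defer is genuine work but standard.

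Your second route is logically sound as far as you take it, and both of the facts you invoke check out: the right-hand side is multiplicity-free because $(q,r)$ is recovered from $(a+b-q-r,\,a-q+r)$, and the dimension identity $\sum_{q,r}(a+b-q-r+1)(a-q+r+1)=\tfrac16(a+1)(b+1)(a+b+2)(2a+b+3)$ holds. This does reduce the proposition to exhibiting, for each $(q,r)$, a non-zero $H$-highest-weight vector in $V^{a,b}$ of weight $(a+b-r)\chi_1+(a-q+r)\chi_2+q\chi_4$. But describing that last step as ``a routine check from the weight diagram'' overstates how automatic it is: the weight diagram tells you the weight occurs, not that the two $H$-raising operators have a common kernel in that weight space, and the crude estimate $m(\lambda)-m(\lambda+\alpha_1)-m(\lambda+\alpha_2)$ for the dimension of that kernel is not obviously positive without further information about multiplicities. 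What actually produces the vectors, with no multiplicity input at all, is the Cartan-product construction $v^{a,b,q,r}=w^{a-q}\cdot v^{b-r}\cdot(w')^q\cdot(v')^r$ that the paper sets up immediately afterwards, whose non-vanishing is Proposition~\ref{prop:Vabmult} (the injection of the Cartan ring into $\cO(G)$). With that substituted for the weight-diagram remark, your second argument is complete, and it has the mild advantage of establishing this proposition and Proposition~\ref{prop:vabqr} in one stroke.
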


  \begin{remark}
   Compare \cite[\S 1]{lemma17} for an equivalent, although less explicit, statement. In Lemma's notations the highest weight of our representation $V^{a,b}$ is $\lambda(a + b, a, 2a+b)$.
  \end{remark}

  For the constructions below it will be useful to fix choices of highest-weight vectors in each of these subrepresentations. For $0 \le q \le a$ and $0 \le r \le b$ we define a vector $v^{a,b,q,r} \in V^{a,b}_{\ZZ}$ as follows:
  \[ v^{a, b,q,r} = w^{a - q} \cdot v^{b-r} \cdot (w')^q \cdot (v')^r\]
  where the product operation is the Cartan product, and:
  \begin{itemize}
   \item $v \in V^{0, 1}$ is the highest-weight vector;
   \item $v' = X_{21} \cdot v$ is a basis of the $\chi_2$ weight space, where $X_{21} = \left(\begin{smallmatrix} 0 & 0 & 0 & 0 \\ 1 & 0 & 0 & 0 \\0 & 0 & 0 & 0 \\ 0 & 0 & -1 & 0 \end{smallmatrix}\right) \in \operatorname{Lie} G$.
   \item $w$ is the highest-weight vector of $V^{1, 0}$.
   \item $w' = Z \cdot w$ is a basis of the $\mu$ weight space of $V^{1,0}$, where 
   \( Z = \left(
    \begin{smallmatrix} 
     0 & 0 & 0 & 0 \\ 
     0 & 0 & 0 & 0 \\
     1 & 0 & 0 & 0 \\ 
     0 & 1 & 0 & 0 
    \end{smallmatrix}\right) \in \operatorname{Lie} G\).
  \end{itemize}

  \begin{remark}
   We can identify $V^{0, 1}$ with the standard representation of $\GSp_4 \subseteq \GL_4$, with basis $(e_1, \dots, e_4)$, by choosing the highest-weight vector $v = e_1$; of course we then have $v'= e_2$. Moreover, we can identify $V^{1, 0}$ with a subspace of $\bigwedge^2 V^{0, 1}$, by choosing $e_1 \wedge e_2$ for the highest-weight vector $w$; and it follows that $w'$ is the vector $e_1 \wedge e_4 - e_2 \wedge e_3$.
  \end{remark}

  \begin{proposition}
   \label{prop:vabqr}
   For all integers $0 \le q \le a$ and $0 \le r \le b$, the vector $v^{a,b,q,r}$ thus defined is a non-zero highest-weight vector for the unique irreducible $H$-summand of $\iota^*(V^{a,b})$ isomorphic to $W^{a+b-q-r, a-q+r} \otimes \sideset{}{^q}\det$.
  \end{proposition}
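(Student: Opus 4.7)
My plan is to verify that $v^{a,b,q,r}$ is (i) non-zero, (ii) annihilated by the positive root spaces of the Borel $B_H$ of $H$, and (iii) of $T$-weight matching the highest weight of $W^{a+b-q-r,\, a-q+r} \otimes \det^q$; together these will identify it as a generating highest-weight vector of the unique summand of $\iota^*(V^{a,b})$ with that isomorphism class.

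For (ii), I would first identify the positive root vectors of $H$ as elements of $\operatorname{Lie}(G)$: by the formula for $\iota$, the upper-triangular root vectors of the two $\GL_2$ factors map to the matrix units $E_{14}$ and $E_{23}$ in $\operatorname{Lie}(G)$. Using the concrete models $v = e_1$, $v' = e_2$, $w = e_1 \wedge e_2$, and $w' = e_1 \wedge e_4 - e_2 \wedge e_3$ recorded in the remark preceding the proposition, one then checks directly that each of these four vectors is killed by both $E_{14}$ and $E_{23}$; the only case with any content is $w'$, where $E_{14}(e_1\wedge e_4) = e_1 \wedge e_1$ and $E_{23}(e_2\wedge e_3) = e_2 \wedge e_2$ vanish by skew-symmetry, while all remaining contributions vanish trivially. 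Since the Cartan product is $G$-equivariant and hence $H$-equivariant, and since the tensor (and therefore Cartan) product of $B_H$-highest-weight vectors is again $B_H$-highest-weight, $v^{a,b,q,r}$ is also annihilated by $E_{14}$ and $E_{23}$. Non-vanishing (i) follows at once from iterated application of the non-vanishing clause of Proposition \ref{prop:Vabmult}, since each of $v, v', w, w'$ is patently non-zero.

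For (iii), the $T$-weights of $v, v', w, w'$ are $\chi_1$, $\chi_2$, $\chi_1 + \chi_2$, and $\mu = \chi_1 + \chi_4$ respectively (the last since $Z$ is a root vector for $\chi_3 - \chi_1$, so it carries the weight $\chi_1 + \chi_2$ of $w$ to $\mu$). Summing the contributions, the weight of $v^{a,b,q,r}$ equals $(a-q)(\chi_1+\chi_2) + (b-r)\chi_1 + q\mu + r\chi_2 = (a+b-q-r)\chi_1 + (a-q+r)\chi_2 + q\mu$. To match this with the highest weight of $W^{a+b-q-r,\, a-q+r} \otimes \det^q$, observe that the standard characters of the Borel of the first (resp.~second) $\GL_2$ factor of $H$ pull back via $\iota$ to $\chi_1$ (resp.~$\chi_2$), while the common determinant character of $H$ pulls back to $\mu$; this gives the desired agreement, and the proposition follows. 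The only mildly delicate points in the whole argument are the skew-symmetric cancellation ensuring $B_H$-invariance of $w'$, and the alignment of weight conventions between the $H$- and $T$-sides, but neither presents a substantive obstacle.
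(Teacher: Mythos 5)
Your proof is correct and is essentially the same as the paper's, which likewise observes that $v^{a,b,q,r}$ is a Cartan product of non-zero $H$-highest-weight vectors (hence non-zero and $H$-highest-weight by Proposition \ref{prop:Vabmult}) and then finishes by comparing weights. You have merely spelled out the concrete verification — identifying the positive root vectors of $H$ as $E_{14}$ and $E_{23}$, checking directly that $v, v', w, w'$ are annihilated by them, and computing the resulting $T$-weight — which the paper leaves implicit.
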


  \begin{proof}
   Since $v^{a,b,q,r}$ is a Cartan product of non-zero $H$-highest-weight vectors (i.e.~vectors fixed by the action of the unipotent radical of the Borel of $H$), it is itself a non-zero $H$-highest-weight vector, and thus generates an irreducible $H$-subrepresentation of $V^{a,b}$. The result now follows by comparing weights.
  \end{proof}

  Since the representation $W^{c,d}$ of $H$ has a canonical highest-weight vector (namely $e_1^c \boxtimes f_1^d$, where $(e_1, e_2)$ and $(f_1, f_2)$ are bases of the standard representations of the two $\GL_2$ factors), we therefore have a canonical homomorphism of $H$-representations
  \begin{equation}
   \label{eq:brab}
   \br^{[a,b,q,r]}:W^{a+b-q-r, a-q+r} \otimes \sideset{}{^q}\det \into \iota^* \left(V^{a,b}\right)
  \end{equation}
  mapping the highest-weight vector to $v^{a,b,q,r}$. We refer to these homomorphisms as \emph{branching maps}.
  
  \begin{proposition}
   \label{prop:branchingint}
   The maps $\br^{[a,b,q,r]}$ restrict to maps
   \[ W^{a+b-q-r, a-q+r}_{\ZZ} \otimes \sideset{}{^q}\det \into V^{a,b}_\ZZ, \]
   where $W^{a+b-q-r, a-q+r}_{\ZZ}$ is the minimal\footnote{Note that the minimal admissible lattice in the representation $\Sym^k$ of $\GL_2$ is isomorphic to the module $\operatorname{TSym}^k \ZZ^2$ of symmetric tensors, while $\Sym^k \ZZ^2$ is the maximal lattice.} admissible lattice in $W^{a+b-q-r, a-q+r}$.
  \end{proposition}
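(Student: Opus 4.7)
The plan is to reduce the integrality of $\br^{[a,b,q,r]}$ to showing that the single vector $v^{a,b,q,r}$ lies in the maximal admissible lattice $V^{a,b}_{\ZZ}$. Let $c = a+b-q-r$ and $d = a-q+r$, and let $v_{c,d} = e_1^{\otimes c} \boxtimes f_1^{\otimes d}$ be the standard highest-weight vector of $\TSym^c \ZZ^2 \boxtimes \TSym^d \ZZ^2 = W^{c,d}_{\ZZ}$. By Kostant's description of the minimal admissible lattice for a split reductive $\ZZ$-group scheme, $W^{c,d}_{\ZZ}$ coincides with the cyclic module $U_{H,\ZZ} \cdot v_{c,d}$, where $U_{H,\ZZ}$ is the Kostant $\ZZ$-form of $U(\mathrm{Lie}(H))$, spanned by divided-power monomials in the root vectors. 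The twist by $\det^q$ modifies the $U_{H,\ZZ}$-action but leaves the underlying $\ZZ$-lattice unchanged, so the same description holds for $W^{c,d}_{\ZZ} \otimes \det^q$.

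Since $\br^{[a,b,q,r]}$ is $H$-equivariant over $\QQ$, it automatically intertwines the actions of $U_{H,\ZZ}$, where the action on the target is defined via $d\iota: U_{H,\ZZ} \to U_{G,\ZZ}$ (which is a morphism of Kostant $\ZZ$-forms because $\iota$ is defined over $\ZZ$). The lattice $V^{a,b}_{\ZZ}$ is $G$-stable as a $\ZZ$-group-scheme representation, hence $U_{G,\ZZ}$-stable. Consequently
\[
 \br^{[a,b,q,r]}\bigl(W^{c,d}_{\ZZ} \otimes \det\nolimits^q\bigr) = U_{H,\ZZ} \cdot v^{a,b,q,r} \subseteq V^{a,b}_{\ZZ},
\]
provided $v^{a,b,q,r} \in V^{a,b}_{\ZZ}$.

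To verify the latter, recall that $v^{a,b,q,r} = w^{a-q} \cdot v^{b-r} \cdot (w')^{q} \cdot (v')^{r}$ is an iterated Cartan product. By Proposition \ref{prop:Vabmult} the Cartan product respects the maximal admissible lattices, so it suffices to show $v, v' \in V^{0,1}_{\ZZ}$ and $w, w' \in V^{1,0}_{\ZZ}$. The first is immediate: $V^{0,1}_{\ZZ} = \ZZ^4$ is the lattice of the defining representation, of which $v = e_1$ and $v' = X_{21} \cdot e_1 = e_2$ are standard basis vectors. For the second, $w = e_1 \wedge e_2$ and $w' = e_1 \wedge e_4 - e_2 \wedge e_3$ lie in $\bigwedge^2 \ZZ^4$, which is a $G$-stable lattice in $\bigwedge^2 V^{0,1}$; intersecting it with the direct summand $V^{1,0}$ yields a $G$-stable $\ZZ$-lattice of $V^{1,0}$ meeting the highest-weight line $\QQ w$ in $\ZZ w$, hence an admissible lattice, and therefore contained in the maximal admissible lattice $V^{1,0}_{\ZZ}$.

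The one delicate step is the appeal to Kostant's structure theorem identifying $W^{c,d}_{\ZZ}$ with the cyclic $U_{H,\ZZ}$-orbit of $v_{c,d}$; a more elementary alternative is to observe that the preimage $(\br^{[a,b,q,r]})^{-1}(V^{a,b}_{\ZZ})$ is an $H$-stable $\ZZ$-lattice in $W^{c,d} \otimes \det^q$ containing $v_{c,d}$, and to check directly (using that $v^{a,b,q,r}$ generates the highest-weight line in $\iota^*(V^{a,b})$) that this preimage is admissible, whence by the minimality of $W^{c,d}_{\ZZ}$ it contains $W^{c,d}_{\ZZ} \otimes \det^q$.
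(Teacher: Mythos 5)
Your proposal is correct, and the paper's own proof is exactly the ``more elementary alternative'' you sketch in the final paragraph: the preimage $(\br^{[a,b,q,r]})^{-1}(V^{a,b}_\ZZ)$ is an $H_\ZZ$-stable lattice containing the highest-weight vector, hence contains the minimal admissible lattice. Your main argument, via the Kostant $\ZZ$-form $U_{H,\ZZ}$ and the identification $W^{c,d}_\ZZ = U_{H,\ZZ}\cdot v_{c,d}$, is really the same argument turned around; it makes explicit where the ``minimal = cyclic over the hyperalgebra'' input enters, which the paper leaves implicit behind the word ``admissible''. Two small remarks. First, the step asserting that $d\iota$ carries $U_{H,\ZZ}$ into $U_{G,\ZZ}$ is needed in \emph{both} formulations (it is what makes $\iota^*(V^{a,b}_\ZZ)$ an $H_\ZZ$-stable lattice, not just an $H(\ZZ)$-stable one), and while you state it a bit casually, it does hold here because the simple root vectors of $H$ map under $\iota$ to single root vectors $E_{14}$, $E_{23}$ of $G$, so divided powers go to divided powers. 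Second, your explicit check that $v,v'\in V^{0,1}_\ZZ$ and $w,w'\in V^{1,0}_\ZZ$, followed by Proposition \ref{prop:Vabmult}, supplies a proof of $v^{a,b,q,r}\in V^{a,b}_\ZZ$ that the paper simply builds into the definition of $v^{a,b,q,r}$; this is a useful fleshing-out rather than a divergence. In the alternative version, note that the preimage need not itself be admissible (its highest-weight intersection could a priori be a proper overlattice of $\ZZ v_{c,d}$); what matters is only that it is $H_\ZZ$-stable and contains $v_{c,d}$, and that the minimal admissible lattice is the smallest lattice with these two properties --- which again is Kostant's theorem, so the two routes are genuinely the same.
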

  
  \begin{proof}
   It is clear that $(\br^{[a,b,q,r]})^{-1}\left(V^{a,b}_\ZZ\right)$ is a lattice in $W^{a+b-q-r, a-q+r}\otimes\det^q$ stable under the action of $H$, and since $v^{a,b,q,r} \in V^{a,b}_\ZZ$, the intersection of this lattice with the highest-weight subspace contains the highest-weight vector $e_1^{a+b-q-r} \boxtimes f_1^{a-q+r}$. Hence this lattice must contain the minimal admissible lattice in $W^{a+b-q-r, a-q+r}$.
  \end{proof}

 \subsection{A Lie-theoretic computation}
  
  As in \S\ref{sect:groups} above, let $T' \subset T$ be the rank-1 split torus $\dfour{x}{x}{1}{1}$; and let $u$ be the element $\left(\begin{smallmatrix} 1 & & 1 & \\ & 1 &  & 1 \\ &&1\\&&&1 \end{smallmatrix}\right)$ of $G(\ZZ)$.

  Since $T'$ is split, the representations $V^{a,b}$ are the direct sums of their weight spaces relative to $T'$, with weights between $0$ and $2a + b$; and the $T'$-weight of $v^{a,b,q,r}$ is $2a + b - q$. The purpose of this section is to prove the following result, which will be used in \S \ref{sect:twistcompat}:

  \begin{lemma}
   \label{lemma:lielemmaV}
   Let $v = v^{a,b,q,r} \in V^{a,b}$ be one of the above $H$-highest-weight vectors. Then for any non-zero integer $h$, the projection of $u^h(v)$ to the highest $T'$-weight space is given by $(2h)^q v^{a,b,0,r}$.
  \end{lemma}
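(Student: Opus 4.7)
The plan is to compute $u^h(v^{a,b,q,r})$ directly using that the Cartan product is $G$-equivariant, after first working out how $u^h$ acts on the four ``building block'' vectors $v, v', w, w'$.

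First, observe that $u = \exp(X)$ where $X = E_{13} + E_{24} \in \operatorname{Lie} G$, and since $X^2 = 0$, we have $u^h = \exp(hX) = 1 + hX$, so $u^h$ acts on any representation via $\exp(hX)$ in the corresponding Lie algebra representation. Using the description of $V^{0,1}$ and $V^{1,0}$ in terms of the standard basis $(e_1, \ldots, e_4)$ given in the remark preceding Proposition \ref{prop:vabqr}, one computes directly that $X$ kills each of $v = e_1$, $v' = e_2$, and $w = e_1 \wedge e_2$, while $X w' = X(e_1 \wedge e_4 - e_2 \wedge e_3) = e_1 \wedge e_2 + e_1 \wedge e_2 = 2w$. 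Hence $u^h$ fixes $v, v', w$ and sends $w' \mapsto w' + 2h\, w$. This is where the factor of $2$ arises.

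Next, since the Cartan product $V^{\lambda} \otimes V^{\lambda'} \to V^{\lambda+\lambda'}$ is $G$-equivariant (Proposition \ref{prop:Vabmult}), $u^h$ acts multiplicatively on Cartan products. Therefore
\begin{align*}
 u^h(v^{a,b,q,r}) &= w^{a-q} \cdot v^{b-r} \cdot (w' + 2hw)^{q} \cdot (v')^{r} \\
 &= \sum_{k=0}^{q} \binom{q}{k} (2h)^{k}\, w^{a-q+k} \cdot v^{b-r} \cdot (w')^{q-k} \cdot (v')^{r} \\
 &= \sum_{k=0}^{q} \binom{q}{k} (2h)^{k}\, v^{a,b,q-k,r},
\end{align*}
using the binomial expansion of the Cartan power $(w' + 2hw)^q$.

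Finally, each summand $v^{a,b,q-k,r}$ lies in the $S$-weight space of weight $2a+b-(q-k)$, so the terms occupy distinct $S$-weight spaces; only $k = q$ contributes to the top weight $2a+b$, and this contribution is exactly $(2h)^q v^{a,b,0,r}$, which is the desired formula. There is no serious obstacle here: the only subtlety is being careful with the definition of the Cartan product (and in particular that $u^h$ really does act multiplicatively on it), which is guaranteed by $G$-equivariance.
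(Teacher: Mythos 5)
Your proof is correct and takes essentially the same approach as the paper: you identify that $v,v',w$ are fixed by $u^h$ while $u^h(w')=w'+2h\,w$, and then push through the Cartan product using its $G$-equivariance. The paper is terser (it derives the fixedness of $v,v',w$ from the fact that they lie in the top $S$-weight space, rather than by explicit matrix computation, and leaves the binomial expansion implicit), but the substance is identical.
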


  \begin{proof}
   Recall that $v^{a,b,q,r} = v^{b-r} \cdot (v')^r \cdot w^{a - q} \cdot( w')^q$. The vectors $v$, $v'$, and $w$ all lie in the highest $T'$-weight subspaces of their parent representations, so they are fixed by $u$. Hence it suffices to check that the projection of $u^h(w')$ to the highest $T'$-weight subspace of $V^{1,0}$ is non-trivial; and one computes easily that $u^h(w') = w' + 2h w$.
  \end{proof}
  
\section{Modular varieties}

 \subsection{Modular curves}

  We fix conventions for modular curves.

  \begin{definition} \
   \begin{enumerate}[(a)]
    \item For $N \ge 3$, we let $Y(N)$ be the $\QQ$-variety pararametrising triples $(E, e_1, e_2)$, where $E$ is an elliptic curve (over some $\QQ$-algebra $R$) and $e_1, e_2 \in E(R)[N]$ are a basis of the $N$-torsion of $E$.
    \item If $\Sigma$ is a finite set of primes containing all those dividing $N$, we write $Y(N)_{\Sigma}$ for the natural model of $Y(N)$ over $\ZZ[1/\Sigma]$ (representing the same functor on $\ZZ[1/\Sigma]$-algebras).
   \end{enumerate}
  \end{definition}
  
  We identify $Y(N)(\CC)$ with the double quotient
  \[ \GL_2^+(\QQ) \backslash \left( \GL_2(\AA_f) \times \cH \right) / U(N), \]
  where $U(N) \subset \GL_2(\hat\ZZ)$ is the principal congruence subgroup of level $N$ and $\cH$ is the upper half-plane, in such a way that:
  \begin{itemize}
   \item The double coset of $(1, \tau)$, for $\tau \in \cH$, corresponds to the triple
   \[ \left( \frac{\CC}{\ZZ \tau + \ZZ}, \frac{\tau}{N}, \frac{1}{N}\right).\]
   \item The right-translation action of $g \in \GL_2(\hat\ZZ)$ on the double quotient corresponds to the action on $Y(N)(\CC)$ given by
   \[
    (E, e_1, e_2) \cdot g =
    (E, e_1', e_2'), \quad
    \begin{pmatrix} e_1'\\e_2' \end{pmatrix} = g^{-1} \cdot \begin{pmatrix} e_1\\e_2 \end{pmatrix}.
   \]
  \end{itemize}
  If $g \in \SL_2(\ZZ / N\ZZ)$ then the above action of $g$ on $Y(N)(\CC)$ coincides with the action of $\gamma^{-1}$ on $\cH$, for any $\gamma \in \SL_2(\ZZ)$ congruent to $g$ modulo $N$. The components of $Y(N)(\CC)$ are indexed by the set $\mu_N^\circ$ of primitive $N$-th roots of unity, via the Weil pairing $(E, e_1, e_2) \mapsto \langle e_1, e_2 \rangle_N$; and the induced action of $g \in \GL_2(\ZZ / N\ZZ)$ on $\mu_N^\circ$ is given by $g \cdot \zeta = \zeta^{1/\det(g)}$. 
  
  \begin{remark}
   \label{rmk:shdatum}
   Note that our model is \emph{not} the Deligne--Shimura canonical model of the Shimura variety for $\GL_2$ with its standard Shimura datum \cite[Example 5.6]{milne-shimura}. Rather, it is the canonical model for the twisted Shimura datum defined by 
   \[ (a + ib) \in \operatorname{Res}_{\CC/\RR}(\mathbf{G}_m) \mapsto \tfrac{1}{a^2+b^2}\stbt{a}{b}{-b}{a}, \]
   which has the effect of flipping the sign of the Galois action on the connnected components.
  \end{remark}

  By passage to the quotient, we define similarly algebraic varieties $Y(U)$ over $\QQ$, for every open compact subgroup $U \subset \GL_2(\AA_f)$; if $U$ is unramified outside the finite set $\Sigma$, then $Y(U)$ has a model over $\ZZ[1/\Sigma]$ which we denote $Y(U)_\Sigma$.
  
  The right-translation action gives isomorphisms $\eta: Y(U) \to Y(\eta^{-1} U \eta)$ for every $\eta \in \GL_2(\AA_f)$, which are compatible with the action of $\eta^{-1}$ on $\cH$ if $\eta \in \GL_2^+(\QQ)$; in particular, scalar matrices $\stbt{A}{0}{0}{A}$ with $A \in \QQ^\times$ act trivially. This structure allows us to view the inverse limit $Y = \varprojlim_U Y(U)$ as a pro-variety over $\QQ$ with a right action of $\GL_2(\AA_f)$, whose $\CC$-points are $\GL_2^+(\QQ) \backslash \left(\GL_2(\AA_f) \times \cH\right)$. 
  
  \begin{definition}
   We say $U \subset \GL_2(\AA_f)$ is \emph{sufficiently small} if every non-identity element of $U$ acts without fixed points on the set $\GL_2^+(\QQ) \backslash (\GL_2(\AA_f) \times \cH)$.
  \end{definition}

  This condition is equivalent to requiring that for all $g \in \GL_2(\AA_f)$, every non-identity element of the discrete group $\Gamma = \GL_2^+(\QQ) \cap gUg^{-1}$ acts without fixed points on $\cH$. For instance, $U(N)$ is sufficiently small if $N \ge 3$. If $U$ is sufficiently small, then $Y(U)$ is the solution to a moduli problem (classifying elliptic curves with appropriate level structure), and therefore has an associated universal elliptic curve $\sE(U) \to Y(U)$.

  We define similarly algebraic surfaces $Y_H(U)$, where $U$ is an open compact subgroup of $H(\AA_f)$, and by passage to the limit a pro-variety  $Y_H = \varprojlim_UY_H(U)$ with a right action of $H(\AA_f)$. Of course, if $U$ is a fibre product $U_1 \times U_2$ of subgroups of $\GL_2(\AA_f)$ such that $\det(U_1) = \det(U_2)$, then $Y_H(U)$ is the fibre product of the modular curves $Y(U_1)$ and $Y(U_2)$ over their common component set $\Zhat^\times / \det(U_1) = \Zhat^\times / \det(U_2)$.
  
 \subsection{Siegel modular varieties}
  \label{sect:siegelMV}

  \begin{definition}
   Let $N \ge 3$. There exists a $\QQ$-variety $Y_G(N)$, smooth and quasiprojective of dimension 3, parametrising $6$-tuples $(A, \lambda, e_1, \dots, e_4)$ where
   \begin{itemize}
    \item $A$ is an abelian surface (over some $\QQ$-algebra $R$);
    \item $\lambda$ is a principal polarization $A \cong A^\vee$;
    \item $e_1, \dots, e_4$ are $N$-torsion sections of $A$ giving an isomorphism $A[N] \cong (\ZZ / N\ZZ)^4$;
    \item the matrix of the Weil pairing (induced by the polarization $\lambda$), with respect to the basis $e_1, \dots, e_4$, is $J \cdot \zeta$ for some $\zeta \in R^\times$.
   \end{itemize}
   Moreover, for any finite set of primes $\Sigma$ containing the primes dividing $N$, $Y_G(N)$ has a model $Y_G(N)_{\Sigma}$ over $\ZZ[1/\Sigma]$, representing the same functor for $\ZZ[1/N]$-algebras.
  \end{definition}
 
  For the existence of this scheme see e.g.~\cite[Corollary 3.3]{laumon05}. The complex manifold $Y_G(N)(\CC)$ can be identified with the double quotient
  \[  \GSp_4^+(\QQ) \backslash \left( \GSp_4(\AA_f) \times \cH_2\right) / U_G(N), \]
  where $\cH_2$ denotes the genus 2 Siegel space of symmetric complex $2 \times 2$ matrices with positive-definite imaginary part, and $U_G(N)$ is the principal congruence subgroup of level $N$ (the kernel of reduction $\GSp_4(\widehat\ZZ) \to \GSp_4(\ZZ / N\ZZ)$).

  The right-translation action of $\GSp_4(\ZZ / N\ZZ)$ on $Y_G(N)$ corresponds to the action on the moduli problem given by $g: (A, \lambda, e_1, \dots, e_4) \mapsto (A, \lambda, e_1', \dots, e_4')$, where
  \[
   \begin{pmatrix} e_1' \\ \vdots \\ e_4' \end{pmatrix} = g^{-1} \cdot \begin{pmatrix} e_1 \\ \vdots \\ e_4 \end{pmatrix}.
  \]

  More generally, if $U$ is any open compact subgroup of $G(\AA_f)$, we define a $\QQ$-model for $Y_G(U)$ by taking the quotient of $Y_G(N)$ by the action of $U / U_G(N)$, for any $N \ge 3$ such that $U_G(N) \subseteq U$. The same procedure gives $\ZZ[1/\Sigma]$-models $Y_G(U)_{\Sigma}$ if $U$ is unramified outside $\Sigma$. If $U$ is sufficiently small (in the same sense as for $\GL_2$), then $Y_G(U)$ is smooth, and can be interpreted as a moduli space for abelian surfaces with level $U$ structure. As before, the inverse limit $\varprojlim_U Y_G(U)$ acquires a right action of $G(\AA_f)$.
 
 \subsection{The embedding of $Y_H$ in $Y_G$}
  \label{sect:Hsmall}
  
  For any open compact subgroup $U$ of $G(\AA_f)$, we have a natural morphism of $\QQ$-varieties
  \[ \iota_U: Y_H(U \cap H) \rTo Y_G(U). \]
  The map $\iota_U$ is not always injective (even if $U$ is sufficiently small). However, we have the following criterion:
  
  \begin{proposition}
   Suppose there is an open compact subgroup $\tilde U$ containing $U$ and $w U w$, where $w = \dfour{-1}{1}{1}{-1}$, with $\tilde U$ sufficiently small. Then $\iota_U$ is injective.
  \end{proposition}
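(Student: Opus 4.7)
The plan is as follows. Suppose for contradiction that $[h_1, \tau_1]$ and $[h_2, \tau_2]$ are distinct points of $Y_H(U \cap H)(\CC)$ with the same image in $Y_G(U)(\CC)$, and pick witnesses $\gamma \in G^+(\QQ)$, $u \in U$ with $\iota(h_2) = \gamma\iota(h_1)u$ and $\iota(\tau_2) = \gamma\iota(\tau_1)$. The goal is to derive a contradiction from the sufficient smallness of $\tilde U$.

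The key geometric ingredient is the structure of the real stabiliser of the submanifold $\iota(\cH \times \cH) \subset \cH_2$ inside $G(\RR)^+$: it has $\iota(H(\RR)^+)$ as its identity component, and a single other component represented by the ``swap'' matrix $\sigma \in \Sp_4(\ZZ)$ that exchanges $e_1 \leftrightarrow e_2$ and $e_3 \leftrightarrow e_4$. A direct check gives $\sigma^2 = 1$ and $\sigma \iota(h_1, h_2) \sigma = \iota(h_2, h_1)$. Since $\gamma\iota(\tau_1) = \iota(\tau_2) \in \iota(\cH \times \cH)$, a case analysis (most cleanly organised via the moduli interpretation of $Y_G(\tilde U)$, where isomorphisms of product abelian surfaces are either factor-preserving or factor-swapping) reduces the problem to two cases: $\gamma \in \iota(H^+(\QQ))$ or $\gamma \in \sigma\iota(H^+(\QQ))$.

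In the first case, writing $\gamma = \iota(\delta)$ for some $\delta \in H^+(\QQ)$, the relation for $u$ gives $u = \iota(h_1^{-1}\delta^{-1}h_2) \in \iota(H(\AA_f)) \cap U = \iota(U \cap H)$, while $\tau_2 = \delta\tau_1$; hence $[h_1, \tau_1]$ and $[h_2, \tau_2]$ are already equal in $Y_H(U \cap H)$, contradicting the assumption.

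The second case, $\gamma = \sigma\iota(\delta)$, is where the hypothesis on $w$ enters, and is the heart of the argument. Using $\sigma\iota(h)\sigma = \iota(h^{\mathrm{sw}})$ (with $h \mapsto h^{\mathrm{sw}}$ denoting the swap of the two $\GL_2$-factors) one rewrites $\iota(h_2) = \gamma\iota(h_1)u$ as $u = \iota(m)\sigma$ for some $m \in H(\AA_f)$. Now $w = \iota(-I, I)$ lies in the centre of $\iota(H)$, since $-I$ is central in $\GL_2$; hence $w\iota(m)w = \iota(m)$. A one-line matrix calculation (conjugating the four non-zero entries of $\sigma$ by $\diag(-1,1,1,-1)$) shows $w\sigma w = -\sigma$. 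Combining, $wuw = -u$. Since $u \in U \subseteq \tilde U$ and $wuw \in wUw \subseteq \tilde U$, both $u$ and $-u$ lie in $\tilde U$, so $-I = (-u)u^{-1} \in \tilde U$. But $-I$ is a non-trivial central element of $G$, acting trivially on $\cH_2$ and on $G^+(\QQ)\backslash G(\AA_f)$, so its presence in $\tilde U$ violates the sufficient-smallness hypothesis. The main obstacle in writing this up rigorously is the reduction to the two cases for $\gamma$; a dimension count ($\dim G^+(\RR) - \dim \iota(H(\RR)^+) = 11 - 7 = 4 = \dim \iota(\cH \times \cH)$) identifies $\iota(H(\RR)^+)$ as the full identity component of the real stabiliser and $\sigma$ gives the only other component, and the moduli-theoretic picture handles the pointwise stabiliser contributions at CM points, where extra automorphisms are finite torsion that sufficient smallness rules out.
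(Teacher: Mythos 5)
The key computations you perform are correct: $w = \iota(-I,I)$ is indeed central in $\iota(H)$, $w\sigma w = -\sigma$ is right, and (conditional on your case reduction) both cases lead correctly to a contradiction with the sufficient smallness of $\tilde U$. But the case reduction itself --- the claim that $\gamma$ may be taken in $\iota(H^+(\QQ)) \cup \sigma\iota(H^+(\QQ))$ --- is the heart of the matter, and you acknowledge yourself that it is not rigorously established. The issue is that an arbitrary $\gamma \in G^+(\QQ)$ with $\gamma\,\iota(\tau_1) = \iota(\tau_2)$ only gives a polarised quasi-isogeny of the two product abelian surfaces, not an honest isomorphism, and in the isogeny category the decomposition of $E_1\times E_2$ is \emph{not} unique once the factors are isogenous --- at CM points $E\times E$, $\operatorname{End}^0 = M_2(K)$ and there are far more polarised quasi-automorphisms than factor-preserving and factor-swapping ones. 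To invoke the uniqueness-of-decomposition theorem for principally polarised abelian varieties, one must first reduce to $\gamma \in \Sp_4(\ZZ)$ by carefully choosing $h_i \in H(\Zhat)$ (which is possible by strong approximation, but needs to be said), and then translate ``factor-preserving/swapping isomorphism'' back into the group-theoretic statement. None of this is in your writeup, and the dimension count you give identifies the identity component of a real stabiliser but does not control which rational elements can occur.

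The paper's proof sidesteps all of this by a purely algebraic observation which you have missed. Working at infinite level, since $w$ lies in $Z(H) \cap H^+(\QQ)$, right translation by $w$ fixes $Y_H$ pointwise. Hence if $Q \in Y_H$ and $Qu \in Y_H$, \emph{both} $Q$ and $Qu$ are fixed by $w$, so the element $\tilde u := u\,w\,u^{-1}\,w$ fixes $Q$. But $\tilde u \in U \cdot (wUw) \subseteq \tilde U$, so sufficient smallness forces $\tilde u = 1$, i.e.\ $u$ centralises $w$. Since the centraliser of $w$ in $G(\AA_f)$ is exactly $\iota(H(\AA_f))$ (an immediate block computation), $u \in U\cap H$ and injectivity follows --- with no case analysis, no moduli interpretation, and no CM worries. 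Your proof uses the same element $w$ but in a more roundabout way (through the identity $w\sigma w = -\sigma$ inside a case you have not justified reaching); the efficient use is to note that $Z_G(w) = H$, so that ``$u$ commutes with $w$'' is exactly the condition you want to prove, and the commutator argument delivers it directly.
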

 
  \begin{proof}
   As above, we write $Y_G$ for the infinite-level Shimura variety $G(\QQ)_+ \backslash \left( G(\AA_f) \times \cH_2 \right)$, and similarly for $Y_H$. It is clear that $\iota$ gives an injection $Y_H \into Y_G$. If $Q, Q' \in Y_H$ have the same image in $Y_G(U)$, then $Q' = Qu$ for some $u \in U$; we want this to imply that $u$ lie in $U \cap H$. So it suffices to prove that, for any element of $U - (U \cap H)$, we have $Y_H u \cap Y_H = \varnothing$ as subsets of $Y_G$.
   
   Since $w$ is central in $H$, its action on $Y_G$ fixes $Y_H$ pointwise. Thus, if $Q \in Y_H$ and $Qu \in Y_H$, we have $Quw = Qu = Qwu$, so $\tilde u = u \cdot w u^{-1} w$ fixes $Q$. This element $\tilde u$ lies in $\tilde U$, by hypothesis, and since $\tilde U$ is sufficiently small, we conclude that $\tilde u = 1$. Thus $u$ lies in the centraliser of $w$ in $G(\AA_f)$, which is exactly $H(\AA_f)$.
  \end{proof}

  We shall say a subgroup $U$ is \textbf{$H$-small} if it satisfies the hypotheses of the above proposition. For instance, if $U$ is contained in the principal congruence subgroup $U_G(N)$ for some $N \ge 3$, then $U$ is $H$-small (since $U_G(N)$ is normal in $G(\Zhat)$, and sufficiently small by \cite[\S 0.6]{pink90}).
   
 \subsection{Component groups and base extension}
  \label{sect:components}
  
  Via strong approximation for $\Sp_4$, we have an isomorphism of component sets
  \[ \pi_0(Y_G(\CC)) = \QQ^\times_+ \backslash \AA_f^\times \cong \Zhat^\times. \]
  Our moduli-space description of $Y_G$ determines a Galois action on these components as follows.
  
  \begin{definition}
   We write 
   \[ \operatorname{Art}: \QQ^\times_+ \backslash \AA_f^\times \to \Gal(\QQbar / \QQ)^{\mathrm{ab}} \]
   for the Artin reciprocity map of class field theory, normalised such that for $x \in \Zhat^\times \subset \AA_f^\times$, $\operatorname{Art}(x)$ acts on roots of unity as $\zeta \mapsto \zeta^x$ (and hence uniformizers map to geometric Frobenius elements). 
  \end{definition}
  
  \begin{proposition}
   All components of $Y_G(\CC)$ are defined over the cyclotomic extension $\QQ^{\mathrm{ab}} = \QQ(\zeta_n : n \ge 1)$, and the right-translation action of $u \in G(\AA_f)$ on $\pi_0(Y_G(\CC))$ coincides with the action of the Galois automorphism $\operatorname{Art}(\mu(u)^{-1})$.
  \end{proposition}
  \begin{proof}
   This is an instance of Deligne's reciprocity law for the action of Galois on the connected components of any Shimura variety; see e.g.~\cite[\S 13]{milne-shimura}.
  \end{proof}
  We will be particularly interested in the following special case. If $U \subset G(\AA_f)$ is an open compact subgroup, and $V_N$ is the subgroup of $\Zhat^\times$ defined by $\{ x: x = 1 \bmod N\}$ for some integer $N$, then there is an embedding of $\QQ$-varieties
  \[ Y_G\left(U \cap \mu^{-1}(V_N)\right) \into Y_G(U) \mathop{\times}_{\Spec \QQ} \Spec \QQ(\zeta_N), \]
  which is an isomorphism if $\mu(U)$ surjects onto $(\ZZ / N\ZZ)^\times$. This map intertwines the action of $g \in G(\AA_f)$ on the left-hand side with that of $(g, \sigma)$ on the right-hand side, where $\sigma$ is the image of $\operatorname{Art}(\mu(g)^{-1})$ in $\Gal(\QQ(\zeta_N) / \QQ)$.

\section{Coefficient sheaves on modular varieties}

 \subsection{\'Etale coefficient sheaves}

  Let $U \subset \GL_2(\hat\ZZ)$ be a sufficiently small open compact subgroup, and $S$ a finite set with a continuous left action of $U$. Then we may construct a finite \'etale covering of $Y(U)$ as follows: we take any open normal subgroup $V \trianglelefteqslant U$ acting trivially on $S$, and we let $\mathscr{S}$ be the quotient of $Y(V) \times S$ by the left action of $U / V$ given by
  \[ h \cdot (y, s) = (yh^{-1}, hs). \]
  If $S$ is a $\ZZ[U]$-module, then $\mathscr{S}$ can be considered as a locally constant \'etale sheaf of abelian groups over $Y(U)$. Note that the sections of $\mathscr{S}$ over $Y(V)$, for any $V  \trianglelefteqslant U$ open, are canonically isomorphic to $S^V$, and the pullback action of $u \in U/V$ on $H^0(Y(V), \mathscr{S})$ is identified with the native left action of $U/V$ on $S^V$. This construction extends in the obvious fashion to profinite modules $S$, and in particular to continuous representations of $U$ on finite-rank $\Zp$-modules; via passage to the isogeny category we may also allow $S$ to be a $\Qp$-vector space.

  If the action of $U$ on $S$ extends to some larger monoid $\mathcal{M} \subseteq \GL_2(\AA_f)$ containing $U$, then the sheaf $\mathscr{S}$ naturally becomes a $\mathcal{M}$-equivariant sheaf. That is, for every $\sigma \in \mathcal{M}$, giving a morphism of varieties $Y(U) \rTo^{\sigma} Y(\sigma^{-1} U \sigma)$, we have morphisms $\sigma^*(\mathscr{S}') \to \mathscr{S}$, where $\mathscr{S}$ and $\mathscr{S}'$ are the sheaves on $Y(U)$ and $Y(\sigma^{-1} U \sigma)$, respectively, corresponding to $S$; and these morphisms satisfy an appropriate cocycle condition. This construction equips the cohomology groups $H^*(Y(U), \mathscr{S})$ with an action of the Hecke algebra $\cH(U \backslash \mathcal{M} / U)$.
  
  \begin{remark}
   Compare \cite[Proposition 4.4.3]{loefflerzerbes16}; our conventions here are a little different as we are considering right, rather than left, actions on our Shimura varieties.
  \end{remark}
  
  Exactly the same theory applies, of course, to the modular varieties $Y_G(U)$ and $Y_H(U)$, and these constructions are compatible via $\iota$: the pullback functor $\iota^*$ on \'etale sheaves corresponds to restriction of representations from $G$ to $H$.

 \subsection{Sheaves corresponding to algebraic representations}
  \label{sect:ancona}

  As we have seen above, the modular curves $Y(U)$, for $U$ sufficiently small, are moduli spaces: $Y(U)$ parametrises elliptic curves $E$ equipped with a $U$-orbit of isomorphisms $E_{\mathrm{tors}} \cong (\QQ / \ZZ)^2$. Thus $Y(U)$ comes equipped with a universal elliptic curve $\sE$. From the description of the action of $\GL_2(\ZZ / N \ZZ)$ on the moduli problem, one deduces the following compatibility:

  \begin{lemma}
   Suppose $U \subseteq \GL_2(\hat \ZZ)$. For $N \ge 1$, the sheaf $\sE[N]$ of $N$-torsion points of $\sE$ is canonically isomorphic to the sheaf associated to the \textbf{dual} of the standard representation of $\GL_2(\ZZ / N\ZZ)$.
  \end{lemma}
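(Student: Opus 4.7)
The plan is to compare the descent data arising on the two sides and check that they agree. The comparison reduces easily to the case $U = U(N)$ (the principal congruence subgroup): any $U$ containing $U(N)$ has $Y(U)$ equal to the $(U/U(N))$-quotient of $Y(U(N))$, so the sheaves $\sE[N]$ and the sheaf associated to a representation both descend along this quotient in the obvious equivariant way, and hence agree on $Y(U)$ iff they agree on $Y(U(N))$ compatibly with the action of $U/U(N)$.

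Next, I would use the moduli description of $Y(U(N))$. Since $Y(U(N))$ represents the moduli problem of triples $(E,e_1,e_2)$, the universal elliptic curve $\sE$ comes equipped with a tautological basis $(e_1,e_2)$ of its $N$-torsion; this provides a canonical trivialisation $\sE[N]\cong\underline{(\ZZ/N\ZZ)^2}$ of locally constant \'etale sheaves on $Y(U(N))$, where a torsion section $x$ is sent to the pair $(a,b)$ with $x=ae_1+be_2$.

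The remaining task is to compute how this trivialisation transforms under the residual action of $U/U(N)$. Here one combines two pieces of data recorded earlier in the paper: first, the right-translation convention $(E,e_1,e_2)\cdot g=(E,e_1',e_2')$ with $\binom{e_1'}{e_2'}=g^{-1}\binom{e_1}{e_2}$ on the moduli problem; and second, the convention $h\cdot(y,s)=(yh^{-1},hs)$ used to build the sheaf $\mathscr{S}$ from a representation $S$. If $h\in U/U(N)$ and $y=(E,e_1,e_2)$, then $yh^{-1}=(E,e_1',e_2')$ with $e_i'=\sum_j h_{ij}e_j$, so the same torsion point $x=ae_1+be_2$ has coordinates $(a',b')=(a,b)h^{-1}$ in the new basis, i.e.\ as column vectors $\binom{a'}{b'}=(h^t)^{-1}\binom{a}{b}$. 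Matching this with the descent rule $(y,s)\sim(yh^{-1},hs)$ forces the representation of $U$ on $(\ZZ/N\ZZ)^2$ realising $\sE[N]$ to be $h\mapsto(h^t)^{-1}$, which is precisely the dual of the standard representation (on column vectors).

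The main obstacle is purely bookkeeping: keeping track of left versus right, inverses, and transposes, since the answer involves two dualisations (one from passing from the moduli action to the sheaf-theoretic action, and one from identifying $(h^t)^{-1}$ as the contragredient of $h$). Once the direction of each arrow is set against the conventions established in \S\S 4.1--4.2 and 5.1, the identification is canonical and the lemma follows.
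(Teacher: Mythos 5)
Your argument is correct and is exactly the computation the paper has in mind: the paper states the lemma without proof, remarking only that it follows "from the description of the action of $\GL_2(\ZZ/N\ZZ)$ on the moduli problem," and your proposal carries out that deduction — reduce to level $U(N)$, trivialise $\sE[N]$ via the tautological basis, and track the two sign/inverse conventions to land on $h\mapsto (h^t)^{-1}$. The bookkeeping is right: $(E,e_1,e_2)\cdot h^{-1}$ has new basis $e_i'=\sum_j h_{ij}e_j$, so the coordinate vector transforms by $(h^t)^{-1}$, which matched against the gluing $(y,s)\sim(yh^{-1},hs)$ identifies the representation as the contragredient of the standard one.
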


  Let $p$ be prime. Taking $N = p^r$ and passing to the limit over $r$ shows that the relative Tate module $T_p\sE$ corresponds to the dual of the standard representation of $\GL_2(\Zp)$. On the other hand, $T_p \sE$ is a lattice in the $p$-adic \'etale realisation of a ``motivic sheaf'' -- a relative Chow motive -- over $Y(U)$, namely $h^1(\sE)^\vee$. 
  
  This is the first instance of a more general phenomenon. Let $\sG$ temporarily denote any of  the three groups $\left\{ \GL_2, \GL_2 \times_{\GL_1} \GL_2, \GSp_4\right\}$; and let $U$ be a sufficiently small open compact in $\sG(\AA_f)$, so we have an associated Shimura variety $Y_{\sG}(U)$.

  \begin{lemma}[{\cite[Theorem 8.6]{ancona15}}]
   There is an additive functor 
   \[ \Anc_{\sG, U}: \operatorname{Rep}(\sG) \to \operatorname{CHM}(Y_{\sG}(U))\]
   from the category of representations of $\sG$ over $\QQ$ to the category of relative Chow motives over $Y_{\sG}(U)$ with the following properties:
   \begin{itemize}
    \item $\Anc_{\sG, U}$ preserves tensor products and duals;
    \item if $\mu$ denotes the multiplier map $\sG \to \mathbf{G}_m$, then $\Anc_{\sG, U}(\mu)$ is the Lefschetz motive $\QQ(-1)$;
    \item if $V$ denotes the defining representation of $\sG$, then $\Anc_{\sG, U}(V) = h^1(\sA_U)$, where $\sA_U$ is the universal PEL abelian variety over $Y_{\sG}(U)$;
    \item for any prime $p$ and $\sG$-representation $V$, the $p$-adic realisation of $\Anc_{\sG, U}(V)$ is the \'etale sheaf associated to $V \otimes \Qp$, regarded as a left $U$-representation via $U \into \sG(\AA_f) \onto \sG(\Qp)$.
   \end{itemize}
  \end{lemma}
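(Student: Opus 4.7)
The plan is to follow Ancona's strategy: define $\operatorname{Anc}_G$ on generators (the defining representation and the multiplier character) and extend to arbitrary objects of $\Rep(G)$ by cutting out each irreducible representation with an explicit idempotent acting on a tensor power of the defining representation. Set $\operatorname{Anc}_G(V_{\mathrm{std}}) := h^1(\mathscr{A})$ and $\operatorname{Anc}_G(\mu) := \QQ(-1)$, where $\pi : \mathscr{A} \to Y$ denotes the universal PEL abelian scheme. By highest-weight theory every irreducible $W \in \Rep(G)$ appears as a direct summand of $V_{\mathrm{std}}^{\otimes n} \otimes \mu^{\otimes k}$ for suitable $n, k$, cut out by an explicit idempotent $e_W \in \End_G(V_{\mathrm{std}}^{\otimes n})$. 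These idempotents are given by invariant-theoretic formulae: Schur--Weyl symmetrisers coming from the symmetric-group action, together with (in the symplectic case) contraction operators built from the invariant form $V_{\mathrm{std}} \otimes V_{\mathrm{std}} \to \mu$.

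The geometric input is the relative Chow--K\"unneth decomposition $h(\mathscr{A}) = \bigoplus_{i=0}^{2g} h^i(\mathscr{A})$ of Deninger--Murre and K\"unnemann, with $h^i(\mathscr{A})$ identified as an exterior power of $h^1(\mathscr{A})$ via the group law of $\mathscr{A}$. The symmetric-group action on the relative fibre powers $\mathscr{A}^n := \mathscr{A} \times_Y \cdots \times_Y \mathscr{A}$ realises the Schur projectors as motivic correspondences; the principal polarisation $\lambda : \mathscr{A} \to \mathscr{A}^\vee$ realises the symplectic form on $h^1(\mathscr{A})$, and its dual provides the corresponding contraction. Combining these ingredients lifts $e_W$ to an idempotent $E_W$ in $\End_{\CHM(Y)}\big(h^1(\mathscr{A})^{\otimes n}(k)\big)$, and we define $\operatorname{Anc}_G(W)$ to be the image of $E_W$. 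The stated multiplicativity and duality properties of $\operatorname{Anc}_G$ then follow by construction, and functoriality in morphisms of $G$-representations is obtained by lifting $\Hom$-spaces via the same correspondences.

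The main obstacle is verifying that the resulting assignment is well-defined and symmetric monoidal on all of $\Rep(G)$. The combinatorial relations determining $\End_G(V_{\mathrm{std}}^{\otimes n})$ as a quotient of the free algebra on symmetrisers and contractions (Brauer-type relations in the symplectic case) must hold at the level of Chow correspondences, not merely in a single Weil cohomology. This is the genuinely motivic content, supplied by K\"unnemann's detailed analysis of the endomorphism algebra of $h^1(\mathscr{A})^{\otimes n}$ in the category of relative Chow motives of abelian schemes. The compatibility with $p$-adic \'etale realisation in the final bullet is then essentially formal transport of structure, since each cycle-theoretic ingredient above (permutation actions, polarisation, Poincar\'e bundle) admits a canonical \'etale counterpart identifying it with the corresponding operation on $H^1_{\et}(\mathscr{A}, \Qp) \cong (V_p \mathscr{A})^\vee$, and hence with the sheaf associated to $V_{\mathrm{std}} \otimes \Qp$.
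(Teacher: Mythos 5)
The paper does not prove this lemma at all: it is a direct citation of Ancona's Theorem 8.6, and the only ``content'' supplied by the paper is the subsequent remark observing that Ancona's own normalisation is different (his functor sends $\mu$ to $\QQ(1)$ and the defining representation to $h^1(\sA)^\vee$) and that the version stated here is obtained by precomposing Ancona's functor with the automorphism of $\Rep(G)$ induced by $g \mapsto \mu(g)^{-1}g$. Your proposal, by contrast, reconstructs the interior of Ancona's argument from scratch.

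Your sketch is a faithful outline of that argument: the Deninger--Murre/K\"unnemann Chow--K\"unneth decomposition for abelian schemes, realisation of Schur projectors via the symmetric-group action on relative fibre powers, realisation of the symplectic form and its contraction via the polarisation, and the appeal to K\"unnemann's computation of endomorphism algebras of $h^1(\sA)^{\otimes n}$ in $\CHM(Y)$ to lift the Brauer-type relations are precisely the ingredients Ancona uses. The realisation compatibility in the last bullet is indeed, as you say, transport of structure once each cycle-theoretic operation is matched with its \'etale counterpart. So as a summary of \emph{why the cited theorem is true} the proposal is sound.

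One detail you gloss over, and which the paper explicitly flags: you set $\operatorname{Anc}_G(V_{\mathrm{std}}) := h^1(\sA)$ and $\operatorname{Anc}_G(\mu) := \QQ(-1)$ as if that were Ancona's normalisation, but it is not. Ancona produces a contravariant-flavoured functor (defining representation $\mapsto h^1(\sA)^\vee$, $\mu \mapsto \QQ(1)$), and the sign of the twist you want requires the additional twist by $g \mapsto \mu(g)^{-1}g$, which changes the $U$-action entering the fourth bullet. If one simply runs your construction with the assignments you wrote down, the resulting functor is the composite, not Ancona's functor itself; this is harmless but should be stated, since otherwise the ``canonical'' comparison with the \'etale sheaf associated to $V\otimes\Qp$ in the last bullet is off by exactly this automorphism.
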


  \begin{remark}
   In fact Ancona's construction is much more general, applying to arbitrary PEL Shimura varieties, but we shall only need the above three groups here. The theorem stated in \emph{op.cit.} is slightly different from ours, since he normalises his functor to send the multiplier representation to $\QQ(1)$, and the defining representation to $h^1(\sA)^\vee$; our functor is obtained from his by composing with the automorphism of $\operatorname{Rep}(\sG)$ induced by the map $g \mapsto \mu(g)^{-1} g$ on $\sG$.
  \end{remark}

  We shall need some ``naturality'' properties of Ancona's construction, which we now recall.
  
  \begin{proposition}
   \label{prop:ancfunctorial}
   Suppose $U, U'$ are open subgroups of $\sG(\AA_f)$ and $\sigma \in \sG(\AA_f)$ is such that $\sigma^{-1} U \sigma \subseteq U'$, so that right-translation defines a map $\sigma: Y_{\sG}(U) \to Y_{\sG}(U')$. Then we have isomorphisms of functors
   \[ \sigma^* \circ \Anc_{\sG, U'} \cong \Anc_{\sG, U}, \]
   satisfying a suitable compatibility condition under composition.
  \end{proposition}
  
  \begin{proof}
   Since $\Anc_{\sG, U}(V)$ for a general $V$ is defined as a direct summand of a tensor power of $h^1(\sA_U)$, one reduces easily to checking this functoriality property for the specific relative motives $h^1(\sA_U)$.
   
   By a standard argument (see e.g.~\cite[Prop 3.3]{deligne69}) one can interpret $Y_{\sG}(U)$ as a moduli space for abelian varieties \emph{up to isogeny}, from which we can deduce that there is a canonical isomorphism
   \[ \lambda_\sigma: \sA_U \otimes \QQ \rTo^\cong \sigma^*\left(\sA_{U'}\right)\otimes \QQ \]
   in the isogeny category of abelian varieties over $Y_{\sG}(U)$. The functor $h^1(-)$ extends to the isogeny category of abelian varieties, so $\lambda_{\sigma}$ induces an isomorphism of relative Chow motives. Moreover, the $\lambda_{\sigma}$ satisfy a cocycle condition for varying $\sigma$ (which we leave it to the reader to formulate explicitly).
  \end{proof}
   
  \begin{remark}
   Note that $\lambda_{\sigma}$ comes from a ``genuine'' isogeny if and only if the matrix of $\sigma$ (in the defining matrix representation of $\sG$) has entries in $\hat{\ZZ}$. To fix conventions, note that if $U = U'$ and $\sigma = \operatorname{diag}(x, \dots, x)$ for some $x \in \QQ^{\times +}$, then the map $\lambda_{\sigma}$ is given by multiplication by $x$.
  \end{remark}
 
  An equivalent way of stating Proposition \ref{prop:ancfunctorial} is as follows. We define a \emph{$\sG(\AA_f)$-equivariant relative Chow motive} over $Y_\sG$ to be the data of a relative Chow motive $\sV_U$ over $Y_\sG(U)$ for each sufficiently small open $U \subset G(\AA_f)$, together with a collection of isomorphisms $\sigma^*(\sV_{U'}) \cong \sV_U$ for each inclusion $\sigma^{-1} U \sigma \subseteq U'$, compatible with composition. These objects form a category $\operatorname{CHM}(Y_{\sG})^{\sG(\AA_f)}$, and the proposition states that the functors $\Anc_{\sG, U}$ for varying $U$ assemble into a functor 
  \[ \Anc_{\sG}: \operatorname{Rep}(\sG) \rTo \operatorname{CHM}(Y_{\sG})^{\sG(\AA_f)}. \]
  
  (Note that the isomorphisms $\Anc_{\sG, U}(\mu) \cong \QQ(-1)$ are not compatible with the equivariant structure, since the isogenies $\lambda_{\sigma}$ only preserve the polarisation up to a scalar; as equivariant motives we have $\Anc_{\sG}(\mu) = \QQ(-1)[-1]$, where the notation $[-1]$ denotes that the equivariant structure is twisted by the character $\|\mu\|^{-1}$ of $G(\AA_f)$.)
  
  If $\sV$ is a $\sG(\AA_f)$-equivariant relative Chow motive over $Y_\sG$, we can define its motivic cohomology by
  \begin{equation}
   \label{eq:directlimit}
    H^*_{\mot}(Y_\sG, \sV) \coloneqq \varinjlim_UH^*_\mot(Y_\sG(U), \sV_U),
  \end{equation}
  and this is naturally a smooth representation of $\sG(\AA_f)$. As motivic cohomology with rational coefficients satisfies Galois descent (see e.g.~\cite[\S 1.3]{deningerscholl91}), for each sufficiently small $U$ we can recover $H^*_\mot\left(Y_\sG(U),\sV_U\right)$ as the $U$-invariants of the direct limit \eqref{eq:directlimit}.

  Finally, we shall need to show a compatibility with respect to changing $\sG$:
  
  \begin{proposition}[``Branching'' for motivic sheaves]
   \label{prop:branching}
   Let $G = \GSp_4$ and $H = \GL_2 \times_{\GL_1} \GL_2$, as in \S \ref{sect:groups} above. Then there is a commutative diagram of functors
   \[
    \begin{diagram}
     \operatorname{Rep}(G) & \rTo^{\Anc_G} & \operatorname{CHM}(Y_G)^{G(\AA_f)} \\
     \dTo<{\iota^*} & & \dTo>{\iota^*} \\
     \operatorname{Rep}(H) & \rTo^{\Anc_H} & \operatorname{CHM}(Y_H)^{H(\AA_f)}
    \end{diagram}
   \]
   where the left-hand $\iota^*$ denotes restriction of representations, and the right-hand $\iota^*$ denotes pullback of relative motives.
  \end{proposition}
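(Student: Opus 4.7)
The plan is to exploit the tensor-functor property of both $\operatorname{Anc}_G$ and $\operatorname{Anc}_H$. Each functor is determined up to canonical natural isomorphism by its values on the multiplier character $\mu$ and the defining representation, together with the requirement that it preserve tensor products and duals (see the bullet points in the statement of Ancona's theorem). Since both instances of $\iota^*$ in the diagram are also tensor functors compatible with duals, it suffices to exhibit natural isomorphisms
\[
 \iota^*\operatorname{Anc}_G(\mu_G) \cong \operatorname{Anc}_H(\iota^*\mu_G), \qquad \iota^*\operatorname{Anc}_G(V^{0,1}) \cong \operatorname{Anc}_H(\iota^*V^{0,1}),
\]
respecting the equivariant structures, and then extend formally to all of $\operatorname{Rep}(G)$. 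For the multiplier the claim is immediate: $\iota^*\mu_G$ is the common determinant character $\mu_H$ on $H$, and both $\operatorname{Anc}_G(\mu_G)$ and $\operatorname{Anc}_H(\mu_H)$ are the Lefschetz motive $\QQ(-1)$, which pulls back to itself canonically.

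The substantive input is the case of the defining representation. From the explicit form of $\iota$ in \S\ref{sect:groups}, the subspaces $V_1 = \langle e_1, e_4\rangle$ and $V_2 = \langle e_2, e_3\rangle$ are $H$-invariant, carry the standard representations of the first and second $\GL_2$ factors respectively, and yield the decomposition $\iota^*V^{0,1} = V_1 \oplus V_2$. The skew form $J$ restricts to the standard symplectic form on each $V_i$ individually, with no mixing between the two subspaces. Applying $\operatorname{Anc}_H$ then gives $h^1(\sE_1) \oplus h^1(\sE_2)$, where $\sE_1, \sE_2$ are the two universal elliptic curves on $\Sh_H$. On the other side, $\iota^*\operatorname{Anc}_G(V^{0,1}) = h^1(\iota^*\sA_G)$, where $\sA_G$ is the universal abelian surface over $\Sh_G$.

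So the heart of the argument is the construction of a canonical $H(\AA_f)$-equivariant isogeny
\[
 \iota^*\sA_G \;\sim\; \sE_1 \times \sE_2
\]
over $\Sh_H$, compatible with polarizations (up to the scalar $\|\mu(\sigma)\|^{-1}$ that already arises in the $G(\AA_f)$-equivariant structure of $\sA_G$). This isogeny reflects the moduli-theoretic content of $\iota$: a point of $Y_H$ parametrises a pair $(E_1, E_2)$ of elliptic curves with matched torsion structures, and the associated abelian surface on $Y_G$ is $E_1 \times E_2$ equipped with the product polarization, which corresponds under the basis $(e_1, e_2, e_3, e_4)$ to the block-diagonal decomposition of $J$ recorded above. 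Taking $h^1$ of this isogeny gives the required identification on the defining representation.

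Once the natural isomorphism on $V^{0,1}$ (and on $\mu$) is in hand, the extension to arbitrary $V \in \operatorname{Rep}(G)$ is formal: every such $V$ is built from $V^{0,1}$, $\mu$, and their duals via direct summands of tensor products, and both $\operatorname{Anc}_H \circ \iota^*$ and $\iota^* \circ \operatorname{Anc}_G$ commute with these operations. The main obstacle is the careful verification that the isogeny $\iota^*\sA_G \sim \sE_1 \times \sE_2$ can be made canonical at infinite level, i.e.~is compatible with the $G(\AA_f)$-equivariant structure of $\sA_G$ under restriction to $H(\AA_f) \subset G(\AA_f)$. Concretely, this amounts to tracking how the ``up-to-isogeny'' data of \S\ref{sect:ancona} — in particular the scalar denominators appearing when non-integral adelic elements act, and the accompanying changes of polarization — behave under the block-diagonal embedding $H \hookrightarrow G$; one must check that the obvious candidate isogeny intertwines the two equivariant structures on the nose, not merely up to the action of $Z_G$.
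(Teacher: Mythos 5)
The paper does not actually give a proof of this proposition: its ``proof'' consists of a single sentence deferring to a forthcoming work of A.~Torzewski, where the statement is said to follow from a general result about arbitrary morphisms of PEL Shimura data. Your proposal is therefore a self-contained direct argument specific to the pair $(H, G)$, and there is no detailed proof in the paper to compare it against. The outline is the right one: reduce to the multiplier $\mu$ and the defining representation $V^{0,1}$; identify the block decomposition $\iota^*V^{0,1} = \langle e_1, e_4\rangle \oplus \langle e_2, e_3\rangle$; and locate the substance of the proposition in a canonical $H(\AA_f)$-equivariant identification $\iota^*\sA_G \cong \sE_1 \times \sE_2$ of abelian schemes over $\Sh_H$ compatible with polarizations. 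What you describe in concrete moduli-theoretic terms is presumably exactly what the general PEL-morphism statement specialises to in this case.

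Two points in your argument would need sharpening before this is a complete proof. First, the ``formal extension'' from generators to all of $\operatorname{Rep}(G)$ is not purely formal: one must check that the isomorphism on $V^{0,1}$ and $\mu$ intertwines not only the permutation idempotents on $V^{0,1}{}^{\otimes n}$ but also the contraction $V^{0,1} \otimes V^{0,1} \to \mu$ given by the symplectic form, since $\operatorname{End}_G$ of a tensor power of $V^{0,1}$ is generated by symmetries together with these form contractions. Under Ancona's functor this contraction becomes the polarisation pairing, so the needed compatibility is exactly that $\iota^*\sA_G \cong \sE_1 \times \sE_2$ respects the polarisations; at a fixed level this should be an isomorphism matching polarisations exactly, and the scalar $\|\mu(\sigma)\|^{-1}$ you mention only enters when varying the level by the $G(\AA_f)$-action, which is a separate verification that you correctly flag as the main obstacle. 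Second, the quoted form of Ancona's theorem asserts existence of the functor but not uniqueness, so the claim that it is ``determined up to canonical natural isomorphism'' by its values on $\mu$ and $V^{0,1}$ requires the same intertwining argument and should be stated as a lemma rather than cited. Neither gap looks fatal, but both are exactly where the general PEL-morphism machinery invoked by the paper does the work.
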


  \begin{proof}
   This is an instance of a general theorem due to Torzewski \cite[Corollary 9.8]{torzewski18}, which verifies the above naturality property for a wide class of homomorphisms of PEL-type Shimura data.
  \end{proof}

\section{Eisenstein classes for \texorpdfstring{$\GL_2$}{GL(2)}}

 \subsection{Modular units}
  \label{sect:modunits}
  
  Let $\cS_0(\AA_f^2, \QQ) \subseteq \cS(\AA_f^2, \QQ)$ denote the subspace of functions satisfying $\phi(0,0) = 0$. Recall that $Y$ denotes the infinite-level modular curve, so that $\cO(Y) = \varinjlim_U \cO(Y(U))$.

  \begin{proposition}
   There is a canonical, $\GL_2(\AA_f)$-equivariant map $\cS_0(\AA_f^2, \QQ) \to \cO(Y)^\times \otimes \QQ$, $\phi \mapsto g_\phi$, with the following characterising property: if $\phi$ is the characteristic function of $(a, b) + N\hat\ZZ^2$, for some $N \ge 1$ and $(a, b) \in \QQ^2 - N\ZZ^2$, then $g_\phi$ is the Siegel unit $g_{a/N, b/N}$ in the notation of \cite[\S 1.4]{kato04}.
  \end{proposition}
  
  \begin{proof} See e.g.~\cite[Th\'eor\`eme 1.8]{colmezBSD}.
  \end{proof}
 
  In order to work integrally, we need to modify the construction somewhat. Let $c > 1$ be an integer. We let ${}_c \cS_0(\AA_f^2, \ZZ)$ denote the subgroup of $\cS_0(\AA_f^2, \QQ)$ consisting of functions of the form $\phi = \phi^{(c)}\cdot \ch(\ZZ_c^2)$, where $\phi^{(c)}$ is a $\ZZ$-valued Schwartz function on $(\AA_f^{(c)})^2$, and $\ZZ_c = \prod_{\ell \mid c} \Zl$. Then we have the following refinement:

  \begin{proposition}
   \label{prop:integral-siegel}
   If $c$ is coprime to 6, there is a map ${}_c \cS_0(\AA_f^2, \ZZ) \to \cO(Y)^\times$, $\phi \mapsto {}_c g_\phi$, which is equivariant for the action of $\GL_2\left(\AA_f^{(c)}\right)$ and satisfies
   \[ {}_c g_\phi \otimes 1 = \left( c^2 - {\stbt{c}{0}{0}{c}}^{-1}\right) g_\phi \quad\text{as elements of}\quad \cO(Y)^\times \otimes \QQ, \]
   where $\stbt{c}{0}{0}{c}$ is understood as an element of $\GL_2(\AA_f^{(c)})$.
  \end{proposition}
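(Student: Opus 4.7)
The plan is to construct the map in stages: first define ${}_c g_\phi$ on basic characteristic functions of cosets prime to $c$, then establish an integral distribution relation allowing unambiguous $\ZZ$-linear extension to all of ${}_c \cS_0(\AA_f^2, \ZZ)$, and finally verify the transformation properties.

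First I would reduce to $\phi = 1_{(a, b) + N\hat\ZZ^2}$ with $N$ coprime to $c$ and $(a, b) \in \ZZ^2 \setminus N\ZZ^2$. The factorisation $\phi = \phi^{(c)} \cdot 1_{\ZZ_c^2}$ (with $\phi^{(c)}$ a $\ZZ$-valued Schwartz function on $(\AA_f^{(c)})^2$) forces any element of ${}_c \cS_0(\AA_f^2, \ZZ)$ to be a finite $\ZZ$-linear combination of such ``basic'' characteristic functions, automatically with denominator $N$ coprime to $c$. For each basic $\phi$, define ${}_c g_\phi$ to be the pullback to $Y$ of Kato's $c$-smoothed theta function ${}_c\theta_{\mathscr{E}} := \theta_{\mathscr{E}}^{c^2} / [c]^* \theta_{\mathscr{E}}$ on the universal elliptic curve $\mathscr{E}$, along the torsion section determined by $(a/N, b/N)$. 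Because $c$ is coprime to $6$, the poles and zeros of numerator and denominator at the identity section cancel exactly, so ${}_c\theta_{\mathscr{E}}$ is a regular nowhere-vanishing section of $\cO_{\mathscr{E}}^\times$ outside the $c$-torsion; since $(a/N, b/N)$ has order coprime to $c$, the pullback lands in $\cO(Y)^\times$ rather than only $\cO(Y)^\times \otimes \QQ$. The displayed identity ${}_c g_\phi \otimes 1 = \left(c^2 - {\stbt c00c}^{-1}\right) g_\phi$ then holds by construction, since $\GL_2(\AA_f)$-equivariance of the rational map $\phi \mapsto g_\phi$ identifies the subtracted term with the Siegel unit attached to $[c]^*\theta_{\mathscr{E}}$.

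The main obstacle is well-definedness of the $\ZZ$-linear extension: a given $\phi$ admits many decompositions into basic characteristic functions, typically obtained by refining the coset $(a, b) + N\hat\ZZ^2$ into a disjoint union of cosets modulo $MN$ for some $M$ coprime to $c$, and all such decompositions must produce the same element of $\cO(Y)^\times$. Equivalently, one must establish the integral distribution relation
\[ {}_c g_{a/N, b/N} = \prod_{(a',b')} {}_c g_{a'/MN, b'/MN}, \]
the product taken over lifts $(a', b') \bmod MN$ of $(a, b) \bmod N$. Rationally this follows from the classical Siegel-unit distribution relations, but to promote it to an equality in $\cO(Y)^\times$ --- not merely modulo torsion --- one exploits the fact that ${}_c\theta_{\mathscr{E}}$ is everywhere regular away from the $c$-torsion, so the norm compatibility along $[M]: \mathscr{E} \to \mathscr{E}$ is an identity of honest units rather than units up to roots of unity; this is the integral refinement carried out in \cite{kato04}.

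Finally, $\GL_2\left(\AA_f^{(c)}\right)$-equivariance is verified directly on basic characteristic functions. Any $\sigma \in \GL_2\left(\AA_f^{(c)}\right)$ sends a basic $\phi$ to another basic characteristic function (possibly at a different level $N$, which remains coprime to $c$), and equivariance of $\phi \mapsto {}_c g_\phi$ reduces to the transformation behaviour of ${}_c g_{a/N, b/N}$ under $\SL_2(\ZZ/N\ZZ)$ and the corresponding double-coset operators, which is established by the same arguments as in \cite{kato04}. The restriction to the prime-to-$c$ part of $\GL_2(\AA_f)$ is intrinsic to the construction: elements with nontrivial $c$-component would destroy the factorisation $\phi = \phi^{(c)} \cdot 1_{\ZZ_c^2}$ that underlies the integrality.
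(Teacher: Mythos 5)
The paper gives no proof of this proposition (the \verb|\qed| is automatic in the \texttt{propqed} environment, signalling that the statement is imported from \cite{kato04} and \cite{colmezBSD}), so there is no ``paper's own proof'' to match against; what matters is whether your reconstruction is sound. The overall architecture is the right one and essentially the only one: pull back Kato's $c$-smoothed theta function $_c\theta_{\sE}$ to torsion sections of order prime to $c$, use the \emph{integral} distribution relations of \cite[\S 1.3]{kato04} for well-definedness of the $\ZZ$-linear extension, and recover the displayed identity from $_c\theta$'s defining divisor $c^2(0)-\sE[c]$.

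There is, however, a genuine inaccuracy in the reduction and in the final equivariance step. A $\ZZ$-valued Schwartz function on $(\AA_f^{(c)})^2$ need not be supported in $(\hat\ZZ^{(c)})^2$: for example $1_{(1/5,0)+\hat\ZZ^2}$ lies in ${}_c\cS_0(\AA_f^2,\ZZ)$ for $c$ prime to $5$ but is \emph{not} a $\ZZ$-linear combination of $1_{(a,b)+N\hat\ZZ^2}$ with $(a,b)\in\ZZ^2$. So the claimed reduction is too narrow. More seriously, the assertion that any $\sigma\in\GL_2(\AA_f^{(c)})$ ``sends a basic $\phi$ to another basic characteristic function'' is false: elements of $\GL_2(\Ql)$ for $\ell\nmid c$ with nontrivial denominators (e.g.\ $\stbt{\ell}{0}{0}{1}$) move cosets of $N\hat\ZZ^2$ to cosets of lattices that are not of that form, so your basic class is not $\GL_2(\AA_f^{(c)})$-stable. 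The fix is either to take the basic building blocks to be characteristic functions of \emph{arbitrary} cosets $v+L$ with $v$ $c$-integral and $L_c=\ZZ_c^2$ (these correspond to torsion sections of order prime to $c$, and Kato's construction applies directly), or to establish equivariance for $\GL_2(\hat\ZZ^{(c)})$ first and then extend to all of $\GL_2(\AA_f^{(c)})$ using the norm compatibility of $_c\theta$ under $\ell$-isogenies for $\ell\nmid c$ --- which is part of the uniqueness statement in \cite[Prop.~1.3]{kato04} and is exactly the extra input your invocation of ``double-coset operators'' is gesturing at but not making explicit. With that repair the argument closes.
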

 
  \begin{proof}
   See \cite[\S 1.4]{kato04}.
  \end{proof}

 \subsection{Higher Eisenstein classes}
  \label{sect:higherEis}
  
  \begin{definition}
   For $k \ge 0$, let $\sH^k_{\QQ}$ denote the $\GL_2(\AA_f)$-equivariant relative Chow motive over $Y$ associated to the representation $\Sym^k(\mathrm{std}) \otimes \det^{-k}$ of $\GL_2 / \QQ$.
  \end{definition}

  \begin{theorem}[Beilinson]
   Let $k \ge 1$. There is a $\GL_2(\AA_f)$-equivariant map $\cS(\AA_f^2, \QQ) \to H^1_{\mot}\left(Y, \sH^k_{\QQ}(1)\right)$, $\phi \mapsto \Eis^k_{\mot, \phi}$, the \emph{motivic Eisenstein symbol}, with the following property: the pullback of the de Rham realization $r_{\dR}\left( \Eis^k_{\mot, \phi} \right)$ to the upper half-plane is the $\sH^k$-valued differential 1-form
   \[ -F^{(k+2)}_{\phi}(\tau) (2\pi i \dd  z)^k (2\pi i\dd \tau), \]
   where $F^{(k+2)}_{\phi}$ is the Eisenstein series defined by 
   \[ 
    F^{(k+2)}_\phi(\tau) = \frac{(k+1)!}{(-2\pi i)^{k+2}}  \sum_{\substack{x, y \in \QQ \\ (x, y) \ne (0,0)}} \frac{\hat\phi(x, y)}{(x\tau + y)^{k+2}}.
   \]
  \end{theorem}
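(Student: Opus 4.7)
The plan is to reduce this statement to the original construction of Beilinson's Eisenstein symbol at each finite level $Y(N)$, and then to assemble these into a $\GL_2(\AA_f)$-equivariant map by passing to the limit.

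At each level $N \ge 3$, I would work with the subspace $\cS(\AA_f^2, \QQ)_N \subseteq \cS(\AA_f^2, \QQ)$ of Schwartz functions factoring through $(\ZZ/N\ZZ)^2$. For such a $\phi$, I would recall Beilinson's horospherical construction, which produces a class in $H^1_{\mot}(Y(N), \sH^k_\QQ(1))$ from data on the $k$-fold fibre product $\pi: \sE^k \to Y(N)$ of the universal elliptic curve. The key observation is that, via the Leray spectral sequence for $\pi$ together with the action of $S_k \ltimes \{\pm 1\}^k$ by permutations and sign-changes on the factors, the motive $\sH^k_\QQ$ appears as a direct summand of $R^k \pi_* \QQ$ (the sign-isotypic piece), and hence $H^1_\mot(Y(N), \sH^k_\QQ(1))$ is a canonical summand of $H^{k+1}_\mot(\sE^k, \QQ(k+1))$ after the appropriate idempotent projection.

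The input classes in $H^{k+1}_\mot(\sE^k, \QQ(k+1))$ are constructed from the modular units of Section \ref{sect:modunits}. Restricting to the open complement $U^k \subset \sE^k$ of the union of diagonals and the zero-section, one produces for each $\phi \in \cS(\AA_f^2, \QQ)_N$ a symbol built from cup products of pullbacks of Siegel units $g_\psi$ along torsion sections (indexed by nonzero $N$-torsion points), then extends across the boundary divisors via the Gysin maps; projecting onto the sign-isotypic summand kills the boundary contributions and yields $\Eis^k_{\mot, \phi}$. The $\GL_2(\hat\ZZ/N)$-equivariance of the Siegel units gives the equivariance at level $N$; compatibility of the construction under the norm maps $Y(N') \to Y(N)$ for $N \mid N'$ allows one to assemble these maps into a single $\GL_2(\AA_f)$-equivariant map on the union $\cS(\AA_f^2, \QQ) = \varinjlim_N \cS(\AA_f^2, \QQ)_N$.

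For the de Rham realization, I would pull back to the upper half-plane via the uniformisation $\sE(U)_\CC = \CC/(\ZZ\tau + \ZZ)$ over $\cH$, so that $\sE^k$ becomes a torus fibration over $\cH$ with coordinate $z = (z_1, \dots, z_k)$. The de Rham realisation of the motivic Siegel unit $g_\psi$ is well-known to be $\mathrm{dlog}(\theta_\psi)$ for a suitable theta function $\theta_\psi$; substituting this into the cup-product construction and computing the pushforward along the fibres produces, after projecting to the sign component, a differential form proportional to the Eisenstein--Kronecker series $E_{k+2}(\tau, z; \phi)(2\pi i\, dz)^k(2\pi i\, d\tau)$. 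Its constant term in the $z$-expansion (which is what remains after projecting to $\sH^k = \Sym^k$ rather than to a mixed piece) is precisely the holomorphic Eisenstein series $F^{(k+2)}_\phi(\tau)$ of the statement, with the Fourier transform $\hat\phi$ appearing through Poisson summation when one converts the ``lattice-side'' expression coming from the theta functions into the ``Hecke-side'' expression involving a sum over $\QQ^2$. The overall sign is fixed by direct comparison at a single test function (e.g.\ the characteristic function of a small neighbourhood of a single rational point), using known formulae for the logarithmic derivative of a Siegel unit at a cusp.

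The principal obstacle is the last step: bookkeeping the normalisations so that the de Rham formula comes out \emph{exactly} as stated, with the correct overall sign, the factor $\frac{(k+1)!}{(-2\pi i)^{k+2}}$, and crucially the appearance of $\hat\phi$ rather than $\phi$. This is not conceptually deep, but it requires care in tracking the Fourier-transform conventions through the horospherical construction and in matching the combinatorial factors introduced by the sign-idempotent projection.
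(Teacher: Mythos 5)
The paper does not actually prove this theorem: it is stated with a \qed as a quoted result of Beilinson, so there is no internal argument with which to compare your proposal. What follows is therefore an assessment of your sketch against the standard literature (Beilinson, Schappacher--Scholl \cite{schappacherscholl91}, Huber--Kings, Kings \cite{kings15}).

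Your overall strategy -- pass to finite level, realise $\sH^k_\QQ(1)$ as the sign-isotypic piece of $R^k\pi_*\QQ$ on the $k$-fold self-product $\sE^k$, build classes from modular units and extend across the boundary using the sign projection, then assemble over levels by norm compatibility and compute the de Rham realisation via theta functions and Poisson summation -- is the correct one, and it is in fact the horospherical construction that everyone, including this paper, implicitly relies on.

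However, the central technical step as you describe it would not produce classes in the right place. You propose taking ``cup products of pullbacks of Siegel units $g_\psi$ along torsion sections (indexed by nonzero $N$-torsion points).'' Siegel units live on $Y(N)$; pulling them back to $\sE^k$ along $\pi$ produces units whose cup products are constant along the fibres and hence project to zero in $R^k\pi_*$ for any $k\ge 1$. What one actually needs are units on $\sE^k$ itself, arising from theta-type functions on the universal elliptic curve $\sE$ pulled back along the $k$ coordinate projections and the sum map $\sE^k\to\sE$; the Siegel unit $g_\psi$ is the restriction of such a theta function to a torsion section, not the input to the cup product. Related to this, the degree forces a cup product of precisely $k+1$ units (since $\dim_\QQ \sE^k = k+1$ and $H^1_\mot = \cO^\times\otimes\QQ$), whereas your sketch has the cup product ranging over all $N^2-1$ nonzero $N$-torsion points. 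Both the source of the units and their number need to be fixed for the construction to land in $H^{k+1}_\mot(\sE^k,\QQ(k+1))$. The remaining steps (sign-projection extending the class, norm-compatibility to ad\`elise the construction, and the de Rham/Poisson-summation computation of the Eisenstein series -- including the appearance of $\hat\phi$ rather than $\phi$) are correctly identified.
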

 
  \begin{proof}
   See \cite{beilinson86}.
  \end{proof}

  \begin{remark}
   Note that if $\phi$ is the characteristic function of $(0, b) + N\hat\ZZ^2$, then $\Eis^k_{\mot, \phi}$ is the class $\Eis^k_{\mot, b, N}$ defined in \cite[Theorem 4.1.1]{KLZ1a}. If $k = 0$, then we need to assume $\phi \in \cS_0(\AA_f^2, \QQ)$ in order for the series defining $F^{(2)}_{\phi}$ to be absolutely convergent. With this assumption, we may define $\Eis^0_{\mot, \phi}$ to be the unit $g_{\phi}$, since $H^1_{\mot}(Y, \QQ(1)) = \cO^\times(Y) \otimes \QQ$; the de Rham realisation of this class is then $\operatorname{dlog} g_{\phi} = - F^{(2)}_{\phi} \cdot (2\pi i \dd\tau)$, so our statements are consistent.
  \end{remark}
  
  Since we lack a good theory of relative Chow motives with coefficients in $\ZZ$, we do not have an integral version of the motivic Eisenstein classes for $k > 0$. However, we can obtain a $\Zp$-structure (for a fixed $p$) using \'etale cohomology instead. For each $U$ we have an \emph{\'etale realisation} map
  \[ r_{\et}: H^1_{\mot}\left(Y(U), \sH^k_{\QQ}(1)\right) \to H^1_{\et}\left(Y(U), \sH^k_{\Qp}(1)\right) \]
  for any prime $p$, where $H^*_\et$ denotes continuous \'etale cohomology in the sense of \cite{jannsen88}, and $\sH^k_{\Qp}$ is the lisse \'etale $\Qp$-sheaf which is the $p$-adic realisation of $\sH_{\QQ}$. This is naturally the base-extension to $\Qp$ of the \'etale $\Zp$-sheaf $\sH_{\Zp}^k$ associated to the \emph{minimal} admissible lattice in the $\GL_2$-representation $\Sym^k(\operatorname{std}) \otimes \det^{-k}$.
  
  \begin{proposition}
   \label{prop:integral-Eis}
   Let $k \ge 0$. If $c$ is coprime to $6p$, then for each sufficiently small open compact $U \subset \GL_2\left(\AA_f^{(pc)} \times \ZZ_{pc}\right)$ there is a map 
   \[ 
    {}_c \cS\left( (\AA_f^{(p)} \times \Zp)^2, \Zp\right)^U \to H^1_{\et}\left(Y(U), \sH^k_{\Zp}(1)\right),\quad \phi \mapsto \cEis^k_{\et, \phi},
   \]
   which is equivariant for the action of $\GL_2\left(\AA_f^{(pc)} \times \ZZ_{pc}\right)$, and satisfies
   \[  \cEis^k_{\et, \phi} \otimes 1 = \left( c^2 - c^{-k} {\stbt{c}{0}{0}{c}}^{-1}\right) r_{\et}\left(\Eis^k_{\mot, \phi}\right) \quad\text{as elements of}\quad H^1_{\et}\left(Y(U), \sH^k_{\Qp}(1)\right). \]
  \end{proposition}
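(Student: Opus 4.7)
The plan is to construct $\cEis^k_{\et,\phi}$ via Kings' theory of $\Lambda$-adic Eisenstein classes attached to the universal elliptic curve $\sE \to Y$, and then to deduce both the $\GL_2$-equivariance and the $c$-smoothing identity by reducing them to the corresponding statements for Siegel units at weight zero.

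First, I would recall Kings' pro-\'etale Iwasawa sheaf $\Lambda(\sH_{\Zp})$ on $Y$, where $\sH_{\Zp} = T_p \sE$ is the $p$-adic Tate module of the universal elliptic curve viewed as a $\Zp$-local system, together with the moment maps
\[ \mom^k \colon \Lambda(\sH_{\Zp}) \to \TSym^k(\sH_{\Zp}) \]
for each $k \ge 0$. After a $\det^{-k}$ twist, $\TSym^k(\sH_{\Zp})$ is identified with the minimal admissible lattice in the $\GL_2$-representation $\Sym^k(\mathrm{std}) \otimes \det^{-k}$, and hence with $\sH^k_{\Zp}$.

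Next, for $\phi \in {}_c\cS\bigl((\AA_f^{(p)} \times \Zp)^2, \Zp\bigr)$, I would construct an integral Eisenstein--Iwasawa class $\cEI_\phi \in H^1_{\et}(Y, \Lambda(\sH_{\Zp})(1))$ whose $0$-th moment is the Kummer image of the Siegel unit ${}_cg_\phi$ of Proposition~\ref{prop:integral-siegel}. This follows Kings' strategy of assembling Kummer images of Siegel units at level $p^n$ into a compatible system (cf.~\cite[\S 4]{KLZ1a}). The $\Zp$-linearity and $\GL_2(\AA_f^{(pc)} \times \Zp)$-equivariance of $\phi \mapsto \cEI_\phi$ are inherited from those of $\phi \mapsto {}_cg_\phi$, together with the natural $\GL_2(\Zp)$-action on $\sH_{\Zp}$.

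Setting $\cEis^k_{\et,\phi} := \mom^k(\cEI_\phi)$, the equivariance of $\phi \mapsto \cEis^k_{\et,\phi}$ is immediate, since $\mom^k$ is $\GL_2(\Zp)$-equivariant. For the relation with $\Eis^k_{\et,\phi}$, I would work in rational cohomology: there is a rational Eisenstein--Iwasawa class $\EI_\phi$ whose $k$-th moment recovers $\Eis^k_{\et,\phi}$, and the identity of Proposition~\ref{prop:integral-siegel} applied at the $0$-th moment layer propagates to the $\Lambda$-adic level to give $\cEI_\phi \otimes 1 = \bigl(c^2 - \langle c \rangle^{-1}\bigr) \EI_\phi$, where $\langle c \rangle$ denotes the image of $\stbt c 0 0 c$. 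Applying $\mom^k$ and using that $\stbt c 0 0 c$ acts on the $\det^{-k}$-twisted target $\sH^k_{\Zp}$ with an extra factor of $c^{-k}$ relative to its action on $\TSym^k(\sH_{\Zp})$, one recovers the claimed identity.

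The main obstacle is the careful bookkeeping of normalisations: the relationship between $\TSym^k$ and $\Sym^k$ as admissible lattices, the orientation of the $\GL_2(\Zp)$-action on $\sH_{\Zp}$ (standard versus dual), and the exact power of $c$ arising from the $\det^{-k}$ twist on $\sH^k_{\Zp}$. Once these conventions are aligned, the $k \ge 1$ identity follows formally from the $k=0$ identity together with the interpolation property of the moment maps, so no new cohomological input is required beyond Proposition~\ref{prop:integral-siegel}.
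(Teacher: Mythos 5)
Your strategy is essentially the intended one. The paper does not give a proof of Proposition~\ref{prop:integral-Eis} at all: it is stated as a known consequence of Kings' $\Lambda$-adic Eisenstein theory (cf.\ Kings' article cited in the bibliography and \cite[\S 4]{KLZ1a}--\cite[\S 4]{KLZ1b}), and the paper's only explicit engagement with this circle of ideas is the later Theorem~\ref{thm:kings}, which is proved precisely by the route you describe — reduce to the characteristic-function case of \cite{KLZ1b} and then extend by $\GL_2(\AA_f^{(p)})$-equivariance of the Siegel units and the moment maps. So your proposal reconstructs the argument the paper is implicitly relying on, and the reduction of the $c$-smoothing identity to the $k=0$ case via the Iwasawa lift of Proposition~\ref{prop:integral-siegel} is the right mechanism for producing the $c^{-k}$ factor.

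One normalisation nit: you say that \emph{after} a $\det^{-k}$ twist, $\TSym^k(\sH_{\Zp})$ becomes $\sH^k_{\Zp}$. But with the paper's conventions (where $T_p\sE$ realises the \emph{dual} of the standard representation, hence $\mathrm{std}\otimes\det^{-1}$), $\TSym^k(T_p\sE)$ is \emph{already} a lattice in $\Sym^k(\mathrm{std})\otimes\det^{-k}$, i.e.\ already equals $\sH^k_{\Zp}$ with no further twist. The $c^{-k}$ in the $c$-smoothing identity is then accounted for by how $\stbt c00c$ acts on $e_1^k$ through the moment map, not by an external twist. This is exactly the kind of bookkeeping you flag at the end, and it does not affect the structure of your argument. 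You should also make explicit that the definition of $\cEI_\phi$ only allows $\phi$ with a constrained $p$-component (a $\ch((0,1)+p^s\Zp^2)$ factor); the extension to all $\phi \in {}_c\cS\bigl((\AA_f^{(p)}\times\Zp)^2,\Zp\bigr)$ uses the $\GL_2(\Zp)$-equivariance of the construction, mirroring the proof of Theorem~\ref{thm:kings} in the paper.
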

  
  Note that \'etale cohomology with $\Zp$ coefficients does not satisfy Galois descent, so it is important to formulate Proposition \ref{prop:integral-Eis} for each individual level, rather than simply passing to the direct limit.
  
  \begin{proof} For levels of the form $U(N)$ this is explained in \cite{kings15}, and the arguments apply without change to a general $U$. \end{proof}
  
  \begin{remark}
   For $k = 0$, $\sH^k_{\Zp}$ is the constant sheaf $\Zp$, and of course $\cEis^0_{\et, \phi}$ is the image of ${}_cg_{\phi}$ under the Kummer map, so the $k = 0$ case of Proposition \ref{prop:integral-Eis} is consistent with Proposition \ref{prop:integral-siegel}.
  \end{remark}
  
 \subsection{The modular unit representation}

  We will need a description of the modular units $\cO^\times(Y) \otimes \CC$ as a $\GL_2(\AA_f)$-representation. 
  
  \begin{definition}
   For $k \ge 0$, and $\eta$ a finite-order character of $\AA_f^\times / \QQ^{\times +}$ satisfying $\eta(-1) = (-1)^k$, let $I_k(\eta)$ denote the space of functions $f: \GL_2(\AA_f) \to \CC$ satisfying 
   \[ f\left( \stbt{a}{b}{}{d} g\right) = \|a\|^{k+1} \|d\|^{-1} \eta(a) f(g)\qquad \text{for all $a, b, d \in \GL_2(\AA_f)$},\]
   regarded as a representation of $\GL_2(\AA_f)$ by right translation. For $k = 0$ and $\eta = 1$, let $I_0^0(1)$ denote the subrepresentation which is the kernel of the natural map $I_0(1) \to \CC$ given by integration over $\GL_2(\AA_f) / B(\AA_f)$.
  \end{definition}
  
  \begin{theorem}
   There is a $\GL_2(\AA_f)$-equivariant isomorphism
   \[ 
    \partial_0: \frac{\cO^\times(Y)}{(\QQ^{\mathrm{ab}})^\times} \otimes \CC
    \rTo^\cong
    I_0^0\left(1\right)
    \oplus \bigoplus_{\eta \ne 1} I_0(\eta),
   \]
   characterised by the statement that if $g \in \cO^\times(Y)$, then $\partial_0(g)(1)$ is the order of vanishing of $g$ at the cusp $\infty$.
  \end{theorem}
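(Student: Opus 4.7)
The plan is to interpret $\partial_0$ as the ``divisor at the cusps'' map, combine it with the known description of cusps as a Borel double-coset space, and use Manin--Drinfeld to control the image.

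\textbf{Step 1 (cusps and principal series).} The cusps of $Y(U)$ are in bijection with $B(\QQ)_+ \backslash \GL_2(\AA_f)/U$, so at infinite level the cusps form the homogeneous space $X \coloneqq B(\QQ)_+ \backslash \GL_2(\AA_f)$ under right translation by $\GL_2(\AA_f)$. The space of smooth functions on $X$ decomposes, by Fourier analysis on the compact abelian quotient $\QQ_+^\times \backslash \AA_f^\times = \Zhat^\times$, as a direct sum $\bigoplus_\eta \mathcal{F}_\eta$ indexed by finite-order Hecke characters $\eta$, where $\mathcal{F}_\eta$ consists of functions $f$ satisfying $f(\stbt a b {} d g) = \eta(a) \eta(d)^{-1} f(g)$ or the appropriate twist thereof. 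Matching this with $I_0(\eta)$ requires working out the correct normalization: the width of the cusp attached to the coset of $\stbt a b {} d g$ relative to that of $g$ equals $\|a/d\|$, and since orders of vanishing transform as the inverse of the uniformizer scaling, the ``raw'' divisor function naturally lives in the twisted induction, matching the factor $\|a\|\|d\|^{-1}$ appearing in the definition of $I_0(\eta)$.

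\textbf{Step 2 (definition of $\partial_0$).} For $g \in \cO^\times(Y)$, a modular unit has divisor supported on cusps on the compactification; define $\partial_0(g)$ to be the function on $X$ whose value at the coset of $g_0$ is $\operatorname{ord}_{\text{cusp}(g_0)}(g)$. Smoothness is automatic since $g$ is pulled back from some finite-level $Y(U)$, and $\GL_2(\AA_f)$-equivariance under right translation is immediate from the corresponding action on cusps. The normalization at the identity coset recovers the characterising property that $\partial_0(g)(1)$ equals the order of vanishing at $\infty$.

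\textbf{Step 3 (kernel).} A modular unit with trivial cuspidal divisor extends to an invertible regular function on the compactification, hence is constant on each geometric component. As noted in Section 6 on component groups, the connected components of $Y$ are geometrically connected and defined over $\QQ^{\mathrm{ab}}$; the field of constants is therefore $\QQ^{\mathrm{ab}}$, so $\ker(\partial_0) = (\QQ^{\mathrm{ab}})^\times$, which accounts for the quotient in the source.

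\textbf{Step 4 (image, via Manin--Drinfeld).} On each connected component the divisor of a rational function has total degree zero, so in the $\eta = 1$ component the image lands in $I_0^0(1)$ (the kernel of the natural integration map recording total degree). Surjectivity onto this summand and onto $I_0(\eta)$ for each nontrivial $\eta$ is the content of Manin--Drinfeld, which, once tensored with $\QQ$, shows that divisors of modular units already exhaust all degree-zero cuspidal divisors; the abundance of Siegel units $g_\phi$ from \S\ref{sect:modunits}, together with their $\GL_2(\AA_f)$-translates, provides the raw material to verify spanning in each isotypic component. The main delicate point is the precise identification of the normalization in Step 1 and verifying that the $\CC$-span of the images of Siegel units is dense (hence equal) in the target smooth induction; after checking this, combining Steps 2--4 gives the isomorphism, and (cf.~\cite[Theorem 3]{scholl89}) provides the desired comparison with Scholl's version.
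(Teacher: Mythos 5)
The paper does not actually supply a proof of this statement: it records the result with a \qed, a citation to Scholl's Theorem 3, and a one-line remark about the difference of normalisations. So there is no ``paper's proof'' to compare against, and your proposal is a blind reconstruction. The shape of it (adelic description of cusps, decomposition of functions on the cuspidal boundary, Manin--Drinfeld for surjectivity, Siegel units as the supply of modular units) is indeed the content of Scholl's theorem, so the strategy is the right one.

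There is, however, a genuine tension between your Steps 1 and 2 that needs to be resolved before the sketch becomes a proof. In Step 2 you define $\partial_0(g)(g_0)$ to be ``the order of vanishing of $g$ at the cusp $\operatorname{cusp}(g_0)$''; but the order of vanishing of a modular unit at a cusp depends on the level at which it is computed, since the cuspidal ramification of $X(U') \to X(U)$ (equal to the index in the unipotent direction, e.g.\ $N'/N$ at $\infty$ for $X(N')\to X(N)$) rescales it. Equivalently, the required transformation law for $I_0(\eta)$ includes the factor $\|a\|\,\|d\|^{-1}$, and since this is $d/a \neq 1$ on elements of $B(\QQ)_+$ with $a,d\in\QQ_+^\times$, a literal ``order of vanishing'' cannot be a well-defined function on $B(\QQ)_+\backslash\GL_2(\AA_f)$. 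You acknowledge exactly this in Step 1 via the width of the cusp, but Step 2 as written ignores the normalisation, so the claimed left-$B(\QQ)_+$-invariance of $\partial_0(g)$ fails. The fix is to define $\partial_0(g)$ as the valuation with respect to the canonical Tate parameter $q$ (or equivalently to weight the level-$U$ order of vanishing by the cusp width), and then the $\|a\|\|d\|^{-1}$ factor drops out of the equivariance computation. This normalisation is precisely the content hiding behind the phrase ``the above formulation is correct for our choice of model'' in the paper, and it is the crux of matching the two sides. A second, smaller gap is in Step 4: Manin--Drinfeld at each finite level gives an injection with finite cokernel into degree-zero cuspidal divisors, but to conclude a $\GL_2(\AA_f)$-equivariant isomorphism with $I_0^0(1)\oplus\bigoplus_{\eta\ne1}I_0(\eta)$ one still needs the dimension count $\dim (I_0^0(1)\oplus\bigoplus_{\eta\ne1}I_0(\eta))^U = \operatorname{rk}\,\cO^\times(Y(U))/\text{const}$ at each level $U$, which you assert but do not verify. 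Neither of these is a wrong idea, but both are exactly where the theorem's content lives.
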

 
  \begin{proof} See \cite[Theorem 3]{scholl89}. (Scholl's normalisations are slightly different from ours, as he uses the canonical model of $Y$ for a different choice of Shimura datum; see Remark \ref{rmk:shdatum}. The above formulation is correct for our choice of model.)
  \end{proof}
 
  For $k \ge 1$ we have an analogous statement for the image of the Eisenstein symbol, although we do not know if this image is the whole of the motivic cohomology:
  
  \begin{theorem}
   For $k \ge 1$, there is a surjective $\GL_2(\AA_f)$-equivariant map
   \[ \partial_k: H^1_{\mot}\left(Y, \sH^k_\QQ(1)\right) \otimes \CC \onto \bigoplus_{\eta} I_k(\eta), \]
   such that $\partial_k(x)(1)$ is the residue at $\infty$ of the 1-form $r_{\dR}(x)$. This map is an isomorphism on the image of the Eisenstein symbol $\phi \mapsto \Eis_{\mot, \phi}^k$.
  \end{theorem}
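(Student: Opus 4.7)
The strategy is to define $\partial_k$ via the ``constant term at the cusp $\infty$'' of the de Rham realisation, generalising Scholl's construction for modular units. For $x \in H^1_{\mot}(Y, \sH^k_{\QQ}(1))$, the form $r_{\dR}(x)$, pulled back to the upper half-plane and trivialising $\sH^k_{\dR}$ via the Tate curve at $\infty$, admits a $q$-expansion (where $q = e^{2\pi i\tau}$) whose constant term, projected to the highest-weight line of $\sH^k_{\dR}$, defines a complex number. I set $\partial_k(x)(1)$ to be this constant term and then extend to a function on all of $\GL_2(\AA_f)$ by $\partial_k(x)(g) \coloneqq \partial_k(g \cdot x)(1)$, using the $\GL_2(\AA_f)$-equivariant structure on the motivic cohomology coming from Ancona's functor. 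This is manifestly $\GL_2(\AA_f)$-equivariant by construction.

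To show that $\partial_k(x) \in \bigoplus_\eta I_k(\eta)$, I verify the required left-transformation law under the Borel. The standard Borel $B(\AA_f)$ stabilises the cusp $\infty$; its unipotent radical acts trivially on constant terms, while a diagonal element $\stbt a {} {} d$ contributes three factors: scaling of $q$ (and hence of $d\tau$) by $\|a/d\|$, scaling of the chosen highest-weight section of $\sH^k_{\dR}$ by $\|a\|^k$, and a factor of $\|a\|$ from the Tate twist $\QQ(1)$. Together these yield the character $\|a\|^{k+1}\|d\|^{-1}$, up to a finite-order character $\eta$ of $\AA_f^\times/\QQ^{\times+}$ coming from the action on the component set $\Zhat^\times$; the parity relation $\eta(-1) = (-1)^k$ is forced by compatibility with the central action on $\sH^k$.

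For the statement about the Eisenstein symbol, I compute $\partial_k(\Eis^k_{\mot,\phi})(1)$ directly from the given formula for $r_{\dR}(\Eis^k_{\mot,\phi})$: the constant term of $F^{(k+2)}_\phi$ works out to $\frac{(k+1)!}{(-2\pi i)^{k+2}} \cdot 2\sum_{y \in \QQ^\times} \hat\phi(0,y) y^{-(k+2)}$, a classical Hurwitz-style zeta value. Extending $\GL_2(\AA_f)$-equivariantly, $\phi \mapsto \partial_k(\Eis^k_{\mot,\phi})$ is identified (up to scalar) with a family of Tate--Godement sections of the degenerate principal series $\bigoplus_\eta I_k(\eta)$, and these sections are classically known to span. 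This gives surjectivity of $\partial_k$ onto $\bigoplus_\eta I_k(\eta)$, and in particular surjectivity of its restriction to the Eisenstein image. Injectivity on this image then follows formally: any class in the kernel would generate a $\GL_2(\AA_f)$-subrepresentation on which $\partial_k$ vanishes identically, but the explicit formula above, combined with $\GL_2(\AA_f)$-equivariance, shows this forces the corresponding $\phi$ to pair trivially with every twisted zeta integral, hence $\Eis^k_{\mot,\phi} = 0$.

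The main obstacle is the precise bookkeeping of the Borel scaling character in the second paragraph: combining the Tate twist, the trivialisation of $\sH^k$ at the cusp, and the degeneration parameter to produce exactly $\|a\|^{k+1}\|d\|^{-1}$ requires careful normalisations, and an error in any one factor would produce a spurious and false statement. A cleaner alternative route may be to verify the transformation law directly on the explicitly computed image $\{\partial_k(\Eis^k_{\mot,\phi}) : \phi\}$ using the right-translation invariance properties of $F^{(k+2)}_{g\cdot\phi}$, bypassing the abstract normalisation argument entirely at the cost of relying on the Eisenstein formula throughout.
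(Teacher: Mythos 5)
Your approach is a genuine departure from the paper's own proof, which is essentially two citations: injectivity on the Eisenstein image is attributed to Schappacher--Scholl (\cite[Theorem 7.4]{schappacherscholl91}), and the identification of the target with $\bigoplus_\eta I_k(\eta)$ is attributed to \cite[Lemma 4.3]{lemma17}. You instead try to build everything from first principles. The construction of $\partial_k$, the constant-term computation, and the surjectivity argument via Godement/Siegel sections spanning the (twisted) degenerate principal series are all in the right spirit, and essentially re-derive Lemma's Lemma 4.3 — a reasonable thing to do, though as you note the bookkeeping of scaling factors is delicate. (Incidentally, the three factors you list, $\|a/d\|$, $\|a\|^k$, and $\|a\|$, multiply to $\|a\|^{k+2}\|d\|^{-1}$, not to the required $\|a\|^{k+1}\|d\|^{-1}$, so at least one of the contributions is misidentified; this is precisely the kind of off-by-one you flag as the danger.)

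The genuine gap is in the injectivity argument, which is not ``formal.'' You write that if $\partial_k(\Eis^k_{\mot,\phi}) = 0$ then $\phi$ ``pairs trivially with every twisted zeta integral, hence $\Eis^k_{\mot,\phi} = 0$.'' But vanishing of $\partial_k(\Eis^k_{\mot,\phi})$ is \emph{by definition} the statement that all these zeta integrals vanish — you have simply restated the hypothesis. The implication from ``all residues vanish'' to ``the motivic class vanishes'' is exactly the content of the theorem, and it is not a triviality: even if one knows that a weight-$(k+2)$ holomorphic Eisenstein series is determined by its constant terms (true), one still has to know that the de Rham realisation $r_{\dR}$ is injective on the image of the Eisenstein symbol inside motivic cohomology, i.e. that the motivic class is detected by its Hodge realisation. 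This is the nontrivial theorem of Schappacher--Scholl; it requires their analysis of the boundary of the Eisenstein symbol, and cannot be recovered from the equivariance formalism alone. You would need to either cite that result or reproduce its proof.
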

  
  \begin{proof}
   It is shown in \cite[Theorem 7.4]{schappacherscholl91} that the residue map $\partial_k$ gives an isomorphism between the image of the Eisenstein symbol and a certain vector space denoted $\mathcal{B}_k$. For the description of this $\mathcal{B}_k$ as a sum of induced representations, see \cite[Lemma 4.3]{lemma17}.
  \end{proof}
    
  For $\eta$ a character of $\AA_f^\times / \QQ^{\times +}$ as above, let us write $\cS(\AA_f^2, \CC)^\eta$ for the subspace of $\cS(\AA_f^2, \CC)$ on which $\Zhat^\times$ acts via the character $\eta$.
  
  \begin{proposition}
   Let $\phi \in \cS(\AA_f^2, \CC)^\eta$ be of the form $\prod_{\text{$\ell$ prime}} \phi_\ell$. If $k = 0$ and $\eta = 1$ then assume we have $\phi(0, 0) = 0$. Then we have
   \[ 
    \partial_k\left( \Eis^k_{\mot, \phi}\right) = \frac{2(k+1)! L(k+2, \eta)}{(-2\pi i)^{k+2}} \prod_{\ell} f_{\hat\phi_\ell, \eta_\ell |\cdot|^{k+1/2}, |\cdot|^{-1/2}}.
   \]
  \end{proposition}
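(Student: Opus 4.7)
The approach is to evaluate both sides at $g = 1 \in \GL_2(\AA_f)$ and then appeal to $\GL_2(\AA_f)$-equivariance to extend to all $g$. Both sides are $\GL_2(\AA_f)$-equivariant functions of $\phi$: on the left via the equivariance of the Eisenstein symbol together with the functoriality of $\partial_k$, and on the right via \eqref{zeta-section-eq} (the local factors $\chi_\ell(\det g)^{-1}$ and $\psi_\ell(\det g)^{-1}$ combine to $\eta_\ell(\det g)^{-1}$, whose product is trivial on the global diagonal, and the powers of $|\det g|$ cancel by the product formula). Hence it suffices to check equality at $g = 1$.

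By definition, $\partial_k(\Eis^k_{\mot,\phi})(1)$ is the residue at the cusp $\infty$ of the de Rham realisation $-F^{(k+2)}_\phi(\tau)(2\pi i\, dz)^k(2\pi i\, d\tau)$. Using the local parameter $q = e^{2\pi i\tau}$ at $\infty$, so that $2\pi i\, d\tau = dq/q$, this residue is (up to the identification of $(2\pi i\, dz)^k$ with the standard basis vector of $\sH^k$ at the cusp) proportional to the constant term of $F^{(k+2)}_\phi$ in its $q$-expansion. Only the $x = 0$ terms in the defining series for $F^{(k+2)}_\phi$ contribute to this constant term, giving
\[
 \text{const.\ term of } F^{(k+2)}_\phi \;=\; \frac{(k+1)!}{(-2\pi i)^{k+2}} \sum_{y \in \QQ^\times} \frac{\hat\phi(0,y)}{y^{k+2}}.
\]

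The remaining step is to identify this sum with the value at $1$ of the right-hand side. First, the symmetry $\hat\phi(0, -y) = (-1)^k \hat\phi(0, y)$---which follows from $\phi \in \cS(\AA_f^2, \CC)^\eta$ and the scaling equivariance of the finite Fourier transform, together with $\eta(-1) = (-1)^k$---folds the sum over $\QQ^\times$ into twice the sum over $\QQ^\times_+$. Using the factorisation $\AA_f^\times = \QQ^\times_+ \cdot \Zhat^\times$, the identity $|y|_f = 1/y$ for $y \in \QQ^\times_+$, the triviality of $\eta$ on $\QQ^\times_+$, and the fact that $\hat\phi(0, yu)\,\eta(u)$ is independent of $u \in \Zhat^\times$, one obtains
\[
 \sum_{y \in \QQ^\times_+} \frac{\hat\phi(0, y)}{y^{k+2}} \;=\; \int_{\AA_f^\times} \hat\phi(0, x)\, \eta(x)\, |x|_f^{k+2}\, d^\times x.
\]
Factoring this integral as a product of local integrals and recognising each local factor via the defining integral formula for the Siegel sections with $\chi_\ell = \eta_\ell|\cdot|^{k+1/2}$ and $\psi_\ell = |\cdot|^{-1/2}$ (so that $L(\chi_\ell/\psi_\ell, 1) = L(\eta_\ell, k+2)$) yields $L(\eta, k+2) \prod_\ell f_{\hat\phi_\ell, \eta_\ell|\cdot|^{k+1/2}, |\cdot|^{-1/2}}(1)$. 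Assembling the factor of $2$ and the prefactor $(k+1)!/(-2\pi i)^{k+2}$ reproduces the right-hand side.

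The main obstacle is not the calculation itself (which is essentially Tate local--global matching) but the bookkeeping of normalisations: in particular, pinning down the precise scalar by which the residue of the $\sH^k$-valued form $(2\pi i\, dz)^k(2\pi i\, d\tau)$ at $\infty$ corresponds to the value $\partial_k(-)(1) \in \CC$, and checking the sign of the residue pairing against the sign appearing in the formula for $r_{\dR}(\Eis^k_{\mot,\phi})$. In the boundary case $k = 0$ (with $\eta$ possibly trivial), the formula should be verified to be compatible with $\partial_0(g_\phi)(1) = \ord_\infty(g_\phi)$ using $\operatorname{dlog}(g_\phi) = -F^{(2)}_\phi \cdot (2\pi i\, d\tau)$ and the interpretation of $\ord_\infty$ as the constant term of the logarithmic derivative in the $q$-expansion.
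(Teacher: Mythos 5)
Your approach matches the paper's: the paper's own proof is the single sentence that the result "follows from a computation of the constant term of the Eisenstein series $F^{(k+2)}_\phi$ at the cusp $\infty$, using the formulae in [Kato, Prop.~3.10]," and you carry out exactly that constant-term computation (the $x=0$ terms of the defining series, folded into a Tate integral and factored into local Siegel sections) and reduce to $g=1$ via equivariance. You are also right to flag the residue normalisation and sign conventions as the delicate point; this is precisely what the citation to Kato's Proposition 3.10 is doing work to absorb.
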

  
  \begin{proof}
   This follows from a computation of the constant term of the Eisenstein series $F^{(k+2)}_{\phi}$ at the cusp $\infty$, using the formulae in \cite[Proposition 3.10]{kato04}.
  \end{proof}

\section{Construction of Lemma--Eisenstein classes}
 
 \subsection{Coefficients}
  
  Let $(a, b) \ge 0$ be a pair of non-negative integers, defining an algebraic representation $V^{a,b}$ of $G$. We write $D^{a,b}$ for the twist $V^{a,b} \otimes \mu^{-(2a+b)}$. We then have an equivariant relative Chow motive $\sD_{\QQ}^{a,b} = \Anc_G(D^{a,b})$ over the Shimura variety $Y_G$.
  
  \begin{notation}
   Let us choose integers $(q, r)$ with $0 \le q \le a$ and $0 \le r \le b$, and set $c = (a-q) + (b-r)$, $d = (a-q)+r$ (so $c$, $d \ge 0$).
  \end{notation}
  
  Then there is a branching map
  \[ 
   \br^{[a,b,q,r]}: (\Sym^c \boxtimes \Sym^d) \otimes \det{}^{-(c+d)} \to D^{a,b} \otimes \mu^q
  \]
  as in \eqref{eq:brab}. Via the commutative diagram of functors in Proposition \ref{prop:branching}, we have a homomorphism of equivariant Chow motives over $Y_H$,
  \[ \br^{[a,b,q,r]}: \sH^{c, d}_{\QQ} \to \iota^*\left( \sD^{a,b}_{\QQ}(-q)[-q] \right), \]
  where $\sH^{c, d}_{\QQ}$ is the relative motive associated to the $H$-representation $(\Sym^c \boxtimes \Sym^d) \otimes \det{}^{-(c+d)}$. (Recall that the notation $[m]$ denotes twisting by the character $\|\mu(-)\|^m$ of $G(\AA_f)$.)

 \subsection{Pushforwards in motivic cohomology}
 
  Let $(a,b,q,r)$ and $(c, d)$ be as in the previous section. We shall define in this section a homomorphism of left $H(\AA_f)\times G(\AA_f)$-representations
  \begin{equation}
  \label{eq:iota}
  \iota^{[a,b,q,r]}_*:
  H^2_{\mot}\left(Y_H, \sH^{c,d}_{\QQ}(2)\right) \otimes \cH(G(\AA_f);\QQ) \rightarrow
  H^4_\mot\left(Y_G, \sD^{a,b}_\QQ(3-q)\right)[-q].
  \end{equation}
  The actions of $H(\AA_f)\times G(\AA_f)$ for which this map is equivariant are given as follows:
  \begin{itemize}
   \item The $H(\AA_f)$ factor acts trivially on the right-hand side of \eqref{zetamap}, and on the left-hand side it acts via the formula
   \[ h\cdot(x\otimes\xi) = (h\cdot x) \otimes \xi(h^{-1}(-)).\]
   \item On the left-hand side, the $G(\AA_f)$ factor acts trivially on $H^2_{\mot}\left(Y_H, \sH^{c,d}_{\QQ}(2)\right)$, and on $\cH(G(\AA_f); \ZZ)$ it acts via $g \cdot \xi = \xi( (-) g)$.
   \item On the right-hand side, $G(\AA_f)$ via its natural action on $H^4_\mot\left(Y_G(U),\sD^{a,b}_\QQ(3-q)\right)$ (deduced from Proposition \ref{prop:ancfunctorial}) twisted by the character $\|\mu(-)\|^{-q}$.
  \end{itemize}
  
  For ease of reading we shall drop the superscripts $[a,b,q,r]$ for the rest of this section.
  
  \begin{lemma}
   \label{lem:austrianspan}
   The space $H^2_{\mot}\left(Y_H, \sH^{c,d}_{\QQ}(2)\right) \otimes \cH(G(\AA_f);\QQ)$ is spanned by vectors of the form $x \otimes \ch(g U)$, where $x \in H^2_{\mot}\left(Y_H, \sH^{c,d}_{\QQ}(2)\right)$, $g \in G(\AA_f)$, and $U$ is an open compact subgroup of $G(\AA_f)$ such that $H(\AA_f) \cap g Ug^{-1}$ fixes $x$ and $g Ug^{-1}$ is $H$-small in the sense of \S \ref{sect:Hsmall}.
  \end{lemma}
  
  \begin{proof}
   This is immediate from the fact that the principal congruence subgroup $U_G(N)$, for any $N \ge 3$, is $H$-small, and these are cofinal among open compact subgroups of $G(\AA_f)$.
  \end{proof}
  
  For an element $x \otimes \ch(Ug)$ as in Lemma \ref{lem:austrianspan}, we have a closed immersion $\iota_{gU}: Y_H(V) \into Y_G(U)$, where $V = U \cap H(\AA_f)$, given by the composite
  \[ Y_H(V) \rTo^{\iota_{g U g^{-1}}} Y_G(gUg^{-1}) \rTo^{g^{-1}} Y_G(U).\]
  Combining this with the morphism of sheaves $\br^{[a,b,q,r]}$ gives a map
  \[ \iota_{gU,*}: H^2_\mot\left(Y_H(V),\sH^{c,d}_\QQ(2)\right)
  \to H^4_{\mot}\left(Y_G(U), \sD^{a,b}_{\QQ}(3-q)\right)[-q]. \]
  We define $\iota_*(x \otimes \ch(gU))$ as the image of the element
  \[ \vol(V) \cdot \iota_{gU,*}\left(x\right) \in H^4_\mot\left(Y_G(U),\sD^{a,b}_{\QQ}(3-q)\right)[-q] \]
  in the direct limit \eqref{eq:directlimit}. (Here $\vol(V)$ is volume with respect to Haar measure, normalised such that $\vol H(\hat{\ZZ}) = 1$.)
  
  Now, suppose $U' \subset U$ is another $H$-small open compact subgroup, so that $\ch(g U) = \sum_{\gamma \in U / U'} \ch(g \gamma U')$. We want to show that $\iota_*(x \otimes \ch(g U)) = \sum_{\gamma \in U / U'} \iota_*(x \otimes \ch(g \gamma U'))$ for any $x$ invariant under $V$. It suffices to prove this when $U' \trianglelefteqslant U$ (since otherwise we may compare both $U$ and $U'$ with a third open compact $U''$ normal in both $U$ and $U'$); we may clearly also assume $g = 1$. 
  
  Let $V' = U' \cap H(\AA_f)$; we then have degeneracy maps $\pr_U^{U'}: Y_G(U') \to Y_G(U)$ and $\pr_V^{V'}: Y_H(V') \to Y_H(V)$, fitting into a commutative diagram
  \begin{diagram}[small]
   Y_H(V') & \rTo^{\iota_{U'}} & Y_G(U') \\
   \dTo<{\pr_V^{V'}} & & \dTo>{\pr_U^{U'}} \\
   Y_H(V) & \rTo_{\iota_U} & Y_G(U).
  \end{diagram} 
  By the functoriality of the pushforward maps, we have
  \[ (\pr_U^{U'})^* \circ (\pr_U^{U'})_* \circ \iota_{U',*} = (\pr_U^{U'})^* \circ \iota_{U, *} \circ (\pr_V^{V'})_*. \]
  The composite $(\pr_U^{U'})^*(\pr_U^{U'})_*$ is given by $\sum_{\gamma \in U/U'} \gamma^*$; and as $x$ is invariant under $V$ (not only $V'$), then we have $(\pr_V^{V'})_*(x) = [V : V'] x$. So we can write this as
  \[ \vol(V') \cdot \sum_{\gamma \in U/U'} \gamma^* \iota_{U',*}(x) = \vol(V) \cdot (\pr_U^{U'})^*\iota_{U, *}(x). \]
  Pulling back from level $U'$ to the direct limit over all levels, we can write this as
  \[ \iota_*(x \otimes \ch(U)) = \sum_{\gamma \in U / U'} \iota_*(x \otimes \ch(\gamma U')). \]
  
  It follows that $\iota_*$ is well-defined on $H^2_{\mot}\left(Y_H, \sH^{c,d}_{\QQ}(2)\right) \otimes \cH(G(\AA_f);\QQ)$. It is obvious that this map is $G(\AA_f)$-equivariant; and the $H(\AA_f)$-equivariance follows from the obvious compatibility
  \[ \iota_{U, *}(h \cdot x) = h \cdot \iota_{h U h^{-1}, *}(x) \]
  of pushforward maps at finite level.

 \subsection{The Lemma--Eisenstein map}
  \label{sect:LEconstruct}
  
  The cup-product of the Eisenstein symbols for the two factors of $H$ defines an $H(\AA_f)$-equivariant map
  \[ \cS_0(\AA_f^2;\QQ)^{\otimes 2} \to H^2_{\mot}\left(Y_H, \sH^{c,d}_{\QQ}(2)\right),\quad \uphi \mapsto \Eis^{c,d}_{\mot, \uphi}.\]
  
  \begin{definition}
   We define the \emph{Lemma--Eisenstein map} 
   \begin{equation}
    \label{zetamap}
    \LE^{[a,b,q,r]}:
    \cS_0(\AA_f^2;\QQ)^{\otimes 2}\otimes\cH(G(\AA_f);\QQ) \rightarrow
    H^4_\mot\left(Y_G, \sD^{a,b}_\QQ(3-q)\right)[-q]
   \end{equation}
   by $\LE^{[a,b,q,r]}(\uphi \otimes \xi) = \iota_*^{[a,b,q,r]}\left( \Eis^{c,d}_{\mot, \uphi} \otimes \xi\right)$, where $\iota_*^{[a,b,q,r]}$ is as in \eqref{eq:iota}.
  \end{definition}

  \begin{remark}
   When $\xi$ is the characteristic function of an open compact subgroup $U \subseteq G(\AA_f)$, our class $\LE^{[a,b,q,r]}_{U}(\uphi \otimes \xi)$ coincides with the motivic cohomology class $\Eis^{m, n, W}_{\mathcal{M}}(h)$ considered in \cite{lemma15}, for $S = Y_G(U)$, $(m, n) = (a-q+b-r, a-q+r)$, $W$ the representation $V^{a,b}$, and $h$ an appropriate element of Lemma's space $\mathcal{B}_m \otimes \mathcal{B}_n$ depending on $\uphi$. In particular, it follows from the regulator computations of \cite[\S 7]{lemma17} that the Lemma--Eisenstein map is non-zero under fairly mild hypotheses on $a,b,q,r$.
  \end{remark}
  
 \subsection{Choices of the local data}
  \label{sect:localdata}
  
  We shall now fix choices of the input data to the above map $\LE^{[a,b,q,r]}$, in order to define a collection of motivic cohomology classes satisfying appropriate norm relations (a ``motivic Euler system''). We shall work with arbitrary (but fixed) choices of local data at the bad primes; it is the local data at good primes which we shall vary, according to the values of three parameters $M, m, n$.

  \subsubsection{Subgroups of tame level 1}

   We fix a prime $p$, a finite set of primes $S$ not containing $p$, and an arbitrary open compact subgroup $K_S \subset G(\QQ_S) = \prod_{\ell \in S} G(\Ql)$. By enlarging $S$ and shrinking $K_S$ if necessary, we may assume that the open compact subgroup
   \[ K_G = K_S \times \prod_{\ell \notin S} G(\Zl) \subseteq G(\AA_f)\]
   is \emph{sufficiently small} in the sense of \S\ref{sect:siegelMV}. For each $n \ge 0$, we define an compact subgroup of $G(\AA_f)$ by
   \[ K_{G, 0}(p^n) \coloneqq K_S \times K_{G_p, 0}(p^n) \times \prod_{\ell \notin S \cup \{p\}} G(\Zl), \]
   where the subgroup $K_{G_p, 0}(p^n)$ of $G_p = G(\Qp)$ is as defined in \S\ref{sect:compacta}. We define similarly subgroups $K_{G, 1}(p^n)$ for $n \ge 0$, and $K_G(p^m, p^n)$ for $m, n \ge 0$, using the other local subgroups at $p$ defined in \S\ref{sect:compacta}. All of these groups are contained in $K_G$, and hence are sufficiently small.
   
   \begin{notation}
    We adopt the notational convention that if $K_{G, \star}(\square)$ denotes some open subgroup of $G(\AA_f)$, then $Y_{G,\star}(\square)$ denotes the corresponding Shimura variety, so e.g.~$Y_{G, 1}(p^n)$ is an abbreviation for $Y_G(K_{G, 1}(p^n))$.
   \end{notation}
   
  \subsubsection{Local data at the bad primes}
  
   We choose the following ``test data'' at $S$:
   \begin{itemize}
    \item A vector $\uphi_S \in \cS(\QQ_S^2, \ZZ)^{\otimes 2}$.
    \item An open compact subgroup $W_S \subset H(\QQ_S)$ such that $W_S \subseteq H(\QQ_S) \cap K_S$, and $W_S$ acts trivially on $\uphi_S$.
   \end{itemize}
   
   Whenever we deal with norm-compatibility relations we shall assume that the local data $K_S$, $W_S$, $\uphi_S$ remains fixed (i.e.~we shall not attempt to formulate any non-trivial norm-compatibilities at the bad primes). Regarding the choice of $\uphi_S$, see Remark \ref{remark:localdata} below.
      
  \subsubsection{Subgroups of higher tame level}
  
   Now let us choose a square-free integer $M \ge 1$ coprime to $S \cup \{p\}$ (which we shall refer to as a ``tame level''). For $m \ge 0$ and $n \ge 1$, we define a subgroup $K_G(M, p^m, p^n) \subseteq K_G(p^m, p^n)$ by
   \[
    K_G(M, p^m, p^n) = \{ k \in K_G(p^m, p^n) : \mu(k) = 1 \bmod M \}.
   \]
   As explained in \S\ref{sect:components}, we have isomorphisms
   \begin{equation}
    \label{eq:components}
    \jmath_{Mp^m}: Y_G(M, p^m, p^n) \rTo^\cong Y_{G, 1}(p^n) \mathop{\times}_{\Spec \QQ} \Spec \QQ(\zeta_{Mp^m}).
   \end{equation}
   Assuming $n \ge m$, we also define $K_G'(M, p^m, p^n) = \{ k \in K_G'(p^m, p^n) : \mu(k) = 1 \bmod M \}$; note that the difference between this and $K_G(M, p^m, p^n)$ is only at $p$ -- we do not impose stronger congruences at $M$.
   
  \subsubsection{Test data of higher level}

   Let $(M, m, n)$ be integers as above. For each such triple, we shall define the following data:
   \begin{itemize}
    \item an element $\xi_{M, m, n} \in \cH(G(\AA_f), \ZZ)$, fixed by the right-translation action of $K_G(M, p^m, p^n)$;
    \item a subgroup $W$ of $H(\AA_f)$, such that for all $x$ in the support of $\xi_{M, m, n}$, we have $W \subseteq H(\AA_f) \cap x K_G(M, p^m, p^n) x^{-1}$;
    \item an element $\uphi_{M, m, n} \in \cS_0(\AA_f^2, \ZZ)^{\otimes 2}$ stable under $W$.
   \end{itemize}
   
   We shall define these as products
   \begin{align*}
    \xi_{M, m, n} &= \ch(K_S) \otimes \bigotimes_{\ell \notin S} \xi_\ell, &
    W &= W_S \times \prod_{\ell \notin S} W_\ell,&
     \uphi_{M, m, n} &= \uphi_S \otimes \bigotimes_{\ell \notin S} \uphi_\ell,
   \end{align*}
   where the local data $K_S, W_S, \uphi_S$ at the bad places are the ones chosen above (independently of $M, m, n$), and the local data at primes $\ell \notin S$ are as follows. As in \S\ref{sect:repth}, we let $\eta_{\ell, r} \in G(\Ql)$ denote the element
   \[
    \begin{smatrix} 1 &  & {\ell^{-r}} &  \\  & 1 &  & \ell^{-r}
    \\  &  & 1 &  \\  &  &  & 1\end{smatrix}.
   \]
   \begin{itemize}
    
    \item If $\ell \nmid Mp$, we set $\xi_\ell = \ch\left(G(\Zl)\right)$, $W_\ell = H(\Zl)$, and $\uphi_\ell = \ch(\Zl^2)^{\otimes 2}$.
     
    \item If $\ell \mid M$, we set $\xi_\ell = \ch(K_{\Gl}(\ell, 1)) - \ch\left(\eta_{\ell, 1} \cdot K_{\Gl}(\ell, 1)\right)$, and $W_\ell = K_{\Hl}(\ell, \ell^2)$. We take $\uphi_\ell = \ch\left( \ell^2 \Zl \times (1 + \ell^2 \Zl)\right)^{\otimes 2}$. 
    
    \item For $\ell = p$, we set $\xi_p = \ch\left(\eta_{p, m} \cdot K_{G_p}(p^m, p^n)\right)$. We choose an integer $t \ge 1$ sufficiently large\footnote{One can check that $t = n + 2m$ suffices.} that $K_{H_p}(p^m, p^t)$ is contained in $\eta_{p, m} \cdot K_{G_p}(p^m, p^n) \cdot \eta_{p, m}^{-1}$; we let $W_p$ be this subgroup, and we define 
    \[ \uphi_p = \ch\left( p^t \Zp \times (1 + p^t \Zp)\right)^{\otimes 2}.\]
   \end{itemize}
   
   Note that $\uphi_{M, m, n} \in \cS_0(\AA_f^2, \ZZ)^{\otimes 2} \subset \cS(\AA_f^2, \ZZ)^{\otimes 2}$, since our local Schwartz functions at $p$ vanish at $(0, 0)$. Both the element $\uphi_{M, m, n}$, and the group $W$, depend on the auxilliary choice of $t$; but if we let $t^\circ > t$ be another choice, and $\uphi_{M, m, n}^\circ$, $W^\circ$ the objects defined using $t^\circ$ in place of $t$, then we have
   \begin{equation} 
    \label{change-t-eq}
    \uphi_{M, m, n} = \sum_{w \in W / W^\circ} w \cdot \uphi_{M,m, n}^\circ.
   \end{equation}
   
   We also define a version mildly modified at $p$, assuming that $n \ge 1$ and $m \le n$. Recall the subgroup $K_{G_p}'(p^m, p^n)$ defined in \S \ref{sect:compacta}. We define $\xi'_p = \ch(\eta_{p, 0} K_{G_p}'(p^m, p^n)) = \ch(K_{G_p}'(p^m, p^n) \eta_{p, 0})$. Thus $\xi_p'$ is preserved under left-translation by $W_p' =K_{H_p}'(p^m, p^n)$; and we choose $\uphi_p' = \ch\left( p^n \Zp \times (1 + p^n \Zp)\right)^{\otimes 2}$.  We let $K'_G(M, p^m, p^n)$, $\xi'_{M, m, n}$, $\uphi_{M, m, n}'$ and $W'$ be the ad\`elic objects defined using these modified choices at $p$, and the same choices as before at all other primes. 
  
  \begin{remark}
   These alternative local choices will give elements related to the ``non-dashed'' versions in the same way as the elements $\cZ_{\dots}$ relate to the elements $\Xi_{\dots}$ in \cite{LLZ}. As in \emph{op.cit.}, it is the non-dashed versions which are of interest for applications, but the dashed versions are convenient for certain calculations, in particular for studying $p$-adic integrality and interpolation properties.
  \end{remark}

  \subsubsection{The Lemma--Eisenstein classes and their norm relations at $p$} 
  
   With the above notations and choices, let us define
   \[
    z^{[a,b,q,r]}_{M, m, n} = \frac{1}{\vol(W)} \LE^{[a,b,q,r]}_{K_G(M, p^m, p^n)}\left(\uphi_{M, m, n} \otimes \xi_{M, m, n}\right) \in H^4_\mot\left(Y_G(M, p^m, p^n), \sD^{a,b}_\QQ(3-q)\right).
   \]
   We refer to these elements as \emph{Lemma--Eisenstein classes}. {\em A priori} this element depends on the auxilliary integer $t$, but it follows readily from \eqref{change-t-eq} that it is in fact independent of this choice (this is essentially the same computation as Lemma \ref{lem:indept}). It can be written concretely as follows: letting $U$ be the subgroup $K_G(M, p^m, p^n)$, we can write our Hecke-algebra element $\xi_{M, m, n}$ as a finite $\ZZ$-linear combination of characteristic functions $\ch(x_i U)$. For each of these terms, if we set $U_i = x_i U x_i^{-1}$, then by hypothesis we have $W \subseteq V_i \coloneqq H \cap U_i$, and we can consider the composition of maps
   \begin{multline}
    \label{eq:explicitLE}
    H^2_{\mot}(Y_H(W), \sH^{c,d}_{\QQ}(2)) \rTo^{(\pr^W_{V_i})_*} H^2_{\mot}(Y_H(V_i), \sH^{c,d}_{\QQ}(2)) \rTo^{\iota^{[a,b,q,r]}_{U_i,*}}  H^4_{\mot}(Y_G(U_i), \sD^{a,b}_{\QQ}(3-q)) \\
    \rTo_{\iota_U}^{x_i}_\cong H^4_{\mot}(Y_G(U), \sD^{a,b}_{\QQ}(3-q)).
   \end{multline}
   (Note that $U_i$ may not be $H$-small, so $\iota_{U_i}: Y_H(V_i) \to Y_G(U_i)$ may not be a closed immersion, but it is still a finite morphism of smooth varieties and this suffices to define the pushforward map $\iota^{[a,b,q,r]}_{U_i,*}$.)
   
   \begin{theorem}
    \label{thm:pnorm}
    The Lemma--Eisenstein classes satisfy the following norm-compatibility relations, as $m$ and $n$ vary:
    \begin{enumerate}[(i)]
     \item 
     For $n \ge 1$, we have
     \[ \left(\pr^{K_G(M, p^m, p^{n+1})}_{K_G(M, p^m, p^n)}\right)_*\left(z^{[a,b,q,r]}_{M, m, n+1}\right) =  z^{[a,b,q,r]}_{M, m, n}.\]
     \item For $m \ge 0$, we have
     \[ \left(\pr^{K_G(M, p^{m+1}, p^{n})}_{K_G(M, p^m, p^n)}\right)_* \left(z^{[a,b,q,r]}_{M, m+1, n}\right) =  
     \left.\begin{cases}
      \frac{U'(p)}{p^q} & \text{if $m \ge 1$} \\
      \left(\frac{U'(p)}{p^q} - 1\right) & \text{if $m=0$}
      \end{cases}\right\} \cdot z^{[a,b,q,r]}_{M, m, n}. 
     \]
    \end{enumerate}
    Here $U'(p) \in \cH(G_p)$ is given by the $K_{G_p}(p^m, p^n)$-double coset of $\dfour{p^{-1}}{p^{-1}}{1}{1}$.
   \end{theorem}
   
   \begin{proof}
    Part (i) of the theorem is immediate from the definition of the classes, since the sum of the translates of $\xi_{M, m, n+1}$ over $K_G(M, p^m, p^n) / K_G(M, p^m, p^{n+1})$ is $\xi_{M, m, n}$.
    
    For part (ii), we note that the Hecke-algebra elements $\xi_{M, m, n}$ and $\xi_{M, m+1, n}$ are identical outside $p$, as are the Schwartz functions $\uphi_{M, m, n}$ and $\uphi_{M, m+1, n}$. So we need to compare two values of a map on $\cS(\Qp^2, \QQ)^{\otimes 2} \otimes \cH(G(\Qp))$ (given by tensoring with the common away-from-$p$ parts and applying $\LE$). It clearly suffices to check the equality after tensoring with $\CC$, which puts us in a position where we may apply Proposition \ref{prop:wildnormrel} (for $\ell = p$). If we assume that the parameters $t$ are chosen identically for the two elements, then the proposition shows that we have
    \[ 
     \left(\pr^{K_G(Mp^{m+1}, p^n)}_{K_G(M, p^m, p^n)}\right)_* \LE \left(\uphi_{M, m+1, n} \otimes \xi_{M, m+1, n} \right) = \left.\begin{cases}\tfrac{1}{p} U'(p) \\ \tfrac{1}{p-1}(U'(p) - 1) \end{cases}\right\} \cdot \LE \left(\uphi_{M, m, n} \otimes \xi_{M, m, n} \right) 
    \]
    as elements of $H^4_\mot\left(Y_G(M, p^m, p^n),\sD^{a,b}_\QQ(3-q)\right)[-q]$. The factor of $\tfrac{1}{p}$ (resp.~$\tfrac{1}{p-1}$) is cancelled out by the factors $\tfrac{1}{\vol(W)}$, since the subgroups $W$ corresponding to the classes at level $Mp^{m+1}$ and $Mp^m$ differ in volume by exactly this factor. Finally, the twist $[-q]$ gives a factor of $p^q$.
   \end{proof}

  \subsubsection{Integral $p$-adic \'etale classes} 
   \label{sect:integralEis}
   
   We now treat questions of integrality. We choose integers $c_1, c_2 > 1$ satisfying the following list of conditions:
   \begin{itemize}
    \item The $c_i$ are coprime to $6p\prod_{\ell \in S} \ell$.
    \item Our chosen vector $\uphi_S \in \cS(\QQ_S^2, \ZZ)^{\otimes 2}$ is preserved by the action of the elements $\left(\stbt{c_1}{}{}{1}, \stbt{c_1}{}{}{1}^{-1}\right)$ and $\left(\stbt{c_2}{}{}{1}, \stbt{c_2}{}{}{1}^{-1}\right)$ of $(\GL_2 \times \GL_2)(\QQ_S)$. (Note that these elements are not in $H$.)
    \item For each $\ell \in S$, the subgroup $K_\ell$ is normalised by the elements $\dfour{1}{c_1}{1}{c_1}$ and $\dfour{c_2}{1}{c_2}{1}$ of $G(\Ql)$.
   \end{itemize}
   (The last two conditions can, of course, always be achieved by taking $c_1$ and $c_2$ to be sufficiently close $\ell$-adically to 1, for all $\ell \in S$.)
   
   Recall the alternative local data $\xi'_{M, m, n}$, $\uphi'_{M, m, n}$, $W'$ introduced at the end of  \S\ref{sect:localdata}.
   
   \begin{definition}
    \label{def:primedclass}
    For $n \ge \max(m, 1)$, let
    \[ {}_{c_1, c_2} \cZ^{[a,b,q,r]}_{\et, M, m, n} \in H^4_{\et}\left(Y_G'(M, p^m, p^n), \sD^{a,b}_{\Zp}(3-q)\right) \]
    be the class defined using the alternative local data $\xi'_{M, m, n}$, $\uphi'_{M, m, n}$, $W'$ in place of their non-dashed versions, and substituting for $\Eis_{\mot, \uphi}^{c,d}$ the integral \'etale Eisenstein classes ${}_{c_1, c_2} \Eis_{\et, \uphi}^{c, d}$.
   \end{definition}
   
   To see that this is well-defined, we use the explicit description of the Lemma--Eisenstein map as a sum of pushforward maps as in \eqref{eq:explicitLE}. Since $\xi'_{M, m, n}$ is a $\ZZ$-linear combination of cosets $x_i U$ (where $U = K'_G(M, p^m, p^n)$) with $x_i \in G(\AA_f^{(p)} \times \Zp)$, the maps $\iota_{x_i U x_i^{-1}, *}^{[a,b,q,r]}$ are well-defined on \'etale cohomology with coefficients in the integral sheaf $\sD_{\Zp}^{a,b}(3-q)$. (Note that $\xi_{M, m, n}$ is not supported in $G(\AA_f^{(p)} \times \Zp)$, which is why we need to introduce the alternative data.)
   
   \begin{definition}
    For $n \ge \max(m, 1)$, let
    \( s_m: Y_G'(M, p^m, p^n) \to Y_G(M, p^m, p^n) \)
    be the map given by the action of $\dfour{p^m}{p^m}{1}{1} \in G(\Qp)$; and let 
    \( s_{m, \sharp}: \sD^{a,b}_{\Zp} \to s_m^* (\sD^{a,b}_{\Zp}) \) be the morphism of sheaves given by the action of $\dfour{p}{p}{1}{1}^{-m}$ on the representation $D^{a,b}_{\Zp}$.
   \end{definition}
   
   Cf.~\cite[\S 6.1]{KLZ1b}. The morphism $s_m$ is well-defined on the Shimura varieties, since we have 
   \[ \dfour {p^m}{p^m}11^{-1}  K'_{G_p}(p^m, p^n) \dfour {p^m}{p^m}11 \subseteq K_{G_p}(p^m, p^n). \]
   To construct the morphism $s_{m, \sharp}$ of integral coefficient sheaves, we note that the representation $D^{a,b}$ of $G$ has all weights $\le 0$ for the torus $T'$ of \S\ref{sect:groups}, so the action of $G(\Zp)$ on $D^{a,b}_{\Zp}$ extends to an action of the monoid generated by $G(\Zp)$ and $\dfour{p}{p}{1}{1}^{-1}$.
   
   \begin{proposition}
    \label{prop:integralclass}
    If $(c_1, c_2)$ satisfy the above conditions, then for any $m,n,M$ as in the previous section with $M$ chosen coprime to $c_1$ and $c_2$, there is a class
    \[ {}_{c_1,c_2} z^{[a,b,q,r]}_{\et, M, m, n} \in H^4_{\et}\Big(Y_G(M, p^m, p^n), \sD_{\Zp}^{a,b}(3-q)\Big) \]
    whose image in the cohomology of $\sD_{\Qp}^{a,b}(3-q)$ is
    \[ p^{mq} \left( c_1^2 - c_1^{-(a-q+b-r)} \dfour{1}{c_1}{1}{c_1}^{-1} \right)\left(c_2^2 - c_2^{-(a-q+r)}\dfour{c_2}{1}{c_2}{1}^{-1}\right) r_{\et}\left(z^{[a,b,q,r]}_{M, m, n}\right), \]
    where the matrices on the left-hand side are understood as elements of $\prod_{\ell \mid Mp S}G(\Zhat)$ acting on $Y_G(M, p^m, p^n)$ by right-translation.
   \end{proposition}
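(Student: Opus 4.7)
The plan is to define ${}_{c_1,c_2}z^{[a,b,q,r]}_{\et, M, m, n}$ as the integral \'etale analogue of $z^{[a,b,q,r]}_{M,m,n}$ by replacing each motivic Eisenstein symbol in the Lemma--Eisenstein construction with its integral \'etale ${}_{c_i}$-variant from Proposition \ref{prop:integral-Eis}, then to extract the claimed identity from the distribution relations. Concretely, for a decomposable Schwartz function $\uphi = \uphi_1 \otimes \uphi_2$ lying in the domain of the two integral constructions, I would set
\[
{}_{c_1,c_2}\Eis^{c,d}_{\et,\uphi} := \pr_1^*\bigl({}_{c_1}\Eis^c_{\et,\uphi_1}\bigr) \cup \pr_2^*\bigl({}_{c_2}\Eis^d_{\et,\uphi_2}\bigr) \in H^2_{\et}\bigl(Y_H, \sH^c_{\Zp} \boxtimes \sH^d_{\Zp}(2)\bigr),
\]
extend bilinearly in $\uphi$, apply the integral branching morphism $\br^{[a,b,q,r]}$ of Proposition \ref{prop:branchingint}, push forward by $\iota_{xU,*}$, and average over cosets just as in the definition of $\LE_U$. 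The $1/\vol(W)$-denominator is absorbed into the averaging sum, exactly as in the motivic setting, so the result is $\Zp$-integral.

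Passing to $\Qp$-coefficients, the distribution relation of Proposition \ref{prop:integral-Eis} applied independently in each modular-curve factor gives
\[
{}_{c_1,c_2}\Eis^{c,d}_{\et,\uphi} \otimes 1 = \bigl(c_1^2 - c_1^{-c}\, T_1\bigr)\bigl(c_2^2 - c_2^{-d}\, T_2\bigr)\,\Eis^{c,d}_{\et,\uphi},
\]
where $T_i$ is the endomorphism induced by pulling back the Hecke operator $[\stbt{c_i}{}{}{c_i}]^{-1}$ through the projection $\pr_i : Y_H \to Y$. The remaining task is to identify $(\iota_{xU})_* T_i$ with an explicit Hecke/Galois operator on $Y_G(K_{M,m,n})$. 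For the first factor, I would use that although $(\stbt{c_1}{}{}{c_1}, I) \notin H(\AA_f)$, it admits the factorisation
\[
(\stbt{c_1}{}{}{c_1}, I) = \bigl(\stbt{1}{}{}{c_1}, \stbt{c_1}{}{}{1}\bigr) \cdot \bigl(\stbt{c_1}{}{}{1}, \stbt{c_1^{-1}}{}{}{1}\bigr),
\]
whose first factor equals $\iota^{-1}\bigl(\dfour{1}{c_1}{1}{c_1}\bigr) \in H(\AA_f)$ and whose second factor acts trivially on the Eisenstein class (at primes away from $c_1 S$ because it lies in $(\GL_2 \times \GL_2)(\Zl)$ and preserves our locally unramified Schwartz data, at primes dividing $c_1$ through the $\ZZ_{c_1}^2$-invariance built into the ${}_{c_1}\cS$-Schwartz functions of Proposition \ref{prop:integral-Eis}, and at primes in $S$ by the hypothesis imposed on $c_1$). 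Thus $(\iota_{xU})_* T_1$ coincides with $[\dfour{1}{c_1}{1}{c_1}]^{-1}$. By \S\ref{sect:components}, since $\mu\bigl(\dfour{1}{c_1}{1}{c_1}\bigr) = c_1$, under the isomorphism $Y_G(K_{M,m,n}) \cong Y_G(K_n) \times \Spec\QQ(\zeta_{Mp^m})$ this operator decomposes as right-translation by $\dfour{1}{c_1}{1}{c_1}^{-1}$ on $Y_G(K_n)$ composed with the Galois element $\sigma_{c_1}$ on the cyclotomic factor. An identical argument for the second factor produces $\dfour{c_2}{1}{c_2}{1}^{-1}\sigma_{c_2}$.

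Finally, the prefactor $p^{mq}$ should arise from the $[-q]$-twist of the Hecke action (the character $\|\mu(-)\|^{-q}$ appearing in the definition of $\LE_{\mot}$) interacting with the isomorphism $Y_G(K_{M,m,n}) \cong Y_G(K_n) \times \Spec\QQ(\zeta_{Mp^m})$: separating off the $p^m$-component of the cyclotomic factor implicitly involves an auxiliary $G(\AA_f)$-element of multiplier $p^m$, on which the twist evaluates to $\|p^m\|^{-q} = p^{mq}$ --- this is the same mechanism producing the $p^q$-correction encountered in the proof of Theorem \ref{thm:pnorm}(ii), applied once for each power of $p$ in the cyclotomic level. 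The hardest step in the plan is verifying the triviality of the residual factors $(\stbt{c_i}{}{}{1}, \stbt{c_i^{-1}}{}{}{1})$ acting on the Eisenstein class at primes dividing $c_i$, where the element leaves $(\GL_2 \times \GL_2)(\Zl)$ and the compatibility on cohomology must be invoked explicitly via the $\ZZ_{c_i}^2$-structure of the ${}_{c_i}$-integral construction.
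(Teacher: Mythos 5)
There is a genuine gap in your plan: you propose to construct the integral \'etale class directly from the \emph{non-dashed} data $\xi_{M,m,n}$, $\uphi_{M,m,n}$, pushing forward by $\iota_{xU,*}$ and averaging. But the Hecke element $\xi_p$ in the non-dashed data is $\ch(\eta_{p,m}\cdot K_{p,m,n})$, where $\eta_{p,m}$ has entries $p^{-m}$ with strictly negative $p$-adic valuation. Right-translation by $\eta_{p,m}$ does not preserve the lattice $\sD^{a,b}_{\Zp}$: the unipotent element $\eta_{p,m}$ acting on $V^{a,b}$ introduces $p$-power denominators, so the map $\iota_{x U,*}$ is not defined on $\Zp$-coefficients. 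Your claim that the result is $\Zp$-integral because the $1/\vol(W)$-denominator is absorbed into the averaging sum only addresses the volume normalization, not this lattice-preservation issue, which is the real obstruction.

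The paper's proof sidesteps precisely this problem by first constructing an auxiliary class ${}_{c_1,c_2}\mathcal{Z}^{[a,b,q,r]}_{\et,M,m,n}$ at the \emph{dashed} level $K'_{M,m,n}$ (assuming $m\le n$), using $\xi'_p = \ch(\eta K'_{p,m,n})$ with $\eta\in G(\Zp)$. Since $\xi'_{M,m,n}$ is supported in $G(\AA_f^{(p)}\times\Zp)$, all the translation actions lift integrally. The integral class at the non-dashed level is then obtained by pushing forward along the twisted degeneracy map $s_m$ given by $\dfour{p^m}{p^m}{1}{1}$. Making $(s_m)_*$ integral on coefficients requires constructing the morphism $(s_m)_\flat:\sD^{a,b}_{\Zp}\to s_m^*(\sD^{a,b}_{\Zp})$ using that $D^{a,b}$ has all $S$-weights $\le 0$ so that $\dfour{p}{p}{1}{1}^{-1}$ acts integrally; and this construction is only possible if the $[-q]$-twist is omitted from the equivariant structure, which is exactly what produces the $p^{mq}$ factor. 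Your intuition about the source of $p^{mq}$ (``an auxiliary element of multiplier $p^m$'') is in the right spirit, but without the dashed-to-undashed transfer you have no mechanism by which this factor actually arises, and more fundamentally no integral class to begin with. The remainder of your argument — extending Proposition \ref{prop:integral-Eis} to the two-variable setting, factoring $(\stbt{c_1}{}{}{c_1},I)$ into an $H$-element and a residual element acting trivially on $\uphi$, and reading off $\sigma_{c_i}$ from the multiplier — is essentially the same as the paper's unwinding and is fine, but it must be grafted onto the dashed construction, not the direct one.
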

   
   \begin{remark}
    If $a = b = 0$, then $ {}_{c_1,c_2} z^{[0,0,0,0]}_{\et, M, m, n}$ is the image of a class ${}_{c_1, c_2} z^{[0,0,0,0]}_{M, m, n}$ in the motivic cohomology with $\ZZ$ coefficients. However, we do not know how to define this group for $a, b > 0$.
   \end{remark}
   
   \begin{proof}
    Assume for the moment that $m \le n$, and define 
    \[ {}_{c_1,c_2} z^{[a,b,q,r]}_{\et, M, m, n} \coloneqq s_{m, *} \left({}_{c_1, c_2} \cZ^{[a,b,q,r]}_{\et, M, m, n}\right), 
    \]
    where the action of $s_m$ on the coefficients is given by $s_{m, \sharp}$. By definition, this has coefficients in $\sD_{\Zp}^{a,b}(3-q)$, so it remains to verify that is related to $z^{[a,b,q,r]}_{M, m, n}$ in the stated manner. 
    
    A simple check shows that we have
    \[  
     {}_{c_1,c_2} z^{[a,b,q,r]}_{\et, M, m, n} = p^{mq} r_{\et}\left(\tilde{z}^{[a,b,q,r]}_{M, m, n}\right)
    \]
    where $\tilde{z}$ is the class obtained in the same way as $z$, with the Schwartz function $\uphi$ replaced by
    \[ \left(c_1^2 - c_1^{-(a-q+b-r)}(\stbt{c_1}{0}{0}{c_1}, \mathrm{id})^{-1}\middle) \middle(c_2^2 - c_2^{-(a-q+r)} (\mathrm{id}, \stbt{c_2}{0}{0}{c_2})^{-1}\right)\uphi.\] 
    (The factor $p^{mq}$ appears because we ignored the twist $[-q]$ in the definition of $s_{m, \sharp}$.) However, our assumptions on the $c_i$ imply that we have
    \begin{multline*}
    \left(c_1^2 - c_1^{-(a-q+b-r)}(\stbt{c_1}{0}{0}{c_1}, \mathrm{id})^{-1}\middle) \middle(c_2^2 - c_2^{-(a-q+r)} (\mathrm{id}, \stbt{c_2}{0}{0}{c_2})^{-1}\right)\uphi
    \\= 
    \left(c_1^2 - c_1^{-(a-q+b-r)}(\stbt{1}{0}{0}{c_1}, \stbt{c_1}{0}{0}{1})^{-1}\middle) \middle(c_2^2 - c_2^{-(a-q+r)} (\stbt{c_2}{0}{0}{1}, \stbt{1}{0}{0}{c_2})^{-1}\right)\uphi
    \end{multline*} 
    and in this second formula, all the elements acting are in $H$, normalise our level groups, and commute with $\eta$, so we can pull them through the equivariance properties of the Lemma--Eisenstein map to obtain the result.
    
    Finally, we remove the restriction $m \le n$: if $n < m$, then we simply define ${}_{c_1,c_2} z^{[a,b,q,r]}_{\et, M, m, n}$ to be the pushforward of ${}_{c_1,c_2} z^{[a,b,q,r]}_{\et, M, m, n'}$ for any integer $n' \ge m$. This is independent of the choice of $n'$, as is easily seen, and using Theorem \ref{thm:pnorm}(i) and the preceding argument with $n'$ in place of $n$, it has the required properties.
   \end{proof}
   
   \begin{remark}
    \label{rmk:integralnorm}
    It follows from Theorem \ref{thm:pnorm} that the Lemma--Eisenstein classes ${}_{c_1, c_2} z^{[a,b,q,r]}_{\et, M, m, n}$ and their variants ${}_{c_1, c_2} \cZ^{[a,b,q,r]}_{\et, M, m, n}$ satisfy norm-compatibility relations in both $m$ and $n$ after tensoring with $\Qp$. However, one can check that these norm relations actually hold \emph{integrally}, without needing to quotient out by the torsion subgroup of the \'etale cohomology group. This is not obvious from the proofs we have given, but can easily be verified after carefully unwinding the normalisation factors.
   \end{remark}
   
\section{Moment maps and p-adic interpolation}
 
 We now study the interpolation of the \'etale Euler system classes, for varying values of the parameters $(a,b,q,r)$. Our goal is Theorem \ref{thm:hard-interpolation-II}, which shows that these classes can all be obtained as specialisations of a single class ``at infinite level''. 
 
 \subsection{Interpolation of the $\GL_2$ Eisenstein classes} 
  
  We begin by recalling a theorem of Kings \cite{kings15}, which will be the fundamental input for our $p$-adic interpolation results. In this section, let us fix an arbitrary open compact subgroup $K^{(p)} \subset \GL_2(\AA_f^{(p)})$, and for $n \ge 1$, write $K_n = K^{(p)} \times \{ g \in \GL_2(\Zp): g \cong \stbt{*}{*}{0}{1} \bmod p^n\}$. Let us assume, by shrinking $K^{(p)}$ if necessary, that $K_1$ is sufficiently small (and hence so is $K_n$ for all $n \ge 1$). 
  
  We also choose a finite set of primes $\Sigma$ containing $p$ and all primes where $K^{(p)}$ is ramified, so the modular curves have models $Y(K_n)_{\Sigma}$ over $\ZZ[1/\Sigma]$ for all $n$. 
  
  \begin{remark}
   Working with integral models is necessary here, because continuous \'etale cohomology for $\QQ$-varieties does not necessarily commute with inverse limits, but this problem does not arise for finite-type $\ZZ$-schemes such as the $Y(K_n)_{\Sigma}$.
  \end{remark}
  
  \begin{definition}
   We define
   \[ H^1_{\Iw}\Big(Y(K_\infty)_{\Sigma}, \Zp(1)\Big) \coloneqq \varprojlim_{s \ge 1} H^1_{\et}\Big(Y(K_s)_{\Sigma}, \Zp(1)\Big)\]
   where the inverse limit is with respect to the pushforward maps.
  \end{definition}
  
  If $\sH^k_n$ denotes the mod $p^n$ reduction of the sheaf $\sH^k_{\Zp}$ on $Y(K_n)$ (cf.~\S \ref{sect:higherEis}), then we have a canonical section
  \[ e_{k, n} = (e_1)^k \in H^0_{\et}\left(Y(K_n)_\Sigma, \sH^k_n\right). \]
  
  Hence, for any $n \ge 1$, we have a map
  \[ 
   \mom^k_n:
   H^1_{\Iw}\Big(Y(K_\infty)_\Sigma, \Zp(1)\Big) \to 
   H^1_{\et}\Big(Y(K_n), \sH^k_{\Zp}(1)\Big),
  \]
  mapping $(g_s)_{s \ge 1}$ to the restriction to the generic fibre $Y(K_n) \subset Y(K_n)_{\Sigma}$ of the element 
  \[ 
   \left(\pr^{K_s}_{K_n}\right)_* \left( g_s \cup e_{k, s} \right)_{s \ge n} 
   \in \varprojlim_{s \ge n} H^1_{\et}\Big(Y(K_n)_\Sigma, \sH^k_s(1)\Big) = H^1_{\et}\Big(Y(K_n)_{\Sigma}, \sH^k_{\Zp}(1)\Big).
  \]
  \begin{definition}
   Let $\phi$ be a $\Zp$-valued Schwartz function on $(\AA_f^{(p)})^2$, stable under $K^{(p)}$; and let $\phi_s = \phi \otimes \ch(p^s \Zp \times (1 + p^s \Zp))$. For $n \ge 1$, and $c > 1$ coprime to $6p$ and to all primes where $\phi$ is ramified, we define
   \[ 
    \cEI_{\phi} = \left( {}_c g_{\phi_s} \right)_{s \ge 1} \in H^1_{\Iw}\Big(Y(K_\infty)_{\Sigma}, \Zp(1)\Big).
   \]
  \end{definition}

  The following theorem, which will be fundamental for our $p$-adic interpolation results later in this paper, shows that the Siegel units interpolate Eisenstein classes of all weights via these moment maps:

  \begin{theorem}[Kings]
   \label{thm:kings}
   For all integers $k \ge 0$ and $n \ge 1$, we have
   \[ \mom^k_n\left( \cEI_{\phi, n} \right) =\cEis^k_{\et, \phi_n} \]
   as elements of $H^1_{\et}\left(Y(K_n), \sH^k_{\Zp}(1)\right).$
  \end{theorem}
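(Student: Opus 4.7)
The plan is to deduce this from the main interpolation theorem of \cite{kings15}, which gives precisely the required compatibility between Siegel units (viewed as specialisations of the $\Lambda$-adic elliptic polylogarithm) and the higher-weight \'etale Eisenstein classes. First I would use the $\GL_2(\AA_f^{(p)})$-equivariance of both sides in the Schwartz datum $\phi$ to reduce to a single, conveniently chosen test function --- for instance $\phi = \ch\left((0,b) + N(\Zhat^{(p)})^2\right)$ with $b$ and $N$ coprime to $p$ --- and shrink $K^{(p)}$ further so that $\phi$ is $K^{(p)}$-invariant and $Y(K_1)$ is a fine moduli space.

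Next I would unwind the definitions. By construction the element $\mom^k_n(\cEI_\phi)$ is determined by its mod-$p^s$ reductions for $s \ge n$, each given by
\[
 \left(\pr^{K_s}_{K_n}\right)_*\left({}_cg_{\phi_s} \cup (e_1)^k\right) \in H^1_{\et}\left(Y(K_n), \sH^k_s(1)\right).
\]
The core step is then the interpolation theorem of \cite{kings15}, which identifies this class modulo $p^s$ with the mod-$p^s$ reduction of the integral Eisenstein class $\cEis^k_{\et, \phi_n}$. Kings realises both sides as moments of the $p$-adic elliptic polylogarithm on the universal elliptic curve, reducing the equality to the distribution relations for Siegel units --- exactly what makes the inverse system $\left({}_cg_{\phi_s}\right)_s$ into a well-defined element of $H^1_{\Iw}$ in the first place. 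Taking the inverse limit over $s$ then produces the asserted equality in $H^1_{\et}\left(Y(K_n), \sH^k_{\Zp}(1)\right)$.

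The hard part is not conceptual but normalisational. I would need to verify that the canonical section $(e_1)^k \in H^0(Y(K_n), \sH^k_{\Zp})$ used here agrees --- up to explicit and ultimately inert factors --- with the highest-weight vector appearing in Kings' moment formalism; that the smoothing factor $c$ enters consistently on both sides; that the specific shift $\phi_s = \phi \otimes \ch\left((0,1) + p^s\Zp^2\right)$ is the one for which the identification is cleanest; and that our trace and right-action conventions on the tower $\{Y(K_s)\}_s$ match those of Kings, bearing in mind that our model of $Y$ differs from the Deligne--Shimura one by a Galois twist, as noted earlier. Once this bookkeeping is complete, the theorem follows directly from \cite{kings15}.
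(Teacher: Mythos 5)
Your proposal follows essentially the same route as the paper: reduce by $\GL_2(\AA_f^{(p)})$-equivariance of Siegel units, Eisenstein classes, and the moment map to a single characteristic-function test datum, and then invoke the known interpolation result for that case. The paper cites \cite[Theorems 4.4.4 and 4.5.1]{KLZ1b} for that base case (where the normalisation bookkeeping you flag is already carried out in the present conventions), whereas you appeal to \cite{kings15} directly --- a cosmetic difference.
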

  
  \begin{proof}
   This is a generalisation of \cite[Theorem 4.4.4 \& Theorem 4.5.1]{KLZ1b}, which is the case where $\phi$ is the characteristic function of $(0, 1) + N (\Zhat^{(p)})^2$ for some integer $N$. The general case can be recovered from this using the action of the group $J = \prod_{\ell \in \Sigma - \{p\}} \GL_2(\Ql)$, since both the Siegel units and Eisenstein classes depend $J$-equivariantly on $\phi$, and the moment map clearly commutes with the action of $J$ (as it acts trivially on $e_{k, s}$).
  \end{proof}
  
  Of course, this argument carries over readily to the modular varieties for $H$: if we fix a small enough prime-to-p level $K_H^{(p)}$ and let $K_{H, n} = K_H^{(p)} \times K_{H_p, 1}(p^n)$, then we obtain moment maps
  \[ 
    \mom^{c,d}_{H, n}:
    H^2_{\Iw}\Big(Y_H(K_{H,\infty})_{\Sigma}, \Zp(2)\Big) \to 
    H^2_{\et}\Big(Y_H(K_{H,n}), \sH^{c,d}_{\Zp}(2)\Big),
  \]
  for any $n \ge 1$ and integers $c, d \ge 0$; and there is a class ${}_{c_1, c_2} \EI_{\uphi}$ in $H^2_{\Iw}$, for any $\uphi \in \cS(\AA_f^{(p) 2}, \Zp)^{\otimes 2}$ stable under $K_H^{(p)}$ and unramified at the primes dividing $c_1c_2$, whose images under $\mom^{c,d}_n$ are the Eisenstein classes ${}_{c_1, c_2} \Eis^{c,d}_{\et, \uphi_n}$.
  
 \subsection{Moment maps for $G$}

  For the group $G = \GSp_4$ we have analogous moment maps, as we shall now explain. As in the $\GL_2$ case, we fix an arbitrary subgroup $K^{(p)}_G \subset G(\AA_f^{(p)})$ unramified outside $\Sigma$, and write $K_{G, n} = K_G^{(p)}\times K_{G_p, 1}(p^n)$. We assume that $K_{G, n}$ is sufficiently small for all $n \ge 1$.
  
  \begin{proposition}
   Let $d^{[a,b,q,r]}$ be the image of $v^{[a,b,q,r]} \in V^{a,b}_{\ZZ}$ in $D^{a,b}_{\ZZ} = V^{a,b}_{\ZZ} \otimes \mu^{-(2a+b)}$; and let $d_n^{[a,b,q,r]}$ be its reduction modulo $p^n$. Then the vectors $d_n^{[a,b,0,r]}$, for $0 \le r \le b$, are stable under $K_{G_p, 1}(p^n)$.
  \end{proposition}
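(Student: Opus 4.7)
The plan is to analyse the reduction of an arbitrary element of $K_{p,n}$ modulo $p^n$ using the symplectic relation, and then to exploit the fact that $v^{[a,b,0,r]}$ sits in the maximal $S$-weight subspace of $V^{a,b}$ so that the resulting unipotent part acts trivially.

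First I would unpack $K_{p,n}$. If $g = \stbt A B C D$ lies in $K_{p, n}$, then $C \equiv 0$ and $D \equiv I_2 \pmod{p^n}$. Writing $J = \stbt 0 {J_2}{-J_2}{0}$ with $J_2 = \stbt 0 1 1 0$ and expanding $g^t J g = \mu(g) J$ block by block, the $(2,1)$-block equation is $D^t J_2 A - B^t J_2 C = \mu(g) J_2$; modulo $p^n$ this collapses to $J_2 A \equiv \mu(g) J_2$, and hence $A \equiv \mu(g) I_2 \pmod{p^n}$. The reduction of $g$ therefore has the normal form
\[
\bar g \;=\; \begin{pmatrix} \mu(g) I_2 & B \\ 0 & I_2 \end{pmatrix} \in G(\ZZ/p^n).
\]
Because $\mu(g) \in \Zp^\times$, I may factor $\bar g = t_{\mu(g)} \cdot n_B$ with $t_\lambda \coloneqq \dfour \lambda \lambda 1 1$ and $n_B \coloneqq \stbt{I_2}{\mu(g)^{-1}B}{0}{I_2} \in N_\mathrm{S}(\ZZ/p^n)$.

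Next I would compute the $T$-weight of $v^{[a,b,0,r]}$ by summing weights along the Cartan product (using that $v$, $v'$, $w$ have weights $\chi_1, \chi_2, \chi_1+\chi_2$ respectively and that $w'$ does not appear as $q = 0$): the result is $(a+b-r)\chi_1 + (a+r)\chi_2$, so the $S$-weight equals $2a+b$, which is the maximal $S$-weight occurring in $V^{a,b}$. The root spaces generating $\mathfrak{n}_\mathrm{S}$ all strictly raise the $S$-weight, so $\mathfrak{n}_\mathrm{S}$ annihilates the top $S$-weight subspace of $V^{a,b}_\ZZ$ (which is a $\ZZ$-direct summand of the admissible lattice, being defined by a weight condition); hence $N_\mathrm{S}(\Zp)$ fixes this subspace, and in particular $n_B \cdot v^{[a,b,0,r]} = v^{[a,b,0,r]}$.

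Finally, $t_{\mu(g)}$ acts on the weight vector $v^{[a,b,0,r]}$ by the scalar $\mu(g)^{(a+b-r)+(a+r)} = \mu(g)^{2a+b}$, while the multiplier $\mu(t_{\mu(g)}) = \mu(g)$ causes the twist $\mu^{-(2a+b)}$ in $D^{a,b} = V^{a,b} \otimes \mu^{-(2a+b)}$ to contribute a compensating factor $\mu(g)^{-(2a+b)}$. The two cancel, giving $g \cdot d^{[a,b,0,r]} \equiv d^{[a,b,0,r]} \pmod{p^n}$, as required. The main obstacle is the very first step — verifying carefully that the integrated symplectic relation forces $A \equiv \mu(g) I_2$ from the hypotheses on $C$ and $D$ modulo $p^n$; once that normal form is secured the remainder is bookkeeping with highest weights, and it is exactly the appearance of $w'$ (of sub-maximal $S$-weight) for $q > 0$ that explains why the proposition must be restricted to $q = 0$.
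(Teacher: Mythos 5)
Your proof is correct and follows the same approach as the paper: both hinge on $v^{[a,b,0,r]}$ lying in the top $S$-weight space (hence annihilated by $N_{\mathrm{S}}$ and scaled by $x^{2a+b}$ under $S$), with the twist $\mu^{-(2a+b)}$ cancelling the scalar. Your extra step — using the $(2,1)$-block of the symplectic relation to show that $g \in K_{p,n}$ reduces mod $p^n$ to $t_{\mu(g)} \cdot n_B \in S \cdot N_{\mathrm{S}}$ — is exactly the observation the paper leaves implicit, and is correctly carried out.
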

  
  \begin{proof}
   We recall that the vectors $v^{[a,b,0,r]}$ lie in the highest $T'$-weight subspace of $V^{a,b}$, where $T'$ is the torus $\dfour{x}{x}{1}{1}$. Hence they are fixed by the unipotent radical $N_\mathrm{S}$ of the Siegel parabolic, and $T'$ acts on them as $x \mapsto x^{2a+b}$. Thus the twists $d^{[a,b,0,r]}$ are fixed by $T'$ and by $N_{\mathrm{S}}$, and the same holds for their reductions modulo $p$.
  \end{proof}
  
  \begin{definition}
   For $n \ge 1$, and any integers $0 \le q \le a$ and $0 \le r \le b$, we define the moment map $\mom^{[a,b,q,r]}_{G, n}$ as the following composition of maps:
   \begin{align*}
    H^*_{\Iw}\left(Y_G(K_{G, \infty})_{\Sigma}, \sD^{q,0}_{\Zp}(3)\right) 
    & \rTo^\cong \varprojlim_s H^*_{\et}\left(Y_G(K_{G,s})_{\Sigma}, \sD^{q,0}_s(3)\right)\\
    &\rTo\varprojlim_s H^*_{\et}\left(Y_G(K_{G,s})_{\Sigma}, (\sD^{q,0}_s \otimes \sD^{a-q,b}_s)(3)\right)\\
     &\rTo \varprojlim_s H^*_{\et}\left(Y_G(K_{G,s})_{\Sigma}, 
               \sD^{a,b}_s(3)\right)\\
    &\rTo \varprojlim_s H^*_{\et}\left(Y_G(K_{G,n})_{\Sigma}, \sD^{a,b}_s(3)\right)\\
    &\rTo^\cong H^*_{\et}\left(Y_G(K_{G,n})_\Sigma, \sD^{a,b}_{\Zp}(3)\right) \\
    &\rTo H^*_{\et}\left(Y_G(K_{G,n}), \sD^{a,b}_{\Zp}(3)\right).
   \end{align*}
   Here the second arrow is given by cup-product with the class $d_s^{[a-q,b,0,r]} \in H^0_\et(Y_G(K_{G,s})_{\Sigma}, \sD_s^{a-q,0})$; the third arrow is given by the Cartan product; the fourth by projection to level $n$; and the final one by restriction to the generic fibre.
  \end{definition}
   
  \begin{remark}
   This construction also has an interpretation in terms of sheaves of measures as in \cite{kings15}. Suppose $q = 0$ for simplicity. One finds that $H^*_\Iw\left(Y_G(K_{G,\infty})_{\Sigma}, \Zp(3)\right)= H^*_\et\left(Y_G(K_{G,n})_{\Sigma}, \Lambda(3)\right)$, where $\Lambda$ is the sheaf on $Y_G(K_{G,n})_{\Sigma}$ given by $\varprojlim_s (\pr^{K_{G,s}}_{K_{G,n}})_*\left(\Zp\right)$. We can interpret $\Lambda$ as the sheaf corresponding to the profinite $\Zp[K_{G,n}]$-module of $\Zp$-valued measures on the quotient $X_n = K_{G,n} / K_{G,\infty}$; in this optic, the moment map is given by the morphism of sheaves
   \[ 
    \Lambda(X_n) \to D^{a,b}_{\Zp}, \qquad \mu \mapsto \int_{X_n} g \cdot d^{[a,b,0,r]}\dd \mu(g).\qedhere
   \]
  \end{remark}
  
  \begin{proposition}
   The Hecke operator $U'(p)$ is well-defined as an endomorphism of the inverse limit $H^*_\Iw\left(Y_G(K_{G,\infty})_\Sigma, \sD^{q,0}_{\Zp}(3)\right)$, and the moment map $\mom^{[a,b,q,r]}_{G, n}$ is compatible with the actions of $U'(p)$ and of $\prod_{\ell \in \Sigma - \{p\}}G(\Ql)$ on both sides.
  \end{proposition}
  
  \begin{proof} Easy check, compare e.g.~\cite[Remark 4.5.3]{KLZ1b}.\end{proof}

 \subsection{Compatibility with the moment maps for $H$}

  We shall now consider compatibility of these constructions between $G$ and $H$. We take $K_H^{(p)} = K_G^{(p)} \cap H(\AA_f^{(p)})$, so we have maps $\iota_n: Y_H(K_{H,n}) \to Y_G(K_{G,n})$ for all $n \ge 1$, and write $\iota_\infty$ for the collection $(\iota_s)_{s \ge 1}$. As before, let $(c, d) = (a+b-q-r, a-q+r)$.
  
  \begin{proposition}
   \label{prop:easy-moment-compat}
   There is a commutative diagram
   \begin{diagram}
    H^2_\Iw\left(Y_H(K_{H,\infty})_\Sigma, \Zp(2)\right)
    & \rTo^{\iota_{\infty,*} \circ \br^{[q,0,q,0]}} & 
    H^4_\Iw\left(Y_G(K_{G,\infty})_\Sigma, \sD^{q,0}_{\Zp}(3-q)\right)\\
    \dTo<{\mom^{c,d}_{H, n}} & & \dTo>{\mom^{[a,b,q,r]}_{G, n}}\\
    H^2_\et\left(Y_H(K_{H,n}), \sH^{c,d}_{\Zp}(2)\right)
    & \rTo^{\iota_{n,*} \circ \br^{[a,b,q,r]}} & 
    H^4_\et\left(Y_G(K_{G,n}), \sD^{a,b}_{\Zp}(3-q)\right)
   \end{diagram}
  \end{proposition}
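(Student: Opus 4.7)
The plan is to unwind both moment maps at each finite level $s \ge n$ and to reduce the commutativity of the diagram to the projection formula combined with a Cartan-product factorisation of the branching maps. Since the Iwasawa cohomology groups are defined as inverse limits taken term-by-term, and both moment maps are built from operations at level $K_s$ followed by a pushforward to level $n$, it suffices to fix $s \ge n$ and verify that the two resulting classes in $H^4_{\et}(Y_G(K_n), \sD^{a,b}_{\Zp}(3-q))$ agree for every class $y_s$ in the relevant inverse system.

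The key algebraic input is the Cartan-product identity
\[
v^{[a,b,q,r]} \; = \; v^{[q,0,q,0]} \cdot v^{[a-q,b,0,r]},
\]
which is immediate from the definition of $v^{[a,b,q,r]}$ as a product of the basic vectors $v,v',w,w'$. Via Proposition \ref{prop:vabqr} and the trivial decomposition $\sH^{c,d} = \sH^{0,0} \otimes \sH^{c,d}$, this translates into a factorisation of the sheaf morphism
\[
\br^{[a,b,q,r]} \; = \; (\mathrm{Cartan}) \circ \bigl( \br^{[q,0,q,0]} \otimes \br^{[a-q,b,0,r]} \bigr).
\]
Writing $e^{c,d}_s = e_1^c \cdot f_1^d \in H^0(Y_H(H \cap K_s), \sH^{c,d}_s)$ for the canonical highest-weight section, and $d^{[a',b',q',r']}_s$ for the $G$-side sections used in defining the moment maps, this yields the identity
\[
\br^{[a,b,q,r]}(e^{c,d}_s) \; = \; d^{[q,0,q,0]}_s \cdot \iota_s^* \, d^{[a-q,b,0,r]}_s
\]
in $H^0(Y_H(H \cap K_s), \iota^* \sD^{a,b}_s)$.

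With this in hand I would trace through the two paths applied to a class $y_s$. The down-then-right path produces $\iota_{n,*} (\pr^{H \cap K_s}_{H \cap K_n})_*\bigl( y_s \cup \br^{[a,b,q,r]}(e^{c,d}_s) \bigr)$. The right-then-down path first forms $\iota_{s,*}(y_s \cup d^{[q,0,q,0]}_s)$ on $Y_G(K_s)$; cupping with $d^{[a-q,b,0,r]}_s$, applying the Cartan product, and pushing down to level $n$, then invoking the projection formula $\iota_{s,*}(x \cup \iota_s^* z) = \iota_{s,*}(x) \cup z$ together with the functoriality relation $(\pr^{K_s}_{K_n})_* \circ \iota_{s,*} = \iota_{n,*} \circ (\pr^{H \cap K_s}_{H \cap K_n})_*$, the expression rearranges to $\iota_{n,*} (\pr^{H \cap K_s}_{H \cap K_n})_*\bigl( y_s \cup d^{[q,0,q,0]}_s \cup \iota_s^* d^{[a-q,b,0,r]}_s \bigr)$. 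The sheaf-level factorisation above equates this with the output of the first path.

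The main obstacle will be the careful bookkeeping of Tate and equivariant twists throughout. Under Ancona's functor the multiplier character $\mu$ maps to $\QQ(-1)$, so the discrepancy $\mu^{-q}$ between the Cartan product $D^{q,0} \otimes D^{a-q,b}$ and $D^{a,b}$ on the representation-theoretic side corresponds to a shift by $\QQ(q)$ matching the Tate-twist difference between the source $\sD^{q,0}_{\Zp}(3)$ and the target $\sD^{a,b}_{\Zp}(3-q)$ of $\mom^{[a,b,q,r]}_{G,n}$; this is also precisely what is absorbed by the $[-q]$ equivariant twist appearing through $\iota_{\infty,*} \circ \br^{[q,0,q,0]}$. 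Verifying that these twists match consistently at every arrow of the diagram is where most of the technical care is required; once that is in place, the chase outlined above is routine.
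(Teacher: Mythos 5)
Your proposal is correct and takes essentially the same approach as the paper's proof: the entire content is that $\br^{[a,b,q,r]}$ sends the canonical highest-weight section $e^{c,d}_s$ to the pullback of $d_s^{[a,b,q,r]} = d_s^{[q,0,q,0]} \cdot d_s^{[a-q,b,0,r]}$, which is the Cartan-product factorisation you identify, and everything else is the projection formula plus functoriality of pushforward. The paper compresses the diagram chase into the phrase ``after much unwinding''; your write-up usefully makes that unwinding explicit.
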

  
  \begin{proof}
   After much unwinding, this reduces to the assertion the modulo $p^s$ reduction of $\br^{[a,b,q,r]} : \sH^{c,d}_{\Zp} \to \iota^* \sD^{a,b}_{\Zp}$ maps the section $(e_{1,s})^c \boxtimes (f_{1,s})^d$ over $K_{H, s}$ to the pullback of $d_s^{[q,0,q,0]} \cdot d_s^{[a-q,b,0,r]} = d_s^{[a,b,q,r]}$, which is true by the construction of the branching maps.
  \end{proof}
  
  \begin{remark}
   In \cite{KLZ1b}, the analogous statment for the $\GL_2\times\GL_2$-moment maps (Lemma 6.3.1) gives rise to a binomial factor; so using the Cartan product simplifies matters considerably.
  \end{remark}
   
  \subsection{Application to Lemma--Eisenstein classes}
  
  We now return to the situation considered in \S \ref{sect:localdata}. We can apply the machinery of the previous section with $K_G^{(p)}$ taken to be the product $K_S \times \prod_{\ell \notin S\cup \{p\}} G(\Zl)$, so the $K_{G, n}$ of the previous section is $K_{G, 1}(p^n)$, and we obtain moment maps
  \[ \mom_{G,n}^{[a,b,q,r]}: H^4_\Iw\left(Y_{G, 1}(p^\infty)_\Sigma, \sD^{q,0}_{\Zp}(3-q)\right) \to H^4_\et\left(Y_{G, 1}(p^n), \sD^{a,b}_{\Zp}(3-q)\right), \]
  for any $\Sigma \supseteq S \cup \{p\}$. More generally, if we take any $m \ge 0$ and any squarefree $M \ge 1$ whose prime divisors lie in $\Sigma - (S \cup \{p\})$, the same construction also gives maps
  \begin{subequations}
   \begin{equation} 
    \label{eq:mom-a}
    H^4_\Iw\left(Y_G(M, p^m, p^\infty)_\Sigma, \sD^{q,0}_{\Zp}(3-q)\right) 
    \to H^4_\et\left(Y_G(M, p^m, p^n), \sD^{a,b}_{\Zp}(3-q)\right), 
   \end{equation}
   for any $n \ge 1$, and
   \begin{equation} 
    \label{eq:mom-b}
    H^4_\Iw\left(Y_G'(M, p^m, p^\infty)_\Sigma, \sD^{q,0}_{\Zp}(3-q)\right) 
    \to H^4_\et\left(Y_G'(M, p^m, p^n), \sD^{a,b}_{\Zp}(3-q)\right), 
   \end{equation}
   for any $n \ge \max(m, 1)$, which we also denote by $\mom_{G,n}^{[a,b,q,r]}$.
  \end{subequations}
  
  \begin{proposition}
   \label{prop:easy-interpolation-1}
   For each $q \ge 0$, there exists a class
   \[ {}_{c_1, c_2} \cZ^q_{\Iw, M, m} \in 
   H^4_\Iw\left( Y'_G(M, p^m, p^\infty)_\Sigma, \sD^{q,0}_{\Zp}(3-q)\right), \]
   such that
   \[ \mom_{G, n}^{[a,b,q,r]}\left( {}_{c_1, c_2} \cZ^q_{\Iw, M, m} \right) 
   = {}_{c_1, c_2} \cZ^{[a,b,q,r]}_{\et, M, m, n}\]
   for all integers $a, b, r, n$ with $a \ge q$, $0 \le r \le b$ and $n \ge \max(m, 1)$.
  \end{proposition}

  \begin{remark}
   Strictly speaking the elements ${}_{c_1, c_2} \cZ^q_{\Iw, M, m}$ depend also on $\Sigma$, but they are easily seen to be compatible with the natural maps given by enlarging $\Sigma$, so shall suppress this from the notation.
  \end{remark}
 
  \begin{proof}
   We define ${}_{c_1, c_2} \cZ^q_{\Iw, M, m}$ to be the sequence $\left({}_{c_1, c_2} \cZ^{[q,0,q,0]}_{\et, M, m, n}\right)_{n \ge \max(1, m)}$, which is norm-compatible by the same argument as in Theorem \ref{thm:pnorm}(i) (and Remark \ref{rmk:integralnorm}).
      
   By construction, there is a finite set of integers $a_i$ and $x_i \in G(\AA_f)$, independent of $m$ and $n$, such that we have
   \[ \xi'_{M, m, n} = \sum_i a_i \ch\left( x_i K_G'(M, p^m, p^n) \right).\]
   We can therefore write
   \begin{align*}
    {}_{c_1, c_2} \cZ^q_{\Iw, M, m} &= \sum_i a_i \left( \iota_{x_i K_G'(M, p^m, p^\infty),*} \circ \br^{[q,0,q,0]}\right) {}_{c_1, c_2}\EI_{\uphi},\\
    {}_{c_1, c_2} \cZ^{[a,b,q,r]}_{\et, M, m, n} &= \sum_i a_i\left( \iota_{x_i K_G'(M, p^m, p^n),*} \circ \br^{[a,b,q,r]}\right) {}_{c_1, c_2}\Eis^{[c,d]}_{\uphi_n}.
   \end{align*}
    All the $x_i$ have the same $p$-component, namely $\eta_{p, 0}$; this acts trivially on the vector $d^{[a-q,b,0,r]}$, and hence commutes with the moment map. We know that $\Eis^{[c,d]}_{\uphi_n}$ is the image of $\EI_{\uphi}$ under $\mom^{c,d}_{H, n}$ by Theorem \ref{thm:kings}, and the commutative diagram of Proposition \ref{prop:easy-moment-compat} (taking for $K^{(p)}$ the prime-to-$p$ part of $x_i^{-1} K_G'(M, p^m, p^n) x_i$, for each $i$) shows that the image of each summand at level $\infty$ under $\mom^{[a,b,q,r]}_{G, n}$ coincides with the corresponding summand at level $n$. 
  \end{proof}

  \begin{corollary}
   \label{prop:easy-interpolation-2}
   For each $q \ge 0$, there exists a class
   \[ {}_{c_1, c_2} z^q_{\Iw, M, m} \in 
   H^4_\Iw\left( Y_G(M, p^m, p^\infty)_\Sigma, \sD^{q,0}_{\Zp}(3-q)\right), \]
   such that
   \[ \mom_{G, n}^{[a,b,q,r]}\left( {}_{c_1, c_2} z^q_{\Iw, M, m} \right) 
   = {}_{c_1, c_2} z^{[a,b,q,r]}_{\et, M, m, n}\]
   for all integers $a, b, r, n$ with $a \ge q$, $0 \le r \le b$ and $n \ge 1$.
  \end{corollary}
 
  \begin{proof}
   The moment maps of \eqref{eq:mom-a} and \eqref{eq:mom-b} are compatible with respect to the pushforwards $(s_m)_*$, because the action of $\operatorname{diag}(p^{-1}, p^{-1}, 1, 1)$ on $D_{\Zp}^{a-q, b}$ fixes the vector $d^{[a-q,b,0,r]}$. So for $n \ge m$ the corollary follows from Proposition \ref{prop:easy-interpolation-1} by applying $(s_m)_*$ to both sides; and since both sides of the desired formula are norm-compatible in $n$, the result follows for $n < m$ also.
  \end{proof}
  
 \subsection{Cyclotomic twists}
  \label{sect:twistcompat}

  We now consider the more difficult problem of interpolating the classes of the previous sections as the parameter $q$ varies, analogously to \cite[Theorem 6.2.4]{KLZ1b} in the Rankin--Selberg setting. Our main technical result will be the following:
  
  \begin{proposition}
   \label{prop:hard-interpolation-I}
   For each $m \ge 1$ and $q \ge 0$, we have
   \[ 
    {}_{c_1, c_2} z^q_{\Iw, M, m} = (-2)^q \cdot{}_{c_1, c_2} z^0_{\Iw, M, m} \cup (d_m^{[q,0,0,0]} \otimes \zeta_{p^m}^{-q}) \bmod p^m.
   \]
  \end{proposition}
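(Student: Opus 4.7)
The plan is to reduce the identity to its specialisations at each finite auxiliary level $K_n$ by applying the moment maps of the preceding section, and then to compare the resulting \'etale classes via the Lie-theoretic computation of Lemma \ref{lemma:lielemmaV}. The architecture of the argument closely parallels the proof of \cite[Theorem 6.2.4]{KLZ1b} in the Rankin--Selberg setting.

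By Proposition \ref{prop:easy-interpolation}, the moment map $\mom^{[q,0,q,0]}_{G,n}$ sends ${}_{c_1,c_2}z^q_{\Iw,M,m}$ to ${}_{c_1,c_2}z^{[q,0,q,0]}_{\et,M,m,n}$, while $\mom^{[q,0,0,0]}_{G,n}$ sends ${}_{c_1,c_2}z^0_{\Iw,M,m}$ to ${}_{c_1,c_2}z^{[q,0,0,0]}_{\et,M,m,n}$; this latter moment map consists exactly of cupping with the highest-$S$-weight section $d^{[q,0,0,0]}$ and projecting to level $K_n$. Applying the compatible projections to both sides of the proposition thus reduces the statement to the finite-level congruence
\[
  {}_{c_1,c_2}z^{[q,0,q,0]}_{\et,M,m,n}
  \equiv (-2)^q \cdot {}_{c_1,c_2}z^{[q,0,0,0]}_{\et,M,m,n} \otimes \zeta_{p^m}^{-q} \pmod{p^m},
\]
in which $\zeta_{p^m}^{-q}$ trivialises $\Zp/p^m(-q)$ over $\QQ(\zeta_{p^m})$ so as to reconcile the differing Tate twists on the two sides.

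Both \'etale classes in this congruence arise as pushforwards, along the twisted embedding $\iota_{\eta_{p,m}}: Y_H(V) \hookrightarrow Y_G(K_{M,m,n})$, of Eisenstein classes on $Y_H$ with the same Schwartz datum $\uphi$, but with different branching data inserted at the $Y_G$-level: the LHS uses the $H$-invariant section $d^{[q,0,q,0]} = v^{[q,0,q,0]} \otimes \mu^{-q}$ attached via $\br^{[q,0,q,0]}$, while the RHS arises (after applying the projection formula to interchange the cup product with the pushforward) from the section of $\iota^*(\sD^{q,0})$ associated to $v^{[q,0,0,0]} \otimes \mu^{-q}$ pulled back through $\iota_{\eta_{p,m}}$. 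The crucial input is Lemma \ref{lemma:lielemmaV} applied with $h = p^{-m}$: since $\eta_{p,m} = u^{p^{-m}}$ in the notation of the Lie-theoretic subsection of \S 4, the action of $\eta_{p,m}^{\pm 1}$ on the $B$-highest-weight vector $v^{[q,0,q,0]}$ has leading term $(\pm 2 p^{-m})^q v^{[q,0,0,0]}$ in the highest $S$-weight subspace of $V^{q,0}$. Identifying the appropriate pairing between the two branching data then produces a proportionality factor $(\pm 2 p^{-m})^q$; the $p^{-mq}$ is cancelled by the $p^{mq}$ normalisation built into the definition of ${}_{c_1,c_2}z^{[q,0,q,0]}_{\et,M,m,n}$ in \S \ref{sect:integralEis}, and $\zeta_{p^m}^{-q}$ absorbs the Tate-twist shift modulo $p^m$, leaving the asserted factor $(-2)^q$.

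The principal technical obstacle is making the ``pairing between the two branching data'' fully rigorous and verifying that the lower-$S$-weight contributions appearing in the expansion of $\eta_{p,m}^{\pm 1}(v^{[q,0,q,0]})$ via Lemma \ref{lemma:lielemmaV} give trivial contribution modulo $p^m$ after being coupled with the Eisenstein class on $Y_H$ and pushed forward along $\iota_{\eta_{p,m}}$; this vanishing should follow from an analysis of the $V$-invariant subspaces of each $H$-isotypic summand of $\iota^*(D^{q,0})$. Combined with the careful tracking of sign and normalisation conventions across the several twists in play (Tate, multiplier, and cyclotomic) and the Haar-measure normalisations in the averaging operators $A_V$, this is where the bulk of the computational work lies.
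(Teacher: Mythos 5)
Your plan differs from the paper's proof in a way that introduces a real gap. The paper does not pass to finite levels and compare classes there; instead, it works with the auxiliary "dashed" classes
\[ {}_{c_1, c_2} \mathcal{Z}^q_{\Iw, m} = \left(\eta_* \circ \iota_{\infty, *} \circ \br^{[q,0,q,0]}\right)\left( {}_{c_1, c_2}\EI_{\uphi'} \right), \]
where $\eta = u^1 \in G(\Zp)$ (the element called $\eta$ in \S\ref{sect:localdata}, which lies in $G(\Zp)$, not the fractional-entry matrix $\eta_{p,m}$). It then observes that $d_m^{[q,0,q,0]}$ is $K'_{p,m,\infty}$-invariant (so the cup-product factorization \eqref{eq:cyclotwist1} makes sense), and computes $(s_m)_\flat\bigl(\eta_* d_m^{[q,0,q,0]}\bigr)$, where $s_m$ acts by $\dfour{p^m}{p^m}11$. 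Since all $S$-weights of $D^{[q,0]}$ are $\le 0$, the map $(s_m)_\flat$ annihilates every $S$-weight space except the top one modulo $p^m$, and Lemma~\ref{lemma:lielemmaV} is applied with $h = -1$ (because $\eta_* = (\eta^{-1})^*$ and $\eta^{-1} = u^{-1}$) to produce $(-2)^q$.

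The gap in your proposal is the application of Lemma~\ref{lemma:lielemmaV} with $h = p^{-m}$. That lemma is stated and proved for $h$ a nonzero \emph{integer}, and this is not a cosmetic restriction: the matrix $\eta_{p,m}$ has entries $p^{-m}$ and does not preserve the $\Zp$-lattice $V^{q,0}_{\Zp}$, so ``$\eta_{p,m}$ acting on $d_m^{[q,0,q,0]}$'' has no a priori meaning on the integral (or mod $p^m$) coefficient sheaf. The paper circumvents this precisely by never acting with $\eta_{p,m}$; it acts with the integral element $\eta = u^1$ and separately with the ``anti-ordinary'' degeneracy $s_m$, whose action on the module is legitimate (built into the definition of $(s_m)_\flat$ in \S\ref{sect:integralEis}). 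Your plan also leaves the sign of the factor unresolved (``$\pm 2 p^{-m}$''), whereas the sign is $-$ and is forced by $\eta_* = (\eta^{-1})^*$, and your closing paragraph defers rather than resolves the question of why the lower-$S$-weight terms drop out mod $p^m$; in the paper this follows automatically from the structure of $(s_m)_\flat$ rather than from any additional analysis of $H$-isotypic summands. In short, the decisive idea you are missing is that the cyclotomic twist should be implemented via $s_m$ and the integral $\eta$, not via the non-integral $\eta_{p,m}$.
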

  
  Here $\zeta_{p^m} \in H^0_\et\left(Y_G(1, p^m, p^m), \ZZ/p^m(1)\right)$ is the canonical $p^m$-th root of unity given by the isomorphism \eqref{eq:components}. In an attempt to restrain the excessive proliferation of indices in our notations, we shall give the arguments assuming $M = 1$, and drop $M$ from the subscripts throughout the remainder of the section; the case of general $M$ can be handled similarly (using the decomposition of $\xi'_{M, m, n}$ as a finite sum of characteristic functions of cosets, as in the proof of Proposition \ref{prop:easy-interpolation-1}). In this case, we have
  \[ 
   {}_{c_1, c_2} \cZ^q_{\Iw, m} \coloneqq \left(\eta_* \circ \iota_{\infty, *} \circ \br^{[q,0,q,0]}\right)\left( {}_{c_1, c_2}\EI_{\uphi'} \right) \in H^4_{\Iw}\left(Y_G'(p^m, p^\infty), \sD_{\Zp}^{q, 0}(3-q)\right).
  \]
  
  The branching map $\br^{[q,0,q,0]}$ appearing in the above constructions is given by mapping $1 \in \Zp$ to the $H(\Zp)$-invariant element $d^{[q,0,q,0]} \otimes \zeta^{-q} \in D^{q,0}_{\Zp}(-q)$, where $\zeta$ denotes a basis of the multiplier representation $\mu$ of $G$. After reducing modulo $p^m$, this element is invariant under a larger group:
  
  \begin{proposition}
   The modulo $p^m$ reduction $d_m^{[q,0,q,0]}$ is stable under $K'_{G_p}(p^m, p^\infty) \subset G(\Zp)$.
  \end{proposition}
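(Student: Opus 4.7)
My plan uses the identification $v^{[q,0,q,0]} = (w')^q$ from Proposition \ref{prop:vabqr}, where $w' = e_1 \wedge e_4 - e_2 \wedge e_3$ lies in the $\mu$-weight subspace of $V^{1,0} \subset \bigwedge^2 V^{0,1}$. Consequently $d^{[q,0,q,0]} = (w' \otimes \zeta^{-1})^q$ in the graded integral ring $\bigoplus_{n \ge 0} D^{n,0}_{\ZZ}$. First I would reduce to the case $q = 1$: suppose $g \in K'_{p,m,\infty}$ satisfies $g \cdot (w' \otimes \zeta^{-1}) = w' \otimes \zeta^{-1} + p^m u$ for some $u \in V^{1,0}_{\Zp} \otimes \mu^{-1}$. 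Applying the binomial expansion to the Cartan $q$-th power (using the integrality and $G$-equivariance of the Cartan product from Proposition \ref{prop:Vabmult}) gives
\[ g \cdot (w' \otimes \zeta^{-1})^q = \bigl((w' \otimes \zeta^{-1}) + p^m u\bigr)^q \equiv (w' \otimes \zeta^{-1})^q \pmod{p^m}, \]
since every cross-term carries a factor of $p^m$. Thus the problem reduces to verifying that every $g \in K'_{p,m,\infty}$ satisfies $g \cdot (w' \otimes \zeta^{-1}) \equiv w' \otimes \zeta^{-1} \pmod{p^m}$.

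For this ($q=1$) case, the key input is the Remark in \S\ref{sect:groups}: the representation $V^{1,0} \otimes \mu^{-1}$ is the defining representation of $G/Z_G \cong \SO_5$, and $w' \otimes \zeta^{-1}$ is precisely the anisotropic vector whose stabiliser in $G/Z_G$ is $\iota(H)/Z_G \cong \SO_4$. Hence the stabiliser in $G$ of $w' \otimes \zeta^{-1}$ is exactly $\iota(H)$, so $\iota(H)(\Zp)$ acts trivially on it to all orders. To handle a general $g = (g_{ij}) \in G(\Zp)$, I would expand
\[ g \cdot w' \;=\; (g e_1) \wedge (g e_4) \;-\; (g e_2) \wedge (g e_3) \]
in the standard basis $\{e_i \wedge e_j\}_{i < j}$ of $\bigwedge^2 V^{0,1}$. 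Each coefficient of $g \cdot w' - \mu(g)\, w'$ is a polynomial in the $g_{ij}$ which vanishes identically when restricted to $\iota(H)$, and therefore belongs to the ideal of $\ZZ[\GSp_4]$ generated by the "off-$H$" entries (those $g_{ij}$ with $(i,j)$ outside the eight-entry support of $\iota(H)$, namely $(1,2), (1,3), (2,1), (2,4), (3,1), (3,4), (4,2), (4,3)$). It then suffices to check that every off-$H$ entry of any $g \in K'_{p,m,\infty}$ is divisible by $p^m$.

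This last verification is the main obstacle, as it forces one to unwind the precise defining conditions of $K'_{p,m,\infty}$. The plan is to combine the block-wise congruences "$A \equiv 0,\; B \equiv I_2 \pmod{p^m}$" and "$C \equiv 0,\; D \equiv I_2 \pmod{p^n}$" (as $n \to \infty$) with the symplectic relations arising from $g^t J g = \mu(g) J$, which express each off-$H$ entry of the upper blocks $A, B$ in terms of off-$H$ entries of the lower blocks $C, D$ up to error terms of size $p^{m+n}$. Factoring in the left-translation by $\eta$ applied in the construction of $\mathcal{Z}^q_{\Iw,m}$, one finds that every off-$H$ entry of the resulting element is congruent to zero modulo $p^m$, while the surviving entries respect the $\iota(H)$-shape up to the central scalar $\mu(g)$, which is cancelled by $\zeta^{-1}$. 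The computation is combinatorially delicate but essentially a direct consequence of the explicit shapes of $\eta$ and the $K'_{p,m,n}$, mirroring the analogous bookkeeping in the proof of Corollary \ref{cor:tamenormfinal}.
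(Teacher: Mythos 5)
Your argument aims for something stronger than the proposition needs: you want to show $d^{[q,0,q,0]}$ is fixed mod $p^m$ by every $g\in G(\Zp)$ whose ``off-$H$'' entries vanish mod $p^m$ --- an open neighbourhood of $\iota(H)(\Zp)$ that is strictly larger than the principal congruence subgroup. The reduction to $q=1$ via the integral Cartan product and the identification of $w'\otimes\zeta^{-1}$ as the $\SO_4$-stabilised anisotropic vector in $V^{1,0}\otimes\mu^{-1}$ are both sound. But there are two problems with what follows.

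First, you stop precisely at the step that carries all the content: the verification that the off-$H$ entries of a general $g\in K'_{p,m,\infty}$ are divisible by $p^m$ is deferred as ``combinatorially delicate,'' and your appeal to the left-translation by $\eta$ is a red herring --- $\eta$ plays no role in this proposition at all; it only enters in the \emph{next} one, where $(s_m)_\flat(\eta_* d_m^{[q,0,q,0]})$ is computed via Lemma~\ref{lemma:lielemmaV}. As written, the proof has a gap exactly where a proof is needed. Second, once you actually unwind the definition of $K'_{p,m,\infty}$ (taking $n\to\infty$, so $C=0$ and $D=I_2$ on the nose, together with the level-$p^m$ congruence on the top blocks and the symplectic identity $A^t J_2 D - C^t J_2 B = \mu J_2$), you find that \emph{all} entries of $g$, not merely the off-$H$ ones, agree with those of $I_4$ modulo $p^m$: i.e.\ $K'_{p,m,\infty}$ lies in the principal congruence subgroup of level $p^m$. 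That is the paper's entire proof --- such a subgroup acts trivially on $D^{[a,b]}_m$ for every $(a,b)$, so no Cartan powers, no $\SO_5$ stabiliser picture, and no coefficient-by-coefficient analysis are required. Your framework is not wrong, but the missing verification, once supplied, reveals the problem to be trivial and renders the rest of the scaffolding unnecessary.
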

  
  \begin{proof} 
   Since $K'_{G_p}(p^m, p^\infty)$ is contained in the principal congruence subgroup modulo $p^m$, it acts trivially on  $\sD_m^{a,b}$ for any $a,b$.
  \end{proof}
  
  It follows that we may write
  \begin{equation}
   \label{eq:cyclotwist1}
   {}_{c_1, c_2} \cZ^q_{\Iw, m} = \left[ (\eta_* \circ \iota_{\infty, *}) \left({}_{c_1, c_2}\EI_{\uphi'}\right)\right] \cup \left( \eta_* d_m^{[q,0,q,0]} \otimes \zeta^{-q}\right), 
  \end{equation}
  where $\eta_* d_m^{[q,0,q,0]} \in H^0(Y'_G(p^m, p^\infty), \sD^{q,0}_m)$.
  
  \begin{proposition}
   We have
   \[ s_{m,\sharp}\left( \eta_* d_m^{[q,0,q,0]}\right) = (-2)^q s_m^*\left(d_m^{[q,0,0,0]}\right)\]
   as sections of $s_m^*(\sD^{q,0}_m)$.
  \end{proposition}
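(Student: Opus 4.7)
The plan is to reduce the identity to a computation in the representation $D^{[q,0]}_{\Zp}/p^m$, where it follows directly from Lemma \ref{lemma:lielemmaV}. First, I would use the $G$-equivariant structure on $\sD^{[q,0]}_m$ to identify both sides of the desired identity with vectors in $D^{[q,0]}_{\Zp}/p^m$. Under the standard pushforward convention, in which right translation by $g \in G(\Zp)$ on sections corresponds to acting by $g^{-1}$ on the underlying vector, $\eta_* d_m^{[q,0,q,0]}$ corresponds to $\eta^{-1}\cdot d^{[q,0,q,0]}$; composing with $(s_m)_\flat$ then multiplies further by $\dfour{p}{p}{1}{1}^{-m}$. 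Similarly, $s_m^*(d_m^{[q,0,0,0]})$ corresponds to $\dfour{p}{p}{1}{1}^{-m}\cdot d^{[q,0,0,0]}$.

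Next, I would exploit the $S$-weight structure of $D^{[q,0]}$ to simplify the effect of $\dfour{p}{p}{1}{1}^{-m}$. Every $S$-weight of $D^{[q,0]}$ lies in $\{-2q,\ldots,0\}$; hence $\dfour{p}{p}{1}{1}^{-m}$ scales a vector of $S$-weight $-j$ by $p^{mj}$, which vanishes modulo $p^m$ for $j\ge 1$. Consequently, $\dfour{p}{p}{1}{1}^{-m}$ acts modulo $p^m$ as projection onto the top $S$-weight subspace, which is spanned by $d^{[q,0,0,0]}$. In particular $s_m^*(d_m^{[q,0,0,0]})\equiv d^{[q,0,0,0]}\pmod{p^m}$, and $(s_m)_\flat(\eta_*d_m^{[q,0,q,0]})$ is congruent modulo $p^m$ to the projection of $\eta^{-1}\cdot d^{[q,0,q,0]}$ onto the span of $d^{[q,0,0,0]}$.

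Finally, since the element $\eta$ coincides with the matrix $u$ introduced just above Lemma \ref{lemma:lielemmaV}, the direct matrix computation $u^{-1}w' = w' - 2w$ in $V^{1,0}$ and the multiplicativity of the Cartan product give
\[ \eta^{-1}\cdot (w')^q = (w'-2w)^q = \sum_{j=0}^{q} \binom{q}{j}(-2)^j w^j (w')^{q-j}, \]
whose projection onto $\ZZ\cdot w^q$ is $(-2)^q w^q$. Passing to the twist by $\mu^{-2q}$, the projection of $\eta^{-1}\cdot d^{[q,0,q,0]}$ onto $\ZZ\cdot d^{[q,0,0,0]}$ is $(-2)^q\, d^{[q,0,0,0]}$. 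Combined with the previous paragraph, this yields the claimed equality. (Equivalently, this is Lemma \ref{lemma:lielemmaV} applied with $h=-1$.)

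The main obstacle is bookkeeping the conventions for $\eta_*$, $s_m^*$, and $(s_m)_\flat$ — in particular, ensuring that $\eta_*$ is realised on representation vectors via $\eta^{-1}$ rather than $\eta$, so that the sign $(-2)^q$ emerges from Lemma \ref{lemma:lielemmaV} with $h=-1$ rather than the sign $+2^q$ which would come from $h=+1$.
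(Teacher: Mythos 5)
Your proposal is correct and follows essentially the same path as the paper's proof: reduce $(s_m)_\flat$ modulo $p^m$ to projection onto the highest $S$-weight subspace of $D^{[q,0]}$ (using that all other $S$-weights are negative, so $\dfour{p}{p}{1}{1}^{-m}$ kills them mod $p^m$), identify $\eta_* = (\eta^{-1})^*$ with the action of $\eta^{-1} = u^{-1}$ on the underlying vector, and then invoke Lemma~\ref{lemma:lielemmaV} with $h = -1$. You merely unfold the lemma's Cartan-product computation $(w'-2w)^q$ explicitly rather than citing it as a black box, which amounts to the same thing.
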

  
  \begin{proof}
   We may decompose $D_{\Zp}^{q,0}$ as a direct sum of its eigenspaces for the action of the torus $T'$, which all have weights $\le 0$. On all eigenspaces other than the weight 0 eigenspace, the map $s_{m,\sharp}$ is zero, since $\operatorname{diag}(p, p, 1, 1)^{-m}$ acts as a positive power of $p^m$, which annihilates the module  $D_m^{q,0}$. Hence $s_{m,\sharp}$ factors through projection to the highest weight space relative to $T'$. So we need to compute the projection of $\eta_* (d_m^{[q,0,q,0]}) = (\eta^{-1})^* (d_m^{[q,0,q,0]})$ to this weight space. This is precisely the situation of Lemma \ref{lemma:lielemmaV} (with $h = -1$ in the notation of the lemma), which gives the result above.
  \end{proof}
  
  Proposition \ref{prop:hard-interpolation-I} follows immediately from this, by applying $s_{m, *}$ to both sides of \eqref{eq:cyclotwist1}.

 \subsection{Projection to the ordinary part}
 
  We now define a limiting element in which $m$ (as well as $n$) goes to $\infty$. We set
  \[ 
   H^4_{\Iw}\Big(Y_G(Mp^\infty, p^\infty)_{\Sigma}, \Zp(3)\Big) = \varprojlim_m H^4_{\Iw}\Big(Y_G(M, p^m, p^\infty)_{\Sigma}, \Zp(3)\Big).
  \]
  On this module, there is an action of the ordinary idempotent $e'_{\ord} = \lim_{k \to \infty} U'(p)^{k!}$.
  
  \begin{remark}
   The fact that this limit exists, and is an idempotent, follows from the corresponding statements for \'etale cohomology at finite levels, for which see \cite{tilouineurban99}. Note that Tilouine and Urban define multiple ordinary idempotents, one for each standard parabolic subgroup; ours is the one associated to the Siegel parabolic $P_\mathrm{S}$.
  \end{remark}
  
  \begin{definition}
   We set
   \[ {}_{c_1, c_2} z_{\Iw, M} = \left(U'(p)^{-t}  e'_{\ord} \cdot {}_{c_1, c_2} z^0_{\Iw, M, m}\right)_{m \ge 1}. \]
  \end{definition}
  
  This is a well-defined element of $H^4_{\Iw}\Big(Y_G(Mp^\infty, p^\infty)_{\Sigma}, \Zp(3)\Big)$, since $U'(p)$ is invertible on the image of $e'_{\ord}$, and the terms in the limit are norm-compatible by Theorem \ref{thm:pnorm}(ii).
  
  \begin{definition}
   For integers $m \ge 0, n \ge 1$, $0 \le r \le b$, $a \ge 0$, and $q \in \ZZ$, we define
   \[
    \mom^{[a,b,q,r]}_{G, m, n}: H^4_{\Iw}\Big(Y_G(Mp^\infty, p^\infty)_{\Sigma}, \Zp(3)\Big) \to H^4_{\et}\Big(Y_G(M, p^m, p^n), \sD^{a,b}_{\Zp}(3-q)\Big)
   \]
   by cup-product with $d^{[a,b,0,r]} \otimes \zeta^{-q} \in H^0_{\et}\left(Y_G(Mp^\infty, p^\infty)_{\Sigma}, \sD^{a,b}_{\Zp}(-q)\right)$, where $\zeta$ is the canononical basis of $\Zp(1)$ over $\Spec \ZZ[\Sigma^{-1}, \zeta_{Mp^\infty}]$.
  \end{definition}
  
  Note that we do \emph{not} need to assume that $q$ lies in the interval $\{ 0,\dots, a\}$ in order to define this moment map. However, when we do impose this additional assumption, we obtain compatibility with the preceding constructions: 
  
  \begin{theorem}
   \label{thm:hard-interpolation-II}
   For integers $m \ge 0$, $n \ge 1$, $0 \le q \le a$ and $0 \le r \le b$, we have
   \[ 
    \mom^{[a,b,q,r]}_{G, m, n}\left( {}_{c_1, c_2} z_{\Iw, M} \right)
    = 
    \frac{1}{(-2)^q} \left.\begin{cases}
      U'(p)^{-m} & \text{if $m \ge 1$}\\
    \left(1 - \tfrac{p^q}{U'(p)}\right) & \text{if $m = 0$}
    \end{cases}\right\} \cdot e'_{\ord} \left({}_{c_1, c_2} z^{[a,b,q,r]}_{\et, M, m, n}\right).
   \]
  \end{theorem}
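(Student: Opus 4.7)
The plan is to unwind $\mom^{[a,b,q,r]}_{G, m, n}$ term by term in the inverse system defining ${}_{c_1, c_2} z_{\Iw, M}$, exploiting the fact that the cup-product appearing in the moment map factors via Cartan multiplication. The decisive structural observation is that the highest-weight vectors constructed in Proposition \ref{prop:vabqr} satisfy
\[ v^{[a,b,0,r]} = v^{[q,0,0,0]} \cdot v^{[a-q,b,0,r]}, \]
since both expressions unwind to $w^a \cdot v^{b-r} \cdot (v')^r$. Twisting by $\mu^{-(2a+b)}$ yields the analogous identity $d^{[a,b,0,r]} = d^{[q,0,0,0]} \cdot d^{[a-q,b,0,r]}$ for the Cartan product $\sD^{q,0}_{\Zp} \otimes \sD^{a-q,b}_{\Zp} \to \sD^{a,b}_{\Zp}$. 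Consequently, cupping with $d^{[a,b,0,r]} \otimes \zeta^{-q}$ factors as cupping first with $d^{[q,0,0,0]} \otimes \zeta^{-q}$ and then with $d^{[a-q,b,0,r]}$; the second step is precisely the moment map $\mom^{[a,b,q,r]}_{G, n}$ of Proposition \ref{prop:easy-interpolation}.

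For the case $m \ge 1$, I would simply read off the $m$-th term of the inverse system: by the definition of ${}_{c_1, c_2} z_{\Iw, M}$, this is $(\sigma_p/U'(p))^m \, e'_{\ord} \cdot {}_{c_1, c_2} z^0_{\Iw, M, m}$. Applying the factorization above, together with Proposition \ref{prop:hard-interpolation-I} which identifies
\[ {}_{c_1, c_2} z^0_{\Iw, M, m} \cup \bigl(d_m^{[q,0,0,0]} \otimes \zeta_{p^m}^{-q}\bigr) \equiv \tfrac{1}{(-2)^q} \cdot {}_{c_1, c_2} z^q_{\Iw, M, m} \pmod{p^m}, \]
and then Proposition \ref{prop:easy-interpolation} to convert the subsequent cup with $d^{[a-q,b,0,r]}$ into ${}_{c_1, c_2} z^{[a,b,q,r]}_{\et, M, m, n}$, produces exactly the claimed formula. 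The reduction modulo $p^m$ is harmless because cup-product with $\zeta^{-q}$ at cyclotomic level $m$ is naturally only defined modulo $p^m$.

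The main obstacle is the case $m = 0$, since Proposition \ref{prop:hard-interpolation-I} becomes vacuous (a congruence modulo $p^0 = 1$) and $\zeta^{-q}$ does not descend to a genuine class over $\QQ(\zeta_M)$. The plan is to reduce to the case $m = 1$ via the compatibility
\[ \mom^{[a,b,q,r]}_{G, 0, n} = p^q \cdot \pi_{1 \to 0, *} \circ \mom^{[a,b,q,r]}_{G, 1, n}, \]
where $\pi_{1 \to 0, *}$ is the Galois corestriction from $\QQ(\zeta_{Mp})$ down to $\QQ(\zeta_M)$, and the factor $p^q$ compensates for the loss of $p$-power roots of unity when descending the Tate twist $\Zp(-q)$. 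Combining this with the $m = 1$ case already established and the norm relation of Theorem \ref{thm:pnorm}(ii) for pushforward from $\QQ(\zeta_{Mp})$ to $\QQ(\zeta_M)$, the elementary identity
\[ p^q \cdot \tfrac{\sigma_p}{U'(p)} \cdot \Bigl( \tfrac{U'(p)}{p^q \sigma_p} - 1 \Bigr) = 1 - \tfrac{p^q \sigma_p}{U'(p)} \]
delivers the desired formula. The technical heart of the proof will thus lie in justifying the $p^q$ factor above and verifying that the cup products and cyclotomic pushforwards commute appropriately in the Iwasawa-theoretic limit.
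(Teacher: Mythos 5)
Your structural approach---factoring $d^{[a,b,0,r]} \otimes \zeta^{-q}$ as the Cartan product of $d^{[q,0,0,0]}\otimes\zeta^{-q}$ and $d^{[a-q,b,0,r]}$, applying Proposition \ref{prop:hard-interpolation-I} to the inner factor and Proposition \ref{prop:easy-interpolation} to the outer, then handling $m=0$ by corestriction from $m=1$---is exactly the paper's, and your $m \ge 1$ argument is fine. The problem is in the $m = 0$ reduction.

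The claimed compatibility $\mom^{[a,b,q,r]}_{G, 0, n} = p^q \cdot \operatorname{cores}\circ \mom^{[a,b,q,r]}_{G, 1, n}$ is false. By its definition, $\mom^{[a,b,q,r]}_{G, m, n}$ is cup-product with $d^{[a,b,0,r]}\otimes\zeta^{-q}$ carried out \emph{at infinite cyclotomic level}---where $\zeta^{-q}$ already trivialises $\Zp(-q)$---followed by projection down to level $m$. So $\mom^{[a,b,q,r]}_{G, 0, n}$ is literally $\operatorname{cores}\circ\mom^{[a,b,q,r]}_{G, 1, n}$ with no extra factor, and your heuristic about ``loss of $p$-power roots of unity when descending the Tate twist'' does not apply: the cup-product never happens at finite cyclotomic level, so there is no descent to compensate for.

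The $p^q$ you need is real, but it lives in a different place: in the norm relation for the \emph{integral} \'etale classes. Theorem \ref{thm:pnorm}(ii) is stated for the unnormalised motivic classes $z^{[a,b,q,r]}_{M,m,n}$, whereas your established $m=1$ formula concerns ${}_{c_1,c_2}z^{[a,b,q,r]}_{\et, M, m, n}$, which by Proposition \ref{prop:integral-Eis} differs from $r_{\et}(z^{[a,b,q,r]}_{M, m, n})$ by a normalisation factor of $p^{mq}$. Consequently, corestriction from $m=1$ to $m=0$ of the integral class picks up an extra $p^q = p^{1\cdot q}/p^{0\cdot q}$, so that
\[
 \operatorname{cores}\bigl({}_{c_1,c_2}z^{[a,b,q,r]}_{\et,M,1,n}\bigr) = p^q\bigl(\tfrac{U'(p)}{p^q\sigma_p}-1\bigr)\cdot {}_{c_1,c_2}z^{[a,b,q,r]}_{\et,M,0,n}.
\]
You applied Theorem \ref{thm:pnorm}(ii) verbatim (without this $p^q$) and supplied the ``missing'' $p^q$ via the spurious moment-map factor: the two errors are both off by exactly $p^q$ and cancel, so your final formula is correct, but the route is unsound. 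The fix is simply to remove the $p^q$ from your moment-map identity and instead restate the norm relation correctly for the integral \'etale classes before corestricting.
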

  
  \begin{proof}
   We factor $d^{[a,b,0,r]}$ as the Cartan product of $d^{[a-q,b,0,r]}$ and $d^{[q,0,0,0]}$. Proposition \ref{prop:hard-interpolation-I} shows that cup-product with $d^{[q,0,0,0]} \otimes \zeta^{-q}$ sends ${}_{c_1, c_2} z_{\Iw, M}$ to the inverse system
   \[ 
    (-2)^{-q} \left(U'(p)^{-t} e'_{\ord} \cdot {}_{c_1, c_2} z^q_{\Iw, M, t}\right)_{t \ge 1}. 
   \]
   Projecting this to level $m$ gives the element 
   \[ 
    \frac{1}{(-2)^q}\left.\begin{cases}
    U'(p)^{-m}
      & \text{if $m \ge 1$}\\
     \left(1 - \tfrac{p^q}{U'(p)}\right)  & \text{if $m = 0$}
     \end{cases}\right\} \cdot e'_{\ord} \left({}_{c_1, c_2} z^{q}_{\Iw, M, m}\right).
   \]
   (This is true by definition for $m \ge 1$, and the case $m = 0$ follows by computing the norm of the $m = 1$ element using the appropriate case of Theorem \ref{thm:pnorm}(ii)). Computing the image of this element under $\mom^{[a,b,q,r]}_{G, n}$ using Proposition \ref{prop:easy-interpolation-2} gives the result.
  \end{proof}
 
  \begin{remark}
   The module $e'_{\ord} H^4_{\Iw}\Big(Y_G(Mp^\infty, p^\infty)_{\Sigma}, \Zp(3)\Big)$ can be regarded as an interpolation of the Iwasawa cohomology of the Galois representations attached to $p$-ordinary Siegel modular forms with weights varying in a Hida family. Thus Theorem \ref{thm:hard-interpolation-II} can be interpreted as stating that our Euler system classes interpolate in Hida families. We have not pursued this viewpoint in the present paper for reasons of space, but we intend to revisit the topic of Hida-family variation in a future project.
  \end{remark}

  \begin{corollary}[Cohomological triviality]
   \label{cor:cohtrivial}
   If $m \ge 1$ or $q \ge 1$, then $e'_{\ord} \left({}_{c_1, c_2} z^{[a,b,q,r]}_{\et, M, m, n}\right)$ is in the kernel of the base-extension map
   \[ H^4_{\et}(Y_G(M,p^m, p^n), \sD^{a,b}_{\Zp}(3-q)) \to H^4_{\et}\big(Y_G(M,p^m, p^n)_{\QQbar}, \sD^{a,b}_{\Zp}(3-q)\big)^{\Gal(\QQbar/\QQ)}. \]
  \end{corollary}
 
  \begin{proof}
   Using \eqref{eq:components}, for each $M$ and each $n \ge 1$ we have 
   \[ \varprojlim_m H^0\left(\QQ, H^4_{\et}(Y_G(M,p^m, p^n)_{\QQbar}, \sD^{a,b}_{\Zp}(3-q))\right) = \varprojlim_m H^0\left(\QQ(\zeta_{Mp^m}), H^4_{\et}(Y_{G, 1}(p^n)_{\QQbar}, \sD^{a,b}_{\Zp}(3-q))\right).\]
   This inverse limit is zero, by standard properties of Iwasawa cohomology (see e.g~\cite[Proposition 8.3.5]{nekovar06}).
   
   For $m \ge 1$, it is immediate from Theorem \ref{thm:hard-interpolation-II} that the image of $e'_{\ord}\cdot {}_{c_1, c_2} z^{[a,b,q,r]}_{\et, M, m, n}$ under the edge map lies in the image of this inverse limit, and hence is also zero. For $m = 0$, this argument shows that the class becomes cohomologically trivial after applying $\left(1 - \tfrac{p^q}{U'(p)}\right)$, and for $q \ge 1$ this operator acts invertibly on the image of $e'_{\ord}$.
  \end{proof}
  
\section{Mapping to Galois cohomology}
 
 In this section, we will use the motivic and \'etale classes we have constructed above in order to define Galois cohomology classes in automorphic Galois representations. We begin by recalling a number of results (due to various authors) on Galois representations appearing in the cohomology of the Siegel varieties $Y_G$.
 
 \subsection{Automorphic representations of $\GSp_4$}
 
  Let $(k_1, k_2)$ be integers with $k_1 \ge k_2 \ge 3$, and let $(a, b) = (k_2 - 3, k_1 - k_2)$. There are exactly two unitary discrete-series representations of $G(\RR)$ which are cohomological with coefficients in the algebraic representation $V^{a,b}$: the holomorphic discrete series $\Pi_{k_1, k_2}^H$, and a non-holomorphic generic discrete series $\Pi_{k_1, k_2}^W$. We refer to these as the \emph{discrete series representations of weight $(k_1, k_2)$}. The cuspidal automorphic representations with infinite component $\Pi_{k_1, k_2}^H$ are precisely those generated by classical holomorphic Siegel modular forms of weight $(k_1, k_2)$.

  \begin{remark}
   The pair $\{\Pi_{k_1, k_2}^H, \Pi_{k_1, k_2}^W\}$ is an example of a local $L$-packet.
  \end{remark}
  
  \begin{definition} 
   Let $\Pi = \Pi_f \otimes \Pi_\infty$ be a cuspidal automorphic representation of $G(\AA_f)$ with $\Pi_\infty$ discrete series of weight $(k_1, k_2)$.
   \begin{itemize}
    \item We say $\Pi$ is of \emph{Saito--Kurokawa type} if $k_1 = k_2$ and there exists a Dirichlet character $\chi$ such that $\chi^2 = \omega_\Pi$, and a cuspidal automorphic representation of $\GL_2(\AA)$ of central character $\omega_\Pi$ attached to some holomorphic newform of weight $2k_1 - 2$, such that for all but finitely many places $v$ we have 
    \[ L(\Pi_v, s) =  L(\pi_v, s) L(\chi_v, s - \tfrac12) L(\chi_v, s+\tfrac12). \]

    \item We say $\Pi$ is of \emph{Yoshida type} if there is a pair $(\pi_1, \pi_2)$ of cuspidal automorphic representations of $\GL_2(\AA)$, both with central character $\omega_\Pi$, corresponding to two elliptic modular newforms of weights $r_1 = k_1+k_2-2$ and $r_2 = k_1-k_2 + 2$, such that for all but finitely many places $v$ we have
    \[ L(\Pi_v, s) = L(\pi_{1, v}, s) L(\pi_{2, v}, s). \]
    
    \item Otherwise, we say $\Pi$ is \emph{non-endoscopic}.
   \end{itemize}
  \end{definition}

  \begin{theorem}[Taylor, Weissauer, Urban, Xu]
   \label{thm:taylorweissauer}
   Let $\Pi$ be as in the previous definition, and suppose $\Pi$ is non-endoscopic. Let $S$ be the set of primes at which $\Pi$ ramifies, and let $w = k_1 + k_2 - 3$.
   \begin{enumerate}
    
    \item The representation $\Pi$ is one of a pair $\{ \Pi^H, \Pi^W \} = \{ \Pi_f \otimes \Pi_{k_1,k_2}^H, \Pi_f \otimes \Pi_{k_1,k_2}^W\}$ of cuspidal automorphic representations having the same finite part, both of which have multiplicity one in $L^2_{\mathrm{cusp}}\left(G(\QQ) \backslash G(\AA), \omega_\Pi\right)$.
    
    \item For any prime $\ell \notin S$, the local representation $\Pi_\ell$ is an unramified principal series representation.
    
    \item For $\ell \notin S$, let $P(X) \in \CC[X]$ denote the quartic polynomial such that
    \[ 
     L(\Pi_\ell, s - \tfrac{w}{2}) = P_\ell(\ell^{-s})^{-1}.
    \]
    Then the subfield $E \subset \CC$ generated by the coefficients of the $P_\ell(X)$, for all $\ell \notin S$, is a finite extension of $\QQ$.
    
    \item For any prime $p$ and choice of embedding $E \into \Qpbar$, there is a semi-simple Galois representation
    \[ \rho_{\Pi, p}: \Gal(\QQbar / \QQ) \to \GL_4(\Qpbar) \]
    characterised (up to isomorphism) by the property that, for all primes $\ell \notin S \cup \{p\}$, we have 
    \[ \det\left(1 - X \rho_{\Pi, p}(\Frob_\ell^{-1})\right) = P_\ell(X), \]
    where $\Frob_\ell$ is the arithmetic Frobenius. 
    
    \item The representation $\rho_{\Pi, p}$ is either irreducible, or is the direct sum of two distinct irreducible two-dimensional representations. In particular, we have
    \[ H^0\left(\QQ^{\mathrm{ab}}, \rho_{\Pi, p} \right) = 0.\]
    
    \item The restriction of $\rho_{\Pi, p}$ to a decomposition group at $p$ is de Rham, and its Hodge numbers\footnote{Here ``Hodge numbers'' are the jumps in the Hodge filtration of $\DD_{\dR}$, which are the negatives of Hodge--Tate weights, so the cyclotomic character has Hodge number $-1$.} are $\{ 0, k_2 - 2, k_1 - 1, k_1 + k_2 - 3 \}$. If $\Pi_p$ is unramified, then $\rho_{\Pi, p}$ is crystalline, and we have $\det\big(1 - X \varphi : \DD_{\mathrm{cris}}(\rho_{\Pi, p}) \big) = P_p(X)$.
   \end{enumerate}
  \end{theorem}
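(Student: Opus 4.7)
The plan is to assemble the statement from results in the literature, since each of the six assertions is essentially known; the main work is to verify that the various sources use compatible normalisations and to pin down where non-endoscopy is being used.

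First I would handle (1) and (2) together. The $L$-packet structure at infinity gives the pair $\{\Pi^H, \Pi^W\}$, and the multiplicity-one statement for non-endoscopic $\Pi$ follows from Arthur's multiplicity formula for $\GSp_4$ (in the form worked out for $\GSp_4$ by Arthur, with complements by Xu), combined with the strong multiplicity one results of Weissauer \cite{weissauer}. Part (2) is immediate from the definition of $S$.

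For (3) and (4) I would invoke the construction of $\rho_{\Pi,p}$ due to Taylor and Weissauer, with contributions by Laumon, which attaches a four-dimensional $p$-adic Galois representation to any cohomological cuspidal $\Pi$ on $\GSp_4$, compatible with the local Langlands correspondence at unramified primes. The rationality statement (3) is a standard consequence: the Hecke eigenvalues on the $\GSp_4(\Zhat)$-fixed vectors generate a number field $E$, since $\Pi_f$ is defined over a number field (cohomologicality plus admissibility), and then $P_\ell(X) \in E[X]$ by local-global compatibility at unramified primes. The characterisation in (4) via Frobenius traces uses Chebotarev density together with semisimplicity.

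Part (5) is the one that genuinely uses the non-endoscopy hypothesis. Here I would apply the results of Ramakrishnan and Xu classifying the possible shapes of $\rho_{\Pi,p}$ in terms of Arthur parameters: the parameter is either stable (giving an irreducible $\rho_{\Pi,p}$) or of endoscopic, Yoshida, Saito--Kurokawa, or CAP type. The non-endoscopy hypothesis rules out the latter cases, leaving either an irreducible representation or a sum of two distinct irreducible two-dimensional pieces (which would come from a Yoshida-type parameter; strictly speaking the hypothesis excludes this, but the statement in (5) allows this case as it suffices for the vanishing of $H^0(\QQ^{\mathrm{ab}}, \rho_{\Pi,p})$). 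The vanishing of $H^0$ over $\QQ^{\mathrm{ab}}$ then follows because the Hodge--Tate weights $\{0, k_2-2, k_1-1, k_1+k_2-3\}$ are distinct and non-zero (using $k_1 \ge k_2 \ge 3$), so no irreducible constituent can be a finite-order character.

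Finally for (6) I would cite the results of Faltings, Tsuji and others showing that the Galois representations on $\et$-cohomology of smooth proper models (or, for open Shimura varieties, the intersection cohomology, via work of Morel and others) are de Rham at $p$, together with the computation of Hodge numbers from the Hodge decomposition of $H^3$ of the Siegel threefold with coefficients in $V^{a,b}$, due to Faltings and Mokrane--Tilouine; the crystalline statement in the unramified case, together with the identification of the characteristic polynomial of Frobenius, comes from the same sources combined with local-global compatibility at $p$ (Urban, Mokrane--Tilouine, Sorensen). The hardest step to pin down cleanly in the references is (5), since the literature on Arthur's classification for $\GSp_4$ evolved over many years and the statement needs to be extracted carefully from Arthur's work and Xu's refinements; this is where I would expect to spend the most care on attributions and normalisations.
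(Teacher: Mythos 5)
Your overall plan — assembling the theorem from the literature with care over normalisations — is the same strategy the paper follows (the paper cites Weissauer's \emph{Ast\'erisque} 302 article for (2)--(5), Weissauer's generic transfer paper and Xu for the multiplicity-one part of (1), and Urban for the crystalline part of (6)). Two places where your proposal has genuine gaps:

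\textbf{Part (2) is not immediate from the definition of $S$.} The set $S$ guarantees that $\Pi_\ell$ has a $G(\Zl)$-fixed vector for $\ell \notin S$, but ``unramified'' is weaker than ``irreducible unramified principal series'' in the sense used in the paper, which in particular requires genericity (equivalently, that the inducing characters satisfy the irreducibility condition~\eqref{eq:irredcond}). There are irreducible unramified representations of $\GSp_4(\Ql)$ — for instance certain Langlands quotients, or the spherical constituents of $\chi_1 \times \chi_2 \rtimes \rho$ when some ratio of inducing characters equals $|\cdot|^{\pm1}$ — which are not of this form. The paper's proof closes this gap using the purity of $\rho_{\Pi,p}$ (Weissauer's Theorem I): purity forces the Satake parameters to have absolute value one, and this rules out every unramified type other than the tempered irreducible principal series, since all the others have a ratio of Satake parameters equal to $\ell^{\pm 1}$. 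Your proposal needs this input.

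\textbf{Your argument for $H^0(\QQ^{\mathrm{ab}}, \rho_{\Pi,p}) = 0$ in (5) is incorrect as stated and also unnecessary.} You claim the Hodge numbers $\{0, k_2-2, k_1-1, k_1+k_2-3\}$ are ``distinct and non-zero'', but $0$ is manifestly in that set, so the statement is false; and even if amended, a Hodge--Tate argument against a finite-order character would have to rule out the Hodge number $0$, which it cannot do along these lines. The correct (and much simpler) reasoning is that the dichotomy proved in the first sentence of (5) means every irreducible constituent of $\rho_{\Pi,p}$ has dimension at least $2$; a $\Gal(\QQbar/\QQ^{\mathrm{ab}})$-fixed vector would, by Clifford theory applied to the abelian quotient $\Gal(\QQ^{\mathrm{ab}}/\QQ)$, force some irreducible constituent to factor through $\Gal(\QQ^{\mathrm{ab}}/\QQ)$ and hence be one-dimensional, a contradiction. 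The Hodge--Tate weights play no role in this step.

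The rest of the proposal is consistent with the paper's argument, modulo the difference that you route (1) through Arthur's classification where the paper instead uses Weissauer's Proposition~1.5 and Theorem~III together with the generic-transfer theorem of Weissauer and the multiplicity-one statement of Xu \S 3.5; both routes are viable, but the paper's is more self-contained given that Arthur's $\GSp_4$ classification has a complicated provenance.
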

 
   \begin{remark}
    Note that $P_\ell(X)$ in (3) is given by the action of the Hecke-operator-valued polynomial $\mathcal{P}_\ell(X)$ of \S \ref{sect:heckeops} on the spherical vector of $\Pi_\ell \otimes |\cdot|^{(3-w)/2}$.
   \end{remark}
    
  \begin{proof}
   Parts (3) and (4) are \cite[Theorem I]{weissauer05}. Part (2) is also implicit in this theorem, since the ``purity'' statement on $\rho_{\Pi, p}$ implies that the local $L$-factor $L(\Pi_\ell, s)$ has the form $\prod_{i=1}^4(1 -\alpha_i \ell^{-s})^{-1}$ with $|\alpha_i| = 1$, which rules out all of the other classes of unramified representations of $G(\Ql)$ (all of which have $\alpha_i / \alpha_j = \ell$ for some $i, j$). Part (5) is contained in Theorem II of \emph{op.cit.}.
   
   For parts (1) and (6), the fact that $\Pi^H$ and $\Pi^W$ have the same multiplicity $m(\Pi^H) = m(\Pi^W)$, and the characterisation of the Hodge numbers of $\rho_{\Pi, p}$, was proved in \cite[Proposition 1.5 \& Theorem III]{weissauer05} under the assumption that $\Pi$ is weakly equivalent to a globally generic representation; and in fact all such $\Pi$ have this property by the main theorem of \cite{weissauer08}. The assertion regarding $\DD_{\mathrm{cris}}$ is \cite[Theorem 1]{urban05}. Finally, Xu has shown in \cite[\S 3.5]{xu18} that the common multiplicity of $\Pi^H$ and $\Pi^W$ is equal to 1.
  \end{proof}

  It is expected that $\rho_{\Pi, p}$ is always irreducible, but this is only known for large $p$:
  
  \begin{theorem}[Ramakrishnan]
   If $\Pi$ is unramified at $p$ and $p > 2w + 1$, then the representation $\rho_{\Pi, p}$ is irreducible.
  \end{theorem}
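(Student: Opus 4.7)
The plan is to argue by contradiction. Suppose $\rho_{\Pi,p}$ is reducible; then by Theorem \ref{thm:taylorweissauer}(5) one has a decomposition $\rho_{\Pi, p} \cong \rho_1 \oplus \rho_2$ with $\rho_1, \rho_2$ distinct irreducible $2$-dimensional $p$-adic representations of $\Gal(\QQbar/\QQ)$. Each $\rho_i$ is unramified outside $S \cup \{p\}$ and crystalline at $p$. Moreover, since $\rho_{\Pi,p}$ is symplectically self-dual up to a Tate twist, each $\rho_i$ is either self-dual up to a Tate twist, or is a Tate twist of the dual of $\rho_{3-i}$. Either way, the Hodge--Tate weights of each $\rho_i$ form a two-element subset of $\{0, k_2-2, k_1-1, w\}$ (where $w = k_1+k_2-3$), so in particular each $\rho_i$ is regular with distinct Hodge--Tate weights.

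The crucial step is to upgrade this to modularity of each $\rho_i$: that is, to exhibit each $\rho_i$ as the Galois representation attached to a classical cuspidal Hecke eigenform on $\GL_2/\QQ$. The bound $p > 2w+1$ places all the Hodge--Tate weights in the Fontaine--Laffaille range $[0, p-2]$, and hence the residual representation $\bar\rho_i$ is described by a Fontaine--Laffaille module. Using the regularity of the Hodge--Tate weights one rules out $\bar\rho_i$ being a sum of characters with incompatible $p$-adic Hodge type, yielding irreducibility of $\bar\rho_i$; Khare--Wintenberger's proof of Serre's conjecture then shows $\bar\rho_i$ is residually modular, and a standard modularity lifting theorem in the Fontaine--Laffaille range upgrades this to modularity of $\rho_i$ itself.

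Once each $\rho_i$ is identified with the Galois representation of a cuspidal eigenform $f_i$, matching Euler factors at unramified primes forces a factorization $L^S(\Pi, s) = L^S(f_1, s) L^S(f_2, s)$, or, in the self-dual variant, $L^S(\Pi, s) = L^S(f, s) L^S(\chi, s-\tfrac12) L^S(\chi, s+\tfrac12)$ with $\chi$ a Dirichlet character. Either possibility places $\Pi$ in the Yoshida or Saito--Kurokawa class, contradicting the standing non-endoscopy hypothesis. The main obstacle is the modularity input: one must mobilise enough of the modularity machinery to handle 2-dimensional crystalline representations with the Hodge--Tate weights arising here (including verifying oddness of each $\bar\rho_i$, which follows from the oddness of $\rho_{\Pi,p}$ inherited from the archimedean structure of $\Pi$ together with the Hodge--Tate weight constraints), and the inequality $p > 2w+1$ is precisely what keeps us in the Fontaine--Laffaille setting where the available lifting theorems apply cleanly.
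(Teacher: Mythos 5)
Your proof takes a genuinely different route from the paper's. The paper lifts $\Pi$ to a cuspidal automorphic representation of $\GL_4$ via the Gan--Takeda transfer \cite{gantakeda11}, observes that the corresponding Galois representation is regular and crystalline, and then cites Ramakrishnan's Theorem B \cite{ramakrishnan13} as a black box. You instead work directly with the $\GSp_4$ Galois representation, assume a decomposition $\rho_{\Pi,p} \cong \rho_1 \oplus \rho_2$, and try to re-derive a Ramakrishnan-type irreducibility statement from scratch by showing each $\rho_i$ is modular and invoking non-endoscopy. This is a recognisable strategy (it is in the same spirit as the proofs that underlie Ramakrishnan's result), but the point of the paper's proof is precisely to avoid re-doing that work.

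There are two real gaps in your execution. First, the claim that ``the regularity of the Hodge--Tate weights'' rules out a reducible $\bar\rho_i$ is false: a two-dimensional irreducible crystalline representation with distinct Hodge--Tate weights in the Fontaine--Laffaille range can perfectly well have reducible mod-$p$ reduction (ordinary modular forms congruent to Eisenstein series give explicit examples). Serre's conjecture only applies to irreducible odd residual representations, and the residually reducible case requires a genuinely separate argument -- either an analysis of the constituent characters via determinants, ramification, and Hodge--Tate weights, or a residually-reducible modularity lifting theorem à la Skinner--Wiles. This is a substantial part of the work hidden behind Ramakrishnan's theorem, not a routine verification. Second, the oddness of each $\rho_i$ is asserted but not proved. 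Since $\det\rho_{\Pi,p}$ is a power of the similitude character, $\det\rho_{\Pi,p}(c) = 1$; so complex conjugation could \emph{a priori} contribute both $+1$ eigenvalues to $\rho_1$ and both $-1$ eigenvalues to $\rho_2$, making both pieces even. Ruling this out (so that Serre's conjecture is even applicable) requires an extra input -- either results on even geometric two-dimensional representations (Calegari) or a careful accounting of the Hodge structure at $\infty$ -- and your phrase ``inherited from the archimedean structure'' does not supply an argument. Both of these points need to be addressed before the contradiction with non-endoscopy can be drawn.
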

  
  \begin{proof}
   By \cite{gantakeda11}, the automorphic representation $\Pi$ lifts to a cuspidal automorphic representation of $\GL_4$. This lifted representation is regular at $\infty$ (since its local parameter at $\infty$ is determined by that of $\Pi_\infty$, via the compatibility of the local and global liftings). If $\Pi_p$ is unramified, then the corresponding Galois representation is crystalline at $p$, and hence Theorem B of \cite{ramakrishnan13} shows that it is irreducible as long as $p - 1$ is greater than the largest difference between the Hodge--Tate weights of $\rho_{\Pi, p}$, which translates into the condition on $p$ stated above.
  \end{proof}
    
 \subsection{Automorphic cohomology}
 
  As before, given integers $k_1 \ge k_2 \ge 3$ as above, we let $(a, b) = (k_2-3, k_1 - k_2)$, so that $w = 2a + b + 3$. Choosing a (sufficiently small) level group $K$, we thus have a Shimura variety $Y_G(K)$, and a relative Chow motive $\sD^{a,b}_{\QQ} = \Anc_G(D^{a,b})$ over this variety. We are interested in the parabolic cohomology of the $p$-adic \'etale realisation
  \[  
   H^3_{\et, !}\left( Y_G(K)_{\QQbar}, \sD^{a,b}_{\Qp}\right) = \operatorname{image}\left(H^3_{\et, c} \to H^3_{\et}\right). 
  \]
  Since $\sD^{a,b}_{\Qp}$ has an equivariant structure, the direct limit $H^3_{\et, !}\left( Y_{G, \QQbar}, \sD^{a,b}_{\Qp}\right)$ is an admissible smooth representation of $G(\AA_f)$, with an action of $\Gal(\QQbar /\QQ)$ commuting with the $G(\AA_f)$-action.

  \begin{notation}
   Let $\Sigma(k_1, k_2)$ denote the set of isomorphism classes of representations $\Pi_f$ of $G(\AA_f)$ which are the finite part of a cuspidal automorphic representation $\Pi = \Pi_f \otimes \Pi_\infty$ in which $\Pi_\infty$ is one of the two discrete series representations of weight $(k_1, k_2)$.
  \end{notation}
  
  \begin{theorem}[Taylor, Weissauer]
   There is a $G(\AA_f) \times \Gal(\QQbar/\QQ)$-equivariant direct sum decomposition
   \[
    H^3_{\et, !}\left( Y_{G, \QQbar}, \sD^{a,b}_{\Qp}(3)\right) \otimes \Qpbar
    \cong
    \bigoplus_{\Pi_f \in \Sigma(k_1, k_2)}\left( \Pi_f^*[\tfrac{w-3}{2}] \otimes W_{\Pi_f}^* \right)
   \]
   where $W_{\Pi_f}$ is a finite-dimensional $p$-adic representation of $\Gal(\QQbar/\QQ)$. If $\Pi$ is non-endoscopic, then the term corresponding to $\Pi$ is a direct summand of the full cohomology $H^3_{\et}\left(Y_{G,\QQbar}, \sD^{a,b}_{\Qp}\right)$, and the semisimplification of $W_{\Pi_f}$ is isomorphic to $\rho_{\Pi, p}$.
  \end{theorem}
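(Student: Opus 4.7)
The strategy is to identify the interior cohomology with cuspidal automorphic forms, compute the relevant $(\mathfrak{g},K_\infty)$-cohomology using Vogan--Zuckerman theory, then verify the compatibility with the Galois representations of Theorem \ref{thm:taylorweissauer}.

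First, I would pass from étale to Betti cohomology via the comparison theorem, so that $H^3_{\et,!}(Y_{G,\QQbar},\sV^{ab}_{\Qp})\otimes_{\Qp}\CC$ is identified with $H^3_!(Y_G(K)(\CC),\sV^{ab}_\CC)$, with the action of $\Gal(\QQbar/\QQ)$ remembered via the étale story. By a theorem of Borel, the interior cohomology is the image of compactly-supported cohomology and therefore captures precisely the contribution of cuspidal automorphic forms; Franke's theorem then gives a $G(\AA_f)$-equivariant decomposition
\[
 H^3_!(Y_{G,\CC},\sV^{ab}_\CC)\;\cong\;\bigoplus_{\Pi\ \text{cuspidal}} m(\Pi)\cdot \Pi_f \otimes H^3\bigl(\mathfrak{g},K_\infty;\Pi_\infty\otimes V^{ab}_\CC\bigr),
\]
where the sum runs over cuspidal automorphic representations with trivial central character twist matched with $V^{ab}$.

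Second, by Vogan--Zuckerman, the $(\mathfrak{g},K_\infty)$-cohomology $H^3(\mathfrak{g},K_\infty;\Pi_\infty\otimes V^{ab}_\CC)$ is non-zero precisely when $\Pi_\infty$ is a discrete series in the Adams--Johnson $L$-packet attached to the infinitesimal character of $V^{ab}$, i.e.\ when $\Pi_\infty\in\{\Pi^H_{k_1,k_2},\Pi^W_{k_1,k_2}\}$; and in that case the cohomology is two-dimensional (Hodge type $(3,0)+(0,3)$ for $\Pi^H$, and $(2,1)+(1,2)$ for $\Pi^W$). Hence the sum collapses to
\[
 \bigoplus_{\Pi_f\in\Sigma(k_1,k_2)} \Pi_f \otimes \bigl[m(\Pi^H)\cdot H^3(\mathfrak{g},K_\infty;\Pi^H_\infty\otimes V^{ab}_\CC) \oplus m(\Pi^W)\cdot H^3(\mathfrak{g},K_\infty;\Pi^W_\infty\otimes V^{ab}_\CC)\bigr].
\]
Combining with Xu's multiplicity-one result (Theorem \ref{thm:taylorweissauer}(1)) gives a four-dimensional $\CC$-vector space $W_{\Pi_f}$ attached to each $\Pi_f\in\Sigma(k_1,k_2)$ in the non-endoscopic case (and, more generally, an appropriate finite-dimensional space for the endoscopic types, of dimension depending on the relevant $L$-packet multiplicities).

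Third, I would transfer the Galois action: for each $\Pi_f\in\Sigma(k_1,k_2)$, the $\Pi_f$-isotypic component of $H^3_{\et,!}$ is a $G(\AA_f)\times\Gal(\QQbar/\QQ)$-stable direct summand, isomorphic to $\Pi_f\otimes W_{\Pi_f}$ with a natural Galois action on $W_{\Pi_f}$. The Tate twist $[\tfrac{3-w}{2}]$ arises because the natural $G(\AA_f)$-equivariant structure on $\sV^{ab}$ differs from the unitary normalisation of $\Pi_f$ by a power of the cyclotomic character dictated by the central character $x\mapsto x^{2a+b}$ of $V^{ab}$. The identification of the semisimplification of $W_{\Pi_f}$ with $\rho_{\Pi,p}$ in the non-endoscopic case follows from the characterisation of $\rho_{\Pi,p}$ via traces of Frobenius at unramified primes $\ell\notin S\cup\{p\}$: by the Eichler--Shimura congruence relation for Siegel threefolds \cite{laumon05}, the trace of $\Frob_\ell^{-1}$ on $W_{\Pi_f}$ satisfies the same local polynomial identity $\mathcal{P}_\ell$ as the spherical Hecke operators acting on $\Pi_f^{G(\Zl)}$, which coincides by construction with $P_\ell(X)$.

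The main obstacle in this plan is the last step, the identification of the Galois representation on $W_{\Pi_f}$ with $\rho_{\Pi,p}$: this is precisely the content of the deep work of Taylor--Laumon--Weissauer, which we invoke as a black box via Theorem \ref{thm:taylorweissauer}. The key non-formal input that allows one to avoid endoscopic complications is Weissauer's purity theorem together with the rigidity of discrete-series $L$-packets at infinity, which forces the archimedean parameter of $\Pi$ to determine both members $\Pi^H$ and $\Pi^W$ uniquely.
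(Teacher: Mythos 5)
The paper does not actually prove this statement: it is cited to Weissauer's work (the references \cite{weissauer05, weissauer09}), and the paper's role here is to set up notation. Your sketch follows the broadly correct strategy --- compare to Betti cohomology, decompose via Matsushima's formula and $(\mathfrak{g},K_\infty)$-cohomology, then identify the Galois action via Eichler--Shimura --- but the first step contains a genuine gap.

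Your assertion that the interior cohomology ``therefore captures precisely the contribution of cuspidal automorphic forms'' does not follow from what precedes it. The equality $H^3_! = \mathrm{image}(H^3_c \to H^3)$ is a \emph{definition}, not a theorem, and the theorem of Borel gives only the inclusion $H^3_{\mathrm{cusp}} \hookrightarrow H^3_!$. The reverse inclusion --- that for regular coefficients and degree $3$ the interior cohomology of the Siegel threefold contains no residual-spectrum or Eisenstein contribution --- is one of the deepest results in this circle of ideas, proved by Taylor and Weissauer via a detailed classification of the boundary cohomology and of the $L^2$-discrete spectrum of $\GSp_4$; it is (roughly) the content of \cite[Theorem 1.1]{weissauer09}, which the paper cites for precisely this purpose in the proposition immediately following the theorem. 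It cannot be deduced from Borel's theorem together with Franke's filtration. Related to this, your appeal to Vogan--Zuckerman theory to conclude that only the two discrete series $\Pi^H$ and $\Pi^W$ can contribute is incomplete: there are other $A_{\mathfrak q}(\lambda)$-modules with non-vanishing $(\mathfrak{g},K_\infty)$-cohomology, and excluding their contribution to $H^3_!$ again requires the same spectral analysis. If these points are explicitly taken as input from Weissauer rather than derived, then the remainder of your outline --- multiplicity one via Xu, the Tate-twist bookkeeping, and Eichler--Shimura plus Chebotarev plus Brauer--Nesbitt to identify semisimplifications --- is a correct account of the strategy.
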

  
  Here $\Pi_f[r] = \Pi_f \otimes \|\mu(-)\|^r$, as above.
  
  \begin{proof} 
   See \cite[\S 1]{weissauer05}.
  \end{proof}

 \subsection{Arithmetic \'etale cohomology}
 
  We now consider the \'etale cohomology of $Y_G(K)$ as a $\QQ$-variety (not as a $\QQbar$-variety); more precisely, we work with continuous \'etale cohomology in the sense of Jannsen \cite{jannsen88}. Note that this space is not finite-dimensional in general. Nonetheless, there is a Hochschild--Serre spectral sequence associated to the structure map $Y_G(K) \to \Spec \QQ$, for any lisse \'etale $\Zp$-sheaf or $\Qp$-sheaf $\sF$,
  \[ 
   E_2^{rs} = H^r\!\left(\QQ, H^s_\et(Y_G(K)_{\QQbar}, \sF)\right) \Rightarrow H^{r+s}_\et(Y_G(K), \sF),
  \]
  for any integer $n$. Consequently, if $H^4_\et(Y_G(K), \sF)_0 = \ker\left(H^4_\et(Y_G(K), \sF) \to H^4_\et(Y_G(K)_{\QQbar}, \sF)\right)$ (the cohomologically trivial classes), then we have a natural ``Abel--Jacobi'' map
  \[ H^4_\et(Y_G(K), \sF)_0 \to H^1\!\left(\QQ, H^3_\et(Y_G(K)_{\QQbar}, \sF)\right).\]

  These maps are compatible with respect to change of $K$, and therefore assemble into a map of $G(\AA_f)$-representations. More generally, this also applies with $Y_G$ replaced by its base-extension $Y_G \times \Spec \QQ(\zeta_N)$, for any integer $N$.
  
  \begin{definition}
   Suppose $\Pi$ is non-endoscopic. For any $N \ge 1$ and $q \in \ZZ$, we write
   \[ 
    \pr_{\Pi^*}: H^4_{\et}\big(Y_{G, \QQ(\zeta_N)}, \sD^{a,b}_{\Qp}(3-q)\big)_0 \rTo \Pi_f^*[\tfrac{w-3}{2}] \otimes H^1(\QQ(\zeta_N), W_{\Pi_f}^*(-q))
   \]
   for the composite of the Abel--Jacobi map and projection onto the $\Pi_f^*$-isotypical component.
  \end{definition}
 
 \subsection{Ordinarity}
  
  Let us now choose a non-endoscopic $\Pi$ with $\Pi_\infty$ discrete series of weight $(k_1, k_2)$, as above, and let $E$ be a number field as in Theorem \ref{thm:taylorweissauer}(3), so we have polynomials $P_\ell(X) \in E[X]$ for all unramified primes $\ell$.
  
  \begin{proposition}
   Let $p$ be a prime such that $\Pi_p$ is unramified, and write $P_p(X) = 1 + a_1 X + \dots + a_4 X^4$. Then, for any embedding $E \into \Qpbar$, we have the relations
   \begin{align*}
    v_p(a_1) &\ge 0,             & v_p(a_2) &\ge k_2 - 2, \\
    v_p(a_3) &\ge k_1 + k_2 - 3, & v_p(a_4) &= 2k_1 + 2k_2 - 6,
   \end{align*}
   where the valuation is normalised such that $v_p(p) = 1$.
  \end{proposition}
  
  \begin{proof}
   By Theorem \ref{thm:taylorweissauer}(6), we know that the crystalline $G_{\Qp}$-representation $\rho_{\Pi, p} |_{G_{\Qp}}$ has Hodge numbers $\{0, k_2 - 2, k_1 - 1, k_1 + k_2 - 3\}$ and its Frobenius has characteristic polynomial $P_p(X)$. The above relations are now precisely the assertion that the Newton polygon associated to this representation lies on or above its Hodge polygon (with the same endpoints), which is a general property of crystalline representations: see e.g.~\cite[Proposition 5.4.2]{fontaine94b}.
  \end{proof}
 
  \begin{remark}
   These inequalities can also be proved directly (without using $p$-adic Hodge theory), by expressing the coefficients of $P_p(X)$ as Hecke eigenvalues using the formula of Lemma \ref{lemma:Tleigenvalues}, and showing that suitable scalar multiples of the Hecke operators preserve an integral lattice in Betti cohomology.
  \end{remark}
  
  \begin{definition}
   \label{def:goodordinary}
   We say that $\Pi$ is \emph{good ordinary} at $p$, with respect to some choice of embedding $E \into \Qpbar$, if $\Pi$ is unramified at $p$ and the eigenvalue of $T(p)$ acting on $\Pi_p[\tfrac{3-w}{2}]^{G(\Zp)}$ is a $p$-adic unit.
  \end{definition}
  
  Since this $T(p)$-eigenvalue is $a_1$ in the notation of the previous proposition, it follows easily from the theory of Newton polygons that $\Pi$ is good ordinary at $p$ if and only if it is unramified at $p$ and $P_p(X)$ has a factor in $\Qpbar[X]$ of the form $1 - \alpha X$ with $\alpha$ a $p$-adic unit. Moreover, this $\alpha$ is unique if it exists. By Proposition \ref{prop:Uleigenvalues}, $\alpha$ is also an eigenvalue of $U(p)$ acting on the four-dimensional space $\Pi_p[\tfrac{3-w}{2}]^{K_{G_p, 0}(p)}$, or dually of $U'(p)$ acting on $\Pi_p^*[\tfrac{w-3}{2}]^{K_{G_p, 0}(p)}$.
  
  We assume henceforth that $\Pi$ is good ordinary at $p$; and we choose one final piece of data needed to define our Euler system. Having fixed a set $S$ as in \S\ref{sect:localdata} above, we obtain a level group 
  \[ K_{G, 0}(p) = K_S \times \prod_{\ell \nmid pS} G(\Zl) \times K_{G_p, 0}(p).\]
  Enlarging $S$ and shrinking $K_S$ if necessary, we suppose that $\Pi_f$ has non-zero invariants under $K_{G, 0}(p)$.
  
  \begin{definition}
   We choose a vector $v_\alpha \in \Pi_f$ invariant under the subgroup $K_{G, 0}(p)$ and lying in the $U(p) = \alpha$ eigenspace. 
  \end{definition}
 
  This choice gives a linear functional $\Pi_f^* \to \Qpbar$, and hence a homomorphism of Galois representations
  \[ H^3_{\et}\left(Y_{G, \QQbar}, \sD^{a,b}_{\Qp}(3)\right) \to W^*_{\Pi_f},\]
  which we also denote by $v_\alpha$. It seems reasonable to interpret this as a ``modular parametrisation'' of the Galois representation $W^*_{\Pi_f}$. Composing with the map $\pr_{\Pi^*}$ of the previous section, for each $N \ge 1$ and $0 \le q \le a$ we have a map
  \[ \pr_{(\Pi^*, v_\alpha)}: H^4_{\et}\left(Y_{G, \QQ(\zeta_N)}, \sD^{a,b}_{\Qp}(3-q)\right) \to H^1\left(\QQ(\zeta_N), W^*_{\Pi_f}(-q)\right).\]
  Our local hypothesis at $p$ implies that this homomorphism factors through projection to the $U'(p) = \alpha$ eigenspace at level $K_{G, 0}(p)$, and in particular through the ordinary idempotent $e'_{\ord}$.
  
  \begin{remark}
   It will come as no great surprise to the well-informed reader to learn that the ordinarity condition can be relaxed somewhat, to allow sufficiently small positive values of the ``slope'' $v_p(t(p))$, where $t(p)$ is the eigenvalue of $T(p)$ on $\Pi_f[\tfrac{3-w}{2}]$. (Slope $< 1$ is easy; slope $< 1 + a$ may be accessible, using the methods of \cite{loefflerzerbes16}.) However, we stick with $v_p(t(p)) = 0$ in the present paper for simplicity.
  \end{remark}
  
 \subsection{Lemma--Eisenstein classes in Galois cohomology}
 
  We apply the maps of the previous section to the classes $z_{M, m, 1}^{[a,b,q,r]}$, or more precisely to their images in $H^4_{\mot}(Y_{G, 1}(p)_{\QQ(\zeta_{Mp^m})}, \sD_{\QQ}^{a,b}(3-q))$ via the map $\jmath_{Mp^m}$ of \eqref{eq:components}.
    
  \begin{definition}
   For $m \ge 0$, we define a class
   \[ z_{M, m}^{[\Pi, q, r]} \in H^1\left(\QQ(\zeta_{Mp^m}), W_{\Pi_f}^*(-q)\right) \]
   by
   \[ 
    \frac{1}{M} \cdot (\pr_{\Pi^*, v_\alpha} \circ \jmath_{Mp^m}) \left.\begin{cases} 
    \left(\tfrac{p^q}{U'(p)}\right)^m e'_{\ord} & \text{if $m \ge 1$}\\
    \left( 1 - \tfrac{p^q}{U'(p)}\right) e'_{\ord} & \text{if $m = 0$}
   \end{cases}\right\} \cdot
      r_{\et} \left( z_{M, m, 1}^{[a,b,q,r]}\right).
   \]
  \end{definition}
    
  Note that this is well-defined, since the classes concerned are cohomologically trivial by Corollary \ref{cor:cohtrivial}. It follows from Theorem \ref{thm:pnorm}(ii) that the classes $z_{M, m}^{[\Pi, q, r]}$ are compatible under the Galois corestriction maps as $m$ varies. More importantly, they also satisfy a compatibility with respect to $M$:
  
  \begin{proposition}
   \label{prop:ESnorm-nonint}
   Let $\ell \nmid Mp$ be a prime, with $\ell \notin S$. Then we have
   \begin{equation}
    \label{eq:ESnorm} 
    \operatorname{cores}_{\QQ(\zeta_{Mp^m})}^{\QQ(\zeta_{\ell M p^m})}\left(z_{\ell M, m}^{[\Pi, q, r]}\right) = P_\ell(\ell^{-1-q} \sigma_\ell^{-1}) z_{M, m}^{[\Pi, q, r]}
   \end{equation}
   where $\sigma_\ell \in \Gal(\QQ(\zeta_{Mp^m}) / \QQ)$ is the arithmetic Frobenius at $\ell$.
  \end{proposition}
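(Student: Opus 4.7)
The plan is to reduce the global norm relation \eqref{eq:ESnorm} to the purely local identity of Corollary \ref{cor:tamenormfinal}, exploiting the uniqueness of $H(\Ql)$-invariant bilinear forms guaranteed by Theorem \ref{thm:KMS}. First, I would translate the corestriction into a geometric operation: via the isomorphism $Y_G(K_{M,m,n}) \cong Y_G(K_n) \times_{\QQ} \Spec \QQ(\zeta_{Mp^m})$ from \S\ref{sect:components}, the Galois corestriction from $\QQ(\zeta_{\ell Mp^m})$ to $\QQ(\zeta_{Mp^m})$ corresponds to pushforward along the degeneracy map $\pi\colon Y_G(K_{\ell M,m,n}) \to Y_G(K_{M,m,n})$. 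Unwinding the definition of $z^{[a,b,q,r]}_{M,m,n}$, both sides of \eqref{eq:ESnorm} arise as images under $v_\alpha \circ \pr_{\Pi^*} \circ r_{\et} \circ \LE^{[a,b,q,r]}_{\mot}$ of test data $(\uphi, \xi)$ which agree at every prime $v \ne \ell$ and differ only at $\ell$: for the $M$-class the relevant $\ell$-component is $(\uphi_0, \ch(G(\Zl)))$, while the pushforward of the $\ell M$-class uses $(\uphi_{1, 2}, \ch(K_{\ell, 1}) - \ch(\eta_{\ell, 1} K_{\ell, 1}))$.

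Next, I would invoke Frobenius reciprocity (Proposition \ref{prop:frobrecip}) to convert this composition into an $H(\Ql)$-equivariant bilinear form on $\tau_\ell \otimes \Pi_\ell$, where $\tau_\ell = I_H(\uchi, \upsi)$ is the local $\ell$-component of the principal series of $H$ into which the Siegel sections land; here $\uchi, \upsi$ are unramified characters determined by the weight parameters $(a,b,q,r)$, the central character of $\Pi$, and the cyclotomic twist, matching the setup of \S\ref{sect:GGP}. The hypothesis $\ell \notin S$ together with the non-endoscopy of $\Pi$ ensures that $\Pi_\ell$ is an irreducible unramified principal series (Theorem \ref{thm:taylorweissauer}) and verifies the hypotheses of Theorem \ref{thm:KMS}; moreover, the multiplicity-one statement in the cuspidal cohomology together with the vanishing of the $\Pi_f$-component in $H^i(Y_G(K)_{\QQbar}, \sV^{ab}_{\Qp})$ for $i \ne 3$ means that our composition factors cleanly through such a local pairing. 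Theorem \ref{thm:KMS} then asserts that any two such bilinear forms agree up to scalar, so the two sides of \eqref{eq:ESnorm} must be proportional.

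Finally, the explicit scalar is supplied by Corollary \ref{cor:tamenormfinal}, applied with $\sigma$ equal to $\Pi_\ell^*$ appropriately twisted and after using Lemma \ref{lem:indept} to replace the fixed Schwartz function $\uphi_{1, 2}$ by the limiting value $\uphi_{1, \infty}$; the corollary yields the local Euler factor $L(\sigma, -\tfrac12)^{-1}$, up to a nonzero volume constant. Matching this with $P(\ell^{-1-q}\sigma_\ell^{-1})$ requires unwinding the Tate twists, the equivariant shift $[-q]$, the factor $\tfrac{1}{M}(\tfrac{p^q\sigma_p}{\alpha})^m$ built into the definition of $z^{[\Pi,q,r]}_{\et,Mp^m,\alpha}$, and the translation between the action of $\sigma_\ell$ on Galois cohomology and right-translation by elements of $G(\AA_f)$ of appropriate multiplier at $\ell$. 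The main technical obstacle lies precisely in this bookkeeping of normalisations and twists; the conceptual input from the Gan--Gross--Prasad conjecture, by contrast, is packaged neatly into Theorem \ref{thm:KMS}.
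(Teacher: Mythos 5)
Your proposal follows essentially the same route as the paper: reduce to a comparison of two pure tensors in $\cS(\AA_f^2)^{\otimes 2}\otimes\cH(G(\AA_f))$ differing only at $\ell$, translate the resulting comparison via Frobenius reciprocity into an $H(\Ql)$-invariant bilinear form, and then invoke the uniqueness of Theorem \ref{thm:KMS} together with the explicit computation of Corollary \ref{cor:tamenormfinal} to produce the Euler factor. Your appeal to Lemma \ref{lem:indept} to pass from $\uphi_{1,t}$ to $\uphi_{1,\infty}$ is also correct and is implicitly used in the paper via the statement of Corollary \ref{cor:tamenormfinal}.

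However, there is one genuine gap. Your reduction to a pairing on $\tau_\ell\otimes\Pi_\ell$, with $\tau_\ell=I_H(\uchi,\upsi)$ irreducible, presupposes that the Eisenstein-symbol / Siegel-section map factors cleanly through the principal series. This works only when the weights $(c,d)=(a+b-q-r,\ a-q+r)$ are both $\ge 1$, so that the divisor map $\partial$ is injective on the image of the Eisenstein symbol. If $c=0$ or $d=0$, the map $\uphi\mapsto g_\phi$ has a kernel (modular units constant along one factor of $Y_H$), and the factorisation through a sum of irreducible principal series no longer holds a priori; one must also account for contributions of non-generic subquotients. The paper disposes of these via Lemma \ref{lem:properquot}, which shows that for essentially tempered $\sigma$ the relevant $\Hom$-spaces from non-generic $H$-representations vanish. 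Without this, your argument does not close in the boundary weights. A second, more cosmetic issue: it is not quite right to say ``the two sides of \eqref{eq:ESnorm} must be proportional'' by KMS uniqueness --- both sides are values of the same form $\mathfrak{z}$ at different test vectors, and what uniqueness buys you is the ability to compute that form against the explicit Bessel-model basis, so that the conclusion of Corollary \ref{cor:tamenormfinal} can be transported to the unknown $\mathfrak{z}$. You also describe the $\ell$-component of the $\ell M$-class as $(\uphi_{1,2},\ \ch(K_{\ell,1})-\ch(\eta_{\ell,1}K_{\ell,1}))$; the data that actually enters the comparison is the image of this under corestriction, with Hecke component $\ch(G(\Zl))-\ch(\eta_{\ell,1}G(\Zl))$ and the Schwartz function replaced by its limiting value, which is worth making explicit to avoid confusion in the normalisation bookkeeping you rightly flag as delicate. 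Finally, for $M>1$ the target representation appears twisted by Dirichlet characters modulo $M$, so the argument must be run for each such twist; this is a small omission but should be stated.
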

  
  \begin{proof}
   This will (eventually) turn out to be a mildly disguised form of Corollary \ref{cor:tamenormfinal}. As in the proofs of the ``$p$-direction'' norm relations, we are comparing the images of two elements of $\cS(\AA_f^2)^{\otimes 2} \otimes \cH(G(\AA_f))$ which are pure tensors having the same local components at all primes away from $\ell$, and at $\ell$ are
   \[ 
    \tfrac{\ell-1}{\ell}\cdot \uphi_{1, 1} \otimes \ch(\eta_{\ell, 1} G(\Zl))\quad\text{and}\quad \uphi_0 \otimes \ch(G(\Zl)).
   \]
   (The factor $(\ell - 1)$ arises by comparing the volumes of the subgroups $W$, and the $\ell$ from the $1/M$ in the definition of $z_{M, m}^{[\Pi, q, r]}$.)
   
   Moreover, the map which we are applying to these elements factors through the Eisenstein symbol map $\Eis^{(c, d)}$, where $(c, d) = (a+q +b-r, a-q+r)$. We first give the proof assuming $c, d \ge 1$. In this case the divisor map $\partial$ is injective on the image of the Eisenstein symbol, so the local version of our map factors through the map 
   \[ \cS(\Ql^2)^{\otimes 2} \rTo_{\uphi \mapsto F_{\uphi}} \bigoplus I(\underline{\eta}), \]
   where the sum is over some set of pairs $\eta = (\eta_1, \eta_2)$ of finite-order characters of $\Ql^\times$, and $I(\underline{\eta})$ is an irreducible principal series representation of $H(\Ql)$. So it suffices to check that for any $(G\times H)(\Ql)$-equivariant homomorphism $I(\eta) \otimes \cH(G(\Ql)) \to W$, where $W$ is an irreducible principal series representation of $G(\Ql)$, the images of $F_{\uphi_{1, 1}} \otimes \ch(\eta_{\ell, 1} G(\Zl))$ and $F_{\uphi_0} \otimes \ch(G(\Zl))$ in $W^{G(\Zl)}$ are related via an Euler factor. Corollary \ref{cor:tamenormfinal} gives a relation of exactly this form, with $\tfrac{\ell}{\ell-1} L(W^\vee, -\tfrac12)^{-1}$ as the correction factor. If $M = 1$ and $m = 0$, applying this with $W = \Pi_\ell^*[\tfrac{w-3}{2}-q]$ gives the result, noting that $L(W^\vee, -\tfrac12)^{-1} = L(\Pi_\ell, 1 + q - \tfrac{w}{2})^{-1} = P_\ell(\ell^{-1-q})$. 
   
   To obtain the result in general, we apply this to each of the twists of $W$ by Dirichlet characters modulo $Mp^m$; the twist of course modifies the Euler factor $P_\ell$, which corresponds to the appearance of $\sigma_\ell$ in the statement of the theorem.
   
   If either $c$ or $d$ (or both) is zero, then the divisor map has a kernel (consisting of modular units which are constant along one of the factors of $Y_H$). However, the kernel of this map is a sum of non-generic representations of $H(\Ql)$, and Lemma \ref{lem:properquot} shows that if $\tau_\ell$ is any of these representations, then every $G \times H$-equivariant map $\tau_\ell \otimes \cH(G(\Ql)) \to W$ is zero. 
  \end{proof}
  
  This Euler system norm relation is not terribly useful on its own -- we need to combine it with some uniform control over the denominators of these classes. Let $T_{\Pi_f}^*$ denote the lattice in $W_{\Pi_f}^*$ generated by the image of $H^3_{\et}(Y_{G, 1}(p)_{\QQbar}, \sD^{a,b}_{\Zp})$ under $v_\alpha$. We choose a pair $(c_1, c_2)$ of integers $> 1$ as before, and we impose the additional assumption that the elements $\operatorname{diag}(1,c_1,1,c_1)$ and $\operatorname{diag}(c_2, 1, c_2, 1) \in G(\AA_f)$ act trivially on the vector $v_\alpha$ (which can be arranged, as usual, by assuming that the $c_i$ are sufficiently close to 1 modulo the primes in $S$).

  \begin{definition}
   For $M$ coprime to $c_1 c_2$, and $m \ge 0$, let us define
    \[ {}_{c_1, c_2} z_{M, m}^{[\Pi, q, r]} \in H^1\left(\QQ(\zeta_{Mp^m}), T_{\Pi_f}^*(-q)\right) \]
    by
    \[ 
    \frac{1}{M} \cdot (\pr_{\Pi^*, v_\alpha} \circ \jmath_{Mp^m}) \left.\begin{cases} 
    U'(p)^{-m} & \text{if $m \ge 1$}\\
    \left( 1 - \tfrac{p^q}{U'(p)}\right) & \text{if $m = 0$}
    \end{cases}\right\} \cdot
    e'_{\ord} \left({}_{c_1, c_2} z_{M, m, 1}^{[a,b,q,r]}\right).
    \]
   \end{definition}
   
   From Proposition \ref{prop:integralclass} (and the interaction between $\jmath_{Mp^m}$ and the $G(\AA_f)$ action described in \S\ref{sect:components}), we see that the image of ${}_{c_1, c_2} z_{M, m}^{[\Pi, q, r]}$ after inverting $p$ is
   \[  (c_1^2 - c_1^{-(a-q+b-r)}\sigma_{c_1}) (c_2^2 - c_2^{-(a-q+r)} \sigma_{c_2}) \cdot z_{M, m}^{[\Pi, q, r]}, \]
   where $\sigma_{c_i} \in \Gal(\QQ(\zeta_{Mp^m}) / \QQ)$ maps to $c_i \bmod Mp^m$ under the mod $Mp^m$ cyclotomic character.
  
  \begin{theorem}
   \label{thm:Iwnormrel}
   Suppose $M$ is coprime to $c_1 c_2$. Then:
   \begin{enumerate}
    \item[(a)] For each integer $r$ with $0 \le r \le b$, there is a class
    \[  {}_{c_1, c_2} z_{\Iw, M}^{[\Pi, r]} \in H^1_{\Iw}\left(\QQ(\zeta_{Mp^\infty}), T_{\Pi_f}^*\right) \]
    uniquely determined by the following property: for each $m \ge 0$ and $q \in \{0, \dots, a\}$, the image of $(-2)^q {}_{c_1, c_2} z_{\Iw, M}^{[\Pi, r]}$ in $H^1\left(\QQ(\zeta_{Mp^m}), T_{\Pi_f}^*(-q)\right)$ is ${}_{c_1, c_2} z_{M, m}^{[\Pi, q, r]}$.
    
    \item[(b)] If $\ell \nmid Mpc_1 c_2$ and $\ell \notin S$, then we have the norm relation
    \[ 
     \operatorname{cores}^{\QQ(\zeta_{\ell M p^\infty})}_{\QQ(\zeta_{M p^{\infty}})} \left({}_{c_1, c_2} z_{\Iw, \ell M}^{[\Pi, r]}\right) = P_\ell(\ell^{-1} \sigma_\ell^{-1}) \cdot {}_{c_1, c_2} z_{\Iw, M}^{[\Pi, r]}.
    \]
   \end{enumerate}
  \end{theorem}
  
  \begin{proof}
   The image of the class ${}_{c_1, c_2} z_{\Iw, M}$ under the maps $\mom^{[a,b,0,r]}_{G, m, 1}$, for $m \ge 0$, define a class in $H^4_{\Iw}\left(Y_G(M, p^\infty, p)_{\Sigma}, \sD^{a,b}_{\Zp}(3)\right)$ where $\Sigma$ is the set of primes dividing $MpS$. This class is cohomologically trivial by Corollary \ref{cor:cohtrivial}. By Theorem \ref{thm:hard-interpolation-II}, the image of this class under tensor product with $\zeta^{-q}$ and projection to level $m$ agrees, up to an appropriate correction factor, with the \'etale class ${}_{c_1, c_2} z^{[a,b,q,r]}_{\et, M, m, 1}$. Applying the maps $\pr_{(\Pi^*, v_\alpha)} \circ \jmath_{Mp^m}$ and dividing by $M$ gives an Iwasawa cohomology class with the required properties.
   
   Let us now prove (b). We claim that, for any given $q$, the map
   \[ H^1_{\Iw}(\QQ(\zeta_{Mp^m}), T_{\Pi_f}^*) \to \varprojlim_m H^1(\QQ(\zeta_{Mp^m}), W_{\Pi_f}^*(-q))\]
   is injective (i.e.~there is no non-zero Iwasawa cohomology class whose image is at every finite level is $p$-torsion). This is the map denoted by $\lambda_q$ in \cite[Appendix A]{LLZ}. By Proposition A.2.6 of \emph{op.cit.}, its kernel is contained in the $\Lambda(\Gamma)$-torsion submodule of the Iwasawa cohomology; and this torsion submodule is in fact zero, by Theorem \ref{thm:taylorweissauer}(5). With this injectivity in hand, the norm relation (b) follows from the norm relation of Proposition \ref{prop:ESnorm-nonint} for the non-integral classes at finite level.
  \end{proof}
  
  We conclude this section with a local property of these Galois cohomology classes.
  
  \begin{proposition}
   \label{prop:eltisSelmer}
   For each prime $\lambda \nmid p$ of $\QQ(\zeta_{Mp^m})$, the image of ${}_{c_1, c_2} z_{M, m}^{[\Pi, q, r]}$ in $H^1\!\left(I_\lambda, T_{\Pi_f}^*(-q)\right)$ is zero, where $I_\lambda \subset \Gal(\QQbar / \QQ(\zeta_{Mp^m}))$ is an inertia group at $\lambda$. For the primes above $p$, the localisation lies in the Bloch--Kato crystalline subspace $H^1_\f\!\left(\QQ(\zeta_{Mp^m})_{\lambda},  T_{\Pi_f}^*(-q)\right)$. 
  \end{proposition}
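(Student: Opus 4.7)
The class ${}_{c_1, c_2} z^{[\Pi, q, r]}_{\et, Mp^m, \alpha}$ arises from the \'etale realization of a motivic cohomology class on $Y_G(K') \times \Spec \QQ(\zeta_{Mp^m})$, via the Hochschild--Serre edge map followed by composition with $\pr_{\Pi^*}$, $v_\alpha$, and the ordinary normalization factor $(\sigma_p/\alpha)^m$. My plan is to trace through this chain at each $\lambda$ separately, exploiting smoothness of the Siegel threefold for $\lambda \nmid p$ and $p$-adic Hodge theory for $\lambda \mid p$.

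\textbf{Primes $\lambda \nmid p$.} Let $\ell$ be the residue characteristic of $\lambda$. If $\ell \notin S$, then $K'$ is hyperspecial at $\ell$ and $Y_G(K_n)$ admits a smooth integral model $\mathcal{Y}$ over $\Spec \ZZ_{(\ell)}$ by the moduli-theoretic description; the embedding $\iota: Y_H \into Y_G$ and Ancona's functor both extend integrally, and since $\ell \ne p$, the relative Chow motive $\sD^{a,b}_{\Zp}$ corresponds to a lisse $\Zp$-sheaf on $\mathcal{Y}$. Consequently $r_\et(z^{[a,b,q,r]}_{M,m,n})$ lifts to a class on $\mathcal{Y} \otimes \cO_{\QQ(\zeta_{Mp^m})_\lambda}$, and smooth base change applied to Jannsen's continuous \'etale cohomology formalism forces the Hochschild--Serre edge map to land in the unramified subspace. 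The case $\ell \in S$ is more delicate; I would pass to the Iwasawa class ${}_{c_1, c_2} z^{[\Pi, r]}_{\Iw, M, \alpha} \in H^1_\Iw(\QQ(\zeta_{Mp^\infty}), T_{\Pi_f}^*)$. Since $\QQ(\zeta_{Mp^\infty})/\QQ(\zeta_M)$ is pro-$p$, it is locally unramified at every $\lambda$ of residue characteristic $\ne p$; by Shapiro's lemma, the ramified part of $H^1$ at each finite level is controlled by the $\Lambda$-torsion of $H^1_\Iw$, which vanishes by Theorem~\ref{thm:taylorweissauer}(5), exactly as in the argument ending the proof of Proposition~\ref{prop:ESnorm-nonint}.

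\textbf{Primes $\lambda \mid p$.} Here I would first invoke the fact, due to Bloch--Kato and Nekov\'a\v{r}, that for any smooth variety $X$ over $\Qp$ the Hochschild--Serre edge map
\[ H^{q+1}_{\et}\bigl(X_{\Qp}, \mathcal{F}\bigr) \rTo H^1\bigl(\Qp, H^{q}_{\et}(X_{\overline{\Qp}}, \mathcal{F})\bigr) \]
lands in the de Rham (``geometric'') subspace $H^1_{g}$. Thus the localization of our class at $\lambda$ automatically lies in $H^1_g(\QQ(\zeta_{Mp^m})_\lambda, T_{\Pi_f}^*(-q))$. Refining from $H^1_g$ to $H^1_{\mathrm{f}}$ (crystalline) is where the ordinary projection enters: choosing $v_\alpha$ selects the unique $p$-adic unit $U'(p)$-eigenvalue $\alpha$, and the factor $(\sigma_p/\alpha)^m$ (or the Euler factor $1 - p^q \sigma_p/\alpha$ when $m=0$) projects onto the slope-zero part of the local Galois representation. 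Combined with the crystallinity of $\rho_{\Pi,p}$ (Theorem~\ref{thm:taylorweissauer}(6)) and an argument parallel to the Rankin--Selberg treatment in \cite[\S 8]{KLZ1b}, this identifies the image with an element of $H^1_{\mathrm{f}}$.

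The main technical obstacle is this last step from $H^1_g$ to $H^1_{\mathrm{f}}$: although $p$-adic Hodge theory yields de Rham-ness quickly, the refinement to the crystalline subspace depends on a careful comparison between the parahoric-level \'etale cohomology of $Y_G$ at $p$ (where the Siegel variety has at best semistable reduction) and the ordinary slope filtration on $W_{\Pi_f}^*$. The expected route is to reinterpret the ordinary projector via the canonical filtration on the associated $(\varphi, \Gamma)$-module and observe, in the style of Coleman, that the $(U'(p)/\alpha)$-normalized class coincides with one obtained by ``restriction'' from a class at hyperspecial level at $p$, for which $H^1_{\mathrm{f}}$-membership is automatic.
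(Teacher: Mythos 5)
Your argument at the primes $\lambda \nmid p$ is correct. The Iwasawa-theoretic route you suggest for $\ell \in S$ — passing to ${}_{c_1, c_2} z^{[\Pi, r]}_{\Iw, M, \alpha}$, observing that the $p$-cyclotomic tower is unramified at $\lambda$, and using the $\Lambda$-torsion-freeness of $H^1_\Iw$ (Theorem \ref{thm:taylorweissauer}(5)) — is in fact the standard ``universal norms are unramified outside $p$'' principle, and it applies uniformly to every $\ell \ne p$; there is no need for the separate smooth-base-change case at $\ell \notin S$, although that argument also works.

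The gap is in your treatment of the primes above $p$. You correctly identify that the Nekov\'a\v{r}--Nizio\l{} comparison places the localisation in $H^1_{\mathrm{g}}$, but the refinement from $H^1_{\mathrm{g}}$ to $H^1_{\mathrm{f}}$ that you propose — projecting onto the slope-zero part via $v_\alpha$ and the ordinary idempotent, and comparing the parahoric-level integral \'etale cohomology with the slope filtration on $W_{\Pi_f}^*$ — is both unnecessary and, as you yourself flag, not something you know how to carry out. The paper sidesteps all of this with a purity argument. The quotient $H^1_{\mathrm{g}}/H^1_{\mathrm{f}}$ vanishes precisely when $p^{-1}$ is not an eigenvalue of crystalline Frobenius on $\DD_{\mathrm{cris}}\big(W_{\Pi_f}^*(-q)\big)$. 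Those eigenvalues are $p^q\alpha_i^{-1}$, where the $\alpha_i$ are Weil numbers of weight $w = k_1 + k_2 - 3 \ge 3$. Since $0 \le q \le a$ and $w = 2a + b + 3$, each $p^q\alpha_i^{-1}$ has complex absolute value $p^{(2q-w)/2}$ with $2q - w \le -(b+3) \le -3$, so it cannot equal $p^{-1}$, which has weight $-2$. Hence $H^1_{\mathrm{g}} = H^1_{\mathrm{f}}$ for this representation \emph{automatically}, independent of the ordinary structure and of the choice of $v_\alpha$; the ordinary filtration is used later in \S\ref{sect:selmer}, but it plays no role in this proposition.
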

  
  \begin{proof}
   It is a standard result that Galois cohomology classes which are universal norms in the $p$-cyclotomic extension are always unramified outside the primes above $p$ (see e.g.~\cite[Corollary B.3.5]{rubin00}). The fact that our classes satisfy the Bloch--Kato condition at $p$ is deeper. The comparison between \'etale cohomology and the syntomic cohomology of Nekov\'a\v{r}--Nizio\l{} \cite{nekovarniziol} shows that the localisations lie in the possibly larger Bloch--Kato space $H^1_{\mathrm{g}} \supseteq H^1_\f$. It suffices to check that $p^{-1}$ is not an eigenvalue of crystalline Frobenius on $\DD_{\mathrm{cris}}(W_{\Pi_f}^*(-q))$, since this implies that the $H^1_\f$ and $H^1_{\mathrm{g}}$ spaces coincide. However, the eigenvalues of Frobenius on this space are exactly the quantities $p^q \alpha_i^{-1}$, where $L(\Pi_p, s-\tfrac{w}{2}) = \prod_{i=1}^4 (1 - \alpha_i p^{-s})^{-1}$. Since the $\alpha_i$ are Weil numbers of weight $w$, the $p^q \alpha_i^{-1}$ have weight $2q-w \le -3$; so none of these quantities may be equal to $p^{-1}$, and $H^1_\f$ and $H^1_{\mathrm{g}}$ coincide for this representation.    
  \end{proof}
  
  Equivalently, we have shown that 
  \[ {}_{c_1, c_2} z_{M, m}^{[\Pi, q, r]} \in H^1_\f\!\left( \QQ(\zeta_{Mp^m}), T_{\Pi_f}^*(-q)\right), \]
  where the right-hand side is the global Bloch--Kato Selmer group.
 
 \subsection{The Euler-system map}
 
  In this short section we give a slicker reinterpretation of the above results. Let $L$ be a finite extension of $\Qp$ with ring of integers $\cO$; and $\Sigma$ a finite set of primes including $p$. We let $\mathcal{R}$ denote the set of square-free products of primes not in $\Sigma$. If $T$ is a finite free $\cO$-module with a continuous action of $\Gal(\QQ^\Sigma / \QQ)$, and $\ell \notin \Sigma$, we let $P_\ell(T; X) = \det(1 - X \Frob_\ell^{-1} : T)$
  
  \begin{definition}
   For $(T, \Sigma)$ as above, we define $\operatorname{ES}(T, \Sigma)$ to be the set of families of cohomology classes $\left( c_M\right)_{M \in \mathcal{R}}$, with $c_M \in H^1_{\Iw}(\QQ(\zeta_{Mp^\infty}), T)$, satisfying
   \[ \operatorname{cores}^{\QQ(\zeta_{\ell M p^\infty})}_{\QQ(\zeta_{Mp^\infty})} \left(c_{M\ell}\right) = P_\ell(T^*(1); \sigma_\ell^{-1}) c_M\]
   for $\ell$ prime with $\ell \nmid M$, $\ell \notin \Sigma$. We refer to such families as \emph{Euler systems for $(T, \Sigma)$}.
  \end{definition}
  
  (In the notation of \cite[Definition 2.1.1]{rubin00}, these are Euler systems for $(T, \mathcal{K}, \mathcal{N})$ where $\mathcal{K}$ is the compositum of the $\QQ( \zeta_{Mp^\infty})$ for $M \in \mathcal{R}$, and $\mathcal{N} = \prod_{\ell \in \Sigma} \ell$.) 
  
  Theorem \ref{thm:Iwnormrel} shows that the classes $\left({}_{c_1, c_2} z_{\Iw, M}^{[\Pi, r]}\right)_{M \in \mathcal{R}}$ are an Euler system for $T = T_{\Pi_f}^*$, with $\Sigma$ the set of primes not dividing $p c_1 c_2 S$. This Euler system, of course, depends on choices of local data at the primes in $S$, namely the Schwartz function $\uphi_S$ and the group $W_S = \prod_{\ell \in S} W_\ell \subset H(\QQ_S)$. The goal of this section is to make this dependence precise. 
  
  Let $K^{S,p} = \prod_{\ell \nmid pS} G(\Zl) \times K_{G_p,0}(p)$. Then our modular parametrisation $v_\alpha$ is an element of the space
  \[ \sigma_S \coloneqq \Pi_f[\tfrac{3-w}{2}]^{(K^{S,p}, U(p) = \alpha)},\]
  which is an irreducible representation of $G(\QQ_S)$, isomorphic to $\Pi_S[\tfrac{3-w}{2}]$. The choice of subgroups $K_S$ and $W_S$ at the bad primes only affects the construction of the Euler system through the volume factor $\vol(W)$, so we have in fact defined a canonical bilinear map
  \[ 
   \LE^{[\Pi, r]}_{\mathrm{ES}, S}: \cS(\QQ_S^2, L)^{\otimes 2} \otimes \sigma_S \rTo \operatorname{ES}\left(T_{\Pi_f}^*,\Sigma \right) \otimes_{\cO} L,
  \]
  mapping $\uphi_S \otimes v_{\alpha}$ to $\vol(W_S) \cdot \left({}_{c_1, c_2} z_{\Iw, M}^{[\Pi, r]}\right)_{M \in \mathcal{R}}$. 
  
  \begin{remark}
   The map $\LE^{[\Pi, r]}_{\mathrm{ES},S}$ does still depend on $(c_1, c_2)$. In fact, we assumed above that the $c_i$ were close to 1 locally above $S$ -- where the meaning of ``close'' depended on the local data chosen -- but this is not needed in order to define the classes ${}_{c_1, c_2} z_{\Iw, M}^{[\Pi, r]}$ or to prove their norm relations, only to state simply their relation to the non-integral classes.
  \end{remark}
    
  \begin{proposition}
   This map satisfies the equivariance property
   \[ 
    \LE^{[\Pi, r]}_{\mathrm{ES},S}\Big(h\uphi, hv\Big) = \operatorname{Art}(\det h)^{-1} \cdot \LE^{[\Pi, r]}_{\mathrm{ES},S}(\uphi, v),
   \]
   for all $h \in H(\QQ_S)$, where we let $\Gal(\QQ^{\mathrm{ab}} / \QQ)$ act on $\operatorname{ES}(T_{\Pi_f}^*, \Sigma)$ via its natural map to $\Gal(\QQ(\zeta_{Mp^\infty}) / \QQ)$ for all $M \in \mathcal{R}$.
  \end{proposition}
 
  \begin{proof} This follows from the $H(\AA_f) \times G(\AA_f)$-equivariance of the Lemma--Eisenstein map. \end{proof}
  
  \begin{remark}
   \label{remark:localdata}
   Similarly, mapping $\uphi_S \otimes v_\alpha$ to $ \vol(W_S) \cdot z_{1, 0}^{[\Pi, q, r]}$ defines an $H(\QQ_S)$-invariant bilinear form on $\cS(\QQ_S^2, L)^{\otimes 2} \otimes \sigma_S[q]$. If we fix characters $\nu_1, \nu_2$ of $\ZZ_S^\times$ and restrict to Schwartz functions on which the centre of $\GL_2(\ZZ_S) \times \GL_2(\ZZ_S)$ acts via $\nu_1 \times \nu_2$, then this bilinear form is forced to factor through $\tau_S \otimes \sigma_S[q]$ for some irreducible principal series representation $\tau_S$ of $H(\QQ_S)$. Of course, the restriction is zero unless $(\nu_1 \nu_2)^{-1}$ coincides with the restriction to $\ZZ_S^\times$ of the central character of $\sigma_S$.
   
   We expect that $\dim \Hom_{H(\QQ_S)}(\tau_S \otimes \sigma_S[q], L)$ should have $L$-dimension $\le 1$. This follows from the Gan--Gross--Prasad conjecture for $\SO_4 \times \SO_5$ if $\nu_1 \nu_2$ is a square in the group of characters of $\ZZ_S^\times$, and should probably be true more generally, but we do not know a reference; let us assume this for the duration of this remark.
   
   If this dimension is 0, then the cohomology class $z_{1, 0}^{[\Pi, q, r]}$ is zero for every choice of the local data in the $(\nu_1, \nu_2)$ eigenspace. If this dimension is $1$ -- which is the case if $\Pi_\ell$ is generic for all $\ell \in S$ -- then the choice of local data only affects this class up to a scaling factor, which is essentially the local zeta integral of Piatetski-Shapiro appearing in \cite[\S 5.2]{lemma17}. We expect, but cannot prove, that if there exists a prime $\ell$ such that $\Pi_\ell$ is not generic, then the classes $z_{M, m}^{[\Pi, q, r]}$ are zero for all choices of the local data and the parameters $(q,r,M,m)$.
  \end{remark}
  
\section{Selmer groups and p-adic L-functions}
\label{sect:selmer}
 
 We conclude by showing that, if the above Euler system is non-zero, it gives bounds on Selmer groups.
 
 \subsection{Assumptions on $\Pi$}
 
  In this section, $\Pi$ will denote a non-endoscopic automorphic representation of $\GSp_4$, discrete-series at $\infty$ of some weight $(k_1, k_2)$, as before. We suppose that $p \ne 2$, and fix an embedding $E \into \Qpbar$, where $E$ is a number field as in Theorem \ref{thm:taylorweissauer}(3). We let $L$ be a sufficiently large finite extension of $\Qp$, with ring of integers $\cO_L$ and residue field $k_L$, such that $L$ contains the image of $E$ and $W_{\Pi_f}^*$ is definable over $L$ as a quotient of the cohomology of $Y_{G, \QQbar}$. 
  
  We also impose the following extra hypotheses:
  
  \begin{assumption}[``no exceptional zero'']
   None of the roots of the polynomial $P_p(X)$ are of the form $p^n \zeta$, with $n \in \ZZ$ and $\zeta$ a root of unity.
  \end{assumption}
  
  \begin{assumption}[``big image''] \
   \begin{enumerate}
    \item[(i)] The representation $T_{\Pi_f}^* \otimes k_L$ is irreducible as a $k_L[\Gal(\QQbar / \QQ(\zeta_{p^\infty})]$-module.
    \item[(ii)] There exists $\tau \in \Gal(\QQbar / \QQ(\zeta_{p^\infty}))$ such that $T_{\Pi_f}^* / (\tau - 1) T_{\Pi_f}^*$ is free of rank 1 over $\cO$.
   \end{enumerate}
   (This is precisely $\operatorname{Hyp}(K_\infty, T)$ of \cite{rubin00}.)
  \end{assumption}

  \begin{assumption}[``Siegel ordinarity'']
   $\Pi$ is good ordinary at $p$ in the sense of Definition \ref{def:goodordinary}.
  \end{assumption}
  \begin{remark}
   Note that the ``big image'' assumption is clearly satisfied if the image of $\Gal(\QQbar / \QQ)$ in $\operatorname{Aut} W_{\Pi_f}^*$ contains a conjugate of $\Sp_4(\Zp)$. This is expected to hold for all but finitely many $p$ if $\Pi$ is ``sufficiently general'' (i.e.~not a functorial lift from a proper subgroup of $\GSp_4$). However, the big image assumption is also satisfied in certain other cases, such as twisted Yoshida lifts of suitable Hilbert modular forms.
  \end{remark}
  
 \subsection{Ordinary submodules at $p$}  Recall that the ordinarity property implies that there is a unique reciprocal root $\alpha$ of $P_p(X)$ which is a $p$-adic unit.

  \begin{proposition}[{Urban; \cite[Corollaire 1(i)]{urban05}}]
   The representation $W_{\Pi_f}|_{G_{\Qp}}$ has a one-dimensional unramified subrepresentation on which geometric Frobenius acts as $\alpha$.
  \end{proposition}

  Equivalently, $W_{\Pi_f}^*$ has a decreasing filtration by $G_{\Qp}$-stable subspaces,
  \[ W_{\Pi_f}^* \supsetneq \sF^1 W_{\Pi_f}^*\supsetneq \sF^3 W_{\Pi_f}^* \supsetneq  \{ 0\},  \]
  with $\sF^i$ having codimension $i$, and the quotient $W_{\Pi_f}^* / \sF^1$ is unramified. We let $\sF^i T_{\Pi_f}^*$ be the intersection of $T_{\Pi_f}^*$ with $\sF^i W_{\Pi_f}^*$.

  \begin{proposition}
   For any $0 \le r \le b$, and any $M \in \mathcal{R}$, the image of the element ${}_{c_1, c_2} z_{\Iw, M}^{[\Pi, r]}$ in $H^1_{\Iw}(\Qp(\zeta_{Mp^\infty}), T_{\Pi_f}^* / \sF^1 )$ is zero.
  \end{proposition}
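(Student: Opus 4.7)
The plan is to localize at primes above $p$ and reduce to a computation in local Bloch--Kato Selmer groups for the unramified quotient.

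First, by Proposition \ref{prop:eltisSelmer} applied with $q = 0$, each finite-level class ${}_{c_1,c_2}z^{[\Pi, 0, r]}_{\et, Mp^m, \alpha}$ lies in $H^1_f(\QQ(\zeta_{Mp^m}), T_{\Pi_f}^*)$; in particular its localization at any prime $\mathfrak{p}$ above $p$ lies in the local Bloch--Kato Selmer subspace. Since the projection $T_{\Pi_f}^* \twoheadrightarrow T_{\Pi_f}^*/\sF^1$ is a morphism of $p$-adic $\Gal(\QQbar/\QQ)$-representations, functoriality of the $H^1_f$ condition implies that the localized projection lies in $H^1_f\left(\QQ(\zeta_{Mp^m})_\mathfrak{p}, T_{\Pi_f}^*/\sF^1\right)$.

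The next step is to compute this target. The module $T_{\Pi_f}^*/\sF^1$ is unramified of $\cO$-rank one with geometric Frobenius acting by $\alpha^{-1}$, so for any finite extension $K / \QQ_p$ of residue degree $f_K$ one has $H^1_f(K, T_{\Pi_f}^*/\sF^1) = (T_{\Pi_f}^*/\sF^1)/(\alpha^{-f_K} - 1)(T_{\Pi_f}^*/\sF^1)$. Taking $K$ to be a completion of $\QQ(\zeta_{Mp^m})$, its residue degree equals the multiplicative order $f_M$ of $p$ modulo $M$, independent of $m$. By the ``no exceptional zero'' hypothesis, $\alpha$ is not a root of unity, so $\alpha^{-f_M} - 1 \ne 0$ in $\cO$ and the target is a finite $\cO$-torsion module of some bounded order $p^v$.

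To pass from this finite-level boundedness to Iwasawa-theoretic vanishing, I would use the standard fact that for the totally ramified tower $\QQ_p(\zeta_{Mp^{m+1}})/\QQ_p(\zeta_{Mp^m})$ (for $m$ sufficiently large), corestriction acts on the unramified subspace of $H^1$ as multiplication by the ramification index $p$. Any norm-coherent sequence $(z'_m)$ with $z'_m$ in a fixed $p^v$-torsion module then satisfies $z'_m = p^N z'_{m+N}$ for every $N$, forcing $z'_m = 0$ once $N \ge v$. Applied to the sequence of localized projections of the Iwasawa class ${}_{c_1,c_2}z^{[\Pi,r]}_{\Iw, M}$, this gives the desired vanishing in $H^1_{\Iw}\left(\QQ_p(\zeta_{Mp^\infty}), T_{\Pi_f}^*/\sF^1\right)$.

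The main technical obstacle is verifying the corestriction-is-multiplication-by-$p$ claim in the final step: this is a standard computation in Galois cohomology of unramified modules over totally ramified extensions, but requires careful bookkeeping. A conceivable alternative would be to work rationally throughout (where $H^1_f$ vanishes outright, since $\alpha^{-f_M} - 1$ is a unit in $L$) and then lift to an integral statement via a $\Lambda$-torsion analysis of $H^1_{\Iw}$, but this would require controlling $H^0\left(\QQ_p(\zeta_{Mp^\infty}), (T_{\Pi_f}^*/\sF^1) \otimes (L/\cO)\right)$, which is itself a finite but generally non-trivial group.
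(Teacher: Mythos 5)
Your argument is essentially correct and, once unwound, coincides with the paper's, though you develop by hand a step the paper packages into a standard Iwasawa-theoretic lemma. The paper first notes that $H^0\bigl(\Qp(\zeta_{Mp^\infty}), T_{\Pi_f}^*/\sF^1\bigr) = 0$ (this is your ``no exceptional zero'' observation that $\alpha$ is not a root of unity, so $\alpha^{-f_M}-1 \ne 0$); this implies that $H^1_{\Iw}\bigl(\Qp(\zeta_{Mp^\infty}), T_{\Pi_f}^*/\sF^1\bigr)$ injects into the inverse limit of the \emph{rational} local $H^1$'s, so one only has to check that each finite-level class dies in $H^1\bigl(\Qp(\zeta_{Mp^m}), W_{\Pi_f}^*(-q)/\sF^1\bigr)$ for some admissible $q$. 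The paper then invokes Proposition~\ref{prop:eltisSelmer} and the Bloch--Kato dimension formula: the quotient has all Hodge--Tate weights $\le 0$ and trivial $H^0$, so $H^1_{\mathrm{f}} = 0$. Your explicit computation $H^1_{\mathrm{f}}(K, T_{\Pi_f}^*/\sF^1) = (T_{\Pi_f}^*/\sF^1)/(\alpha^{-f_K}-1)$ is the same vanishing seen through a different lens (valid since the quotient is unramified). The corestriction-is-multiplication-by-$p$ claim you flag as the ``main technical obstacle'' is correct and standard (for a totally ramified degree-$p$ extension and an unramified module, $\operatorname{res}$ identifies the two copies of $H^1_{\mathrm{ur}}$, and $\operatorname{cor}\circ\operatorname{res} = p$); applied to the torsion subgroup of $H^1$ (which here equals $H^1_{\mathrm{ur}}$ because $H^0$ with $W$-coefficients vanishes), it gives exactly the vanishing of $\varprojlim_m H^1(K_m,T^*/\sF^1)_{\mathrm{tor}}$, which is the kernel of the paper's injectivity map. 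In other words, the ``conceivable alternative'' you sketch at the end is precisely the paper's route, and the $H^0$ you worry about controlling is handled by observing it vanishes for $T$-coefficients (not $L/\cO$-coefficients), which is what the injectivity lemma actually requires.
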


  \begin{proof}
   From the ordinarity of $\Pi_p$ it follows that $H^0\left(\Qp(\zeta_{Mp^\infty}), T_{\Pi_f}^* / \sF^1\right) = 0$, and hence that the natural map
   \[ 
    H^1_{\Iw}\left(\Qp(\zeta_{Mp^\infty}), T_{\Pi_f}^* / \sF^1\right) \to \varprojlim_m H^1_{\Iw}\left(\Qp(\zeta_{Mp^m}), W_{\Pi_f}^*(-q) / \sF^1\right) 
   \]
   is injective for any integer $q$. So it suffices to show that the image of ${}_{c_1, c_2} z_{M, m}^{[\Pi, q, r]}$ in $W_{\Pi_f}^*(-q) / \sF^1$ is zero for all $m$ (for any choice of $q$). However, if $q$ is chosen such that $0 \le q \le a$, then this element lands in the Bloch--Kato subspace $H^1_\f(\Qp(\zeta_{Mp^m}), W_{\Pi_f}^*(-q) / \sF^1)$, by Proposition \ref{prop:eltisSelmer}; and this subspace is zero, since $W_{\Pi_f}^*(-q) / \sF^1$ has all Hodge--Tate weights $\le 0$ (and is not the trivial representation).
  \end{proof}
  
  Let us write $\QQ_\infty = \QQ(\zeta_{p^\infty})$. We can use the submodule $\sF^1 T^*_{\Pi_f}$ to define Selmer groups $\widetilde H^i_{\Iw}(\QQ_\infty, T_{\Pi_f}^*)$, via Nekov\'a\v{r}'s formalism of Selmer complexes \cite{nekovar06}; cf.~\cite[\S 11.2]{KLZ1b}. These are finitely-generated $\Lambda$-modules, where $\Lambda$ denotes the Iwasawa algebra $\cO[[\Gamma]]$ of $\Gamma = \Gal(\QQ_\infty / \QQ) \cong \Zp^\times$. They admit the following somewhat concrete descriptions:
  \begin{itemize}
   \item $\widetilde H^i_{\Iw}(\QQ_\infty, T_{\Pi_f}^*) = 0$ unless $i = 1$ or $i = 2$.
   \item $\widetilde H^1_{\Iw}(\QQ_\infty, T_{\Pi_f}^*)$ is the kernel of the map
   \[ 
    H^1_{\Iw}(\QQ_\infty, T_{\Pi_f}^*) \rTo H^1_{\Iw}(\QQ_{p,\infty}, T_{\Pi_f}^* / \sF^1).
   \]
   \item Let $S_\infty$ denote the set of primes of $\QQ_\infty$ above $S$, and $A = T_{\Pi_f}(1) \otimes \Qp/\Zp$. If we define $\cS(\QQ_\infty, A)$ to be the $p$-torsion group 
   \[ \ker \left( H^1(\ZZ[\zeta_{p^\infty}, 1/pS], A)
  \rTo \bigoplus_{v \in S_\infty} H^1(\QQ_{\infty, v}, A) \oplus H^1(\QQ_{p, \infty}, A / \sF^3 A) \right), \] 
   then there is an exact sequence
   \[ 0 \to \cS(\QQ_\infty, A)^\vee \to \widetilde H^2_{\Iw}(\QQ_\infty, T_{\Pi_f}^*) \to H^2_{\Iw}(\QQ_{p,\infty}, \sF^1 T_{\Pi_f}^*)\]
   where the last module is a finite group (cf.~\cite[Proposition 11.2.8]{KLZ1b}).
  \end{itemize}
  
  Moreover, an Euler characteristic computation (using the fact that complex conjugation acts on $W_{\Pi_f}$ with two $+1$ eigenvalues and two $-1$ eigenvalues) shows that $\operatorname{rank}_{\Lambda} \widetilde H^1 - \operatorname{rank}_{\Lambda} \widetilde H^2 = 1$.
  
  \begin{theorem}
   Suppose that there exists an $r$ such that ${}_{c_1, c_2} z_{\Iw, 1}^{[\Pi, r]}$ is non-torsion. Then $\widetilde H^2_{\Iw}(\QQ_\infty, T_{\Pi_f}^*)$ is a torsion $\Lambda$-module, and we have the divisibility of characteristic ideals
   \[ \operatorname{char}_{\Lambda}\left( \widetilde H^2_{\Iw} \right) \mid \operatorname{char}_{\Lambda} \left( \frac{ \widetilde H^1_{\Iw} } { \Lambda \cdot {}_{c_1, c_2} z_{\Iw, 1}^{[\Pi, r]} } \right).\]
  \end{theorem}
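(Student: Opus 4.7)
The argument will follow the well-established template for extracting Selmer-group bounds from an Euler system, combined with the formalism of Nekov\'a\v{r}'s Selmer complexes as in \cite{KLZ1b, \S 11.6}.

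First I would verify that ${}_{c_1,c_2} z^{[\Pi,r]}_{\Iw,1}$ actually belongs to $\widetilde H^1_{\Iw}(\QQ_\infty, T^*_{\Pi_f})$: by the preceding proposition its image in $H^1_{\Iw}(\Qp(\zeta_{p^\infty}), T^*_{\Pi_f}/\sF^1)$ vanishes, which is exactly the defining condition for Nekov\'a\v{r}'s Selmer group with the Greenberg-style local condition at $p$ given by $\sF^1$. The Euler system norm relations proved earlier exhibit $({}_{c_1,c_2} z^{[\Pi,r]}_{\Iw,M})_{M\in\mathcal{R}}$ as an Euler system for $(T^*_{\Pi_f},\Sigma)$ in the sense of Rubin, and Assumption $(ii)$ supplies precisely his hypothesis $\operatorname{Hyp}(K_\infty,T)$, while Assumption $(i)$ ensures the residual irreducibility needed for the descent step.

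Next I would invoke Rubin's Euler system theorem \cite[Theorem 2.3.3]{rubin00} applied to this Euler system and its derived Kolyvagin classes. This yields a divisibility, in the characteristic ideal of $\Lambda$, for the Pontryagin dual of the strict Selmer group $\operatorname{Sel}^{\mathrm{str}}(\QQ_\infty, A)$ attached to the unramified-at-$p$ local condition coming from $\sF^1$, bounded above by $\operatorname{char}_\Lambda\!\left(\widetilde H^1_\Iw / \Lambda \cdot {}_{c_1,c_2} z^{[\Pi,r]}_{\Iw,1}\right)$. The passage from this bound to the claimed divisibility for $\widetilde H^2_\Iw$ uses the exact sequence recalled just before the theorem: modulo the finite piece $H^2_\Iw(\Qp(\zeta_{p^\infty}), \sF^1 T^*_{\Pi_f})$, the module $\widetilde H^2_\Iw$ is identified with $\mathcal{S}(\QQ_\infty, A)^\vee$, which in turn differs from $\operatorname{Sel}^{\mathrm{str}}(\QQ_\infty, A)^\vee$ only by terms whose characteristic ideals are trivial. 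Since ${}_{c_1,c_2} z^{[\Pi,r]}_{\Iw,1}$ is non-torsion by hypothesis, $\widetilde H^1_\Iw / \Lambda \cdot z$ is a torsion $\Lambda$-module, so Rubin's bound forces $\widetilde H^2_\Iw$ to be torsion; combined with the Euler characteristic identity $\operatorname{rank}_\Lambda \widetilde H^1_\Iw - \operatorname{rank}_\Lambda \widetilde H^2_\Iw = 1$, this also pins down $\operatorname{rank}_\Lambda \widetilde H^1_\Iw = 1$, so the quotient's characteristic ideal is well-defined and nonzero.

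The principal obstacle lies in reconciling Rubin's machinery, which is naturally phrased in terms of a one-step quotient by a rank-$1$ unramified subrepresentation, with the richer two-step Greenberg filtration $\sF^\bullet T^*_{\Pi_f}$ intrinsic to the four-dimensional $\GSp_4$-Galois representation. The required reconciliation is an analysis of the local cohomology of $\sF^1 T^*_{\Pi_f}$ and $T^*_{\Pi_f}/\sF^3$ at $p$: the no-exceptional-zero hypothesis ensures that no Frobenius eigenvalue on $\mathbf{D}_{\mathrm{cris}}$ of the relevant subquotients equals $1$ after any cyclotomic twist, so the error modules $H^0$ and $H^2$ over $\Qp(\zeta_{p^\infty})$ are finite and their characteristic ideals are units in $\Lambda$. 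A secondary technical point is that Rubin's theorem is written for $\ZZ_p$-coefficients, so one must work over $\cO_L$ and check that the derivative classes inherit the appropriate integrality — this is routine given Assumption (i), which guarantees that $T^*_{\Pi_f}$ is an irreducible $\cO_L[G_{\QQ_\infty}]$-module and makes the Chebotarev-based divisibility argument go through verbatim.
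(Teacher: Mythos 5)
Your proposal is correct and takes essentially the same route as the paper, which invokes the Euler system machine via a reference to \cite[Theorem 11.4.3]{KLZ1b}; you have simply unpacked what that citation contains. Your key steps — membership of the class in $\widetilde H^1_{\Iw}$ via the vanishing of its image in $H^1_{\Iw}(\Qp(\zeta_{p^\infty}), T^*_{\Pi_f}/\sF^1)$, verification of $\operatorname{Hyp}(K_\infty, T)$, application of Rubin's theorem, translation to the Selmer-complex $\widetilde H^2_{\Iw}$ via the exact sequence stated just before the theorem, and the Euler characteristic identity — are exactly the ingredients the paper is implicitly invoking.
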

   
  \begin{proof}
   This follows by applying the ``Euler system machine'' to the Euler system ${}_{c_1, c_2} z_{\Iw, M}^{[\Pi, r]}$. Compare \cite[Theorem 11.4.3]{KLZ1b}.
  \end{proof}
    
  \begin{corollary}
   \label{cor:BKSel}
   Suppose $q$ is an integer $\ge 0$, and assume that there is an $r \in \{0, \dots, b\}$ such that the cohomology class $z_{1, 0}^{[\Pi, q, r]} \in H^1(\QQ, W_{\Pi_f}^*(-q))$ is non-zero.
   \begin{itemize}
    \item If $\operatorname{loc}_p\left(z_{1, 0}^{[\Pi, q, r]}\right)$ lies in the subspace $H^1_\f(\Qp, W_{\Pi_f}^*(-q))$, then the Bloch--Kato Selmer group $H^1_\f\left( \QQ, W_{\Pi_f}^*(-q)\right)$ has dimension $1$ over $L$, and $ z_{1, 0}^{[\Pi, q, r]}$ is a basis of this space.
    \item If $\operatorname{loc}_p\left(z_{1, 0}^{[\Pi, q, r]}\right)$ does not lie in $H^1_\f(\Qp, W_{\Pi_f}^*(-q))$, then $H^1_\f\left( \QQ, W_{\Pi_f}^*(-q)\right)$ is zero.
   \end{itemize}
  \end{corollary}
  
  Note that the second case cannot occur if $0 \le q \le a$, by Proposition \ref{prop:eltisSelmer}. 
  
  \begin{proof}
   This follows from the previous theorem by descent; compare \cite[Proposition 11.5.1]{KLZ1b}.
  \end{proof}

 \subsection{The motivic p-adic L-function}

  We can also interpret the above results in terms of $p$-adic $L$-functions. For simplicity, we assume that $\Pi_p$ is ordinary for the Borel subgroup (not just for the Siegel parabolic) in the sense of \cite{urban05}. In this case, Corollary 1 of \emph{op.cit.} shows that there is a 2-dimensional $G_{\Qp}$-stable subspace $\sF^2 W_{\Pi_f}^*$, with $\sF^1 \supsetneq \sF^2 \supsetneq \sF^3$. Then the graded piece $\sF^1  W_{\Pi_f}^*/ \sF^2$ has Hodge--Tate weight $a + 1$, and Perrin-Riou's ``big logarithm'' map gives a canonical isomorphism
  \[ 
   \mathcal{L}: H^1_{\Iw}(\QQ_{p, \infty}, \sF^1  W_{\Pi_f}^*/ \sF^2) \rTo \Lambda_L(\Gamma) \otimes \DD_{\mathrm{cris}}(\sF^1  W_{\Pi_f}^*/ \sF^2).
   \]
   Composing $\mathcal{L}$ with evaluation at $\chi^q$, where $\chi$ is the cyclotomic character, interpolates the Bloch--Kato logarithm (for $q \le a$) or dual exponential (for $q \ge a+1$) of the image of $z$ in $H^1(\QQ_p, \sF^1  W_{\Pi_f}^*(-q)/ \sF^2)$.
   
   \begin{definition}
    We let ${}_{c_1, c_2} L_p^{\mot, r}(\Pi_f)$ be the image of ${}_{c_1, c_2} z_{\Iw, 1}^{[\Pi, r]}$ under the map $\mathcal{L}$.
   \end{definition}
   
   By choosing a basis of the 1-dimensional $L$-vector space $\DD_{\mathrm{cris}}(\sF^1  W_{\Pi_f}^*/ \sF^2)$, we may regard this as an element of $\Lambda_L(\Gamma)$, well-defined up to non-zero scalars. We call this measure the \textbf{motivic $p$-adic $L$-function}.
   
   \begin{theorem}
    Let $a + 1 \le q \le a + b + 1$, and suppose that ${}_{c_1, c_2} L_p^{\mot, r}(\Pi_f)$ is non-vanishing at $\chi^q$ for some $r$. Then $H^1_\f\left( \QQ, W_{\Pi_f}^*(-q)\right) = H^1_\f\left( \QQ, W_{\Pi_f}(1 + q)\right) = 0$.
   \end{theorem}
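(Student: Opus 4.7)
The plan is in two stages: first I would use the non-vanishing hypothesis at $\chi^q$ to show, via the dual-exponential interpretation of Perrin-Riou's big logarithm, that the specialised Euler system class is non-zero in $H^1(\QQ, W_{\Pi_f}^*(-q))$ but lies outside its Bloch--Kato Selmer subgroup; then I would combine this with the Euler system divisibility to deduce the vanishing of both Selmer groups.

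First, I would unpack the hypothesis. By construction ${}_{c_1, c_2} L_p^{\mathrm{mot}, r}(\Pi_f) = \mathcal{L}({}_{c_1, c_2} z_{\Iw, 1}^{[\Pi, r]})$. Since the graded piece $\sF^1/\sF^{2}$ has Hodge--Tate weight $a+1$, for $q \ge a+1$ evaluation at $\chi^q$ interpolates the Bloch--Kato dual-exponential rather than the logarithm, and the no-exceptional-zero hypothesis ensures the Euler-factor correction in the interpolation formula is non-zero. Hence non-vanishing of ${}_{c_1, c_2} L_p^{\mathrm{mot}, r}(\Pi_f)(\chi^q)$ implies that the specialised class in $H^1(\QQ, W_{\Pi_f}^*(-q))$ is itself non-zero and its localisation at $p$ has non-zero dual-exponential, so lies outside the local subspace $H^1_{\mathrm{f}}(\QQ_p, W_{\Pi_f}^*(-q))$.

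Second, Corollary \ref{cor:BKSel} applied to our non-zero class yields that $H^1_{\mathrm{f}}(\QQ, W_{\Pi_f}^*(-q))$ has $L$-dimension at most $1$ and is contained in the $L$-line spanned by the class. But the class itself is not in this subspace, by the non-trivial local dual-exponential at $p$. Hence $H^1_{\mathrm{f}}(\QQ, W_{\Pi_f}^*(-q)) = 0$.

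Third, for the companion vanishing $H^1_{\mathrm{f}}(\QQ, W_{\Pi_f}(1+q)) = 0$, I would use the Euler system divisibility from the preceding theorem together with descent. The non-torsion class ${}_{c_1, c_2} z_{\Iw, 1}^{[\Pi, r]}$ gives, via that theorem, the divisibility $\operatorname{char}_{\Lambda}(\widetilde H^2_{\Iw}) \mid \operatorname{char}_{\Lambda}(\widetilde H^1_{\Iw}/\Lambda z)$. The standard control theorem for Nekov\'a\v{r}'s Selmer complexes identifies the $\chi^q$-specialisation of $\widetilde H^2_{\Iw}$ with the Pontryagin dual of a Greenberg-type Selmer group for $W_{\Pi_f}(1+q)$, and the non-vanishing of $L_p^{\mathrm{mot}, r}(\Pi_f)(\chi^q)$ forces this specialisation to be finite over $\cO$, hence zero after inverting $p$. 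A Hodge--Tate weight bookkeeping for $q \in [a+1, a+b+2]$, combined with the no-exceptional-zero hypothesis, shows that the Greenberg condition coincides with the Bloch--Kato condition $H^1_{\mathrm{f}}(\QQ_p, W_{\Pi_f}(1+q))$, giving the claim.

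The principal obstacle is this descent step: one must rigorously match Nekov\'a\v{r}'s Iwasawa Selmer complex, after specialisation at $\chi^q$, with the Bloch--Kato Selmer group of $W_{\Pi_f}(1+q)$. This requires tracking Hodge--Tate weights on each graded piece of $\sF^\bullet$ and verifying that the no-exceptional-zero hypothesis eliminates the only possible source of discrepancy (namely a crystalline Frobenius eigenvalue equal to $p^{-1}$); once this matching is in hand, the rest of the argument is the standard combination of Perrin-Riou's regulator interpolation, Corollary \ref{cor:BKSel}, and Nekov\'a\v{r}-style descent.
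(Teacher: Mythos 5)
Your proof follows the same overall strategy as the paper. The paper's argument shows the non-vanishing hypothesis forces the image of the Iwasawa class in $H^1(\QQ_p, \sF^1 W_{\Pi_f}^*(-q)/\sF^2)$ to be non-zero, notes this quotient has vanishing $H^1_{\mathrm{f}}$ for $q \ge a+1$, concludes the class escapes $H^1_{\mathrm{f}}$ locally, and then invokes Corollary \ref{cor:BKSel}; your reformulation through the dual-exponential branch of $\mathcal{L}$ is equivalent, and your explicit appeal to the no-exceptional-zero hypothesis (to guarantee the Euler-factor correction in Perrin-Riou's interpolation does not kill the specialised class) correctly surfaces an assumption the paper uses silently. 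Where your proposal adds genuine value is in your third step: the ``in particular'' clause of Corollary \ref{cor:BKSel} only asserts the vanishing of $H^1_{\mathrm{f}}(\QQ, W_{\Pi_f}(1+q))$ for $0 \le q \le a$, so the paper's one-line ``By Corollary \ref{cor:BKSel} it follows that the global $H^1_{\mathrm{f}}$ is zero'' leaves the companion Selmer group implicit for the range $q \ge a+1$ at issue. Your re-running of the Iwasawa divisibility plus Nekov\'a\v{r}-style descent, with the Greenberg/Bloch--Kato comparison at $p$ handled via the no-exceptional-zero hypothesis, is the right way to close this; one should spell out that non-vanishing of $L_p^{\mathrm{mot},r}$ at $\chi^q$ ensures $\chi^q$ is not a zero of $\operatorname{char}_\Lambda(\widetilde H^1_{\Iw}/\Lambda z)$, hence not of $\operatorname{char}_\Lambda(\widetilde H^2_{\Iw})$, so that the specialised $\widetilde H^2$ is finite and descent then gives the vanishing.
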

   
   \begin{proof}
    By construction, if the motivic $L$-function does not vanish, then the image of ${}_{c_1, c_2} z_{\Iw, 1}^{[\Pi, r]}$ in $H^1(\Qp, \sF^1  W_{\Pi_f}^*(-q)/ \sF^2)$ is non-zero. The hypotheses on $q$ imply that this subquotient has vanishing $H^1_\f$, so we conclude that ${}_{c_1, c_2} z_{\Iw, 1}^{[\Pi, r]}$ cannot lie in $H^1_\f$ locally at $p$. By Corollary \ref{cor:BKSel} it follows that the global $H^1_\f$ is zero.
   \end{proof}
   
   \begin{remark}
    Note that $a + 1 \le q \le a + b + 1$ is precisely the range such that $L(\Pi, 1+q - \tfrac{w}{2})$ is a critical value of the spin $L$-function. We conjecture that, for an appropriate $r$ and suitably chosen test data $\uphi, v_\alpha$, the value at $\chi^q$ of ${}_{c_1, c_2} L_p^{\mot, r}(\Pi_f)$ should be non-zero if and only if the critical $L$-value does not vanish.
   \end{remark}

\providecommand{\bysame}{\leavevmode\hbox to3em{\hrulefill}\thinspace}
\providecommand{\MR}[1]{}
\renewcommand{\MR}[1]{%
 MR \href{http://www.ams.org/mathscinet-getitem?mr=#1}{#1}.
}
\newcommand{\articlehref}[2]{\href{#1}{#2}}

\end{document}